\title[]{Quasi-BPS categories for symmetric quivers with potential}
\author{Tudor P\u adurariu and Yukinobu Toda}
\newtheorem{thm}[equation]{Theorem}
\newtheorem{cor}[equation]{Corollary}
\newtheorem{prop}[equation]{Proposition}
\newtheorem{lemma}[equation]{Lemma}
\theoremstyle{definition}
\newtheorem{defn}[equation]{Definition}
\newtheorem{thm*}[equation]{Theorem$^*$}
\newtheorem{remark}[equation]{Remark}
\newtheorem{example}[equation]{Example}
\newtheorem{step}{Step}
\newtheorem{assum}[equation]{Assumption}
\newcommand{\comment}[1]{}
\renewcommand{\leq}{\leqslant}
\renewcommand{\geq}{\geqslant}
\newcommand{\X}{\mathscr{X}}
\newcommand{\Y}{\mathscr{Y}}
\newcommand{\Coh}{\operatorname{Coh}}
\newcommand{\id}{\operatorname{id}}
\newcommand{\Ind}{\operatorname{Ind}}
\newcommand{\Hom}{\operatorname{Hom}}
\newcommand{\Spec}{\operatorname{Spec}}
\newcommand{\ee}{\underline{e}}
\newcommand{\dd}{\underline{d}}
\newcommand{\inclusion}{\ar@<-0.3ex>@{^{(}->}[r]}
\newcommand{\dinclusion}{\ar@<-0.3ex>@{^{(}->}[d]}
\newcommand{\Tr}{\mathop{\rm Tr}}
\newcommand{\ssslash}{/\!\!/}
\tikzstyle{block}=[draw=black, width=1cm, minimum height=2cm, align=center] 
\tikzstyle{block2}=[draw=black, text width=2cm, minimum height=1cm, align=center] 
\tikzstyle{block3}=[draw=black, text width=2cm, minimum height=1cm, align=center]
\begin{document}

\begin{abstract}
We study 
certain 
categories associated to symmetric quivers with potential, called quasi-BPS categories. 
We construct semiorthogonal decompositions of the categories of matrix 
factorizations for moduli stacks of representations of (framed or unframed) symmetric quivers with potential, where the summands are 
categorical Hall products of quasi-BPS categories. These results generalize our previous results about the three loop quiver. 

We prove several properties of 
quasi-BPS categories: wall-crossing equivalence, strong generation, and categorical support lemma 
in the case of tripled quivers with potential. 
We also introduce reduced quasi-BPS categories for preprojective algebras, 
which have
trivial relative Serre functor and are indecomposable when the weight is coprime with the total dimension. In this case, we regard the reduced quasi-BPS categories as noncommutative local hyperkähler varieties, and as (twisted) categorical versions of crepant resolutions of singularities of good moduli spaces of representations of preprojective algebras.

The studied categories include the local models of quasi-BPS categories of K3 surfaces. In a follow-up paper, we establish analogous properties for quasi-BPS categories of K3 surfaces.

\end{abstract}

\maketitle

\section{Introduction}

\renewcommand{\thefootnote}{\fnsymbol{footnote}} 
\footnotetext{\emph{$2020$ Mathematics Subject Classification.} Primary: 14N35, 18N25. Secondary: 19E08. \\ \emph{Keywords:} BPS invariants, matrix factorizations, semiorthogonal decompositions, noncommutative resolutions of singularities, wall-crossing.}     
\renewcommand{\thefootnote}{\arabic{footnote}} 

\subsection{Motivation}
The BPS invariants \cite[Section 2 and a half]{MR3221298} and BPS cohomologies \cite{DM} are
central objects in the study of Donaldson-Thomas (DT) theory and of (Kontsevich--Soibelman \cite{MR2851153}) cohomological Hall algebras of a Calabi-Yau $3$-fold
or a quiver with potential. 
In this paper, we study certain subcategories of matrix factorizations associated with 
symmetric quivers with potential, called \textit{quasi-BPS categories}. 
They were introduced by the first named author in \cite{P} to prove 
a categorical version of the PBW theorem 
for cohomological Hall algebras \cite{DM}. 
As proved by the second named author~\cite{Todstack}, quivers with potential describe the local structure of moduli of sheaves on a Calabi-Yau $3$-fold (CY3). 
Thus, 
the study of quasi-BPS categories for quivers with potential 
is expected to help in understanding the (yet to be defined) Donaldson-Thomas (DT) categories
or quasi-BPS categories for global CY3 geometries. 

A particular case of interest is that of \textit{tripled quivers with potential}. 
A subclass of tripled quivers with potential gives a local 
model of the moduli stack of (Bridgeland semistable and compactly supported) sheaves on the local K3 surface
\begin{equation}\label{XSC}
X=S\times \mathbb{C},
\end{equation}
where $S$ is a K3 surface. 
This local description 
was used 
by Halpern-Leistner~\cite{halpK32} to prove 
the D-equivalence 
conjecture for moduli spaces of stable 
sheaves on K3 surfaces, see~\cite[Corollary~5.4.8]{T}
for its generalization.
Tripled quivers with potential are also 
of interest in representation theory: the Hall algebras of a tripled quiver with potential are Koszul equivalent to the preprojective Hall algebras introduced by Schiffmann--Vasserot \cite{SV}, Yang--Zhao \cite{YangZhaopre}, Varagnolo--Vasserot \cite{VaVa}, which are categorifications of positive halves of quantum affine algebras \cite{NeSaSc}.

The tripled quiver with potential for the Jordan quiver is the quiver with one vertex and three loops $\{X,Y,Z\}$, and with potential $X[Y,Z]$. 
In our previous papers~\cite{PTzero, PT1}, 
motivated by the search for a categorical analogue of the MacMahon formula, 
we studied 
quasi-BPS categories for the three loop quiver. 
In particular, we constructed semiorthogonal decompositions for the framed and unframed stacks of representations of the tripled quiver, we proved a categorical support lemma, and so on. 
The purpose of this paper is to generalize 
the results in~\cite{PTzero, PT1} to more general 
symmetric quivers with potential, with special attention to tripled quivers with potential. 
We also prove new results on quasi-BPS categories: first, we show that quasi-BPS categories are equivalent under wall-crossing; next, we introduce \textit{reduced quasi-BPS categories} and show that they are indecomposable when the weight is coprime with the total dimension.   

In \cite{PTK3}, we use the results of this paper to 
introduce and study quasi-BPS categories for (local) K3 
surfaces, and discuss their relationship with 
(twisted) categorical crepant resolutions of 
singular symplectic moduli spaces of semistable 
sheaves on K3 surfaces. 

\subsection{Quasi-BPS categories}

For a symmetric quiver $Q=(I,E)$ and 
a dimension vector $d\in \mathbb{N}^I$, consider \begin{align*}
\Tr W \colon 
\X(d):=R(d)/G(d) \to \mathbb{C}
\end{align*}
the moduli stack of representations of $Q$ of 
dimension $d$, together with the 
regular function determined by the potential $W$. 
Let 
$M(d)_{\mathbb{R}}^{W_d}$
be the set of Weyl invariant real weights of the maximal 
torus $T(d) \subset G(d)$. 
For $\delta \in M(d)_{\mathbb{R}}^{W_d}$, consider
the (ungraded or graded) 
\textit{quasi-BPS category}:
\begin{equation}\label{def:quasiBPS}
\mathbb{S}^\bullet(d; \delta)\subset \mathrm{MF}^\bullet(\X(d), \mathrm{Tr}\,W)\text{ for }\bullet\in\{\emptyset, \mathrm{gr}\},
\end{equation}
which is 
the category of matrix factorizations with factors in
\begin{align}\label{intro:def:M}\mathbb{M}(d; \delta)\subset D^b(\X(d)),\end{align}
a noncommutative resolution of the coarse space of $\X(d)$ constructed by \v{S}penko--Van den Bergh~\cite{SVdB}.

When $d$ is primitive and $\delta, \ell\in M(d)^{W_d}_\mathbb{R}$ are generic 
of weight zero with respect to the 
diagonal torus $\mathbb{C}^{\ast} \subset G(d)$,  
Halpern-Leistner--Sam's magic window theorem~\cite{hls} says that 
there is an equivalence: 
\begin{align}\label{intro:magic}
    \mathbb{M}(d; \delta) \stackrel{\sim}{\to} D^b\big(X(d)^{\ell\text{-ss}}\big).
\end{align}
Here, $\X(d)^{\ell\text{-ss}} \to X(d)^{\ell\text{-ss}}$ is the GIT quotient 
of the $\ell$-semistable locus, which is 
a $\mathbb{C}^{\ast}$-gerbe, 
and $X(d)^{\ell\text{-ss}}$ is 
a smooth quasi-projective variety. 
However, there is no equivalence (\ref{intro:magic}) for non-primitive $d$. In this case, the stack 
$\X(d)^{\ell\text{-ss}}$
contains strictly semistable 
representations, the morphism 
$\X(d)^{\ell\text{-ss}} \to X(d)^{\ell\text{-ss}}$ is more complicated, 
and $X(d)^{\ell\text{-ss}}$ is usually singular. 
Nevertheless, under some conditions on $\delta$, 
we expect $\mathbb{M}(d; \delta)$
to behave as the derived category of 
a smooth quasi-projective variety. 
Thus it is interesting to investigate 
the structure of $\mathbb{M}(d; \delta)$ or $\mathbb{S}(d; \delta)$, especially when $d$ is non-primitive. 

As its name suggests, the category 
$\mathbb{S}(d; \delta)$ was introduced in \cite{P}
as a
categorical version of \textit{BPS invariants}. 
The BPS invariants for CY3-folds are fundamental enumerative invariants which determine other enumerative invariants of interest, such as Donaldson-Thomas (DT) and Gromov-Witten invariants \cite[Section 2 and a half]{MR3221298}. 
There are BPS cohomologies whose Euler characteristics equal the BPS invariants, defined by Davison--Meinhardt~\cite{DM} in the case of symmetric quivers with potential 
and by Davison--Hennecart--Schlegel Mejia~\cite{DHSM} in the case of local K3 surfaces. 
For general CY 3-fold, up to the existence of a certain orientation data, 
the BPS cohomologies are defined in~\cite[Definition~2.11]{TodGV}. 

In~\cite{PTtop}, we make the relation between quasi-BPS categories and BPS cohomologies more precise:
we describe the topological K-theory 
of \eqref{def:quasiBPS} in terms of 
BPS cohomologies, and show that, under some 
extra condition, they are isomorphic.





\subsection{Semiorthogonal decompositions}

In \cite{P}, the first named author constructed semiorthogonal decompositions of the categorical Hall algebra of $(Q, W)$ in Hall products of quasi-BPS categories for all symmetric quivers $Q$ and all potentials $W$. However, the (combinatorial) data which parametrizes the summands is not easy to determine, and it is not very convenient for studying explicit wall-crossing geometries. 

In this paper, 
we construct, for certain symmetric quivers, a different semiorthogonal decomposition which is more amenable to wall-crossing.
We state the result in a particular case, 
see Theorem \ref{sodfullstackB} for a more general statement which applies to all tripled quivers with potential. 

Before stating Theorem \ref{thm:intro1}, we introduce some notations.
For a dimension vector $d=(d^i)_{i\in I}$, let $\underline{d}=\sum_{i\in I}d^i$ be its total length.
We set $\tau_d=\frac{1}{\underline{d}}\left(\sum_{i\in I}\sum_{j=1}^{d^i}\beta_i^j\right)$,
where $\beta_i^j$ are weights of the standard 
representation of $G(d)$. We consider the following particular examples of quasi-BPS categories \eqref{def:quasiBPS}:
\[\mathbb{S}^\bullet(d)_v:=\mathbb{S}^\bullet(d; v\tau_d).\]

\begin{thm}\emph{(Theorem~\ref{sodfullstackBW})}\label{thm:intro1}
Let $(Q, W)$ be a symmetric quiver with potential such that
the number of loops at each vertex is odd and the number of edges between any two different vertices is even. Let $\bullet\in \{\emptyset, \mathrm{gr}\}$.
     There is a semiorthogonal decomposition
    \begin{equation}\label{intro:sod1}
    \mathrm{MF}^\bullet(\X(d), \Tr W)=\left\langle \bigotimes_{i=1}^k \mathbb{S}^\bullet(d_i)_{v_i}: 
    \frac{v_1}{\dd_1} <\cdots<\frac{v_k}{\dd_k}
    \right\rangle,
    \end{equation}
    where $(d_i)_{i=1}^k$ is a partition
 of $d$ and $(v_i)_{i=1}^k\in\mathbb{Z}^k$. 
\end{thm}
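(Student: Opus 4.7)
The plan is to generalize the strategy from the earlier papers \cite{PTzero, PT1} on the three-loop quiver. Since each piece $\mathbb{S}^\bullet(d_i)_{v_i}$ is by definition the matrix-factorization version of the \v{S}penko--Van den Bergh window category $\mathbb{M}(d_i)_{v_i}$, the first step is to establish the corresponding semiorthogonal decomposition on the coherent side for a suitable window subcategory of $D^b(\X(d))$ in terms of categorical Hall products of the $\mathbb{M}(d_i)_{v_i}$, and then transfer this decomposition to $\mathrm{MF}^\bullet(\X(d),\Tr W)$ via the standard Koszul/Isik--Shipman type equivalences available for a regular function with Gm-action.

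To build the decomposition on the coherent side, I would use a $\Theta$-stratification of $\X(d)$ indexed by Harder--Narasimhan types for varying cocharacters of $T(d)$. For each partition $d=d_1+\cdots+d_k$ and sequence $(v_i)$ with $v_1/\dd_1<\cdots<v_k/\dd_k$, there is an attracting substack whose Levi quotient is $\prod_i \X(d_i)$ and whose normal bundle has only positive weights along the chosen cocharacter. The categorical Hall (parabolic induction) functor produces a functor $\bigotimes_i \mathbb{M}(d_i)_{v_i} \to D^b(\X(d))$ landing inside a combinatorially prescribed window, and fully faithfulness on each Levi quotient is guaranteed by the Halpern-Leistner--Sam magic window theorem. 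Semiorthogonality across two different slope profiles reduces to a strict weight inequality for $T(d)$-characters appearing in the two Hall products. The parity hypothesis on the number of loops at each vertex and edges between distinct vertices enters precisely here, to guarantee that the half-integer shift $\tau_d$ and the Koszul signs associated to the symmetric form of $(Q,W)$ are integral so that the windows glue consistently, exactly as in the three-loop case.

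The main technical obstacle is the generation step: showing that the Hall products $\bigotimes_i \mathbb{S}^\bullet(d_i)_{v_i}$, ranging over all partitions of $d$ and all slope-ordered weight sequences, span $\mathrm{MF}^\bullet(\X(d),\Tr W)$. I expect to argue by induction on $\underline{d}$. Using the $\Theta$-stratification, an arbitrary object admits a filtration whose pieces are pushed forward from individual strata; each stratum has Levi quotient of strictly smaller total dimension, so the induction hypothesis decomposes the associated graded into Hall products of quasi-BPS categories with smaller $\underline{d}_i$. The delicate point is to match the parametrization used in the induction (slopes of HN cocharacters) with the parametrization in \eqref{intro:sod1} (the rational slopes $v_i/\dd_i$ of weights $v_i\tau_{d_i}$); this is a combinatorial translation between stability chambers in $M(d)^{W_d}_{\mathbb{R}}$ and the rays generated by $\tau_d$, which is where the argument inherited from \cite{PT1} must be reworked for a general symmetric $Q$, with the parity condition replacing the three-loop-specific input. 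Once generation is in place, passing from the coherent to the matrix-factorization statement for both $\bullet=\emptyset$ and $\bullet=\mathrm{gr}$ is formal, since the construction of $\mathbb{S}^\bullet(d;\delta)$ from $\mathbb{M}(d;\delta)$ preserves semiorthogonal decompositions.
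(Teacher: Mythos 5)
Your high-level plan (Hall products of window categories $\mathbb{M}(d_i)_{v_i}$, show they generate and are semiorthogonal on the coherent side, then transfer to matrix factorizations formally) matches the paper's general template, and the final transfer step is correct. But there are two substantive gaps in the middle.

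First, you miss the key structural reduction: the paper first proves the statement for \emph{very symmetric} quivers (those where every pair of vertices $a,b$ is connected by exactly $A$ edges, for a fixed $A$), and then handles general symmetric quivers satisfying the parity condition by a Kn\"orrer periodicity equivalence $D^b(\X(d))\simeq \mathrm{MF}^{\mathrm{gr}}(\X^{\gimel}(d),\Tr W^{\gimel})$ for an auxiliary very symmetric quiver $Q^{\gimel}$ with quadratic potential (Propositions~\ref{prop:period} and~\ref{prop:period2}, Subsection~\ref{symmquivers}, Step~\ref{step3}). The reason this intermediate reduction is needed is Proposition~\ref{prop:tau}: only for very symmetric quivers is $R(d)^{\lambda>0}$ a linear combination of the Weyl-invariant weights $\tau_{d_i}$, which is what makes the decomposition $\chi+\rho-\delta_d=-\sum_{\ell}r_\ell N_\ell+\sum_i\psi_i+w\tau_d$ repackagable in the slope form $(d_i,v_i)$ of \eqref{intro:sod1}. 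Saying the parity hypothesis is ``to guarantee that $\tau_d$ and Koszul signs are integral so that windows glue consistently'' gestures at part of this, but it conceals the actual mechanism: the parity lets you choose a compatible $Q^{\gimel}$ and shows that the half-integral twist $\theta_i$ from \eqref{def:deltai} becomes integral, so $\mathbb{M}(d_i;\theta_i+v_i\tau_{d_i})\simeq\mathbb{M}(d_i)_{v_i}$.

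Second, the generation argument you propose is not one that runs. There is no nontrivial $\Theta$-stratification of $\X(d)$ for the trivial linearization (everything is semistable), so you cannot ``filter an arbitrary object by pushforwards from strata with strictly smaller Levi'' and induct on $\underline{d}$. More fundamentally, the numbers $v_i/\dd_i$ appearing in \eqref{intro:sod1} are \emph{not} Harder--Narasimhan slopes for any stability condition on $Q$; they index \v{S}penko--Van den Bergh/Halpern-Leistner--Sam window subcategories of $D^b(\X(d_i))$ (roughly, the choice of $v_i\tau_{d_i}$ as the center of the weight polytope $\mathbf{W}(d_i)$). Conflating them with HN slopes is where your parametrization-matching step would come unstuck. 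The paper's generation argument instead uses the \emph{weight-combinatorial} decomposition of an arbitrary dominant weight via the tree of partitions (Proposition~\ref{prop:decompchi}), which is the input to \cite[Theorem~1.1]{P}; the bijection of Proposition~\ref{prop410} between those combinatorial data and slope-ordered tuples $(d_i,v_i)$ is then exactly where Proposition~\ref{prop:tau}, and hence the reduction to very symmetric quivers, is used. One further small inaccuracy: the Halpern-Leistner--Sam magic window theorem gives an equivalence of a window category with $D^b$ of a GIT quotient for primitive $d$; it is not what gives fully faithfulness of the Hall product, which instead comes from the explicit Koszul-type resolutions and weight estimates as in \cite[Proposition~2.1]{PTzero}.
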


As in \cite{P}, we regard the above semiorthogonal decomposition as a categorical version of the PBW theorem for cohomological Hall algebras of Davison--Meinhardt \cite{DM}. 

We also study 
semiorthogonal 
decompositions for the moduli spaces $\X^f(d)^{\mathrm{ss}}$ of semistable framed representations of $Q$, 
consisting of framed $Q$-representations 
generated by the image of the maps from the framed 
vertex. 
We state a particular case, see Theorem \ref{thmsodC} for a general statement which includes all tripled quivers with potential:

\begin{thm}\emph{(Theorem~\ref{sodfullstackBW0})}\label{thm:intro2}
In the setting of Theorem~\ref{thm:intro1}, 
we further take 
$\mu\in \mathbb{R}\setminus\mathbb{Q}$. Then there is a semiorthogonal decomposition
    \begin{equation}\label{intro:sod2}
    \mathrm{MF}^\bullet\left(\X^f(d)^{\text{ss}}, \Tr W\right)=\left\langle \bigotimes_{i=1}^k \mathbb{S}^\bullet(d_i)_{v_i}: 
    \mu\leq \frac{v_1}{\dd_1}<\cdots<\frac{v_k}{\dd_k}<1+\mu
    \right\rangle,
     \end{equation}
    where $(d_i)_{i=1}^k$ is a partition 
    of $d$ and $(v_i)_{i=1}^k\in\mathbb{Z}^k$.
\end{thm}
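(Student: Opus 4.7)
The plan is to reduce the statement to the unframed decomposition of Theorem~\ref{thm:intro1} by a magic-window argument on the framed stack.

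First, I would establish a window equivalence for the framed semistable moduli. The framed moduli stack $\X^f(d)$ carries a stability character $\ell$ for which $\ell$-semistability coincides with the condition that the underlying representation is generated by the image of the framing map. By the Halpern-Leistner window theorem for GIT quotients, together with its matrix factorization version due to Ballard-Favero-Katzarkov and Isik, one has a natural equivalence
\begin{equation*}
    \mathrm{MF}^\bullet(\X^f(d)^{\mathrm{ss}}, \Tr W) \xrightarrow{\sim} \mathbb{S}^{f,\bullet}(d;\delta),
\end{equation*}
where $\mathbb{S}^{f,\bullet}(d;\delta)$ is a window subcategory of $\mathrm{MF}^\bullet(\X^f(d), \Tr W)$ cut out by a generic weight $\delta \in M(d)_{\R}^{W_d}$ corresponding to the irrational shift $\mu$. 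This is the framed analogue of the \v{S}penko-Van den Bergh magic window equivalence of~\eqref{intro:magic}.

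Second, I would compare this framed window to the unframed one. Since the framing representation $F(d)$ is $G(d)$-linear and $\Tr W$ does not depend on framings, the forgetful morphism $\X^f(d) \to \X(d)$ is an affine $G(d)$-bundle. The window condition on $\X^f(d)$ thus translates to a shifted window on $\X(d)$; when expressed in terms of the diagonal torus $\C^{\ast} \subset G(d)$, the shift by the weights of $F(d)$ compresses the real line of all slopes to the half-open interval $[\mu, 1+\mu)$ of length one.

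Third, I would invoke the unframed decomposition of Theorem~\ref{thm:intro1} and restrict it to the framed window. The decomposition~\eqref{intro:sod1} expresses $\mathrm{MF}^\bullet(\X(d), \Tr W)$ as a semiorthogonal gluing of $\bigotimes_i \mathbb{S}^\bullet(d_i)_{v_i}$ over all partitions of $d$ and integer weight vectors with strictly increasing slopes. Restricting to the framed window kills every summand whose slopes leave $[\mu,1+\mu)$ and preserves exactly those with $\mu \leq v_1/\underline{d}_1 < \cdots < v_k/\underline{d}_k < 1+\mu$. The irrationality of $\mu$ guarantees that no slope can equal either endpoint, so the decomposition inherits the strict inequalities in~\eqref{intro:sod2} and no strictly semistable framed representations appear on the boundary.

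The main obstacle is the first step: establishing the window equivalence with the right window size so that the induced restriction on slopes has length exactly one. This requires a careful computation of the magic window polytope on the framed stack in terms of the weights of $R(d) \oplus F(d)$, and verification that the resulting window is compatible with the Hall product structure used in Theorem~\ref{thm:intro1}. The compatibility check amounts to a Hom-space computation between tensor products of quasi-BPS summands with factors in specific windows, generalizing the approach used for the three loop quiver in~\cite{PT1}.
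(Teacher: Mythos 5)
Your high-level strategy—a framed window equivalence, translation to a shifted window on $\X(d)$, and a combinatorial SOD of the window—matches the paper's overall architecture (see the proof of Theorem~\ref{thmsodA} via Theorem~\ref{thmsodB}). However, there are substantive gaps in the middle.

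The most serious omission is the reduction to \emph{very symmetric} quivers via Knörrer periodicity. In the paper, Theorems~\ref{thmsodA} and~\ref{thmsodB} are proved only when every pair of vertices of $Q$ is joined by the same number $A$ of edges, because only then does Proposition~\ref{prop:tau} hold: $R(d)^{\lambda>0}$ lies in the span of the weights $\tau_{d_i}$. This is exactly what makes the slope vectors $(v_i)_i$ well-defined and lets one re-index the summands by partitions with integer slopes. For a general quiver satisfying Assumption~\ref{assum11}, the paper passes to an auxiliary very symmetric quiver $Q^\gimel$ with extra quadratic potential and uses the Knörrer periodicity equivalence $\Theta_d$ (Proposition~\ref{prop:period2}) to transport the SOD back, tracking the half-integral twist $\delta^\circ_d=-\tfrac{1}{2}\det U(d)$ through Step~\ref{step3}. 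Your proposal never addresses why the slope parameterization makes sense for a general symmetric quiver, and without this reduction the combinatorial bookkeeping breaks down.

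Second, the window polytope you describe is not quite right: the framed window must be the magic window for the \emph{self-dual} representation $R(d)\oplus V(d)\oplus V(d)^\vee$ (the polytope $\mathbf{V}(d)$ of~\eqref{polyt:W}), not $R(d)\oplus V(d)$; this self-dualization is what makes the Halpern-Leistner--Sam style generator description available. Third, your step three of ``restricting the unframed SOD to the window'' is morally right but logically inverted relative to the paper: Theorem~\ref{thmsodB} is not deduced from the unframed SOD but is proved directly from Proposition~\ref{prop:decompchi} and Proposition~\ref{prop55} (which pins the slopes in $[\mu,1+\mu]$ when $\chi+\rho-\delta_d-\mu\sigma_d\in\mathbf{V}(d)$); both the framed and unframed SODs then fall out of the same combinatorics with different polytopes. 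Finally, your citation for the three-loop-quiver prototype should be \cite{PTzero}, not \cite{PT1}; the latter is the support lemma paper.
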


We proved Theorem \ref{thm:intro2} for the three loop quiver in \cite[Theorem 1.1]{PTzero} in order to give a categorical 
analogue of the MacMahon formula for Hilbert schemes 
of points on $\mathbb{C}^3$. Theorem~\ref{thm:intro2} gives
a generalization of~\cite[Theorem 1.1]{PTzero}. 
As explained in~\cite{PTzero}, the semiorthogonal 
decomposition (\ref{intro:sod2}) is regarded 
as a categorical analogue of DT/BPS
wall-crossing formula~\cite{BrH, T5, MR2892766}, 
whose motivic/ cohomological version 
is due to Meinhardt--Reineke~\cite{MeRe}. 

 Theorems \ref{thm:intro1} and \ref{thm:intro2} are two of the main tools we use to further investigate quasi-BPS categories. For example, they are central in the proof of Theorem \ref{thm:intro6} and in the results of \cite{PTtop}.


\subsection{Categorical wall-crossing of quasi-BPS categories}
Let $d$ be a primitive dimension vector 
and let $\ell, \ell'\in M(d)^{W_d}_\mathbb{R}$ be generic stability conditions. 
Then there is a birational map between 
two crepant resolutions of $X(d)$:
\begin{equation}\label{resolutions}
\begin{tikzcd}
X(d)^{\ell\text{-ss}}\arrow[dr] \arrow[rr, dotted] & & X(d)^{\ell'\text{-ss}} \arrow[ld] \\
& X(d). &
\end{tikzcd}
\end{equation}
As a corollary of 
the magic window 
theorem (\ref{intro:magic}) of Halpern-Leistner--Sam, there is 
a derived equivalence: 
\[D^b\big(X(d)^{\ell\text{-ss}}\big)\simeq D^b\big(X(d)^{\ell'\text{-ss}}\big),\]
which proves the 
D/K equivalence conjecture of Bondal--Orlov~\cite{B-O2}, 
Kawamata~\cite{MR1949787} for the resolutions \eqref{resolutions}. 

We prove an analogous result when 
$d$ is not necessary primitive, hence 
there is no stability condition such that the 
$\mathbb{C}^{\ast}$-rigidified moduli stack of semistable 
representations is a Deligne-Mumford stack. 
For a stability condition $\ell$ on $Q$, 
we define a quasi-BPS category
\[\mathbb{S}^\ell(d; \delta)\subset \mathrm{MF}\big(\X(d)^{\ell\text{-ss}}, \Tr W\big)\]
which is, 
locally on the good moduli space 
$\X(d)^{\ell\text{-ss}} \to X(d)^{\ell\text{-ss}}$,
modeled by a category 
(\ref{def:quasiBPS}). 

\begin{thm}\emph{(Theorem~\ref{cor:lgen2})}\label{thm:intro5}
Let $(Q, W)$ be a symmetric quiver with potential, 
and let $\ell, \ell'$ be stability conditions.
Then there is a dense open subset $U \subset M(d)_{\mathbb{R}}^{W_d}$
such that, for $\delta \in U$, there is an equivalence:
    \begin{equation}\label{derivedequivBPSzero}
    \mathbb{S}^\ell(d; \delta) \simeq \mathbb{S}^{\ell'}(d; \delta).
     \end{equation}
\end{thm}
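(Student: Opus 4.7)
The plan is to introduce a common ambient category on the full stack $\X(d)$ and show that, for generic $\delta$, both $\mathbb{S}^\ell(d;\delta)$ and $\mathbb{S}^{\ell'}(d;\delta)$ are obtained from it by restriction to the corresponding semistable loci. More precisely, I would consider the unframed quasi-BPS category $\mathbb{S}(d;\delta)\subset \mathrm{MF}(\X(d),\Tr W)$ defined in (\ref{def:quasiBPS}) via the magic window subcategory $\mathbb{M}(d;\delta)\subset D^b(\X(d))$ of \v{S}penko--Van den Bergh, and the restriction functors
\[
\mathbb{S}(d;\delta)\xrightarrow{\,r_\ell\,} \mathbb{S}^\ell(d;\delta),\qquad \mathbb{S}(d;\delta)\xrightarrow{\,r_{\ell'}\,}\mathbb{S}^{\ell'}(d;\delta)
\]
induced by pullback along the open immersion $\X(d)^{\ell\text{-ss}}\hookrightarrow \X(d)$ (and similarly for $\ell'$). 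The claim I want to prove is that there exists a dense open $U\subset M(d)^{W_d}_\R$ such that both $r_\ell$ and $r_{\ell'}$ are equivalences for $\delta\in U$; composing one with the inverse of the other then yields (\ref{derivedequivBPSzero}).

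The main technical input is a Halpern-Leistner-style window argument adapted to matrix factorizations. For a fixed $\ell$, the Kempf--Ness stratification of the $\ell$-unstable locus of $\X(d)$ produces finitely many one-parameter subgroups $\lambda$ with centralizer $G(d)^\lambda$ and numerical invariants $\eta_\lambda\in M(d)_\R^{W_d}$. The window condition for $\mathbb{M}(d;\delta)$ cuts out representations of $G(d)$ whose weights lie in a polytope $\mathbb{W}(\delta)$ depending linearly on $\delta$. For $\delta$ in the complement of a finite union of rational hyperplanes determined by $\ell$ (the \emph{wall set} $H_\ell$), the polytope $\mathbb{W}(\delta)$ lies strictly inside the window interval for every Kempf--Ness stratum, so the restriction functor $r_\ell$ is an equivalence by the standard window/fully-faithful-plus-generation argument; here the potential $\Tr W$ poses no additional problem, since matrix factorization categories inherit semiorthogonal decompositions from the underlying derived category. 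A local-to-global reduction over the good moduli space $X(d)^{\ell\text{-ss}}$ (using \'etale Luna slices, which identify the stack locally with $\X_H(d_H)$ for Levi subgroups $H\subset G(d)$) is needed because $d$ is not assumed primitive and strictly semistable representations are present; quasi-BPS categories are glued from such local pieces, and the window property passes through this gluing.

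Taking $U:=M(d)^{W_d}_\R\setminus (H_\ell\cup H_{\ell'})$, which is dense open since each $H_{\bullet}$ is a finite union of rational hyperplanes, both $r_\ell$ and $r_{\ell'}$ become equivalences for $\delta\in U$, and the composition $r_{\ell'}\circ r_\ell^{-1}$ gives the desired equivalence. Symmetry/functoriality of the window construction ensures that the resulting equivalence is independent of the auxiliary choice of $\mathbb{S}(d;\delta)$ up to natural isomorphism, which is important because in applications one wants to iterate this across chambers.

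The hard part will be the generation step of the window argument in the strictly semistable regime: when $d$ is non-primitive, objects supported on strictly semistable points contribute infinitesimal deformations that are not detected by a single Kempf--Ness stratification, and one must check that the Koszul-type resolutions implicit in the window theorem still terminate inside $\mathbb{M}(d;\delta)$. I would handle this by induction on the Luna stratification of $X(d)^{\ell\text{-ss}}$, using Theorem \ref{thm:intro1} to decompose categories on deeper strata into Hall products of quasi-BPS categories for smaller dimension vectors, which reduces the generation statement to the primitive case handled by (\ref{intro:magic}).
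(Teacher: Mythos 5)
Your overall route is the same as the paper's: work with the window-style category $\mathbb{M}(d;\delta)\subset D^b(\X(d))$ (resp.\ $\mathbb{S}(d;\delta)$) on the full stack, restrict to the two semistable loci, and take $\delta$ generic with respect to the Kempf--Ness cocharacters so that $\mathbb{M}(d;\delta)$ sits inside a Halpern-Leistner/BFK window for each stability condition (this is exactly Proposition~\ref{lem:incW}, giving full faithfulness, with $U_\ell$ defined by $2\langle\mu_i,\delta\rangle\notin\mathbb{Z}$). The genuine gap is in the essential surjectivity step, which is the new content here and which you only gesture at. The category $\mathbb{S}^\ell(d;\delta)$ is defined intrinsically by weight bounds at all maps $\nu\colon B\mathbb{C}^\ast\to\X(d)^{\ell\text{-ss}}$, so the window equivalence onto all of $\mathrm{MF}(\X(d)^{\ell\text{-ss}},\Tr W)$ does not identify the essential image of the restriction functor; when $d$ is non-primitive there exist cocharacters $\lambda$ with $\langle\lambda,\ell\rangle=0$ (faces of $\mathbf{W}(d)$ parallel to hyperplanes containing $M(d)^{W_d}_{0,\mathbb{R}}$), a case absent from Halpern-Leistner--Sam, and one must (i) prove a new generation statement (Lemma~\ref{lem:modify}) expressing $D^b(\X(d)^{\ell\text{-ss}})_w$ in terms of $\mathrm{res}(\mathbb{M}(d;\delta))$ together with restricted Hall products $m_\lambda(P)$ of low weight along such $\lambda$, and (ii) show these Hall products are right-orthogonal to $\mathbb{S}^\ell(d;\delta)$. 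Step (ii) is where the real work lies: the paper uses the Cartesian diagram of attracting stacks over the $\ell$-semistable locus, the formal-local description of $\X(d)^{\ell\text{-ss}}$ over its good moduli space via mapping stacks from $\Theta$ and $B\mathbb{C}^\ast$ and Luna slices (Lemma~\ref{lem:form:A}), and a $\lambda$-weight comparison; note also that in the paper essential surjectivity holds for \emph{all} $\delta$ (Proposition~\ref{prop:esssurj}), genericity being needed only for full faithfulness.

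Your proposed repair of the generation step does not work as stated. Invoking Theorem~\ref{thm:intro1} (Theorem~\ref{sodfullstackBW}) restricts you to quivers with the parity assumptions on loops and edges, whereas the statement is for arbitrary symmetric quivers with potential; moreover the semiorthogonal decompositions of Section~\ref{sec:SOD} are proved \emph{after}, and partly by means of, the $\ell=0$ case of the wall-crossing results, so building the wall-crossing on them inverts the paper's logical order. More substantively, the reduction ``to the primitive case handled by \eqref{intro:magic}'' fails: at a strictly semistable point the formal-local model is the representation stack of an Ext-quiver whose dimension vector records the multiplicities of the polystable summands, and this is again non-primitive in general, so an induction over Luna strata does not bottom out in the magic window theorem. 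What is actually needed at each such point is the weight estimate \eqref{chi:ineq} against the defining bounds of $\mathbb{S}^\ell(d;\delta)$ on the local model $A^{\lambda}/G_p^{\lambda}\leftarrow A^{\lambda\ge 0}/G_p^{\lambda\ge 0}\to A/G_p$, not a primitive-case equivalence.
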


Note that BPS invariants are preserved under wall-crossing~\cite[Lemma~4.7]{TodGV}. We regard Theorem~\ref{thm:intro5} as the 
categorical analogue of this property.

\subsection{Categorical support lemma for quasi-BPS 
categories of tripled quivers}
For a quiver $Q^{\circ}=(I, E^{\circ})$, 
let $(Q^{\circ, d}, \mathscr{I})$ be its doubled quiver
with relation $\mathscr{I}$, 
and let $(Q, W)$ be its tripled 
quiver with potential, see Subsection~\ref{subsec22}. 
Tripled quivers with potential 
form an important class of 
symmetric quivers with potential. Hall algebras of tripled quivers with potential are isomorphic to preprojective Hall algebras, which are themselves positive parts of quantum affine algebras \cite{NeSaSc}.
An important ingredient in the study of these Hall algebras is Davison's support lemma~\cite[Lemma~4.1]{Dav} for BPS sheaves of tripled quivers with potential, 
which is used to prove purity of various cohomologies \cite{Dav, DavPurity}.

Inspired by Davison's support lemma, we studied in \cite[Theorem 1.1]{PT1} the support of objects in quasi-BPS categories for the tripled quiver with potential of the Jordan quiver, and we used it to obtain generators for the integral equivariant K-theory of certain quasi-BPS categories \cite[Theorem 1.2]{PT1}.

We prove an analogous result to \cite[Theorem 1.1]{PT1} for (certain) tripled quivers with potential, see Theorem \ref{thm:intro6}.
The examples we study include all Ext-quivers of polystable sheaves (for a Bridgeland stability condition) on a local K3 
surface as in \eqref{XSC}. We use Theorem \ref{thm:intro6} to show relative properness of reduced quasi-BPS categories in Theorem \ref{thm:intro7}. 

Let $d=(d^i)_{i\in I}\in\mathbb{N}^I$ be a dimension vector and let $\mathfrak{g}(d)$ be the 
Lie algebra of $G(d)$. 
There is a projection map which remembers the linear map on the added loops (i.e. edges of the tripled quiver which are added to the doubled quiver):
\[\X(d)\to \mathfrak{g}(d)/G(d).\]
It induces a map 
\[\pi\colon \mathrm{Crit}\left(\mathrm{Tr}\,W\right)\hookrightarrow \X(d)\to \mathfrak{g}(d)/G(d)\to \mathfrak{g}(d)\ssslash G(d)=\prod_{i \in I} \mathrm{Sym}^{d^i}(\mathbb{C}).\]
Consider the diagonal 
map 
\[\Delta\colon \mathbb{C} \hookrightarrow \prod_{i\in I} \mathrm{Sym}^{d^i}(\mathbb{C}).\] 
For two vertices $a, b \in I$ of the quiver $Q^{\circ}$, let $\delta_{ab}=1$ if $a=b$ and $\delta_{ab}=0$ otherwise, and define:
\begin{align}\label{alphaab}
    \alpha_{a, b} &:=\sharp(a \to b\text{ in }E^\circ)+\sharp(b \to a\text{ in }E^\circ)-2\delta_{ab}.
\end{align}

\begin{thm}\emph{(Theorem~\ref{lem:support})}\label{thm:intro6}
    Let $Q^{\circ}=(I, E^{\circ})$ be a quiver such that 
    $\alpha_{a, b}$ is even for any $a, b \in I$. 
    Let 
    $(Q,W)$ be the tripled quiver with potential
    of $Q^{\circ}$. 
    If $\gcd(v, \underline{d})=1$, 
    then any object 
    of $\mathbb{S}(d)_v$ is supported over $\pi^{-1}(\Delta)$.  
\end{thm}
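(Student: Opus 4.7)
The plan is to show that any object $F \in \mathbb{S}(d)_v$ vanishes on $\mathrm{Crit}(\Tr W) \setminus \pi^{-1}(\Delta)$, by stratifying the complement according to the joint generalized eigenvalue decomposition of the added-loop matrices and applying the semiorthogonal decomposition of Theorem~\ref{thm:intro1} stratum by stratum. The coprimality $\gcd(v, \underline{d}) = 1$ will then force the decomposition to be trivial.

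First I would unwind the critical-locus equations for the tripled potential $W$: at a critical point, the doubled-quiver data $(\{e\}, \{e^*\})$ satisfies the preprojective relations, and each added loop $x_v$ intertwines the edges, so $x_v e = e x_u$ and $x_u e^* = e^* x_v$ for every $e \colon u \to v$ in $Q^\circ$. The additive group $\mathbb{G}_a$ acts on $\X(d)$ by $\lambda \cdot (x_v) = (x_v + \lambda\, \id_{d^v})$ and preserves $\Tr W$ because each preprojective expression is a sum of commutators; hence $\mathbb{S}(d)_v$ is $\mathbb{G}_a$-equivariant and $\pi^{-1}(\Delta) = \mathbb{G}_a \cdot \pi^{-1}(0)$, with $\pi^{-1}(0)$ the locus where every $x_v$ is nilpotent. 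It therefore suffices to work on the slice $\{\sum_v \Tr(x_v) = 0\}$, on which $\pi^{-1}(\Delta)$ coincides with $\pi^{-1}(0)$.

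Next, I fix a polystable point $p$ in the critical locus outside $\pi^{-1}(\Delta)$. Then $(x_v)_{v \in I}$ has at least two distinct joint generalized eigenvalues, and the intertwining relations above split the underlying doubled-quiver representation as $V = \bigoplus_{\alpha=1}^k V^{(\alpha)}$ with $k \geq 2$, giving a partition $d = d_1 + \cdots + d_k$. A Luna-type \'etale slice at $p$ identifies a neighborhood of the image of $p$ in $\X(d)$ with a product of smaller tripled-quiver stacks $\prod_\alpha \X(d_\alpha)$ (times an affine factor), under which $\Tr W$ pulls back to the sum of the tripled potentials on the factors. Pulling back $F$ and applying Theorem~\ref{thm:intro1} to the local product expresses its restriction as a sum of Hall products from $\bigotimes_\alpha \mathbb{S}(d_\alpha)_{v_\alpha}$, and membership of $F$ in $\mathbb{S}(d)_v$ pins down the slopes so that $v_\alpha / \underline{d}_\alpha = v / \underline{d}$ for every $\alpha$. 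Since $\gcd(v, \underline{d}) = 1$, the fraction $v/\underline{d}$ is in lowest terms, so each $\underline{d}_\alpha$ must be a positive multiple of $\underline{d}$; combined with $\underline{d}_\alpha < \underline{d}$ this forces $k = 1$, a contradiction. Therefore $F$ vanishes near $p$, and its support is contained in $\pi^{-1}(\Delta)$.

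The main obstacle is the local identification in the previous paragraph: one needs a Luna-slice model for the quasi-BPS category which matches the local product of tripled-quiver stacks both with the potential and with the weight-grading cutting out $\mathbb{S}(d)_v$, in a way compatible with Theorem~\ref{thm:intro1}. The evenness hypothesis $\alpha_{a,b} \in 2\mathbb{Z}$ enters exactly here: the symmetric quiver describing the local neighborhood at $p$ picks up extra edge contributions between factors controlled by $\alpha_{a,b}$, and evenness ensures that the parity conditions of Theorem~\ref{thm:intro1} (odd loops at each vertex, even non-loop edges) descend to the local product. A secondary but routine check is that $\tau_d$ restricts to the tuple $(\tau_{d_\alpha})_\alpha$ under $\prod_\alpha G(d_\alpha) \hookrightarrow G(d)$, so that the weight condition on $F$ transfers to weight conditions on each factor and the Hall-product summands sit in the expected pieces.
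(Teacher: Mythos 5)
Your overall strategy is the same as the paper's: pick a point $p$ of $\mathfrak{g}(d)\ssslash G(d)$ off the diagonal, split the representation by joint generalized eigenvalues of the added loops, reduce to quasi-BPS categories of the smaller blocks, and run the slope-plus-coprimality argument ($v_\alpha/\underline{d}_\alpha=v/\underline{d}$ with $v_\alpha\in\mathbb{Z}$ contradicts $\gcd(v,\underline{d})=1$). But the key technical step is not carried out correctly, and it is exactly the step where the evenness hypothesis is really used. The (formal) local model of $\X(d)$ over $p$ is \emph{not} a product of smaller tripled-quiver stacks ``times an affine factor'' on which the potential is irrelevant: writing $\overline{R}(d)=\overline{R}(d')\oplus\overline{R}(d'')\oplus U\oplus U^{\vee}$, the cross-term directions $U\oplus U^{\vee}$ between distinct eigenvalue blocks carry a nontrivial piece of $\Tr W$, and only after a change of variables using that the two eigenvalue groups are disjoint (Lemma~\ref{sublem:1.5}) does the potential become $\Tr W'\boxplus\Tr W''\boxplus f$ with $f$ a nondegenerate quadratic form on $U\oplus U^{\vee}$. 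One must then apply Kn\"orrer periodicity, and the induced identification of quasi-BPS categories (Proposition~\ref{prop:period2}) twists the weight by $\tfrac12\det U$. The evenness of $\alpha_{a,b}$ is needed precisely so that this half-twist is an integral Weyl-invariant weight, which makes the local factors genuine categories $\mathbb{S}(d_\alpha)_{v_\alpha}$ and (via Remark~\ref{rmk:Mweight}) forces $v_\alpha\in\mathbb{Z}$ once they are nonzero; it is not about parity hypotheses for applying Theorem~\ref{thm:intro1} to the local quiver.

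Relatedly, the appeal to Theorem~\ref{thm:intro1} (Theorem~\ref{sodfullstackBW}) to ``pin down the slopes'' does not work as stated: in that semiorthogonal decomposition the slopes are strictly increasing, so there is no summand with all slopes equal to $v/\underline{d}$ except the trivial partition, and knowing merely that the restriction of $F$ lies somewhere in the SOD of the local stack gives no contradiction. What is needed instead is (i) that restriction to the formal local model sends $\mathbb{S}(d)_v$ into the local quasi-BPS category $\mathbb{S}_p(d)_v$ defined by the same window inequalities (this uses the identity $n_{\lambda,p}=n_\lambda$ for the relevant cocharacters), and (ii) the direct identification, via Kn\"orrer periodicity, of $\mathbb{S}_p(d)_v$ with (the completion of) $\mathbb{S}_{p'}(d')_{v'}\boxtimes\mathbb{S}_{p''}(d'')_{v''}$, where $v\tau_d=v'\tau_{d'}+v''\tau_{d''}$ is an identity of weights forcing equal slopes. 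With these two ingredients the numerical endgame you describe goes through; without them the argument has a gap at its core. (Your preliminary $\mathbb{G}_a$-translation reduction is fine but unnecessary: it suffices to treat any $p$ whose support has at least two points.)
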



\subsection{Quasi-BPS categories for reduced stacks}

We now explain a modification of the categories \eqref{def:quasiBPS} with better geometric properties
in the case of tripled quivers with potential. We first introduce notations related to stacks of representations of doubled quivers. 

Let $Q^\circ=(I, E^{\circ})$ be a quiver
with stack of representations 
$\X^{\circ}(d)=R^{\circ}(d)/G(d)$. 
Let $
\mathscr{P}(d):=\mu^{-1}(0)/G(d)$
be the derived moduli stack of dimension $d$ representations of the preprojective algebra of $Q^\circ$ (equivalently, of
$(Q^{\circ, d}, \mathscr{I})$-representations), where 
$\mu^{-1}(0)$ is the derived zero locus of the moment map
\begin{equation}
    \mu\colon T^{\ast}R^{\circ}(d) \to \mathfrak{g}(d).
\end{equation} 
Consider  the good moduli space
\begin{align*}
    \mathscr{P}(d)^{\rm{cl}} \to P(d)=\mu^{-1}(0)\ssslash G(d). 
\end{align*}
In many cases, $P(d)$ is a singular symplectic 
variety and the study of
its (geometric or non-commutative or categorical)
resolutions is related to the study 
of hyperkähler varieties. Note that the variety $P(d)$ may not have 
geometric crepant resolutions of singularities, for example if $Q^\circ$ is the quiver with one loop and $g\geq 2$ loops, $d\geq 2$, and $(g, d)\neq (2,2)$, see \cite[Proposition 3.5, Theorem 6.2]{KaLeSo}.

Under the Koszul equivalence, the graded quasi-BPS category $\mathbb{S}^{\mathrm{gr}}(d)_v$ for the tripled quiver with potential of the quiver $Q^\circ$ is equivalent to 
the preprojective quasi-BPS category: 
\[\mathbb{T}(d)_v\subset D^b\left(\mathscr{P}(d)\right).\]
The stack $\mathscr{P}(d)$ is never classical because the image of the moment map $\mu$ lies in the Lie subalgebra $\mathfrak{g}(d)_0\subset \mathfrak{g}(d)$ of traceless elements. 
We consider the reduced stack 
\[\mathscr{P}(d)^{\mathrm{red}}:=\mu_0^{-1}(0)/G(d),\] where $\mu_0\colon T^*R^\circ(d)\to \mathfrak{g}(d)_0$. 
We study the reduced quasi-BPS category
\begin{equation}\label{def:redquasiBPS}
\mathbb{T}:=\mathbb{T}(d)_v^{\mathrm{red}}\subset D^b\big(\mathscr{P}(d)^{\mathrm{red}}\big).
\end{equation}
Recall $\alpha_{a, b}$ from (\ref{alphaab}) and 
define 
$\alpha_{Q^{\circ}}:=\mathrm{min}\{\alpha_{a, b} \mid a, b \in I\}$. We use Theorem~\ref{thm:intro6}
to prove the following:

\begin{thm}\emph{(Propositions~\ref{thm:stronggenerator} and \ref{lem:bound}, Theorem \ref{thm:Serretriv}, Corollary \ref{mainprop:dec2})}\label{thm:intro7}
In the setting of Theorem~\ref{thm:intro6}, 
suppose that $\gcd(v, \dd)=1$. 
Then: 

(i) If $\alpha_{Q^{\circ}} \geq 2$, the category $\mathbb{T}$ is regular, and it is proper over $P(d)$. 

(ii) Suppose furthermore that $P(d)$ is Gorenstein, e.g. $\alpha_{Q^{\circ}} \geq 3$. 
Then there exists a relative Serre functor $\mathbb{S}_{\mathbb{T}/P(d)}$ of $\mathbb{T}$ over $P(d)$, and it satisfies $\mathbb{S}_{\mathbb{T}/P(d)}\cong \mathrm{id}_{\mathbb{T}}$. 

(iii) In the situation of (ii), $\mathbb{T}$ does not 
admit any non-trivial semiorthogonal decomposition. 
\end{thm}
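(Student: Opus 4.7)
For part (i), I would establish strong generation by adapting the magic window construction of Halpern-Leistner--Sam and \v{S}penko--Van den Bergh to the reduced preprojective setting. The direct sum of the $G(d)$-equivariant line bundles whose weights lie in the magic window polytope defining $\mathbb{M}(d; v\tau_d)$, pulled back to $\mathscr{P}(d)^{\mathrm{red}}$ via $\mathscr{P}(d)^{\mathrm{red}}\to \X^\circ(d)$, should constitute a classical generator of $\mathbb{T}$. Since $\mathscr{P}(d)^{\mathrm{red}}$ is quasi-smooth (being cut out of a smooth classical stack by the traceless moment map), this generator has finite Tor-dimension, hence is a strong generator, which gives regularity. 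For properness of $\mathbb{T}$ over the affine base $P(d)$, I would invoke the categorical support lemma (Theorem~\ref{thm:intro6}) on the Koszul-dual tripled-quiver side: every object of $\mathbb{S}^{\mathrm{gr}}(d)_v$ is supported over $\pi^{-1}(\Delta)$, and the moment-map reduction corresponds to imposing that the trace of the added loop is zero, so support is confined to $\pi^{-1}(0)$. This closed substack is proper over $P(d)$ via the good moduli space map, so $\mathbb{T}$, being generated by objects with proper support, is itself proper over $P(d)$.

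For part (ii), I would compute the relative Serre functor of $\mathbb{T}$ over $P(d)$ explicitly using the formula $\mathbb{S}_{\mathbb{T}/P(d)}(-)\simeq (-)\otimes \omega_{\mathscr{P}(d)^{\mathrm{red}}/P(d)}[n]$ restricted to the window. The holomorphic symplectic structure on the reduced preprojective stack (which is only available after removing the tracefull part, hence the role of the \textbf{reduced} stack) trivializes the dualizing complex of $\mathscr{P}(d)^{\mathrm{red}}$ up to a character of $G(d)$; when $P(d)$ is Gorenstein, this character descends to a line bundle on $P(d)$ which is canonically trivial. The residual ambiguity lives in characters of the diagonal center $\mathbb{C}^*\subset G(d)$, and the hypothesis $\gcd(v,\dd)=1$ combined with the normalization $\delta = v\tau_d$ forces the residual character to preserve the magic window exactly, so it acts on $\mathbb{T}$ by a natural isomorphism to the identity.

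For part (iii), the relative Serre triviality proved in (ii) makes the standard indecomposability argument work. Given a non-trivial semiorthogonal decomposition $\mathbb{T}=\langle \mathcal{A}, \mathcal{B}\rangle$, Serre duality over $P(d)$ gives $\Hom(a,b)\cong \Hom(b,\mathbb{S}_{\mathbb{T}/P(d)}(a))^\vee = \Hom(b,a)^\vee = 0$ for $a\in \mathcal{A}$, $b\in \mathcal{B}$, so the decomposition is in fact orthogonal. This provides a non-trivial central idempotent in $\mathrm{HH}^0(\mathbb{T}/P(d))$. But $\mathbb{T}$ is smooth and proper over the affine scheme $P(d)$ with trivial relative Serre, so $\mathrm{HH}^0(\mathbb{T}/P(d))= \mathcal{O}(P(d))$. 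Irreducibility of $P(d)$ (which holds for good moduli spaces of representations of preprojective algebras under the running assumptions) gives $\mathcal{O}(P(d))$ no non-trivial idempotents, a contradiction.

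The step I expect to be the main obstacle is (ii): identifying the relative Serre functor with the identity on the nose, rather than merely up to a line bundle pulled back from $P(d)$, requires pinning down the symplectic trivialization precisely and matching the Gorenstein dualizing sheaf of $P(d)$ with the descent of $\omega_{\mathscr{P}(d)^{\mathrm{red}}}$. The delicate arithmetic input is that $\gcd(v,\dd)=1$ forces the residual character of $\mathbb{C}^*\subset G(d)$ to vanish on the magic window, and verifying this without slippage — especially since the window is defined by an open polytope condition and the character could conceivably shift boundary strata — is the hardest point.
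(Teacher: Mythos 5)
Your part (i) regularity argument has a genuine gap: on the singular (quasi-smooth but not smooth) stack $\mathscr{P}(d)^{\mathrm{red}}$, a classical generator built from the window vector bundles is not automatically a \emph{strong} generator --- bounding the number of cones is precisely the issue, and ``finite Tor-dimension'' does not supply that bound (non-perfect objects of $D^b(\mathscr{P}(d)^{\mathrm{red}})$ are not reached in finitely many steps by any such naive argument). The paper instead gets regularity from admissibility: by the matrix-factorization version of Theorem \ref{thmsodC}, the quasi-BPS category is admissible in $\mathrm{MF}^\bullet(\X^f(d),\Tr W)$, which is smooth hence regular, and the projection of a strong generator of the ambient category is a strong generator (Proposition \ref{thm:stronggenerator}). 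Your properness argument is in spirit the paper's (Corollary \ref{cor:support} and Proposition \ref{lem:bound}), but ``generated by objects with proper support'' is not the operative mechanism; what one actually shows is that the pushforward along the good moduli space map of the internal Hom of the associated matrix factorizations is a bounded complex of coherent sheaves supported on $\mathrm{Im}(\overline{0}_C)\cong P(d)$, whence the $\mathbb{Z}/2$-graded Hom spaces are finitely generated over $P(d)$.

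In part (ii) the proposed formula $\mathbb{S}_{\mathbb{T}/P(d)}\simeq(-)\otimes\omega_{\mathscr{P}(d)^{\mathrm{red}}/P(d)}[n]$ begs the question: $\mathscr{P}(d)^{\mathrm{red}}$ is not proper over $P(d)$, so the relative Serre functor is not computed by tensoring with the stacky dualizing complex; one must push internal Homs down to the good moduli space and commute Grothendieck duality with that non-proper pushforward. This is exactly where $\gcd(v,\dd)=1$ enters in the paper, and not as a ``residual central character'': by Lemma \ref{lem:boundary} coprimality forces the relevant weights into the interior of the polytope, so by \v{S}penko--Van den Bergh the pushforwards $\pi_{X\ast}\mathcal{Q}^i$ are maximal Cohen--Macaulay, and only then does dualization commute with $\pi_{X\ast}$, combined with the triviality of $\omega_{X(d)}$, $\omega_{Y(d)}$, $\omega_{P(d)}$ from Lemma \ref{lem:dimP}; your symplectic-trivialization-plus-character bookkeeping never produces this step, which you yourself flag as the hard point. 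In part (iii) your first step (trivial Serre functor turns any semiorthogonal decomposition into an orthogonal one) agrees with the paper, but the appeal to $\mathrm{HH}^0(\mathbb{T}/P(d))=\mathcal{O}(P(d))$ rests on smoothness of $\mathbb{T}$ and a Hochschild computation that are not established anywhere (the paper explicitly refrains from proving smoothness). The paper instead excludes orthogonal decompositions directly (Proposition \ref{mainprop:dec}): the projections $\Phi(\mathcal{O}_{\mathscr{P}(d)^{\mathrm{red}}}\otimes V)$ of weight-$v$ vector bundles have pushforwards with full support on the irreducible affine $P(d)$ (via torsion-freeness, Proposition \ref{prop416}), so any two of them admit nonzero Homs, which forces all of them, and hence every object, into a single orthogonal factor.
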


Inspired by the above theorem, we regard \eqref{def:redquasiBPS} as a noncommutative local hyperkähler variety, which is a (twisted) categorical version of a crepant resolution of singularities of $P(d)$. 
It is an interesting question to 
see the relation with categorical crepant resolutions in the sense of Kuznetsov~\cite{KuzICM} or noncommutative 
crepant resolutions in the sense of Van den Bergh \cite{VdB22}. We plan to investigate this relation in future work.

In \cite{PTK3}, we use Theorem \ref{thm:intro7} to study reduced quasi-BPS categories for a K3 surface $S$. In particular, we show that these categories are a (twisted) categorical version of a crepant resolution of the moduli space \[M^{H}_S(v)\] of $H$-Gieseker semistable sheaves on $S$, where $H$ is a generic stability condition and $v$ is a non-primitive Mukai vector such that $\langle v, v\rangle \geq 2$, compare with \cite{KaLeSo} in the geometric case.

\subsection{Acknowledgments}
We thank Tasuki Kinjo, Davesh Maulik, Yalong Cao, Junliang Shen, Georg Oberdieck, and Jørgen Rennemo for discussions 
related to this work. T.~P.~is grateful to Columbia University in New York and to Max Planck Institute for Mathematics in Bonn for their hospitality and financial support during the writing of this paper.
The project of this paper started when Y.~T.~was visiting Columbia University 
in April 2023. Y.~T.~thanks Columbia University for their hospitality. 
	Y.~T.~is supported by World Premier International Research Center
	Initiative (WPI initiative), MEXT, Japan, and Grant-in Aid for Scientific
	Research grant (No.~19H01779) from MEXT, Japan.

 \subsection{Notations}\label{notation}

We list the main notation used in the paper in Table \ref{table:notation}.

All the spaces $\X=X/G$ considered are quasi-smooth (derived) quotient stacks over $\mathbb{C}$, where $G$ is an algebraic group. 
The classical truncation of $\X$ is denoted by 
$\X^{\rm{cl}}=X^{\mathrm{cl}}/G$. We assume that $X^{\mathrm{cl}}$ is a quasi-projective scheme.
We denote by $\mathbb{L}_\X$ the cotangent complex of 
$\X$. 
Any dg-category considered is a $\mathbb{C}$-linear 
pre-triangulated dg-category, in particular its homotopy category is a 
triangulated category. 
We denote by $D_{\rm{qc}}(\X)$ the unbounded derived category of quasi-coherent sheaves, by
$D^b(\X)$ the bounded derived category of coherent sheaves, 
and by $\mathrm{Perf}(\X)$ its subcategory of perfect complexes. 

Let $R$ be a set. Consider a set $O\subset R\times R$ such that for any $i, j\in R$ we have $(i,j)\in O$, or $(j,i)\in O$, or both $(i,j)\in O$ and $(j,i)\in O$. 
Let $\mathbb{T}$ be a pre-triangulated dg-category. We will construct semiorthogonal decompositions
\begin{equation}\label{SOD}
\mathbb{T}=\langle \mathbb{A}_i \mid i \in R \rangle
\end{equation} 
with summands 
pre-triangulated subcategories $\mathbb{A}_i$ indexed by $i\in R$
such that for any $i,j\in R$ with $(i, j)\in O$ and for any objects $\mathscr{A}_i\in\mathbb{A}_i$, $\mathscr{A}_j\in\mathbb{A}_j$, we have 
$\Hom_{\mathbb{T}}(\mathscr{A}_i,\mathscr{A}_j)=0$. 

Consider a morphism $\pi\colon \X\to S$. We say the semiorthogonal decomposition \eqref{SOD} is \textit{$S$-linear} if $\mathbb{A}_i\otimes \pi^*\mathrm{Perf}(S)\subset \mathbb{A}_i$ for all $i\in R$. 

We use the terminology of \textit{good moduli spaces} of Alper, see \cite[Section 8]{MR3237451} for examples of stacks with good moduli spaces.

\begin{figure}
	\centering
\scalebox{0.7}{
	\begin{tabular}{|l|l|l|}
		\hline
	Notation & Description & Location defined \\\hline
$\alpha_{a,b}$ & quantity computed from the number of edges of a quiver &
Equation \eqref{alphaab}
\\ \hline
$\X(d)=R(d)/G(d)$ & stack of representations of a quiver $Q$ &
Subsection \ref{subsub221}
\\ \hline
$\pi_{X,d}\colon \X(d)\to X(d)$ & good moduli space map for $\X(d)$ &
Subsection \ref{subsub221}
\\ \hline
$W$, $\mathrm{Tr}\,W$ & potential and the regular function induced by it &
Equation \eqref{def:regularfunction}
\\ \hline
$T(d)\subset G(d)$ & maximal torus & Subsection \ref{subsec:theweightlattice}
\\ \hline
$M(d)$, $M(d)_\mathbb{R}$ & weight spaces & Subsection \ref{subsec:theweightlattice}
\\ \hline
$N(d)$ & cocharacter lattice & Subsection \ref{subsec:theweightlattice}
\\ \hline
$1_d$ & diagonal cocharacter & Subsection \ref{subsec:theweightlattice}
\\ \hline
$\chi$ & (dominant, integral) weight &
Subsection \ref{subsec:theweightlattice}
\\ \hline
$ST(d)$ & subtorus of determinant one elements &
Subsection \ref{subsec:theweightlattice}
\\ \hline
$ST(d)$ & subtorus of determinant one elements &
Subsection \ref{subsec:theweightlattice}
\\ \hline
$\langle\,,\,\rangle$ & pairing between cocharacters and characters &
Subsection \ref{subsec:theweightlattice}
\\ \hline
$\rho$ & half the sum of positive roots &
Subsection \ref{subsec:theweightlattice}
\\ \hline
$\tau_d$ & Weyl-invariant weight with sum of coefficients $1$ & Subsection \ref{subsec:theweightlattice}
\\ \hline
$\underline{e}\geq \underline{d}$ & comparison of partitions & Definition \ref{compa}
\\ \hline
$\mathcal{T}$ & tree of partitions & Definition \ref{tree}
\\ \hline
$\X^f(d)$ & stack of representations of the framed quiver $Q^f$ &
Subsection \ref{subsub224}
\\ \hline
$\lambda$ & cocharacter (in general antidominant) & 
Subsection \ref{subsub225}
\\ \hline
$p_\lambda, q_\lambda$ & maps used to define the Hall product & 
Subsection \ref{subsub225}
\\ \hline
$\mathscr{Y}(d)=\overline{R}(d)/G(d)$ & stack of representations of the doubled quiver $Q^{\circ,d}$ &
Subsection \ref{subsec22}
\\ \hline
$\pi_{Y,d}\colon\mathscr{Y}(d)\to Y(d)$ & good moduli space map of $\mathscr{Y}(d)$ &
Subsection \ref{subsec22}
\\ \hline
$\mathscr{P}(d)$ & stack of representations of the preprojective algebra of $Q^{\circ}$ &
Subsection \ref{subsec22}
\\ \hline
$\pi_{P,d}\colon\mathscr{P}(d)^{\mathrm{cl}}\to P(d)$ & good moduli space map of $\mathscr{P}(d)^{\mathrm{cl}}$ &
Subsection \ref{subsec22}
\\ \hline
$(Q,W)$ & tripled quiver with potential & 
Subsection \ref{subsub:triple}
\\ \hline
$j$  & inclusion of a Koszul stack in a smooth stack & Equation \eqref{jeta}
\\ \hline
$\eta$  & projection from the total space of a vector bundle & Equation \eqref{jeta}
\\ \hline
$\Theta$  & Koszul equivalence & Equation \eqref{equiv:theta}
\\ \hline
$\textbf{W}(d)$ & polytope used to define quasi-BPS categories
& Equation \eqref{polyt:W} 
\\ \hline
$\textbf{V}(d)$ & polytope
& Equation \eqref{polyt:W} 
\\ \hline
$\delta_d$  & Weyl-invariant real weight & Definition \ref{def:defmdw}
\\ \hline
$n_\lambda$  & width of magic categories & Equation \eqref{nlambdadef}
\\ \hline
$\mathbb{M}(d; \delta_d), \mathbb{M}(d)_v$ & magic categories
& Equations \eqref{defmdw}
\\ \hline
$\mathbb{S}(d; \delta_d),  \mathbb{S}^{\mathrm{gr}}(d; \delta_d),\mathbb{S}(d)_v$ &  quasi-BPS categories for a quiver with potential
& Equations \eqref{defsdw}, \eqref{defsdwgr} 
\\ \hline
$\mathbb{T}(d; \delta_d), \mathbb{T}(d)_v$ & preprojective quasi-BPS categories 
& Equation \eqref{def:qbps:double} 
\\ \hline
$\langle\,,\,\rangle$ & pairing &
Equation \ref{newpairing}
\\ \hline
$\lambda_1,\ldots,\lambda_m$ & cocharacters associated to codimension one faces of $\textbf{W}(d)$ &
Equation \eqref{lambdacha}
\\ \hline
$\mathrm{wt}(A), \mathrm{wt}^{\mathrm{max}}(A)$ & weight set of a complex $A$ &
Equation \eqref{wtsetA}
\\ \hline
$\mathbb{M}^\ell(d;\delta_d)$ & magic category for a linearization $\ell$ & Equation \eqref{BPS:Sl} 
\\ \hline
$\mu_1,\ldots,\mu_N$ & cocharacters associated to Kempf-Ness loci & Equation \eqref{mucha}
\\ \hline
$\mathbb{S}^\ell(d;\delta_d), \mathbb{T}^\ell(d; \delta_d)$ & quasi-BPS categories for a linearization $\ell$ & Equations \eqref{defuqias}, \eqref{def:int}
\\ \hline
$\mathrm{forg}$ & forget-the-grading functor & Equation \eqref{def:forg}
\\ \hline
$\theta_i$ & weights associated to a partition of $d$ &
Equations \eqref{def:deltai}, \eqref{defthetai}
\\ \hline
$L^d_{\delta_d}$ & set of partitions &
Subsection \ref{subsub421} 
\\ \hline
$T^d_w$ & set of partitions & Equation \eqref{varphi}
\\ \hline
$Q^\gimel, \X^\gimel(d)$ etc. & auxiliary quiver, its space of representations etc. &
Subsection \ref{symmquivers}
\\ \hline
$\mathscr{P}(d)^{\mathrm{red}}$ & reduced preprojective stack &
Equation \eqref{def:Pd}
\\ \hline
$\X_0(d)$ & reduced stack of representations of the tripled quiver &
Subsection \ref{subsec52}
\\ \hline
$\mathrm{Tr}\,W_0$ & function on $\X_0(d)$ &
Subsection \ref{subsec52}
\\ \hline
$\pi_C, p_C$ & maps from the critical locus of $\mathrm{Tr}\,W_0$ &
Diagram \eqref{dia:crit}
\\ \hline
$\mathscr{N}_{\mathrm{nil}}$ & substack where a linear map is nilpotent &
Equation \ref{def:Nnil}
\\ \hline
$\mathbb{S}^{\mathrm{gr}}(d)^{\mathrm{red}}_v$, $\mathbb{T}(d)^{\mathrm{red}}_v$ & quasi-BPS categories for reduced stacks &
Equations \eqref{def:quasiBPSred}, \eqref{def:quasiBPSredgr}
\\ \hline
$\alpha_{Q^\circ}$ & quantity computed from the number of edges of a quiver &
Equation \eqref{subsec:relproper}
\\ \hline
	\end{tabular}
}
	\vspace{.5cm}
	 \caption{Notation used in the paper}
	 \label{table:notation}
\end{figure}

\section{Preliminaries}\label{section:prelim}
\subsection{Matrix factorizations}\label{subsection:matrixfactorizations}
We briefly review the definition of categories of matrix factorizations. For more details, see \cite[Subsection 2.6]{PTzero}.

Consider a smooth quotient stack $\X=X/G$, where $G$ is an algebraic group acting on a smooth affine scheme $X$, with a regular function $f\colon\X\to\mathbb{C}$. Consider the category of matrix factorizations by 
\[\mathrm{MF}(\X, f),\] 
whose objects are tuples 
\begin{align}\label{obj:AB}
(\alpha \colon A \leftrightarrows B \colon \beta), \ 
\alpha \circ \beta=\cdot f, \ \beta \circ \alpha=\cdot f,
\end{align}
where $A, B \in \Coh(\X)$. 
If $\mathbb{M}\subset D^b(\X)$ is a subcategory, let 
\begin{align}\label{sub:MMF}\mathrm{MF}(\mathbb{M}, f)\subset \mathrm{MF}(\X, f)
\end{align}
the subcategory consisting of totalizations of tuples (\ref{obj:AB}) 
with $A, B \in \mathbb{M}$, 
see~\cite[Subsection~2.6]{PTzero} for the precise definition. 
If $\mathbb{M}$ is generated by
a set of vector bundles $\{\mathscr{V}_i\}_{i\in I}$
on $\X$, then (\ref{sub:MMF}) is generated by 
matrix factorizations whose 
factors are direct sums of vector bundles 
from $\{\mathscr{V}_i\}_{i\in I}$, 
see~\cite[Lemma~2.3]{PTzero}. 

Given an action of $\mathbb{C}^*$ on $\X$ for which $f$ is of weight $2$, we also consider the category of graded matrix factorizations
$\mathrm{MF}^{\mathrm{gr}}(\X, f)$.
Its objects consist of tuples (\ref{obj:AB}) where 
$A, B$ are $\mathbb{C}^{\ast}$-equivariant
and $\alpha, \beta$ are of $\mathbb{C}^{\ast}$-weight one. 
For a subcategory $\mathbb{M}\subset D^b(\X)$, we define 
$\mathrm{MF}^{\mathrm{gr}}(\mathbb{M}, f)\subset \mathrm{MF}^{\mathrm{gr}}(\X, f)$
similarly to (\ref{sub:MMF}). 

Let $\mathscr{Z}\subset \X$ be a closed substack.
A matrix factorization $F$ in $\mathrm{MF}(\X, f)$ has support in $\mathscr{Z}$ if its restriction to $\mathrm{MF}(\X\setminus\mathscr{Z}, f)$ is zero. Every matrix factorization $F$ has support included in $\mathrm{Crit}(f)\subset \X$,
so for any open substack $\mathscr{U} \subset \X$ which contains 
$\mathrm{Crit}(f)$, the following restriction functor is an equivalence: 
\begin{align}\label{rest:MFU}
    \mathrm{MF}(\X, f) \stackrel{\sim}{\to} \mathrm{MF}(\mathscr{U}, f). 
\end{align}

We deduce semiorthogonal decompositions for a quiver with potential 
$(Q,W)$ from the case of zero potential, see for example \cite[Proposition 2.5]{PTzero}, \cite[Proposition 2.1]{P0}. We extensively use the Koszul equivalence, 
see Theorem~\ref{thm:Koszul}.

We consider either ungraded categories of matrix factorizations or graded categories which are Koszul equivalent to derived categories of bounded complexes of coherent sheaf on a quasi-smooth stack.
When considering the product of two categories of matrix factorizations, which is in the context of the Thom-Sebastiani theorem, we consider the product of dg-categories over $\mathbb{C}(\!(\beta)\!)$ for $\beta$ of homological degree $-2$ in the ungraded case, see \cite[Theorem 4.1.3]{Preygel}, and the product of dg-categories over $\mathbb{C}$ in the graded case, see \cite[Corollary 5.18]{MR3270588} (alternatively in the graded case, one can use the Koszul equivalence).

\subsection{Quivers, weights, and partitions}\label{subsec22two}
\subsubsection{Basic notions}\label{subsub221}
Let $Q=(I,E)$ be a quiver, i.e. a directed graph with set of vertices
$I$ and set of edges $E$. 
Let $d=(d^a)_{a\in I}\in\mathbb{N}^I$ be a dimension vector. Denote by 
\[\X(d)=R(d)/G(d)\]
the stack of representations of $Q$ of dimension $d$.
Here $R(d)$, $G(d)$ are given by 
\begin{align*}
    R(d)=\bigoplus_{(a\to b) \in E}\Hom(V^a, V^b), \ 
    G(d)=\prod_{a\in I}GL(V^a). 
\end{align*}
We say that $Q$ is \textit{symmetric} if for any $a, b \in I$, the number of arrows 
from $a$ to $b$ is the same as those from $b$ to $a$. In this case, 
$R(d)$ is a self-dual $G(d)$-representation. 
We have the good moduli space morphism (or GIT quotient)
\begin{align*}
    \pi_{X,d}=\pi_X \colon \X(d) \to X(d):=R(d)\ssslash G(d). 
\end{align*}

For a quiver $Q$, let $\mathbb{C}[Q]$ be its path algebra. 
A potential $W$ of a quiver $Q$ is an element 
\begin{align*}W \in \mathbb{C}[Q]/[\mathbb{C}[Q], \mathbb{C}[Q]].
\end{align*}
A pair 
$(Q, W)$ is called a \textit{quiver with potential}. 
Given a potential $W$, there is a regular function 
\begin{align}\label{def:regularfunction}
    \Tr W \colon \X(d) \to \mathbb{C}. 
\end{align}
By the property of the good moduli space, the function
$\Tr W$ 
factors through the good moduli space $\Tr W \colon \X(d) \xrightarrow{\pi_{X,d}} X(d) \to \mathbb{C}$. 

We will consider the derived category $D^b(\X(d))$ of coherent sheaves on 
$\X(d)$ and the category of matrix factorizations 
$\mathrm{MF}(\X(d), \Tr W)$. 
Since the diagonal torus $\mathbb{C}^{\ast} \subset T(d)$ acts on $R(d)$ trivially, 
there are orthogonal decompositions 
\begin{align}\label{orthogonaldecomposition}
    D^b(\X(d))=\bigoplus_{w\in \mathbb{Z}} D^b(\X(d))_w, \ 
    \mathrm{MF}(\X(d), \Tr W)=\bigoplus_{w\in \mathbb{Z}} \mathrm{MF}(\X(d), \Tr W)_w
\end{align}
where each summand corresponds to the diagonal $\mathbb{C}^{\ast}$-weight $w$-part. 
\subsubsection{The weight lattice}\label{subsec:theweightlattice}
We fix a maximal torus $T(d)$ of $G(d)$. Let $M(d)$ be the weight lattice of $T(d)$. 
For $a\in I$ and for $d^a\in\mathbb{N}$, denote by $\beta^a_i$ for $1\leq i\leq d^a$ the weights of the standard representation of $T(d^a)$.
We have 
\begin{align*}
    M(d)=\bigoplus_{a \in I} \bigoplus_{1\leq i\leq d^a} \mathbb{Z} \beta^a_i. 
\end{align*}
By abuse of notation, for a $T(d)$-representation $U$
we also denote by $U \in M(d)$ the sum of weights 
in $U$, equivalently, the 
class of the character $\det U$. 
A weight \[\chi=\sum_{a\in I}\sum_{1\leq i\leq d^a} x_i^a \beta^a_i\] is 
\textit{dominant (or antidominant)}
if $x_i^a \leq x_{i+1}^a$ (or $x_i^a \geq x_{i+1}^a$) for all $a\in I$ and $1\leq i\leq d^a$. 
For a dominant weight $\chi$, we denote by $\Gamma_{G(d)}(\chi)$ the 
irreducible representation of $G(d)$ with highest weight $\chi$. 
The dominant or antidominant cocharacters are 
also defined for elements of 
the cocharacter lattice $N(d)=\Hom(M(d), \mathbb{Z})$. 
We denote by $1_d \in N(d)$ the diagonal cocharacter. 

We denote by $M(d)_0\subset M(d)$ the hyperplane of weights with sum of coefficients equal to zero, 
and set 
\begin{align*}
M(d)_{\mathbb{R}}:=M(d)\otimes_{\mathbb{Z}}\mathbb{R}, \ M(d)_{0,\mathbb{R}}:=M(d)_0\otimes_{\mathbb{Z}}\mathbb{R}.
\end{align*}
Note that $M(d)_0$ is the weight lattice of the 
subtorus $ST(d) \subset T(d)$ defined by 
     \begin{align*}
    ST(d):=\text{ker}(\det\colon T(d)\to \mathbb{C}^*), 
    \ (g^a)_{a \in I} \stackrel{\det}{\mapsto} \prod_{a \in I} \det (g^a). 
    \end{align*}
We denote by $\langle \,,\,\rangle\colon N(d)\times M(d)\to \mathbb{Z}$ the natural pairing, and we use the same notation for its real version. If $\lambda$ is a cocharacter of $T(d)$ and $V$ is a $T(d)$-representation, we may abuse notation and write
\[\langle \lambda, V\rangle=\langle \lambda, \det(V)\rangle\]
to ease notation.

We denote by $W_d$ the Weyl group of $G(d)$ 
and $M(d)^{W_d} \subset M(d)$ the Weyl-invariant 
subset. 
For $d=(d^a)_{a\in I}$,
let $\underline{d}=\sum_{a\in I}d^a$ be its total length. Define the Weyl-invariant weights in $M(d)_{\mathbb{R}}$: 
\begin{align*}
    \sigma_{d}:=\sum_{a \in I} \sum_{1\leq i \leq d^a}\beta^a_i, \ 
    \tau_d:=\frac{\sigma_d}{\dd}.
\end{align*}
We denote by $\mathfrak{g}(d)$ the Lie algebra of $G(d)$, 
and by $\rho$ half the sum of positive roots of $\mathfrak{g}(d)$: 
\begin{align*}
    \rho=\frac{1}{2} \sum_{a \in I}\sum_{1\leq i<j \leq d^a} (\beta_j^a -\beta_i^b). 
\end{align*}

\subsubsection{}\label{id}
 Let $(d_i)_{i=1}^k$ be a partition of $d$. There is an identification \[\bigoplus_{i=1}^k M(d_i)\cong M(d),\] where 
$\beta^a_1, \ldots, \beta^a_{d_1}$ correspond
to the 
the weights
of standard representation of $GL(d^a_1)$ in $M(d^a_1)$ for $a\in I$, etc.


\begin{defn}\label{compa}
Let $\underline{e}=(e_i)_{i=1}^l$ and $\dd=(d_i)_{i=1}^k$ be two partitions of $d\in \mathbb{N}^I$. We write $\ee\geq\dd$
if there exist integers \[a_0=0< a_1<\cdots<a_{k-1}\leq a_k=l\] such that for any $0\leq j\leq k-1$, we have
\[\sum_{i=a_{j}+1}^{a_{j+1}} e_i=d_{j+1}.\]
\end{defn}


We next define a tree which is useful in decomposing dominant weights of $M(d)_\mathbb{R}$.

\begin{defn}\label{tree}
We define 
$\mathcal{T}$ to be the unique (oriented) tree such that: 
\begin{enumerate}
\item each vertex is indexed by a partition $(d_1, \ldots, d_k)$ of 
	some $d \in \mathbb{N}^I$, 
 \item for each $d \in \mathbb{N}^I$, there is a unique vertex 
	indexed by the partition $(d)$ of size one, 
  \item if $\bullet$ is a vertex indexed by $(d_1, \ldots, d_k)$ 
	and $d_m=(e_1, \ldots, e_s)$ is a partition of $d_m$ for some $1\leq m \leq k$, then there is a unique vertex $\bullet'$ indexed by 
	$(d_1, \ldots, d_{m-1}, e_1, \ldots, e_s, d_{m+1}, \ldots, d_k)$ and
	with an edge from $\bullet$ to $\bullet'$, and 
 \item all edges in $\mathcal{T}$ are as in (3).
	\end{enumerate}
 \end{defn}
Note that  
 each partition $(d_1, \ldots, d_k)$ of 
	some $d \in \mathbb{N}^I$ 
 gives an index of some (not necessary unique) vertex.  
A subtree $T \subset \mathcal{T}$ is called a \textit{path of partitions} 
if it is connected, contains a vertex indexed by $(d)$ for some 
$d \in \mathbb{N}^I$ and 
a unique end vertex $\bullet$. 
The partition $(d_1, \ldots, d_k)$ at the end vertex $\bullet$
is called the associated partition of $T$. 
We define the Levi group associated to $T$ to be 
\begin{align*}
	L(T):=\times_{i=1}^k G(d_i). 
	\end{align*}


\subsubsection{Framed quivers}\label{subsub224}
Consider a quiver $Q=(I, E)$. Define the \textit{framed quiver}: 
\begin{align*}
    Q^f=(I^f, E^f)
\end{align*}
with set of vertices $I^f=I\sqcup\{\infty\}$ and set of edges $E^f=E\sqcup \{e_a\mid a\in I\}$,
where $e_a$ is an edge from $\infty$ to $a\in I$. 
Let $V(d)=\bigoplus_{a\in I}V^a$, where $V^a$ is a $\mathbb{C}$-vector space of dimension $d^a$. 
Denote by \[R^f(d)=R(d)\oplus V(d)\] the affine space of representations of $Q^f$ of dimension $d$ and consider the moduli stack of framed representations
\[\X^f(d):=R^f(d)/G(d).\]
We consider GIT stability on $Q^f$
given by the character $\sigma_{\dd}$. 
It coincides with the 
King stability condition on $Q^f$ such that the (semi)stable representations of dimension $(1,d)$ are the representations of $Q^f$ with no subrepresentations of dimension $(1,d')$ for $d'$ different from $d$, 
see~\cite[Lemma~5.1.9]{T}. 
Consider the smooth variety obtained as a GIT quotient:
\[\X^f(d)^{\text{ss}}:=R^f(d)^{\text{ss}}/G(d).\]

\subsubsection{The categorical Hall product}\label{subsub225}
For a cocharacter $\lambda \colon \mathbb{C}^{\ast} \to T(d)$ and a 
$T(d)$-representation $V$, let $V^{\lambda\geq 0}\subset V$ be the subspace generated by weights $\beta$ such that $\langle \lambda, \beta\rangle\geq 0$, and let $V^\lambda\subset V$ be the subspace generated by weights $\beta$ such that $\langle \lambda, \beta\rangle=0$. 
We denote by $G(d)^{\lambda} \subset G(d)^{\lambda \geq 0}$
the associated Levi and parabolic subgroup of $G(d)$. 
If $V$ is a $G(d)$-representation, 
consider the quotient stack $\X=V/G(d)$ and set:
\begin{align*}
    \X^{\lambda \geq 0}=V^{\lambda \geq 0}/G(d)^{\lambda \geq 0}, \ 
    \X^{\lambda}=V^{\lambda}/G(d)^{\lambda}. 
\end{align*}
The 
projection $V^{\lambda \geq 0} \twoheadrightarrow V^{\lambda}$ and the 
inclusion $V^{\lambda \geq 0} \hookrightarrow V$ 
induce maps 
\begin{align}\label{dia:att}
\X^{\lambda} \leftarrow \X^{\lambda \geq 0} \to \X. 
\end{align}
We apply the above construction for $V=R(d)$ to obtain maps:
\[\X(d)^\lambda=\times_{i=1}^k\X(d_i)\xleftarrow{q_\lambda}\X(d)^{\lambda\geq 0}\xrightarrow{p_\lambda}\X(d).\]
Suppose that $\lambda$ is antidominant 
with associated partition $(d_i)_{i=1}^k$ of $d\in\mathbb{N}^I$, meaning that 
\[\X(d)^\lambda=\times_{i=1}^k\X(d_i).\]
The multiplication for the categorical Hall algebra of $(Q, 0)$ (or of $(Q,W)$ for a potential $W$ of $Q$ and possibly a grading) is defined by the functors \cite{P0}, where $\bullet\in\{\emptyset, \mathrm{gr}\}$:
\begin{align}\label{def:Hallprod}
&m_\lambda:=p_{\lambda*}q_\lambda^{\ast} \colon 
\boxtimes_{i=1}^k D^b(\X(d_i)) \to D^b(\X(d)), \\
\notag &m_\lambda:=p_{\lambda*}q_\lambda^{\ast} \colon 
\boxtimes_{i=1}^k \mathrm{MF}^\bullet(\X(d_i), \Tr W) \to \mathrm{MF}^\bullet(\X(d), \Tr W). 
\end{align}

\subsubsection{Doubled quiver}\label{subsec22}
Let $Q^\circ=(I, E^\circ)$ be a quiver.
Its \textit{doubled quiver} is the quiver
\begin{align*}
Q^{\circ, d}=(I, E^{\circ, d})
\end{align*}
with set of edges $E^{\circ, d}=\{e, \overline{e} \mid e \in E^{\circ}\}$,
where $\overline{e}$ is is the edge with the opposite orientation of $e\in E^\circ$. 
Consider the relation $\mathscr{I}$ of $\mathbb{C}[Q^{\circ, d}]$:
\begin{align}\label{def:relation}
    \mathscr{I}:=\sum_{e \in E^{\circ}}[e, \overline{e}] \in \mathbb{C}[Q^{\circ, d}]. 
\end{align}
For $d\in \mathbb{N}^I$, 
consider the stack of representations of the quiver $Q^{\circ, d}$ of dimension $d$:
\[\mathscr{Y}(d):=\overline{R}(d)/G(d):=T^*R^\circ(d)/G(d)\] with good moduli space map
\[\pi_{Y,d}=\pi_Y\colon \mathscr{Y}(d)\to Y(d):=\overline{R}(d)\ssslash G(d).\]
Note that
\begin{align*}
\overline{R}(d):=T^*R^\circ(d)=\bigoplus_{(a\to b) \in E^{\circ}}\Hom(V^a, V^b) \oplus 
\Hom(V^b, V^a). 
\end{align*}
For $x \in T^*R^\circ(d)$ and an edge $(a \to b) \in E^{\circ}$, 
consider the components 
$x(e) \in \Hom(V^a, V^b)$ and $x(\overline{e}) \in \Hom(V^b, V^a)$. 
The relation (\ref{def:relation}) determines the moment map 
\begin{align}\label{def:moment}
\mu \colon T^*R^\circ(d) \to \mathfrak{g}(d), \ 
x \mapsto \sum_{e \in E^{\circ}}[x(e), x(\overline{e})]. 
\end{align}
Let $\mu^{-1}(0)$ be the derived zero locus of $\mu$.
Define the stack of representations of $Q^{\circ, d}$ with relation $\mathscr{I}$, alternatively of the preprojective algebra $\Pi_{Q^\circ}:=\mathbb{C}[Q^{\circ, d}]/(\mathscr{I})$:
\begin{align}\label{def:Pj}
   j \colon \mathscr{P}(d):=\mu^{-1}(0)/G(d) \hookrightarrow 
    \mathscr{Y}(d).
\end{align}
The stack $\mathscr{P}(d)^{\mathrm{cl}}$ has a good moduli space map:
\begin{equation}\label{gmdpd}
    \pi_{P,d}=\pi_P\colon \mathscr{P}(d)^{\mathrm{cl}}\to P(d):=\mu^{-1}(0)^{\rm{cl}}\ssslash G(d).
\end{equation}

Let $\lambda$ be an antidominant cocharacter of $T(d)$
corresponding to 
the decomposition $(d_i)_{i=1}^k$ of $d$. 
Similarly to (\ref{def:Hallprod}), there is a categorical 
Hall product \cite{PoSa, VaVa}, see also (\ref{cathall:P}):
\begin{align}\label{cathall:Pd}
m_{\lambda} \colon \boxtimes_{i=1}^k 
D^b(\mathscr{P}(d_i)) \to D^b(\mathscr{P}). 
\end{align}

\begin{remark}
As mentioned in the introduction, moduli stacks of representations of preprojective algebras are interesting for at least two reasons: they describe locally the moduli stack of (Bridgeland semistable) sheaves on a Calabi-Yau surface, and their K-theory (or Borel-Moore homology) can be used to construct positive halves of quantum affine algebras (or of Yangians).
\end{remark}


\subsubsection{Tripled quivers with potential}\label{subsub:triple}
Consider a quiver $Q^{\circ}=(I, E^{\circ})$. Its \textit{tripled quiver}
\begin{align*}
    Q=(I, E)
\end{align*} has set of edges $E=E^{\circ, d}\sqcup\{\omega_a \mid a\in I\}$,
where $\omega_a$ is a loop at the vertex $a \in I$. 
The tripled potential $W$ (which is a potential of $Q$) is defined as follows:
\begin{equation}\label{poten:W}
W:=\left(\sum_{a\in I}\omega_a\right)
\left(\sum_{e\in E^\circ}[e, \overline{e}]\right)\in \mathbb{C}[Q].
\end{equation}
The quiver with potential $(Q, W)$ constructed above 
is called the \textit{tripled quiver with potential} associated to $Q^{\circ}$. 

For $d \in \mathbb{N}^I$, let $\X(d)$ be the stack of representations of $Q$.
It is given by 
\begin{align}\label{stack:triple}
    \X(d)=\left(T^*R^\circ(d) \oplus \mathfrak{g}(d)\right)/G(d)=\left(\overline{R}(d) \oplus \mathfrak{g}(d)\right)/G(d). 
\end{align}
Recall the function $\Tr W$ on $\X(d)$ 
from (\ref{def:regularfunction}). 
The critical locus 
\begin{align*}
    \mathrm{Crit}(\Tr W) \subset \X(d)
\end{align*}
is the moduli stack of $(Q, W)$-representations, alternatively of the Jacobi algebra $\mathbb{C}[Q]/\mathscr{J}$, where $\mathscr{J}$ is the two-sided ideal generated by the partial derivatives $\partial W/\partial e$ for all $e\in E$. 

Consider the grading on $\X(d)$ which is of weight $0$ on $\overline{R}(d)$ and of weight $2$
on $\mathfrak{g}(d)$. 
The Koszul equivalence (which we recall later, in Theorem~\ref{thm:Koszul})
says that there is an equivalence:
\begin{equation}\label{Kosz}
\Theta \colon 
D^b\left(\mathscr{P}(d)\right)\stackrel{\sim}{\to} \mathrm{MF}^{\mathrm{gr}}\left(\X(d), \mathrm{Tr}\,W\right).
\end{equation}
\begin{remark}
Tripled quivers with potential are interesting for at least two reasons: they model the local structure of moduli stacks of semistable sheaves on an arbitrary  CY3, and they can be used, in conjunction with dimensional reduction techniques (such as the Koszul equivalence \eqref{Kosz}), to study (moduli of representations of) preprojective algebras.
\end{remark}



\subsection{The Koszul equivalence}\label{subsec:Koszul}
Let $Y$ be an affine smooth scheme with an action of a reductive group $G$.
Consider the quotient stack $\mathscr{Y}=Y/G$. Let $\mathscr{V} \to \mathscr{Y}$ be
a vector bundle and let $s\in \Gamma(\mathscr{Y}, \mathscr{V})$. 
Let $\mathscr{P}=s^{-1}(0)$ be the derived 
zero locus of $s$, 
so its structure complex is the 
Koszul complex
\[\mathcal{O}_{\mathscr{P}}:=\left(\mathrm{Sym}_{\mathcal{O}_{\mathscr{Y}}}(\mathscr{V}^{\vee}[1]), d_{\mathscr{P}}\right)\]
with differential $d_{\mathscr{P}}$ induced by $s$. 
Let 
$\X=\mathrm{Tot}_\X\left(\mathscr{V}^{\vee}\right)$ and define 
the function $f$ by
\begin{align*}
f \colon \mathscr{X} \to \mathbb{C}, \ f(y, v)=\langle s(y), v \rangle
\end{align*}
for $y \in \mathscr{Y}$ and $v \in \mathscr{V}^{\vee}|_{y}$.
There are maps: 
\begin{align}\label{jeta}
    \mathscr{P} \stackrel{j}{\hookrightarrow} \mathscr{Y} \stackrel{\eta}{\leftarrow} 
    \mathscr{X} \stackrel{f}{\to} \mathbb{C}, 
\end{align}
where $j$ is the natural inclusion and $\eta$ is the natural projection. 
The following is the Koszul equivalence: 
\begin{thm}\emph{(\cite{I, Hirano, T})}\label{thm:Koszul}
There are equivalences:
\begin{align*}
    \Theta \colon D^b(\mathscr{P}) \stackrel{\sim}{\to} \mathrm{MF}^{\rm{gr}}(\mathscr{X}, f), \ 
    \Theta \colon \Ind D^b(\mathscr{P}) \stackrel{\sim}{\to} \mathrm{MF}^{\rm{gr}}_{\rm{qc}}(\mathscr{X}, f).
\end{align*}
The grading on the right hand side has weight zero on $\Y$ and weight $2$ on
the fibers of $\eta\colon \mathscr{X} \to \mathscr{Y}$. 
    \end{thm}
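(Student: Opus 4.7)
The plan is to realize $\Theta$ via a Koszul-type construction and then verify it gives an equivalence, either by a local trivialization argument or by reducing to the singular derived category via (a graded equivariant version of) Orlov's theorem.

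First, I would construct the functor explicitly. Given $\mathscr{F} \in D^b(\mathscr{P})$, represent $\mathscr{F}$ as a bounded complex of locally free $\mathcal{O}_{\mathscr{P}}$-modules on the derived stack $\mathscr{P}$ with its dg structure sheaf $\mathcal{O}_{\mathscr{P}} = (\mathrm{Sym}_{\mathcal{O}_{\mathscr{Y}}}(\mathscr{V}^{\vee}[1]), d_{\mathscr{P}})$. Then the pushforward $j_{\ast}\mathscr{F}$ is modelled on $\mathscr{Y}$ by $\mathscr{F} \otimes_{\mathcal{O}_{\mathscr{P}}} \mathrm{Sym}(\mathscr{V}^{\vee})$ equipped with the Koszul differential contracting against $s$. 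Pulling back along $\eta \colon \mathscr{X} \to \mathscr{Y}$ gives a complex of $\eta^{\ast}\mathrm{Sym}(\mathscr{V})$-modules; combining the pulled-back Koszul differential with multiplication by the tautological section of $\eta^{\ast}\mathscr{V}$ produces a differential whose square is exactly $\Tr\langle s(y), v\rangle = f$. Putting the fiber coordinate $v$ in $\mathbb{C}^{\ast}$-weight $2$ matches the grading convention and makes this into an object of $\mathrm{MF}^{\mathrm{gr}}(\mathscr{X}, f)$; this is $\Theta(\mathscr{F})$.

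Next I would prove $\Theta$ is an equivalence. The cleanest route is via the singular derived category: by the graded equivariant version of Orlov's theorem, there is an equivalence $\mathrm{MF}^{\mathrm{gr}}(\mathscr{X}, f) \simeq D^{\mathrm{gr},\mathrm{sing}}(\mathscr{X}_0)$ where $\mathscr{X}_0 = f^{-1}(0) \subset \mathscr{X}$. One then observes that $\mathscr{X}_0$ is, as a derived stack, a linear fibration over $\mathscr{P}$ (its fiber over $\mathscr{P}$ is the hyperplane in $\mathscr{V}^{\vee}$ cut out by $s$, which after accounting for the derived vanishing $\mathscr{P} = s^{-1}(0)$ becomes all of $\mathscr{V}^{\vee}|_{\mathscr{P}}$), and this fibration induces an equivalence on $\mathbb{C}^{\ast}$-equivariant singularity categories. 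Composing with Orlov's equivalence yields $\Theta$. A parallel, more hands-on approach is to trivialize $\mathscr{V}$ locally on $\mathscr{Y}$ so that $\mathscr{X} = \mathscr{Y} \times \mathbb{A}^n$ and $f$ is linear in the fiber coordinates; there the equivalence reduces to the classical Koszul duality between $\mathrm{Sym}$ and $\Lambda$, in its curved form to account for $d_{\mathscr{P}}$. Descent along a smooth cover of $Y/G$ (with reductive $G$) globalizes this.

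The ind-completed equivalence follows formally by passing to Ind on both sides, using that $\mathrm{MF}^{\mathrm{gr}}_{\mathrm{qc}}(\mathscr{X}, f)$ is compactly generated with compact objects $\mathrm{MF}^{\mathrm{gr}}(\mathscr{X}, f)$, and similarly for $\Ind D^b(\mathscr{P})$. The main obstacle I anticipate is bookkeeping: verifying that on the $\mathscr{X}$-side the sum of the pulled-back Koszul differential and the tautological-section differential squares \emph{exactly} to $f$ (and not $f$ plus curvature from higher terms) requires careful sign conventions; and keeping track of the derived structure of $\mathscr{P}$ (genuinely non-classical when $s$ fails to be regular) compatibly with dg-modules throughout the descent is the most delicate point. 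Given that the result is established in the cited references \cite{I, Hirano, T}, one can also simply invoke these, noting that our setup (smooth affine $Y$, reductive $G$, vector bundle $\mathscr{V}$) falls into their framework.
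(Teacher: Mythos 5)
Your construction of $\Theta$ (pull back the Koszul resolution of $j_{\ast}\mathscr{F}$ along $\eta$ and add the tautological-section differential, i.e.\ tensoring with the Koszul factorization whose differential is $d_{\mathscr{P}}\otimes 1+\kappa$) is exactly the explicit form the paper records, and for the equivalence itself the paper gives no argument but simply invokes Isik, Hirano and Toda, which is also your fallback; your sketch via the graded Orlov theorem and the singularity category of $f^{-1}(0)$ is precisely the route of those references. So the proposal is correct and takes essentially the same approach as the paper (your "linear fibration" description of $f^{-1}(0)$ is imprecise at the key step, but since the result is quoted rather than proved here, nothing more is required).
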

    The equivalence $\Theta$ is given by the functor
  \begin{align}\label{equiv:theta}
  \Theta(-)=(-) \otimes_{\mathcal{O}_{\mathscr{P}}}\mathcal{K},
  \end{align}
  where $\mathcal{K}$ is the Koszul factorization $\mathcal{K}:=\left(\mathcal{O}_{\mathscr{P}} \otimes_{\mathcal{O}_{\mathscr{Y}}}\mathcal{O}_{\mathscr{X}}, d_\mathcal{K}\right)$
with $d_{\mathcal{K}}=d_{\mathscr{P}} \otimes 1 +\kappa$, 
where $\kappa \in \mathcal{V}^{\vee} \otimes \mathcal{V}$
corresponds to $\id \in \Hom(\mathcal{V}, \mathcal{V})$, see~\cite[Theorem~2.3.3]{T}. 
The following lemma is easily proved 
from the above description of $\Theta$. 
\begin{lemma}\label{lem:genJ}
    Let $\{V_j\}_{j\in J}$ be a set of
    $G$-representations
    and let $\mathbb{S} \subset \mathrm{MF}^{\rm{gr}}(\X, f)$
    be the subcategory generated by 
    matrix factorizations whose 
    factors are direct sums of vector bundles $\mathcal{O}_{\X} \otimes V_j$. 
    Then an object $\mathcal{E} \in D^b(\mathscr{P})$
    satisfies $\Theta(\mathcal{E}) \in \mathbb{S}$
    if and only if $j_{\ast}\mathcal{E} \in D^b(\mathscr{Y})$
    is generated by $\mathcal{O}_{\mathscr{Y}} \otimes V_j$ for $j\in J$. 
\end{lemma}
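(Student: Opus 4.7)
The plan is to leverage the explicit formula $\Theta(-) = (-) \otimes_{\mathcal{O}_{\mathscr{P}}} \mathcal{K}$ with $\mathcal{K} = \mathcal{O}_{\mathscr{P}} \otimes_{\mathcal{O}_{\mathscr{Y}}} \mathcal{O}_{\mathscr{X}}$. A direct tensor manipulation gives the key identity
\[\Theta(\mathcal{E}) = \mathcal{E} \otimes_{\mathcal{O}_{\mathscr{P}}} \bigl(\mathcal{O}_{\mathscr{P}} \otimes_{\mathcal{O}_{\mathscr{Y}}} \mathcal{O}_{\mathscr{X}}\bigr) = j_{\ast}\mathcal{E} \otimes_{\mathcal{O}_{\mathscr{Y}}} \mathcal{O}_{\mathscr{X}} = \eta^{\ast}j_{\ast}\mathcal{E}\]
as underlying graded $\mathcal{O}_{\mathscr{X}}$-modules, with the matrix factorization differential $d_{\mathcal{K}} = d_{\mathscr{P}} \otimes 1 + \kappa$ combining the Koszul differential encoding the $\mathcal{O}_{\mathscr{P}}$-module structure on $j_{\ast}\mathcal{E}$ and the tautological section $\kappa$ of $\eta^{\ast}\mathcal{V}$.

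For the \emph{if} direction, assume $j_{\ast}\mathcal{E}$ lies in the triangulated subcategory $\langle \mathcal{O}_{\mathscr{Y}} \otimes V_{j} : j\in J\rangle \subset D^{b}(\mathscr{Y})$. Choose a finite complex $\mathcal{F}^{\bullet} \to j_{\ast}\mathcal{E}$ whose terms are direct sums of bundles $\mathcal{O}_{\mathscr{Y}} \otimes V_{j}$. Pulling back by $\eta^{\ast}$ gives a complex of $\mathcal{O}_{\mathscr{X}}$-modules with terms direct sums of $\mathcal{O}_{\mathscr{X}} \otimes V_{j}$. Totalizing with the differential $d_{\mathscr{P}} \otimes 1 + \kappa$ yields a matrix factorization quasi-isomorphic to $\Theta(\mathcal{E})$ whose two factors are direct sums of $\mathcal{O}_{\mathscr{X}} \otimes V_{j}$, and hence $\Theta(\mathcal{E}) \in \mathbb{S}$.

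For the \emph{only if} direction, use the zero-section inclusion $i \colon \mathscr{Y} \hookrightarrow \mathscr{X}$. Since $f\vert_{\mathscr{Y}} = 0$ and $\mathbb{C}^{\ast}$ acts trivially on $\mathscr{Y}$, restriction along $i$ defines an exact functor
\[i^{\ast} \colon \mathrm{MF}^{\mathrm{gr}}(\mathscr{X}, f) \to \mathrm{MF}^{\mathrm{gr}}(\mathscr{Y}, 0) \cong D^{b}(\mathscr{Y}),\]
where the last identification unfolds the $\mathbb{C}^{\ast}$-grading into a cohomological grading. Applied to $\Theta(\mathcal{E})$, the term $\kappa$ vanishes on the zero section (as it is built from fiber coordinates), leaving only $d_{\mathscr{P}} \otimes 1$; this identifies $i^{\ast}\Theta(\mathcal{E}) \cong j_{\ast}\mathcal{E}$ in $D^{b}(\mathscr{Y})$. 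Since $i^{\ast}(\mathcal{O}_{\mathscr{X}} \otimes V_{j}) = \mathcal{O}_{\mathscr{Y}} \otimes V_{j}$, the functor $i^{\ast}$ sends $\mathbb{S}$ into $\langle \mathcal{O}_{\mathscr{Y}} \otimes V_{j}\rangle$, and the assumption $\Theta(\mathcal{E}) \in \mathbb{S}$ forces $j_{\ast}\mathcal{E} \in \langle \mathcal{O}_{\mathscr{Y}} \otimes V_{j}\rangle$.

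The main point that requires care is the identification $i^{\ast}\Theta \cong j_{\ast}$: one has to track how the $\mathbb{C}^{\ast}$-weight grading unfolds the $\mathbb{Z}/2$-graded restriction to match the $\mathbb{Z}$-grading of $j_{\ast}\mathcal{E}$ as a complex of $\mathcal{O}_{\mathscr{Y}}$-modules, and confirm that the surviving piece of the matrix factorization differential reproduces the Koszul differential coming from the section $s$.
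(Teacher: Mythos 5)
Your overall strategy matches the standard one behind the Koszul equivalence (and presumably the cited argument in \cite[Lemma~4.5]{PTzero}): exploit the explicit kernel description $\Theta(-) = (-) \otimes_{\mathcal{O}_{\mathscr{P}}}\mathcal{K}$ to identify the underlying graded module of $\Theta(\mathcal{E})$ with $\eta^{\ast}j_{\ast}\mathcal{E}$, and use the zero-section restriction $i^{\ast}$ for the converse. The ``only if'' direction is correct and cleanly phrased: $\kappa$ vanishes on the zero section and $i^{\ast}$ recovers $j_{\ast}\mathcal{E}$, so $i^{\ast}$ carries $\mathbb{S}$ into $\langle \mathcal{O}_{\mathscr{Y}}\otimes V_j\rangle$.

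The ``if'' direction as written, however, has a real gap. You pick a quasi-isomorphism $\mathcal{F}^{\bullet} \to j_{\ast}\mathcal{E}$ in $D^b(\mathscr{Y})$ with terms that are direct sums of $\mathcal{O}_{\mathscr{Y}}\otimes V_j$, pull back by $\eta^{\ast}$, and then ``totalize with the differential $d_{\mathscr{P}}\otimes 1 + \kappa$.'' But the operator $\kappa$ is not intrinsic to the complex $\mathcal{F}^{\bullet}$ of $\mathcal{O}_{\mathscr{Y}}$-modules; it is built from the $\mathcal{O}_{\mathscr{P}}$-module structure on $\mathcal{E}$, i.e.\ from the degree $-1$ action of $\mathscr{V}^{\vee}$. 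A quasi-isomorphism in $D^b(\mathscr{Y})$ does not automatically transport this structure, so the twisted differential is not defined on $\eta^{\ast}\mathcal{F}^{\bullet}$. What you actually need is a replacement of $\mathcal{E}$ \emph{itself} in $D^b(\mathscr{P})$ by a dg-$\mathcal{O}_{\mathscr{P}}$-module $\mathcal{E}'$ whose underlying $\mathcal{O}_{\mathscr{Y}}$-module $j_{\ast}\mathcal{E}'$ already has terms of the desired form. This is usually achieved by choosing a minimal model: since $Y$ is a smooth affine $G$-scheme and $G$ is reductive, one can pick a representative of $\mathcal{E}$ as a semi-free dg-$\mathcal{O}_{\mathscr{P}}$-module with minimal underlying complex of $G$-equivariant free $\mathcal{O}_Y$-modules, and then the hypothesis that $j_{\ast}\mathcal{E}$ lies in the thick subcategory $\langle \mathcal{O}_{\mathscr{Y}}\otimes V_j\rangle$ forces the $G$-types in each term to lie in the span of the $V_j$'s. (Implicitly this also uses that the $V_j$'s, in the paper's applications, are sums of irreducibles closed under taking $G$-subrepresentations, so that ``summand of direct sums of $\mathcal{O}_{\mathscr{Y}}\otimes V_j$'' stays in the same class.) Once such an $\mathcal{E}'$ is fixed, your totalization step is exactly right: $\Theta(\mathcal{E}') = (\eta^{\ast}j_{\ast}\mathcal{E}', d_{\mathcal{E}'}+\kappa)$ has factors that are direct sums of $\mathcal{O}_{\mathscr{X}}\otimes V_j$, hence lies in $\mathbb{S}$. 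You also rightly flag the care needed in matching the $\mathbb{C}^{\ast}$-weight unfolding with the cohomological grading when identifying $i^{\ast}\Theta(\mathcal{E})$ with $j_{\ast}\mathcal{E}$; that bookkeeping is standard but worth stating.
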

\begin{proof}
    The same argument used to prove~\cite[Lemma~4.5]{PTzero} applies here. 
\end{proof}
    
For the later use, we will compare internal homomorphisms under Koszul equivalence. 
For $\mathcal{E}_1, \mathcal{E}_2 \in D^b(\mathscr{P})$, 
there exists an internal homomorphism, see~\cite[Remark~3.4]{MR3037900}:
\begin{align*}
    \mathcal{H}om(\mathcal{E}_1, \mathcal{E}_2) \in D_{\rm{qc}}(\mathscr{P})
\end{align*}
It satisfies the following tensor-Hom adjunction: for any $A \in D_{\rm{qc}}(\mathscr{P})$, there are natural isomorphisms: 
\begin{align*}
    \Hom_{D_{\rm{qc}}(\mathscr{P})}(A, \mathcal{H}om(\mathcal{E}_1, \mathcal{E}_2))
    \cong \Hom_{\Ind D^b(\mathscr{P})}(A \otimes \mathcal{E}_1, \mathcal{E}_2). 
\end{align*}
On the other side of the Koszul equivalence, consider the internal Hom of $\mathcal{F}_1, \mathcal{F}_2 \in \mathrm{MF}^{\rm{gr}}(\mathscr{X}, f)$:
\begin{align*}
\mathcal{H}om(\mathcal{F}_1, \mathcal{F}_2) \in \mathrm{MF}^{\rm{gr}}(\mathscr{X}, 0).  
\end{align*}
\begin{lemma}\label{lem:internal}
For $\mathcal{E}_1, \mathcal{E}_2 \in D^b(\mathscr{P})$, 
the equivalence $\Theta$ induces an isomorphism
\begin{align}\label{isom:interhom}
    j_{\ast}\mathcal{H}om(\mathcal{E}_1, \mathcal{E}_2) \stackrel{\cong}{\to}
    \eta_{\ast} \mathcal{H}om(\Theta(\mathcal{E}_1), \Theta(\mathcal{E}_2))    
\end{align}
in $D_{\rm{qc}}(\mathscr{Y})=\mathrm{MF}^{\rm{gr}}_{\mathrm{qc}}(\mathscr{Y}, 0)$.     
\end{lemma}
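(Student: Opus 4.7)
The plan is to prove the isomorphism via the Yoneda lemma: I will show that both sides corepresent the same functor on $D_{\mathrm{qc}}(\mathscr{Y}) = \mathrm{MF}^{\mathrm{gr}}_{\mathrm{qc}}(\mathscr{Y}, 0)$, with the natural transformation induced by $\Theta$. First, combining tensor--Hom adjunction with the adjunctions $j^{\ast} \dashv j_{\ast}$ and $\eta^{\ast} \dashv \eta_{\ast}$, I obtain for any $A \in D_{\mathrm{qc}}(\mathscr{Y})$ natural isomorphisms
\[
\Hom_{D_{\mathrm{qc}}(\mathscr{Y})}\bigl(A, j_{\ast}\mathcal{H}om(\mathcal{E}_1, \mathcal{E}_2)\bigr) \cong \Hom_{D_{\mathrm{qc}}(\mathscr{P})}\bigl(j^{\ast}A \otimes_{\mathcal{O}_{\mathscr{P}}} \mathcal{E}_1,\, \mathcal{E}_2\bigr),
\]
\[
\Hom_{D_{\mathrm{qc}}(\mathscr{Y})}\bigl(A, \eta_{\ast}\mathcal{H}om(\Theta(\mathcal{E}_1), \Theta(\mathcal{E}_2))\bigr) \cong \Hom_{\mathrm{MF}^{\mathrm{gr}}_{\mathrm{qc}}(\mathscr{X}, f)}\bigl(\eta^{\ast}A \otimes_{\mathcal{O}_{\mathscr{X}}} \Theta(\mathcal{E}_1),\, \Theta(\mathcal{E}_2)\bigr),
\]
using that the MF internal Hom takes values in $\mathrm{MF}^{\mathrm{gr}}_{\mathrm{qc}}(\mathscr{X}, 0) = D_{\mathrm{qc}}(\mathscr{X})$, on which $\eta^{\ast}A \otimes (-)$ acts.

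Next, I will establish the projection-formula-type compatibility
\[
\Theta\bigl(j^{\ast}A \otimes_{\mathcal{O}_{\mathscr{P}}} \mathcal{E}_1\bigr) \cong \eta^{\ast}A \otimes_{\mathcal{O}_{\mathscr{X}}} \Theta(\mathcal{E}_1).
\]
Unwinding \eqref{equiv:theta}, the left-hand side equals $(j^{\ast}A \otimes_{\mathcal{O}_{\mathscr{P}}} \mathcal{E}_1) \otimes_{\mathcal{O}_{\mathscr{P}}} \mathcal{K}$. Using the description $\mathcal{K} = \mathcal{O}_{\mathscr{P}} \otimes_{\mathcal{O}_{\mathscr{Y}}} \mathcal{O}_{\mathscr{X}}$ as the underlying $(\mathcal{O}_{\mathscr{P}}, \mathcal{O}_{\mathscr{X}})$-bimodule and associativity of the triple tensor, this rewrites as $A \otimes_{\mathcal{O}_{\mathscr{Y}}} \mathcal{E}_1 \otimes_{\mathcal{O}_{\mathscr{Y}}} \mathcal{O}_{\mathscr{X}} \cong \eta^{\ast}A \otimes_{\mathcal{O}_{\mathscr{X}}} \Theta(\mathcal{E}_1)$. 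The Koszul term $\kappa$ in $d_{\mathcal{K}} = d_{\mathscr{P}} \otimes 1 + \kappa$ acts only along the fibers of $\eta$ and is independent of the $A$-factor, so the isomorphism upgrades to an isomorphism in $\mathrm{MF}^{\mathrm{gr}}(\mathscr{X}, f)$. Combined with the equivalence $\Theta$ of Theorem~\ref{thm:Koszul}, this yields a natural isomorphism
\[
\Hom_{D_{\mathrm{qc}}(\mathscr{P})}\bigl(j^{\ast}A \otimes \mathcal{E}_1, \mathcal{E}_2\bigr) \cong \Hom_{\mathrm{MF}^{\mathrm{gr}}_{\mathrm{qc}}(\mathscr{X}, f)}\bigl(\eta^{\ast}A \otimes \Theta(\mathcal{E}_1), \Theta(\mathcal{E}_2)\bigr)
\]
natural in $A$. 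The Yoneda lemma then produces the desired isomorphism \eqref{isom:interhom}, and tracing through the adjunctions confirms that it is the map induced by $\Theta$.

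The main obstacle is the second step: the projection-formula compatibility between $\Theta$ and the $D_{\mathrm{qc}}(\mathscr{Y})$-module structures induced by $j^{\ast}$ and $\eta^{\ast}$. This requires careful bookkeeping with the derived structure sheaf $\mathcal{O}_{\mathscr{P}} = (\mathrm{Sym}_{\mathcal{O}_{\mathscr{Y}}}(\mathscr{V}^{\vee}[1]), d_{\mathscr{P}})$ and the interaction of $d_{\mathcal{K}}$ with the triple tensor product; once established, the rest of the argument is a formal consequence of adjunctions and Yoneda.
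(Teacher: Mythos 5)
Your proposal is correct and follows essentially the same route as the paper: both compute $\Hom(A,-)$ of the two sides for test objects $A \in D_{\mathrm{qc}}(\mathscr{Y})$ via adjunction and tensor--Hom, identify the resulting functors using the explicit form \eqref{equiv:theta} of $\Theta$ (your projection-formula step $\Theta(j^{\ast}A \otimes \mathcal{E}_1) \cong \eta^{\ast}A \otimes \Theta(\mathcal{E}_1)$ is exactly what the paper asserts there), and conclude by Yoneda. You merely spell out in more detail the bimodule bookkeeping with $\mathcal{K}$ that the paper leaves implicit.
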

\begin{proof}
For $A \in D_{\rm{qc}}(\mathscr{Y})$, we have 
\begin{align*}
    \Hom(A, j_{\ast}\mathcal{H}om(\mathcal{E}_1, \mathcal{E}_2)) &\cong
    \Hom(j^{\ast}A, \mathcal{H}om(\mathcal{E}_1, \mathcal{E}_2)) \\
    &\cong\Hom(j^{\ast}A \otimes \mathcal{E}_1, \mathcal{E}_2).
\end{align*}
We also have 
\begin{align*}
\Hom(A, \eta_{\ast}\mathcal{H}om(\Theta(\mathcal{E}_1), \Theta(\mathcal{E}_2)) &\cong
\Hom(\eta^{\ast}A, \mathcal{H}om(\Theta(\mathcal{E}_1), \Theta(\mathcal{E}_2))) \\
&\cong \Hom(\eta^{\ast}A \otimes \Theta(\mathcal{E}_1), \Theta(\mathcal{E}_2)) \\
&\cong \Hom(\Theta(j^{\ast}A \otimes \mathcal{E}_1), \Theta(\mathcal{E}_2)), 
\end{align*}
where the last isomorphism follows from the 
 explicit form of $\Theta$ in (\ref{equiv:theta}). 
 Therefore $\Theta$ induces an isomorphism 
 \begin{align*}
      \Hom(A, j_{\ast}\mathcal{H}om(\mathcal{E}_1, \mathcal{E}_2))
      \stackrel{\sim}{\to} \Hom(A, \eta_{\ast}\mathcal{H}om(\Theta(\mathcal{E}_1), \Theta(\mathcal{E}_2))),
 \end{align*}
 which implies the isomorphism (\ref{isom:interhom}). 
\end{proof}

Let $\lambda$ be a cocharacter of $G$. Consider the sections induced by $s$:
\begin{align*}
    s^{\lambda \geq 0}\in \Gamma(\Y^{\lambda \geq 0}, \mathscr{V}^{\lambda \geq 0}), \ s^\lambda\in \Gamma(\Y^{\lambda}, 
    \mathscr{V}^{\lambda})
\end{align*}
and their derived zero loci \[\mathscr{P}^{\lambda \geq 0}:=\left(s^{\lambda\geq 0}\right)^{-1}(0),\, \mathscr{P}^{\lambda}:=\left(s^\lambda\right)^{-1}(0).\] 
Similarly to (\ref{dia:att}), consider the maps: 
\begin{align*}
\mathscr{P}^{\lambda} \stackrel{q}{\leftarrow} 
\mathscr{P}^{\lambda \geq 0} \stackrel{p}{\to} 
\mathscr{P},
\end{align*}
where $q$ is quasi-smooth
and $p$ is proper. 
Consider the functor, which is a generalization of the categorical Hall product for preprojective algebras \cite{PoSa, VaVa}:
\begin{align}\label{cathall:P}
   m_{\lambda}= p_{\ast}q^{\ast}
    \colon D^b(\mathscr{P}^{\lambda}) \to 
    D^b(\mathscr{P}).     
\end{align}
We have the following compatibility 
of categorical Hall products under 
Koszul equivalence. 
For the proof, 
see~\cite[Proposition~3.1]{P2}
or \cite[Lemma~2.4.4, 2.4.7]{T}. 
\begin{prop}\label{prop:compati:hall}
The following diagram commutes: 
\begin{align*}
    \xymatrix{
    D^b(\mathscr{P}^{\lambda}) \ar[r]^-{m_{\lambda}}
    \ar[d]^-{\Theta'}_-{\simeq} & D^b(\mathscr{P}) 
    \ar[d]^-{\Theta}_-{\simeq} \\
    \mathrm{MF}^{\rm{gr}}(\X^{\lambda}, f)
    \ar[r]^-{m_{\lambda}} & 
    \mathrm{MF}^{\rm{gr}}(\X, f). 
    }
\end{align*}
    The horizontal arrows 
    are categorical Hall products for $\mathscr{P}$ and $\X$. We denote by $\Theta$ the Koszul equivalence for both $\X$ and $\X^\lambda$, and the left vertical map is the functor $\Theta'(-):=\Theta(-)\otimes \det (\mathcal{V}^{\lambda>0})^{\vee}[\mathrm{rank}\, \mathcal{V}^{\lambda>0}]$.
\end{prop}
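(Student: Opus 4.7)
The plan is to unfold both compositions in the square using the explicit formula $\Theta(-)=(-)\otimes_{\mathcal{O}_{\mathscr{P}}}\mathcal{K}$ from (\ref{equiv:theta}) and to reduce the matching to a base-change computation along the natural ambient correspondence $\mathscr{Y}^{\lambda}\leftarrow \mathscr{Y}^{\lambda\geq 0}\rightarrow \mathscr{Y}$, inside which both the $\mathscr{P}$- and $\mathscr{X}$-correspondences are embedded. Recall that $\mathscr{P}$, $\mathscr{P}^{\lambda\geq 0}$, $\mathscr{P}^{\lambda}$ are the derived zero loci of $s$, $s^{\lambda\geq 0}$, $s^{\lambda}$ on $\mathscr{Y}$, $\mathscr{Y}^{\lambda\geq 0}$, $\mathscr{Y}^{\lambda}$, while direct inspection of the attracting decomposition of $\mathscr{X}=\mathrm{Tot}_{\mathscr{Y}}(\mathcal{V}^{\vee})$ yields $\mathscr{X}^{\lambda\geq 0}=\mathrm{Tot}_{\mathscr{Y}^{\lambda\geq 0}}((\mathcal{V}^{\vee})^{\lambda\geq 0})=\mathrm{Tot}_{\mathscr{Y}^{\lambda\geq 0}}((\mathcal{V}^{\lambda\leq 0})^{\vee})$, so that the fiber directions of $\mathscr{X}^{\lambda\geq 0}$ contain no contribution from $\mathcal{V}^{\lambda>0}$; this absence is what forces the twist in $\Theta'$.

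For $\mathcal{E}\in D^{b}(\mathscr{P}^{\lambda})$ I would expand both sides as tensor products with Koszul factorizations pushed through the correspondence: $\Theta(m_{\lambda}\mathcal{E})=(p_{\ast}q^{\ast}\mathcal{E})\otimes_{\mathcal{O}_{\mathscr{P}}}\mathcal{K}$ and $m_{\lambda}(\Theta'\mathcal{E})=p_{\mathscr{X},\ast}q_{\mathscr{X}}^{\ast}\bigl(\mathcal{E}\otimes_{\mathcal{O}_{\mathscr{P}^{\lambda}}}\mathcal{K}^{\lambda}\otimes \det(\mathcal{V}^{\lambda>0})^{\vee}[\mathrm{rank}\,\mathcal{V}^{\lambda>0}]\bigr)$, where $\mathcal{K}^{\lambda}$ denotes the Koszul factorization attached to $(\mathscr{Y}^{\lambda},\mathcal{V}^{\lambda},s^{\lambda})$. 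Applying standard base change along the fibered squares relating the $\mathscr{P}$-correspondence to the ambient $\mathscr{Y}$-correspondence, together with the projection formula, both expressions can be rewritten as $p_{\mathscr{X},\ast}$ of a single Koszul-type factorization on $\mathscr{X}^{\lambda\geq 0}$ tensored with $q_{\mathscr{X}}^{\ast}\mathcal{E}$, reducing the equality of functors to an identity of factorizations on $\mathscr{X}^{\lambda\geq 0}$.

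The main obstacle, and the source of the twist, is precisely this last comparison. The Koszul factorization $\mathcal{K}$ pulls back from $\mathscr{X}$ to the naive ambient $\mathrm{Tot}_{\mathscr{Y}^{\lambda\geq 0}}(\mathcal{V}^{\vee}|_{\mathscr{Y}^{\lambda\geq 0}})$; further restricting to the closed subvariety $\mathscr{X}^{\lambda\geq 0}$ amounts to setting the $(\mathcal{V}^{\lambda>0})^{\vee}$-fiber coordinates to zero. Under the weight decomposition $\mathcal{V}=\mathcal{V}^{\lambda>0}\oplus\mathcal{V}^{\lambda}\oplus\mathcal{V}^{\lambda<0}$, this collapses the portion of $\mathcal{K}$ built from $\mathcal{V}^{\lambda>0}$ to the Koszul resolution of the zero section of $(\mathcal{V}^{\lambda>0})^{\vee}$, which, by a standard computation, contributes the shifted determinant line $\det(\mathcal{V}^{\lambda>0})^{\vee}[\mathrm{rank}\,\mathcal{V}^{\lambda>0}]$. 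The remaining piece is exactly the Koszul factorization $\mathcal{K}^{\lambda\geq 0}$ attached to $(\mathscr{Y}^{\lambda\geq 0},\mathcal{V}^{\lambda\geq 0},s^{\lambda\geq 0})$, which in turn agrees with $q_{\mathscr{X}}^{\ast}\mathcal{K}^{\lambda}$ because $q_{\mathscr{X}}$ is a vector-bundle projection and the needed Koszul differentials pull back by $q_{\mathscr{X}}^{\ast}$. Combining these identifications delivers the desired equality $\Theta\circ m_{\lambda}=m_{\lambda}\circ\Theta'$. The entire argument is manifestly compatible with the $\mathbb{C}^{\ast}$-grading (weight zero on $\mathscr{Y}$, weight two on the fibers of $\eta$), so the identity lifts to $\mathrm{MF}^{\mathrm{gr}}$, concluding the proof.
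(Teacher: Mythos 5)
Your overall strategy is the right one, and it is essentially the argument behind the sources the paper defers to for this statement ([P2, Proposition~3.1], [T, Lemmas~2.4.4, 2.4.7]): use base change and the projection formula to reduce to a comparison of Koszul kernels over the attracting correspondence, split the kernel into $\lambda$-weight blocks, extract the twist $\det(\mathcal{V}^{\lambda>0})^{\vee}[\mathrm{rank}\,\mathcal{V}^{\lambda>0}]$ from the $\lambda$-positive block, and identify the remaining block with $q_{\mathscr{X}}^{\ast}$ of the kernel on $\mathscr{X}^{\lambda}$ (for which one uses that $s|_{\mathscr{Y}^{\lambda\geq 0}}$ has no $\mathcal{V}^{\lambda<0}$-component and that its $\mathcal{V}^{\lambda}$-component is pulled back from $\mathscr{Y}^{\lambda}$). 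Your description of $\mathscr{X}^{\lambda\geq 0}$ and your diagnosis of where the twist comes from are correct.

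The pivotal step is, however, misstated in a way that would fail as written. The left-hand side is a pushforward from the full fiber product $\mathscr{X}\times_{\mathscr{Y}}\mathscr{Y}^{\lambda\geq 0}$, so you are not allowed to simply \emph{restrict} the kernel to the closed substack $\mathscr{X}^{\lambda\geq 0}$; and even granting that move, setting the $(\mathcal{V}^{\lambda>0})^{\vee}$-coordinates to zero in the $\lambda$-positive block kills the wedge part of the differential and leaves the Koszul complex of $s^{\lambda>0}$ on the base, which carries no determinant — note also that "the Koszul resolution of the zero section" resolves the untwisted structure sheaf, so it cannot be what "contributes the shifted determinant". What the argument actually needs is an isomorphism in the matrix factorization category on the ambient correspondence space: the $\lambda$-positive block $(\Lambda^{\bullet}(\mathcal{V}^{\lambda>0})^{\vee},\ \iota_{s^{\lambda>0}}+t\wedge)$ is a Koszul factorization of the pairing $\langle s^{\lambda>0},t\rangle$ (your sketch drops the $\iota_{s^{\lambda>0}}$-part entirely), and it is isomorphic to $\iota_{\ast}(\det(\mathcal{V}^{\lambda>0})^{\vee}[\mathrm{rank}\,\mathcal{V}^{\lambda>0}])$, where $\iota$ is the inclusion of the locus $\{t=0\}$, i.e.\ of $\mathscr{X}^{\lambda\geq 0}$; the determinant arises because only the top exterior power survives the wedge-by-$t$ direction. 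With this identification in place, the projection formula along $\iota$ and properness of $\mathscr{Y}^{\lambda\geq 0}\to\mathscr{Y}$ give exactly your final equality, so the gap is local to this step and is closed by the standard "Koszul factorization equals pushforward" lemma rather than by restriction. Two smaller imprecisions: the "remaining piece" is not the Koszul factorization attached to $(\mathscr{Y}^{\lambda\geq 0},\mathcal{V}^{\lambda\geq 0},s^{\lambda\geq 0})$ (that would still contain the $\lambda$-positive block, and lives on a different total space) but the factorization built from $(\mathcal{V}^{\lambda})^{\vee}$ alone, which is indeed $q_{\mathscr{X}}^{\ast}$ of the kernel for $\mathscr{X}^{\lambda}$; and "tensored with $q_{\mathscr{X}}^{\ast}\mathcal{E}$" should read $q_{\mathscr{X}}^{\ast}\Theta(\mathcal{E})$, since $\mathcal{E}$ lives on $\mathscr{P}^{\lambda}$.
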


\subsection{Polytopes and categories}\label{polycat}
Let $Q=(I,E)$ be a symmetric quiver and let $d=(d^a)_{a\in I}\in\mathbb{N}^I$ be a dimension vector. 
Consider the multisets of $T(d)$-weights 
\begin{align}\label{def:poly}
    \mathscr{A}&:=\{\beta^a_i-\beta^b_j \mid a,b\in I, 
    (a \to b) \in E, 1\leq i\leq d^a, 1\leq j\leq d^b\},\\
    \notag \mathscr{B}&:=\{\beta^a_i\mid a\in I, 1\leq i\leq d^a\},\\
    \notag \mathscr{C}&:=\mathscr{A}\sqcup \mathscr{B}.
\end{align}  
Here, $\mathscr{A}$ is the set of $T(d)$-weights of $R(d)$ and $\mathscr{C}$ is the set of $T(d)$-weights of $R^f(d)$.
Define the polytopes
\begin{align}\label{polyt:W}
    \mathbf{W}(d)&:=\frac{1}{2}\mathrm{sum}_{\beta\in \mathscr{A}}[0, \beta]\subset M(d)_{0,\mathbb{R}}\subset M(d)_{\mathbb{R}},\\
    \notag \mathbf{V}(d)&:=\frac{1}{2}\mathrm{sum}_{\beta\in \mathscr{C}}[0,\beta]\subset M(d)_{\mathbb{R}},
\end{align}
where the sums above are Minkowski sums in the space of weights $M(d)_{\mathbb{R}}$.

\begin{defn}\label{def:defmdw}
For a weight $\delta_d\in M(d)_{\mathbb{R}}^{W_d}$,
define \begin{equation}\label{defmdw}
    \mathbb{M}(d; \delta_d)\subset D^b(\X(d))
\end{equation}
to be the full subcategory of $D^b(\X(d))$ generated by vector bundles $\mathcal{O}_{\X(d)}\otimes\Gamma_{G(d)}(\chi)$, where 
$\chi \in M(d)$ is a dominant weight such that
\begin{equation}\label{chiwd}
\chi+\rho-\delta_d\in \mathbf{W}(d).
\end{equation}
\end{defn}
The category (\ref{defmdw}) is a particular case of the noncommutative resolutions of quotient singularities introduced by \v{S}penko--Van den Bergh \cite{SVdB}. 
We may call \eqref{defmdw} a ``magic category" following \cite{hls}. 
For $\lambda$ a cocharacter of $T(d)$, define
    \begin{align}\label{nlambdadef}
n_{\lambda}=\left\langle \lambda, \det\left(\mathbb{L}_{\X(d)}|_{0}^{\lambda>0} \right)\right\rangle 
=\left\langle \lambda, \det\left(R(d)^{\vee}\right)^{\lambda\geq 0}\right\rangle-\left\langle\lambda, \det\left(\mathfrak{g}(d)^{\vee}\right)^{\lambda>0} \right\rangle. 
    \end{align}
The category \eqref{defmdw} has also the following alternative description. 
\begin{lemma}\emph{(\cite[Lemma~2.9]{hls})}\label{lemma:alt}
    The subcategory $\mathbb{M}(d; \delta_d)$ of $D^b(\X(d))$
    is generated by vector bundles $\mathcal{O}_{\X(d)} \otimes \Gamma$
    for a $G(d)$-representation $\Gamma$
    such that, for any $T(d)$-weight $\chi$ of $\Gamma$ and any cocharacter $\lambda$ of $T(d)$, we have
    that: 
    \begin{align}\label{cond:n}
     \langle \lambda, \chi-\delta_d\rangle
     \in \left[-\frac{1}{2}n_{\lambda}, \frac{1}{2}n_{\lambda}  \right].
    \end{align}
\end{lemma}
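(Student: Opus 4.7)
The plan is to recognize both descriptions of $\mathbb{M}(d;\delta_d)$ as polytope membership conditions on a single dominant weight, and then reconcile them using a convex-hull estimate in the Weyl orbit. First, since every $G(d)$-representation $\Gamma$ splits as $\bigoplus_i \Gamma_{G(d)}(\chi_i)$ and the weight hypothesis passes to direct summands, it suffices to compare the two descriptions on a single irreducible $\Gamma=\Gamma_{G(d)}(\chi)$ with $\chi$ dominant. The weights of $\Gamma_{G(d)}(\chi)$ lie in $\mathrm{conv}(W_d \cdot \chi)$, and since $\delta_d$ is Weyl-invariant and $n_\lambda$ is $W_d$-invariant in $\lambda$ (because $R(d)$ and $\mathfrak{g}(d)$ are $G(d)$-modules), a linearity argument will reduce the cocharacter condition on every weight of $\Gamma$ to the single statement
\[\chi - \delta_d \in \mathbf{N}(d):=\bigl\{p \in M(d)_{\mathbb{R}} : |\langle \lambda, p\rangle| \leq \tfrac{n_\lambda}{2} \text{ for all cocharacters } \lambda\bigr\}.\]

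Next I would identify $\mathbf{N}(d)$ as a Minkowski difference of $\mathbf{W}(d)$. Symmetry of $Q$ makes $R(d)$ self-dual, so $\mathbf{W}(d)$ is centrally symmetric with support function $h_{\mathbf{W}(d)}(\lambda) = \tfrac{1}{2}\langle \lambda, R(d)^{\lambda>0}\rangle$. Unwinding the definition of $n_\lambda$ and pairing positive roots against their negatives in $\mathfrak{g}(d)^{\vee}$, a short computation gives
\[\tfrac{n_\lambda}{2} = h_{\mathbf{W}(d)}(\lambda) - \tfrac{1}{2}\sum_{\alpha>0}|\langle \lambda, \alpha\rangle|.\]
The subtracted term is $W_d$-invariant in $\lambda$ and equals $\langle \lambda_+, \rho\rangle$, where $\lambda_+$ is the dominant Weyl translate of $\lambda$, hence it is the support function of the permutahedron $\mathbf{R}:=\mathrm{conv}(W_d \cdot \rho)$. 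Consequently $\mathbf{N}(d)=\mathbf{W}(d)\ominus \mathbf{R}$, i.e., $q \in \mathbf{N}(d)$ if and only if $q + w\rho \in \mathbf{W}(d)$ for every $w \in W_d$.

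It then remains to prove, for dominant $q:=\chi-\delta_d$ (dominant because $\delta_d$ is Weyl-invariant), the equivalence $q+\rho \in \mathbf{W}(d) \Longleftrightarrow q \in \mathbf{N}(d)$. The direction ``$\Longleftarrow$'' is immediate since $\mathbf{N}(d) \subset \mathbf{W}(d)-\rho$. For ``$\Longrightarrow$'', $W_d$-invariance and convexity of $\mathbf{W}(d)$ give $\mathrm{conv}(W_d(q+\rho)) \subset \mathbf{W}(d)$, and it suffices to verify that $q + w\rho \in \mathrm{conv}(W_d(q+\rho))$ for every $w \in W_d$. This will follow from the support-function inequality
\[\langle \lambda, q + w\rho\rangle = \langle \lambda, q\rangle + \langle w^{-1}\lambda, \rho\rangle \leq \langle \lambda_+, q\rangle + \langle \lambda_+, \rho\rangle = \max_{w'}\langle \lambda, w'(q+\rho)\rangle,\]
valid for every cocharacter $\lambda$. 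Each of the two term-wise estimates uses the classical fact that, for $\mu$ dominant, $\max_{w''}\langle w''\lambda, \mu\rangle = \langle \lambda_+, \mu\rangle$, because $\mu - w''\mu$ is a non-negative integer combination of positive roots.

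The main obstacle is precisely this last convex-hull estimate: the bounds on $\langle \lambda, q\rangle$ and on $\langle w^{-1}\lambda, \rho\rangle$ cannot each be combined with the polytope condition separately, but they can be added because both $q$ and $\rho$ are dominant, so the two maxima over the Weyl orbit are attained at a common $\lambda_+$. Once this is in place, matching irreducible generators between the two descriptions is immediate: every $\Gamma_{G(d)}(\chi)$ with $\chi + \rho - \delta_d \in \mathbf{W}(d)$ has all weights satisfying the cocharacter bound, and conversely every representation satisfying the cocharacter bound decomposes into irreducibles of this form.
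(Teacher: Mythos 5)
The paper does not prove this lemma; it cites \cite[Lemma~2.9]{hls} and moves on, so there is no in-paper argument to compare your attempt against. Judged on its own, your proof is correct. The reduction to a single irreducible $\Gamma_{G(d)}(\chi)$ and then to the highest weight $\chi$ — using that the weights of $\Gamma_{G(d)}(\chi)$ lie in $\mathrm{conv}(W_d\cdot\chi)$, that $\delta_d$ is Weyl-invariant, and that the width polytope $\mathbf{N}(d)$ is convex and $W_d$-invariant — is valid. The computation $\tfrac{n_\lambda}{2}=h_{\mathbf{W}(d)}(\lambda)-\langle\lambda_+,\rho\rangle$ checks out: the first term uses self-duality of $R(d)$ (which holds because $Q$ is symmetric, an assumption in force in Subsection 2.4), and the subtracted term $\tfrac12\langle\lambda,\mathfrak{g}(d)^{\lambda>0}\rangle=\langle\lambda_+,\rho\rangle$ is exactly the support function of the permutahedron $\mathrm{conv}(W_d\cdot\rho)$. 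Since $n_{-\lambda}=n_\lambda$, the equality of support functions does give $\mathbf{N}(d)=\mathbf{W}(d)\ominus\mathrm{conv}(W_d\cdot\rho)$ on the nose, i.e. $q\in\mathbf{N}(d)$ iff $q+w\rho\in\mathbf{W}(d)$ for all $w\in W_d$. The key implication — for dominant $q$, $q+\rho\in\mathbf{W}(d)$ forces $q+w\rho\in\mathbf{W}(d)$ for all $w$ — is correctly obtained from $q+w\rho\in\mathrm{conv}(W_d(q+\rho))$, by adding the estimates $\langle\lambda,q\rangle\leq\langle\lambda_+,q\rangle$ and $\langle w^{-1}\lambda,\rho\rangle\leq\langle\lambda_+,\rho\rangle$, both valid because $q$ and $\rho$ are dominant and the max over the Weyl orbit of $\lambda$ is attained at the same dominant representative $\lambda_+$. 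You rightly flag this common-maximizer step as the crux. The one point worth stating explicitly is why $q=\chi-\delta_d$ is dominant: $\delta_d\in M(d)^{W_d}_{\mathbb{R}}$ is orthogonal to all coroots, so subtracting it preserves all coroot pairings.
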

\begin{remark}\label{rmk:Mweight}
    The subcategory (\ref{defmdw}) 
    is contained in $D^b(\X(d))_w$ for $w=\langle 1_d, \delta_d \rangle$. 
    In particular, if (\ref{defmdw}) is non-zero, 
    then $\langle 1_d, \delta_d \rangle \in \mathbb{Z}$.
\end{remark}

We also define a larger subcategory corresponding 
to the polytope $\mathbf{V}(d)$. 
Let 
\begin{align}\label{def:dd}
\mathbb{D}(d; \delta_d) \subset D^b(\X(d))
\end{align}
be generated by vector bundles $\mathcal{O}_{\X(d)}\otimes\Gamma_{G(d)}(\chi)$, where $\chi$ is a dominant weight of $G(d)$ such that
\[\chi+\rho-\delta_d\in \mathbf{V}(d).\]

The following definition will be used later
in decomposing $\mathbb{D}(d; \delta_d)$ into 
categorical Hall products of $\mathbb{M}(d; \delta_d)$. 
\begin{defn}\label{def:generic2} A weight $\delta_d\in M(d)^{W_d}_\mathbb{R}$ is called \textit{good} if for all dominant cocharacters $\lambda$ such that $\langle \lambda, \beta^i_a\rangle\in\{-1, 0\}$ for all $i\in I$ and $1\leq a\leq d^i$, one has that $\langle \lambda, 2\delta_d\rangle\notin \mathbb{Z}$.\end{defn}

Next, for a quiver with potential $(Q, W)$, we 
define quasi-BPS categories:
\begin{defn}
Define the quasi-BPS category to be 
\begin{equation}\label{defsdw}
\mathbb{S}(d; \delta_d):=\mathrm{MF}(\mathbb{M}(d; \delta_d), \mathrm{Tr}\,W)\subset \mathrm{MF}(\X(d), \Tr W).
\end{equation}
\end{defn}
Suppose that $(Q, W)$ is a tripled quiver 
of a quiver $Q^{\circ}=(I, E^{\circ})$. 
In this case, 
the graded version is similarly defined 
\begin{equation}\label{defsdwgr}
\mathbb{S}^{\mathrm{gr}}(d; \delta_d):=\mathrm{MF}^{\mathrm{gr}}(\mathbb{M}(d; \delta_d), \Tr W)
\subset \mathrm{MF}^{\rm{gr}}(\X(d), \Tr W). 
\end{equation}

Next, we define quasi-BPS categories for preprojective algebras. Let $Q^\circ$ be a quiver and consider the moduli stack $\mathscr{P}(d)$ of dimension $d$ representations of the preprojective algebra of $Q^\circ$ and recall the
closed immersion $j \colon \mathscr{P}(d) \hookrightarrow 
\mathscr{Y}(d)$. 
The quasi-BPS category for $\mathscr{P}(d)$ is defined 
as follows: 
\begin{defn}\label{def:qbps:double}
Let $\widetilde{\mathbb{T}}(d; \delta_d)\subset D^b(\mathscr{Y}(d))$ be the subcategory generated by vector bundles $\mathcal{O}_{\mathscr{Y}(d)} \otimes \Gamma_{G(d)}(\chi)$, where $\chi$ is a dominant weight of $G(d)$ satisfying:
\[\chi+\rho-\delta_d\in \frac{1}{2}\text{sum}_{\beta\in \mathscr{A}}[0,\beta],\] where $\mathscr{A}$ is the set of $T(d)$-weights of $\overline{R}(d)\oplus \mathfrak{g}(d)$.
Define the preprojective quasi-BPS category (also called quasi-BPS category of preprojective algebra):
\begin{align*}
\mathbb{T}(d; \delta_d) \subset D^b(\mathscr{P}(d))
\end{align*} as the full subcategory of $D^b(\mathscr{P}(d))$ 
with objects $\mathcal{E}$ such
 that $j_{\ast}\mathcal{E}\in \widetilde{\mathbb{T}}(d; \delta_d)$. 
\end{defn}
By Lemma~\ref{lem:genJ}, the Koszul equivalence (\ref{Kosz}) 
restricts to the equivalence 
\begin{align}\label{Koszul:qbps}
\Theta \colon \mathbb{T}(d; \delta_d) 
\stackrel{\sim}{\to} \mathbb{S}^{\rm{gr}}(d; \delta_d).
\end{align}
For $v\in \mathbb{Z}$ and $\bullet\in\{\emptyset, \mathrm{gr}\}$, we will use the following shorthand notation:
\[\mathbb{M}(d)_v:=\mathbb{M}(d; \delta_d),\, \mathbb{S}^\bullet(d)_v:=\mathbb{S}^\bullet(d; v\tau_d),\, \mathbb{T}(d)_v:=\mathbb{T}(d; v\tau_d).\] 


\section{The categorical wall-crossing equivalence}\label{subsection:wc}
In this section, we prove a wall-crossing 
equivalence for quasi-BPS categories for symmetric quivers. We thus generalize (the particular case of moduli stacks of representations of quivers of) the theorem of Halpern-Leistner--Sam \cite[Theorem 3.2]{hls} to the situation when there is no stability condition such that the $\mathbb{C}^{\ast}$-rigidified 
stack of semistable representations is Deligne-Mumford.

\subsection{Preliminaries}
Let $Q=(I, E)$ be a symmetric quiver. 
For $d \in \mathbb{N}^I$, 
recall from Subsection \ref{polycat} that $\mathbf{W}(d) \subset M(d)_{0, \mathbb{R}}$ is the polytope 
given by the Minkowski sum 
\begin{align*}
    \mathbf{W}(d)=\frac{1}{2}\mathrm{sum}_{\beta\in\mathscr{A}}[0, \beta] \subset M(d)_{0, \mathbb{R}},
\end{align*}
where \[\mathscr{A}:=\{\beta^a_i-\beta^b_j \mid a,b\in I, 
    (a \to b) \in E, 1\leq i\leq d^a, 1\leq j\leq d^b\}.\]
 We denote by \[H_1, \ldots, H_m \subset \mathbf{W}(d)\]
the codimension one faces of $\mathbf{W}(d)$, 
and by 
\begin{equation}\label{lambdacha}
\lambda, \ldots, \lambda_m\colon \mathbb{C}^*\to ST(d)
\end{equation}
the cocharacters of $ST(d)$
such that $\lambda_i^{\perp} \subset M(d)_{0, \mathbb{R}}$ is parallel to $H_i$ for all $1\leq i\leq m$. 

Note that there is a natural pairing (we abuse notation and use the same notation as for the pairing in Subsection \ref{subsec:theweightlattice}):
\begin{align}\label{newpairing}
    \langle -, -\rangle \colon 
  \mathbb{R}^I\times  M(d)_{\mathbb{R}}^{W_d}  \to 
    \mathbb{R}
\end{align}
defined by \[\langle e^b, \det V^a\rangle=\delta^{ab},\]
where $e^b$ is the basis element of $\mathbb{R}^I$
corresponding to $b \in I$. 
Note that an element 
$\ell \in M(d)_{0, \mathbb{R}}^{W_d}$, 
is written as 
\begin{align}\label{write:l0}
    \ell=\sum_{a}\ell^{a}\det V^{a}, \ 
    \langle d, \ell\rangle=\sum_{a}d^a \ell^a=0
\end{align}
i.e. $\ell$ is a $\mathbb{R}$-character of $G(d)$
which is trivial on the diagonal torus 
$\mathbb{C}^{\ast} \subset G(d)$. 
Note the following lemma: 
\begin{lemma}\label{lem:prop}
Let $\lambda\in\{\lambda_1,\ldots,\lambda_m\}$ be an antidominant cocharacter
such that $\X(d)^\lambda=\times_{j=1}^k \X(d_j)$.
Let $\ell \in M(d)_{0, \mathbb{R}}^{W_d}$ satisfy that  
$\langle \lambda, \ell \rangle=0$. 
 Then we have $\langle d_j, \ell \rangle=0$
 for all $1 \leq j \leq k$. 
 In particular if 
$M(d)_{0, \mathbb{R}}^{W_d} \subset \lambda^{\perp}$, 
then $d_j$ is proportional to $d$ for all $1\leq j\leq k$. 
\end{lemma}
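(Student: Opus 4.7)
The plan is to reduce the lemma to linear algebra after forcing $k=2$ via a zonotope computation.

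First I set up the linear relations. Since $\lambda$ is antidominant with distinct values $c_1>\cdots>c_k$ on the blocks of its partition, and $\lambda \in ST(d)$ gives $\sum_j c_j \dd_j = 0$, a direct index-by-index computation on $\ell = \sum_a \ell^a \det V^a \in M(d)_{\mathbb{R}}^{W_d}$ yields
\[
\langle \lambda, \ell\rangle \;=\; \sum_a \ell^a \sum_j c_j d_j^a \;=\; \sum_{j=1}^k c_j \langle d_j, \ell\rangle,\qquad \langle d, \ell\rangle \;=\; \sum_{j=1}^k \langle d_j, \ell\rangle.
\]
Thus the hypotheses provide exactly two linear relations on the $k$ unknowns $\langle d_j,\ell\rangle$; by distinctness of the $c_j$, these force all $\langle d_j, \ell\rangle = 0$ provided $k \leq 2$.

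The key step is to show $k = 2$ using the codimension-one face condition. By the defining property of $\lambda_i$, the subspace $V_\lambda := \lambda^\perp \cap M(d)_{0,\mathbb{R}}$ has dimension $\dd-2$, and since $\mathbf{W}(d)$ is a zonotope with edge-directions in $\mathscr{A}$, $V_\lambda$ equals $\mathrm{span}\{\beta \in \mathscr{A} : \langle \lambda, \beta\rangle = 0\}$. Such a $\beta = \beta^b_i - \beta^a_\ell$ arises exactly when $(a\to b)\in E$ and the indices $(a,\ell),(b,i)$ lie in a common block $B_j$. Regard each $B_j$ as the vertex set of an undirected multigraph $G_j$ whose edges are the in-block quiver edges; then the span of the resulting weights is the image of the boundary map $\mathbb{R}^{E(G_j)} \to \mathbb{R}^{B_j}$, of dimension $\dd_j - c(G_j)$ where $c(G_j)$ is the number of connected components. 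Summing,
\[
\dd - 2 \;=\; \dim V_\lambda \;=\; \sum_{j=1}^k (\dd_j - c(G_j)) \;=\; \dd - \sum_j c(G_j),
\]
so $\sum_j c(G_j) = 2$. Since $c(G_j) \geq 1$ and $\lambda \neq 0$ forces $k\geq 2$, we must have $k = 2$ and each $G_j$ connected, which proves the first claim.

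For the \emph{in particular} part, the hypothesis $M(d)_{0,\mathbb{R}}^{W_d} \subset \lambda^\perp$ says that the Weyl average $\overline{\lambda}$ (which, by the calculation above, corresponds under the identification $(M(d)_{\mathbb{R}}^{W_d})^* \cong \mathbb{R}^I$ to $\sum_j c_j d_j \in \mathbb{R}^I$) vanishes on the codimension-one subspace $M(d)_{0,\mathbb{R}}^{W_d}$, so $\overline{\lambda}$ is a scalar multiple of $d$ in $\mathbb{R}^I$. Combined with the constraint $\sum_a \overline{\lambda}^a = \sum_j c_j \dd_j = 0$, this forces $\overline{\lambda} = 0$, i.e.\ $c_1 d_1 + c_2 d_2 = 0$. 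Since $c_1 \neq c_2$ are both nonzero (as $c_1 \dd_1 = -c_2 \dd_2$ with $\dd_j > 0$), both $d_1$ and $d_2$ are proportional to $d = d_1 + d_2$.

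I expect the main obstacle to be the boundary-map identification $\dim V_\lambda = \dd - \sum_j c(G_j)$: one must carefully handle loops in $Q$, pairs of opposite edges in $E$ (so that $\mathscr{A}$ contains both $\beta^b_i - \beta^a_\ell$ and its negative), and blocks in which some $d_j^a$ vanishes. Once the block-graph picture is correctly set up, the remainder is straightforward linear algebra.
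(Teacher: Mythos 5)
Your proof is correct, but it takes a genuinely different route from the paper's. The paper uses the face condition in the opposite direction: since $\lambda^{\perp}\cap M(d)_{0,\mathbb{R}}$ is spanned by the weights of $R(d)$ orthogonal to $\lambda$ (the within-block differences $\beta_i^{p}-\beta_j^{q}$), the weight $\ell$ itself is expanded as a linear combination of such differences, and comparing the block-wise sum of coefficients in that expression (which is zero) with the block-wise sum in the Weyl-invariant expression $\ell=\sum_{a}\ell^a(\beta_1^a+\cdots+\beta_{d^a}^a)$ gives $\langle d_j,\ell\rangle=0$ directly, for every block and for arbitrary $k$; the \emph{in particular} statement then follows by letting $\ell$ range over $M(d)_{0,\mathbb{R}}^{W_d}$. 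You never expand $\ell$: instead you use the same zonotope input to compute the rank of the span of the $\lambda$-orthogonal weights via graph boundary maps, forcing $\sum_j c(G_j)=2$ and hence $k=2$ (with connected block graphs), after which the conclusion is a $2\times 2$ linear system coming from $\langle\lambda,\ell\rangle=0$ and $\langle d,\ell\rangle=0$. What your route buys is the extra structural fact that every face cocharacter $\lambda_i$ corresponds to a two-term partition, which the paper never needs or proves and which is of some independent interest (e.g. for Lemma \ref{lem:modify}); what it costs is the combinatorial bookkeeping you flag, which does check out: loops with equal indices and opposite edges contribute zero or repeated directions and do not change the image of the boundary map, and isolated indices are simply counted as components. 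Your version of the \emph{in particular} part (showing $\sum_j c_j d_j$ is proportional to $d$, hence zero since $\lambda$ is a cocharacter of $ST(d)$, so $c_1d_1+c_2d_2=0$ with $c_1,c_2\neq 0$) is a correct, slightly more direct variant of the paper's argument. Both proofs rest on the same implicit geometric fact, namely that a codimension-one face of $\mathbf{W}(d)$ has direction space equal to $\lambda^{\perp}\cap M(d)_{0,\mathbb{R}}$ (i.e. $\mathbf{W}(d)$ is full-dimensional in $M(d)_{0,\mathbb{R}}$), so your argument introduces no gap beyond what the paper itself assumes.
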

\begin{proof}
For simplicity, suppose that $\lambda$ corresponds to a 
decomposition $d=d_1+d_2$. 
Since $\lambda^{\perp}$ is spanned 
by a subset of weights in $R(d)$, 
for fixed vertices $a, b \in I$ 
we can write: 
\begin{align}\label{write:l}
\ell=\sum_{i, j, p, q}{c_{ij}^{pq}}(\beta_i^{p}-\beta_j^{q})
\end{align}
for some $c_{ij}^{pq}\in \mathbb{R}$,
where the sum on the right hand side is over 
all 
$p, q \in I$ and 
$1 \leq i \leq d_1^{p}, 1\leq j \leq d_1^{q}$
or $d_1^{p}<i \leq d^{p}, d_1^{q}<j \leq d^{q}$. 
On the other hand, as $l$ is Wely-invariant we can write 
\begin{align}\label{write:l2}
    \ell=\sum_{a \in I}\ell^a(\beta_1^a+\cdots+\beta_{d^a}^a)
\end{align}
for some $\ell^a \in \mathbb{R}$, see (\ref{write:l0}).

In the right hand side of (\ref{write:l}), 
the sum of the coefficients of 
$\beta_i^{p}$ for all $p \in I$ and $1\leq i \leq d_1^{p}$ is zero. Therefore, by taking the sum of coefficients of such $\beta_i^p$ in the right hand side of (\ref{write:l2}), 
we conclude that
\begin{align*}
    \sum_{a \in I}\ell^a d_1^a=\langle d_1, \ell \rangle=0. 
\end{align*}
If $M(d)_{0, \mathbb{R}}^{W_d} \subset \lambda^{\perp}$, 
we have $\langle d_j, \ell \rangle=0$ for all $\ell \in M(d)_{0, \mathbb{R}}^{W_d}$, 
hence $d_j$ is proportional to $d$. 
\end{proof}

For $\ell\in M(d)^{W_d}_{0, \mathbb{R}}$, consider the 
open substack 
$\X(d)^{\ell\text{-ss}} \subset \X(d)$
of $\ell$-semistable points. 
By~\cite{Kin}, 
the $\ell$-semistable locus 
consists of $Q$-representations $R$ such that, 
for any subrepresentation $R' \subset R$
of dimension vector $d'$,
we have that $\langle d', \ell\rangle \geq 0$. 
Consider the good moduli space morphism: 
\begin{align*}
    \X(d)^{\ell\text{-ss}} \to X(d)^{\ell\text{-ss}}. 
\end{align*}
A closed point of 
$X(d)^{\ell\text{-ss}}$
corresponds to a direct sum 
\begin{align*}
    \bigoplus_{i=1}^k V^{(i)} \otimes R^{(i)},
    \end{align*}
where $V^{(i)}$ is a finite dimensional $\mathbb{C}$-vector space 
and $R^{(i)}$ is an $\ell$-stable $Q$-representation 
whose dimension vector $d^{(i)}$ satisfying $\langle d^{(i)}, \ell \rangle=0$ for each $1\leq i \leq k$.

\subsection{Quasi-BPS categories for semistable stacks}
For each $\delta_d \in M(d)_{\mathbb{R}}^{W_d}$, 
recall from Definition~\ref{def:defmdw} the quasi-BPS category:
\begin{align}\label{defsdw2}
    \mathbb{M}(d; \delta_d) \subset D^b(\X(d)). 
\end{align}
By Lemma~\ref{lemma:alt}, 
it is the subcategory of objects $P$ 
such that, for any cocharacter $\lambda \colon \mathbb{C}^{\ast} \to T(d)$,
we have that: 
\begin{align}\label{wt:cond}
    \mathrm{wt}_{\lambda}(P)  \subset 
    \left[-\frac{1}{2}n_{\lambda}, 
    \frac{1}{2}n_{\lambda} \right]+\langle \lambda, \delta \rangle,
\end{align}
where $n_{\lambda}:=\left\langle \lambda, 
\det\left(\mathbb{L}_{\X(d)}^{\lambda>0}|_{0}\right) \right\rangle$, see \eqref{nlambdadef}.

Consider a complex $A\in D^b(B\mathbb{C}^*)$. Write $A=\bigoplus_{w\in\mathbb{Z}}A_w$, where $\mathbb{C}^*$ acts with weight $w$ on $A_w$. Define the set of weights
\begin{equation}\label{wtsetA}
\mathrm{wt}(A):=\{w\mid A_w\neq 0\}\subset \mathbb{Z}.
\end{equation}
Define also the integers
\begin{equation}\label{wtmax}
    \mathrm{wt}^{\mathrm{max}}(A):=\mathrm{max}\left(\mathrm{wt}(A)\right),\, \mathrm{wt}^{\mathrm{min}}(A):=\mathrm{min}\left(\mathrm{wt}(A)\right).
\end{equation}

We 
define a version of quasi-BPS categories 
for semistable loci. 
Let 
\begin{align}\label{BPS:Sl}
    \mathbb{M}^{\ell}(d; \delta_d)
    \subset D^b(\X(d)^{\ell \text{-ss}})
\end{align}
be the subcategory of objects 
$P$ such that, for any map 
$\nu \colon B\mathbb{C}^{\ast} \to \X(d)^{\ell\text{-ss}}$,
we have: 
\begin{align}\label{cond:wtnuP}
    \mathrm{wt}(\nu^{\ast}P)
    \subset \left[-\frac{1}{2}n_{\nu}, \frac{1}{2}n_{\nu}\right]+\mathrm{wt}(\nu^{\ast}\delta_d),
\end{align}
where $n_{\nu}:=\mathrm{wt}\left(\det \left(\left(\nu^{\ast}\mathbb{L}_{\X(d)}\right)^{>0}\right)\right) \in \mathbb{Z}$. 
In Corollary~\ref{cor:lzero}, we show that, for $\ell=0$, the category (\ref{BPS:Sl})
is $\mathbb{M}(d; \delta_d)$. 

Consider the restriction functor: 
\begin{align}\label{funct:res}
    \mathrm{res} \colon 
    D^b(\X(d)) 
    \twoheadrightarrow D^b(\X(d)^{\ell\text{-ss}}).
\end{align}

\begin{lemma}\label{lem:funct:res}
The functor (\ref{funct:res}) restricts 
to the functor 
 \begin{align}\label{funct:res2}
    \mathrm{res} \colon 
    \mathbb{M}(d; \delta_d)
    \to \mathbb{M}^{\ell}(d; \delta_d).
    \end{align}
    \end{lemma}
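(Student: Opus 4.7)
The strategy is to unwind both sides of the claimed restriction in terms of cocharacters of $T(d)$ and observe that the defining weight conditions match, using crucially that $R(d)$ is a linear $G(d)$-representation.

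First I would describe the maps $\nu\colon B\mathbb{C}^{\ast}\to\X(d)^{\ell\text{-ss}}$. Such a map is equivalent to the data of a cocharacter $\lambda\colon\mathbb{C}^{\ast}\to G(d)$ together with a $\lambda$-fixed point $x\in R(d)^{\ell\text{-ss}}$, modulo the $G(d)$-conjugation that simultaneously replaces $\lambda$ by a conjugate and $x$ by the corresponding translate. After conjugation we may therefore assume $\lambda$ is a cocharacter of the maximal torus $T(d)$, and we only need to check the desired weight bound for such representatives.

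Next I would compute $n_{\nu}$ and $\mathrm{wt}(\nu^{\ast}\delta_d)$ explicitly. Since $\X(d)=R(d)/G(d)$ is a linear quotient, the cotangent complex $\mathbb{L}_{\X(d)}|_{x}$ is represented by the two-term complex $[\mathfrak{g}(d)^{\vee}\to R(d)^{\vee}]$ for every point $x$, with $\mathbb{C}^{\ast}$-structure induced by $\lambda$; consequently the $\mathbb{C}^{\ast}$-weights of $\nu^{\ast}\mathbb{L}_{\X(d)}$ and of $\mathbb{L}_{\X(d)}|_{0}$ coincide, so
\[n_{\nu}=\mathrm{wt}\!\left(\det\bigl((\nu^{\ast}\mathbb{L}_{\X(d)})^{>0}\bigr)\right)=\bigl\langle\lambda,\det\bigl(\mathbb{L}_{\X(d)}^{\lambda>0}|_{0}\bigr)\bigr\rangle=n_{\lambda}.\]
Similarly, the (formal) $\mathbb{C}^{\ast}$-weight attached to $\nu^{\ast}\delta_d$ is $\langle\lambda,\delta_d\rangle$. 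For a generating vector bundle $\mathcal{E}=\mathcal{O}_{\X(d)}\otimes\Gamma_{G(d)}(\chi)$ of $\mathbb{M}(d;\delta_d)$, the pullback $\nu^{\ast}\mathcal{E}$ is the trivial bundle on $B\mathbb{C}^{\ast}$ tensored with $\Gamma_{G(d)}(\chi)$ viewed as a $\mathbb{C}^{\ast}$-module via $\lambda$; its set of weights is $\{\langle\lambda,\chi'\rangle\mid \chi'\text{ a }T(d)\text{-weight of }\Gamma_{G(d)}(\chi)\}$.

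Finally I would conclude. By Lemma~\ref{lemma:alt}, the condition $\mathcal{E}\in\mathbb{M}(d;\delta_d)$ is precisely $\langle\lambda,\chi'-\delta_d\rangle\in[-\tfrac{1}{2}n_{\lambda},\tfrac{1}{2}n_{\lambda}]$ for all cocharacters $\lambda$ of $T(d)$ and all weights $\chi'$ of $\Gamma_{G(d)}(\chi)$. Combined with $n_{\nu}=n_{\lambda}$, this is exactly the membership condition (\ref{cond:wtnuP}) for $\mathrm{res}(\mathcal{E})\in\mathbb{M}^{\ell}(d;\delta_d)$. Since $\mathbb{M}(d;\delta_d)$ is generated under shifts, direct sums, and cones by such vector bundles, since $\mathrm{res}$ is exact, and since the defining condition of $\mathbb{M}^{\ell}(d;\delta_d)$ (being expressible as weight-set inclusions stable under these operations) is preserved, the restriction functor sends $\mathbb{M}(d;\delta_d)$ into $\mathbb{M}^{\ell}(d;\delta_d)$. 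The only subtle point is to verify that the conjugation freedom on pairs $(\lambda,x)$ does not expand the weight condition beyond what is tested by $T(d)$-cocharacters at the origin; this is precisely where the linearity of $R(d)$ is used, so no genuine obstacle arises.
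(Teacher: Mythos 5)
Your proposal is correct and follows essentially the same route as the paper: identify a map $\nu\colon B\mathbb{C}^{\ast}\to\X(d)^{\ell\text{-ss}}$, after conjugation, with a pair $(x,\lambda)$ where $\lambda$ is a cocharacter of $T(d)$ fixing $x$, observe $n_{\nu}=n_{\lambda}$ (the cotangent complex of the linear quotient stack is the constant two-term complex $[\mathfrak{g}(d)^{\vee}\to R(d)^{\vee}]$, so its weights at $x$ agree with those at $0$), and match the weight condition of Lemma~\ref{lemma:alt} with the condition (\ref{cond:wtnuP}). The paper states this in two lines; your write-up just makes the generator/exactness bookkeeping explicit.
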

    \begin{proof}
    A map $\nu \colon \mathbb{C}^{\ast} \to \X(d)$ corresponds (possibly after conjugation) to a point 
    $x \in R(d)$ together with a cocharacter 
    $\lambda \colon \mathbb{C}^{\ast} \to T(d)$ which 
    fixes $x$. Then $n_{\nu}=n_{\lambda}$, 
    so the functor (\ref{funct:res}) restricts 
    to the functor (\ref{funct:res2}).
    \end{proof}

There is an orthogonal decomposition \[D^b(\X(d)^{\ell\text{-ss}})=\bigoplus_{w\in\mathbb{Z}}D^b(\X(d)^{\ell\text{-ss}})_w\] as in \eqref{orthogonaldecomposition}. There are analogous such decompositions for $D^b(\X(d)^\lambda)$.
The following lemma is a generalization of~\cite[Proposition~3.11]{hls}, where 
a similar result was obtained when the $\mathbb{C}^{\ast}$-rigidification of $\X(d)^{\ell\text{-ss}}$ is Deligne-Mumford. 
\begin{lemma}\label{lem:modify}
Let $w\in\mathbb{Z}$.
For $\ell \in M(d)_{0, \mathbb{R}}^{W_d}$ and $\delta_d \in M(d)_{\mathbb{R}}^{W_d}$ with $\langle 1_d, \delta_d\rangle=w$, 
the category $D^b(\X(d)^{\ell\text{-ss}})_w$ 
is generated by $\mathrm{res}\left(\mathbb{M}(d; \delta_d)\right)$
and objects of the form  
$m_{\lambda}(P)$,
where $P \in D^b(\X(d)^{\lambda})_w$ is generated by 
$\Gamma_{G^{\lambda}(d)}(\chi'') \otimes \mathcal{O}_{\X^{\lambda}}(d)$
such that 
\begin{align}\label{chi:ineq}
    \langle \lambda, \chi'' \rangle 
    <-\frac{1}{2}n_{\lambda}+\langle \lambda, \delta_d \rangle,
\end{align}
and 
$\lambda \in \{\lambda_1, \ldots, \lambda_m\}$ is an antidominant cocharacter 
of $T(d)$ such that $\langle \lambda, \ell \rangle=0$. 
The functor $m_{\lambda}$ is the categorical 
Hall product for the cocharacter $\lambda$, see \eqref{def:Hallprod}.
\end{lemma}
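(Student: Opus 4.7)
The plan is to reduce the statement to a generation result for $D^b(\X(d))_w$ via the essentially surjective restriction functor in \eqref{funct:res}, and then apply an iterative window-sliding argument in the style of the magic window theorem. Since $\X(d)^{\ell\text{-ss}}\subset \X(d)$ is open, every $\mathcal{F}\in D^b(\X(d)^{\ell\text{-ss}})_w$ lifts to some $\widetilde{\mathcal{F}}\in D^b(\X(d))_w$, which is in turn generated by vector bundles $\mathcal{O}_{\X(d)}\otimes \Gamma_{G(d)}(\chi)$ for dominant weights $\chi$ with $\langle 1_d,\chi\rangle = w$. It therefore suffices to show that each such vector bundle lies, modulo objects supported on $\X(d)\setminus \X(d)^{\ell\text{-ss}}$, in the triangulated subcategory generated by $\mathbb{M}(d;\delta_d)$ and the Hall products appearing in the statement.

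For a dominant $\chi$ with $\chi+\rho-\delta_d\notin \mathbf{W}(d)$, I would choose a wall cocharacter $\lambda\in\{\lambda_1,\ldots,\lambda_m\}$ (antidominant, after adjusting sign and reindexing) so that $\chi$ is maximally outside the magic window in the $\lambda$-direction. Applying the Koszul-type resolution on the fiber of $p_\lambda\colon \X(d)^{\lambda\geq 0}\to \X(d)$, in the style of the magic window arguments of \cite{SVdB, hls}, produces a triangle expressing $\mathcal{O}_{\X(d)}\otimes \Gamma_{G(d)}(\chi)$ as an iterated extension of a Hall product $m_\lambda(P)$ with $P\in D^b(\X(d)^{\lambda})_w$ generated by weights $\chi''$ satisfying the inequality \eqref{chi:ineq}, together with further vector bundles $\mathcal{O}_{\X(d)}\otimes \Gamma_{G(d)}(\chi')$ whose weights are strictly closer to $\mathbf{W}(d)$ in the $\lambda$-direction. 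Since a suitably defined distance to the polytope decreases at each step and is bounded below, the induction terminates, yielding generation of $D^b(\X(d))_w$ by $\mathbb{M}(d;\delta_d)$ together with Hall products $m_\lambda(P)$ indexed by $\lambda\in\{\lambda_1,\ldots,\lambda_m\}$ with $P$ below the window.

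It remains to restrict to the semistable locus. For antidominant $\lambda$ associated to a partition $(d_i)_{i=1}^k$ of $d$, the image of $p_\lambda$ consists of representations admitting a filtration with associated graded of dimensions $d_i$. If $\langle \lambda,\ell\rangle\neq 0$, then by the argument of Lemma~\ref{lem:prop} the partial sums $\sum_{j}\langle d_j,\ell\rangle$ cannot all have the sign required by King's criterion, so the image of $p_\lambda$ lies entirely in the unstable locus and $m_\lambda(P)$ restricts to zero in $D^b(\X(d)^{\ell\text{-ss}})_w$. The Hall products with $\langle \lambda,\ell\rangle=0$ survive, and are precisely of the form in the statement.

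The main obstacle will be verifying that the window-sliding resolution only requires the cocharacters $\lambda_1,\ldots,\lambda_m$ associated to the codimension-one faces of $\mathbf{W}(d)$ (rather than arbitrary cocharacters of $T(d)$), and that one can always arrange $P$ to lie below rather than above the magic window. This requires a careful symmetrization argument together with the fact that, since $Q$ is symmetric, the polytope $\mathbf{W}(d)$ is centrally symmetric: the ``above the window'' contribution in direction $\lambda$ can be rewritten as a ``below the window'' contribution in direction $-\lambda$ (or an equivalent antidominant representative after applying the Weyl group), reducing everything to the form asserted.
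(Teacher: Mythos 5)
The proposal captures the right overall strategy — an inductive window-sliding argument modifying \cite[Proposition~3.11]{hls} — but there is a genuine gap in the restriction step. You claim that when $\langle\lambda,\ell\rangle\neq 0$ the image of $p_\lambda$ lies entirely in the $\ell$-unstable locus, so that $m_\lambda(P)$ restricts to zero. This only holds for one sign of $\langle\lambda,\ell\rangle$. For an antidominant $\lambda$ associated to $(d_1,\ldots,d_k)$ with decreasing block weights $a_1>\cdots>a_k$, Abel summation gives
\[
\langle\lambda,\ell\rangle=\sum_{j=1}^{k-1}(a_j-a_{j+1})\,\langle d_1+\cdots+d_j,\ell\rangle,
\]
with every $a_j-a_{j+1}>0$. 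If $\langle\lambda,\ell\rangle<0$, some partial sum $\langle d_1+\cdots+d_j,\ell\rangle$ must be negative, and since every point of $\X(d)^{\lambda\geq 0}$ has a subrepresentation of dimension $d_1+\cdots+d_j$, the image of $p_\lambda$ is indeed unstable. But if $\langle\lambda,\ell\rangle>0$, all partial sums can be nonnegative, the image of $p_\lambda$ generally meets the $\ell$-semistable locus, and $m_\lambda(P)$ need not vanish after restriction. Lemma~\ref{lem:prop} only tells you that some $\langle d_j,\ell\rangle$ is nonzero when $\langle\lambda,\ell\rangle\neq 0$; a \emph{positive} such pairing creates no destabilizing subrepresentation of that dimension. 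Your final paragraph gestures at a central-symmetry argument swapping $\lambda$ and $-\lambda$, but this is aimed at the above-versus-below-window issue for $P$, not at the sign of $\langle\lambda,\ell\rangle$; you do not explain how the inductive invariant you invoke behaves under the swap, nor why the resulting $\chi''$ would satisfy \eqref{chi:ineq}.

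The paper's proof sidesteps this by interleaving generation and restriction rather than separating them. When $\langle\lambda,\ell\rangle\neq 0$, the paper does not form a Hall product: as in \cite[Proposition~3.11]{hls}, it produces a complex of vector bundles in which $\Gamma_{G(d)}(\chi)\otimes\mathcal{O}_{\X(d)}$ appears once, all other terms have strictly smaller invariant $(r_{\chi'},p_{\chi'})$ in a lexicographic order, and whose cohomology is supported on the $\ell$-unstable locus. After restriction this complex is acyclic, $\Gamma_{G(d)}(\chi)$ becomes an iterated extension of lower-invariant bundles, and there is nothing to discard. Only the case $\langle\lambda,\ell\rangle=0$ — which cannot occur in the Deligne--Mumford setting of \cite{hls} and is the genuinely new case here — contributes a Hall product $m_\lambda(\Gamma_{G(d)^\lambda}(\chi)\otimes\mathcal{O}_{\X^\lambda(d)})$, and the bound $r_\chi>1/2$ then yields \eqref{chi:ineq}. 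Your ``first generate over $\X(d)$, then restrict'' plan separates what the paper interleaves, and that separation is precisely where the sign problem becomes fatal; repairing it would essentially amount to redoing the paper's interleaved argument.
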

\begin{proof}
We explain how to modify the proof of \cite[Proposition~3.11]{hls} to obtain the desired conclusion.
   The vector bundles \begin{equation}\label{vbchi}
   \Gamma_{G(d)}(\chi) \otimes \mathcal{O}_{\X(d)^{\ell\text{-ss}}}
   \end{equation}
   for $\chi$ a dominant weight of $G(d)$ such that $\langle 1_d, \chi\rangle=w$
   generate the category $D^b\left(\X(d)^{\ell\text{-ss}}\right)_w$.
We show that \eqref{vbchi}
is generated by the objects in the statement
using induction on the pair 
$(r_{\chi}, p_{\chi})$ (with respect to the lexicographic order), 
where
\begin{align*}
    r_{\chi}:=\mathrm{min}\left\{r \geq 0 \mid 
    \chi+\rho-\delta_d \in r\mathbf{W}(d) \right\}
\end{align*}
and $p_{\chi}$ is the smallest possible number of 
$a_\beta$ equal to $-r_{\chi}$ among 
all ways of writing 
\[\chi+\rho-\delta_d=\sum_{\beta\in\mathscr{A}}a_\beta \beta,\]
with $a_\beta \in [-r_{\chi}, 0]$. 
If $r_{\chi} \le 1/2$, 
then $\Gamma_{G(d)}(\chi) \otimes 
\mathcal{O}_{\X(d)^{\ell\text{-ss}}}$
is an object in $\mathrm{res}\left(\mathbb{M}(d; \delta_d)\right)$, 
so we may assume that $r_{\chi}>1/2$. 

As in the argument of~\cite[Proposition~3.11]{hls},
there is an antidominant cocharacter $\lambda$ of $ST(d)$
such that:
\begin{itemize}
    \item $\langle \lambda, \chi \rangle \leq \langle \lambda, \mu \rangle$
for any $\mu \in -\rho+r\mathbf{W}(d)+\delta_d$, and
\item $\lambda^{\perp}$ is parallel to a face in 
$\mathbf{W}(d)$. 
\end{itemize}

Suppose first that $\langle \lambda, \ell \rangle \neq 0$. 
Then as in the proof of~\cite[Proposition~3.11]{hls}, 
there is a complex of vector bundles
consisting of 
$\Gamma_{G(d)}(\chi) \otimes \mathcal{O}_{\X(d)}$
(which appears once)
and $\Gamma_{G(d)}(\chi') 
\otimes \mathcal{O}_{\X(d)}$
such that $(r_{\chi'}, p_{\chi'})$ is smaller 
than $(r_{\chi}, p_{\chi})$, and supported 
on the $\ell$-unstable locus. The conclusion then follows by induction. 

Suppose next that 
$\langle \lambda, \ell \rangle=0$ (observe that this case did not occur in \cite[Proposition~3.11]{hls}). 
The object 
\[m_{\lambda}(\Gamma_{G(d)^{\lambda}}(\chi) \otimes \mathcal{O}_{\X^{\lambda}(d)})\]
is quasi-isomorphic to a complex of vector bundles (see \cite[Proposition 2.1]{PTzero}) 
consisting of 
$\Gamma_{G(d)}(\chi) \otimes \mathcal{O}_{\X(d)}$
(which appears once)
and $\Gamma_{G(d)}(\chi') \otimes \mathcal{O}_{\X(d)}$
such that $(r_{\chi'}, p_{\chi'})$ is smaller 
than $(r_{\chi}, p_{\chi})$, 
see the last part 
of the proof of~\cite[Proposition~4.1]{P}. 
Since we have 
\begin{align*}
    r_{\chi}=-\frac{\langle \lambda, \chi+\rho-\delta_d\rangle}{\langle \lambda, R^{\lambda>0}(d) \rangle}>\frac{1}{2}
\end{align*}
and $\lambda$ is antidominant, the inequality 
(\ref{chi:ineq}) holds for $\chi''=\chi$. 
The conclusion then follows using induction on $(r_{\chi}, p_{\chi})$. 
\end{proof}

\subsection{The categorical wall-crossing equivalence for symmetric quivers}
In this subsection, 
we give some sufficient conditions for the 
     functor (\ref{funct:res2}) to be an equivalence. As a corollary, we obtain the categorical 
wall-crossing equivalence for quasi-BPS categories of symmetric quivers (and potential zero). 
The argument is similar to~\cite[Theorem~3.2]{hls}, 
with a modification due to the existence of faces of 
$\mathbf{W}(d)$ parallel 
to hyperplanes which contain $M(d)_{0, \mathbb{R}}^{W_d}$. 

For a stability condition $\ell \in M(d)_{0, \mathbb{R}}^{W_d}$, 
let 
\begin{align}\label{KN:X}
    \X(d)=\mathscr{S}_1 \sqcup \cdots 
    \sqcup \mathscr{S}_N \sqcup 
    \X(d)^{\ell\text{-ss}}
\end{align}
be the Kempf-Ness stratification of $\X(d)$
with center $\mathscr{Z}_i \subset \mathscr{S}_i$ for $1\leq i\leq N$. Consider the associated one parameter subgroups 
\begin{equation}\label{mucha}
    \mu_1,\ldots,\mu_N\colon \mathbb{C}^{\ast} \to T(d),
\end{equation} see~\cite[Section~2]{halp} for a review of Kempf-Ness stratification.
\begin{prop}\label{lem:incW}
Suppose that $2\langle \mu_i, \delta_d \rangle \notin 
\mathbb{Z}$ for all $1\leq i\leq N$. 
Then  
the functor (\ref{funct:res2}) 
is fully-faithful: 
\[\mathrm{res} \colon \mathbb{M}(d; \delta_d) \hookrightarrow 
\mathbb{M}^{\ell}(d; \delta_d).\] 
\end{prop}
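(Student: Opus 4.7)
The plan is to place the generators of $\mathbb{M}(d;\delta_d)$ inside the Halpern-Leistner grade-restriction window for each Kempf-Ness stratum $\mathscr{S}_i$ in (\ref{KN:X}), and then to invoke the fully-faithfulness half of the standard restriction theorem to the semistable locus. The hypothesis $2\langle \mu_i,\delta_d\rangle\notin\mathbb{Z}$ is precisely the numerical condition needed to force the integer-valued $\mu_i$-weights of objects in $\mathbb{M}(d;\delta_d)$ to lie strictly inside the closed magic window from Lemma~\ref{lemma:alt}, and hence inside a half-open window of $n_{\mu_i}$ consecutive integers.

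Concretely, I would fix a Kempf-Ness stratum $\mathscr{S}_i$ with center $\mathscr{Z}_i$ and associated cocharacter $\mu_i$. For any generator $\mathcal{O}_{\X(d)}\otimes\Gamma_{G(d)}(\chi)$ of $\mathbb{M}(d;\delta_d)$, Lemma~\ref{lemma:alt} applied with $\lambda=\mu_i$ gives that every $T(d)$-weight $\chi'$ of $\Gamma_{G(d)}(\chi)$ satisfies
\begin{align*}
\langle \mu_i,\chi'\rangle\in\left[-\tfrac{1}{2}n_{\mu_i}+\langle \mu_i,\delta_d\rangle,\ \tfrac{1}{2}n_{\mu_i}+\langle \mu_i,\delta_d\rangle\right].
\end{align*}
Because $n_{\mu_i}\in\mathbb{Z}$ and $2\langle \mu_i,\delta_d\rangle\notin\mathbb{Z}$ by hypothesis, both endpoints of this interval are non-integers; since $\langle \mu_i,\chi'\rangle\in\mathbb{Z}$, the weight must lie strictly inside. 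A short floor/ceiling computation (handled uniformly by splitting on the parity of $n_{\mu_i}$ and on whether the fractional part of $\langle \mu_i,\delta_d\rangle$ is below or above $1/2$) then shows that these integer weights all lie in the interval of exactly $n_{\mu_i}$ consecutive integers
\begin{align*}
\left\{\left\lceil-\tfrac{1}{2}n_{\mu_i}+\langle \mu_i,\delta_d\rangle\right\rceil, \ldots, \left\lfloor\tfrac{1}{2}n_{\mu_i}+\langle \mu_i,\delta_d\rangle\right\rfloor\right\}.
\end{align*}
This is exactly the grade-restriction condition for the stratum $\mathscr{S}_i$, applied to the $\mu_i$-weights of the restriction of the generator to $\mathscr{Z}_i$.

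The final step is to invoke the fully-faithfulness half of Halpern-Leistner's grade-restriction theorem (\cite{halp}, cf.~\cite{hls}): when the $\mu_i$-weights of every generator on each center $\mathscr{Z}_i$ lie in such a window, the restriction functor to the semistable locus is fully-faithful on the subcategory generated by them. Combined with Lemma~\ref{lem:funct:res}, which locates the image inside $\mathbb{M}^{\ell}(d;\delta_d)$, this gives the desired embedding.

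The main obstacle, as I see it, is justifying the use of Halpern-Leistner's fully-faithfulness statement in the present setting, where $\X(d)^{\ell\text{-ss}}$ typically contains strictly semistable points and the $\mathbb{C}^{\ast}$-rigidified stack is not Deligne-Mumford. Fortunately, fully-faithfulness (as opposed to essential surjectivity) only requires vanishing of local cohomology along each Kempf-Ness stratum $\mathscr{S}_i$, which is controlled entirely by the $\mu_i$-weight condition at $\mathscr{Z}_i$ and is insensitive to the presence of strictly semistable loci. Essential surjectivity would be more delicate and would depend on the generation statement of Lemma~\ref{lem:modify}, but only fully-faithfulness is claimed here.
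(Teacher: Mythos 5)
Your proposal is correct and follows essentially the same route as the paper: the paper also sets $k_i=-\tfrac{1}{2}n_{\mu_i}+\langle \mu_i,\delta_d\rangle$, observes that the hypothesis $2\langle \mu_i,\delta_d\rangle\notin\mathbb{Z}$ forces $\mathbb{M}(d;\delta_d)$ into the Halpern-Leistner/BFK window $\mathbb{W}^{\ell}_{k_\bullet}$, and deduces fully-faithfulness from the window theorem (the paper simply quotes the full window equivalence with $D^b(\X(d)^{\ell\text{-ss}})$, which holds without any Deligne--Mumford assumption, rather than only its fully-faithful half). Your explicit integrality/ceiling argument is just an unpacking of the paper's one-line appeal to the assumption on $\delta_d$.
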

\begin{proof}
  For a choice $k_\bullet=(k_i)_{i=1}^N \in \mathbb{R}^N$, 
there is a ``window category"
$\mathbb{W}_{k_{\bullet}}^{\ell} \subset 
D^b(\X(d))$
such that the 
functor (\ref{funct:res}) restricts 
to the equivalence (see~\cite{halp, MR3895631}):
\begin{align*}
\mathrm{res} \colon 
    \mathbb{W}_{k_{\bullet}}^{\ell} \stackrel{\sim}{\to} 
    D^b(\X(d)^{\ell\text{-ss}}). 
\end{align*}
The subcategory $\mathbb{W}_{k_{\bullet}}^{\ell}$
consists of objects $P$ such that $P|_{\mathscr{Z}_i}$
has $\mu_i$-weights contained in the interval
$[k_i, k_i+n_{\mu_i})$, where the width $n_{\mu_i}$ is defined 
by (\ref{nlambdadef}) for $\lambda=\mu_i$. 
For $1\leq i\leq N$, let \[k_i:=-n_{\mu_i}/2+\langle \mu_i, \delta_d \rangle.\]
By the assumption of $\delta_d$, 
we have that
$\mathbb{M}(d; \delta_d) \subset \mathbb{W}_{k_{\bullet}}^{\ell}$. 
It thus follows that the functor (\ref{funct:res2}) 
is fully-faithful.
\end{proof}

\begin{prop}\label{prop:esssurj}
For any $\delta_d\in M(d)^{W_d}_{\mathbb{R}}$, the functor (\ref{funct:res2}) is essentially 
surjective: 
\[\mathrm{res} \colon \mathbb{M}(d; \delta_d) \twoheadrightarrow \mathbb{M}^{\ell}(d; \delta_d).\] 
\end{prop}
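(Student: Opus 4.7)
The plan is to show that any $F \in \mathbb{M}^\ell(d;\delta_d)$ lies in the image of the restriction functor from $\mathbb{M}(d;\delta_d)$, by induction on the total dimension $\dd$. The argument follows the strategy of~\cite[Theorem~3.2]{hls}, but uses Lemma~\ref{lem:modify} (which has the extra case $\langle \lambda,\ell\rangle=0$ not present in loc. cit.) as the main ingredient. First, I apply Lemma~\ref{lem:modify}, which presents $F$ as an object built by iterated extensions from $\mathrm{res}(\mathbb{M}(d;\delta_d))$ and Hall products $m_\lambda(P)$, where $\lambda \in \{\lambda_1,\ldots,\lambda_m\}$ is antidominant with $\langle \lambda,\ell\rangle = 0$ and $P$ is generated by $\Gamma_{G(d)^\lambda}(\chi'') \otimes \mathcal{O}_{\X(d)^\lambda}$ with $\langle \lambda,\chi''\rangle < -\tfrac{1}{2}n_\lambda + \langle \lambda,\delta_d\rangle$. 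It thus suffices to prove that each such Hall product $m_\lambda(P)$ already lies in $\mathrm{res}(\mathbb{M}(d;\delta_d))$.

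For this, I use the induction. By Lemma~\ref{lem:prop}, the constraint $\langle \lambda,\ell\rangle = 0$ implies that each part $d_j$ of the associated partition of $d$ satisfies $\langle d_j,\ell\rangle = 0$; in particular the fixed stack $\X(d)^\lambda = \prod_{j=1}^k \X(d_j)$ lies inside $\X(d)^{\ell\text{-ss}}$, and the inductive hypothesis gives essential surjectivity for each smaller factor $\X(d_j)$. A weight-reduction argument analogous to the one used in the proof of Lemma~\ref{lem:modify} (in turn modeled on~\cite[Proposition~4.1]{P}) then expresses $m_\lambda(P)$, modulo further $\mathrm{res}(\mathbb{M}(d;\delta_d))$-contributions, as built from vector bundles $\Gamma_{G(d)}(\chi') \otimes \mathcal{O}_{\X(d)}$ with $\chi'+\rho-\delta_d \in \mathbf{W}(d)$, i.e., from generators of $\mathbb{M}(d;\delta_d)$.

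The main obstacle is the termination of this iterative procedure: each application of Lemma~\ref{lem:modify} may reintroduce Hall products along cocharacters with $\langle \lambda,\ell\rangle = 0$, so one must verify that a suitable complexity invariant (analogous to the lexicographic pair $(r_\chi,p_\chi)$ used in the proof of Lemma~\ref{lem:modify}) strictly decreases. The key novelty compared to~\cite[Theorem~3.2]{hls} is precisely the case $\langle \lambda,\ell\rangle = 0$, where one cannot discard the Hall product contribution as being supported on the unstable locus (as one does in the case $\langle \lambda,\ell\rangle\neq 0$); instead, Lemma~\ref{lem:prop} and the inductive hypothesis together control this case, which is exactly the point where the non-Deligne--Mumford nature of the problem (i.e., the existence of faces of $\mathbf{W}(d)$ parallel to $M(d)_{0,\mathbb{R}}^{W_d}$) enters.
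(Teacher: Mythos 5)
Your central reduction is not correct. You claim it suffices to show that each Hall product $m_{\lambda}(P)$ appearing in Lemma~\ref{lem:modify} (with $\langle \lambda,\ell\rangle=0$ and $\langle \lambda,\chi''\rangle < -\tfrac{1}{2}n_{\lambda}+\langle\lambda,\delta_d\rangle$) restricts into $\mathrm{res}\left(\mathbb{M}(d;\delta_d)\right)$, and you then try to prove this membership by a weight-reduction plus induction on $\underline{d}$. But these objects are precisely \emph{not} in the quasi-BPS category: they generate the complementary piece of a semiorthogonal decomposition. Indeed, the paper's proof establishes the vanishing $\Hom\left(\mathbb{M}^{\ell}(d;\delta_d),\mathrm{res}\,m_{\lambda}(P)\right)=0$ (see (\ref{Hom:vanish})); since $\mathrm{res}\left(\mathbb{M}(d;\delta_d)\right)\subset \mathbb{M}^{\ell}(d;\delta_d)$ by Lemma~\ref{lem:funct:res}, if $\mathrm{res}\,m_{\lambda}(P)$ also lay in $\mathrm{res}\left(\mathbb{M}(d;\delta_d)\right)$ it would be self-orthogonal, hence zero — and these restrictions are not zero in general. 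The case $\ell=0$ already refutes your claim: there $\mathrm{res}$ is the identity, and the objects $m_{\lambda}(P)$ with low $\lambda$-weight lie in summands of the decomposition of $D^b(\X(d))_w$ complementary to $\mathbb{M}(d;\delta_d)$, not in $\mathbb{M}(d;\delta_d)$ itself. So the "termination" issue you flag is not the obstacle; the membership statement you are iterating toward is false. (A secondary gap: even if membership held, passing from "every object of $\mathbb{M}^{\ell}(d;\delta_d)$ is generated by objects in the image of $\mathrm{res}$" to "$\mathrm{res}$ is essentially surjective onto $\mathbb{M}^{\ell}(d;\delta_d)$" requires lifting morphisms and cones, i.e. fullness of (\ref{funct:res2}), which is only available under the genericity hypothesis of Proposition~\ref{lem:incW}, whereas the statement is for arbitrary $\delta_d$.)

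The actual argument runs in the opposite direction: one proves the semiorthogonality, not the membership. Using Lemma~\ref{lem:prop}, the constraint $\langle\lambda,\ell\rangle=0$ shows that a filtered representation is $\ell$-semistable exactly when its subquotients are, giving the Cartesian diagram (\ref{C:diagram}); by base change and adjunction the vanishing (\ref{Hom:vanish}) reduces to a sheaf-level vanishing which is checked formally locally on $X(d)^{\ell\text{-ss}}$, where Lemma~\ref{lem:form:A} (mapping stacks out of $\Theta$ and $B\mathbb{C}^{\ast}$, Luna slices) puts the correspondence in the form $A^{\lambda}/G_p^{\lambda}\leftarrow A^{\lambda\geq 0}/G_p^{\lambda\geq 0}\to A/G_p$; there the $\lambda$-weights of $p_{\lambda}^{\ell\ast}E$, for $E\in\mathbb{M}^{\ell}(d;\delta_d)$, are strictly larger than those of $q_{\lambda}^{\ell\ast}j^{\ast}P$ by (\ref{cond:wtnuP}) and (\ref{chi:ineq}), forcing the vanishing. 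Combined with Lemma~\ref{lem:modify} this yields a semiorthogonal decomposition $D^b(\X(d)^{\ell\text{-ss}})=\langle\mathcal{C},\mathrm{res}\,\mathbb{M}(d;\delta_d)\rangle$ with $\mathcal{C}$ generated by the $\mathrm{res}\,m_{\lambda}(P)$, and the vanishing places $\mathbb{M}^{\ell}(d;\delta_d)$ in the left complement of $\mathcal{C}$, hence inside $\mathrm{res}\,\mathbb{M}(d;\delta_d)$. No induction on $\underline{d}$ is used, and none of your inductive input on the factors $\X(d_j)$ enters; I recommend reworking your argument around the Hom-vanishing rather than around membership of the Hall products.
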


\begin{proof}
We will use Lemma \ref{lem:modify}.
 We first explain that it suffices to show that 
\begin{align}\label{Hom:vanish}
    \Hom\left(\mathbb{M}^{\ell}(d; \delta_d), 
   \mathrm{res}\left(m_{\lambda}(P)\right)\right)=0,
\end{align}
where $m_\lambda$ is the categorical Hall product and $\lambda$ and $P$ are given as in Lemma~\ref{lem:modify}. 
If the above vanishing holds, then 
\begin{align*}
 \Hom\left(\mathrm{res}\left(\mathbb{M}(d; \delta_d)\right), \mathrm{res}\left(m_{\lambda}(P)\right)\right)=0,
 \end{align*}
hence by Lemma~\ref{lem:modify} 
there is a semiorthogonal decomposition 
\begin{align*}
D^b(\X(d)^{\ell\text{-ss}})
=\left\langle \mathcal{C}, \mathrm{res}\left(\mathbb{M}(d; \delta_d)\right) \right\rangle,
\end{align*}
where $\mathcal{C}$ is the subcategory of $D^b(\X(d)^{\ell\text{-ss}})$ generated by objects $\mathrm{res}\left(m_{\lambda}(P)\right)$
as above (or as in Lemma \ref{lem:modify}). 
Observe that $\mathbb{M}^{\ell}(d; \delta_d)$ is in the left complement of $\mathcal{C}$ in $D^b(\X(d)^{\ell\text{-ss}})$.
Thus the vanishing (\ref{Hom:vanish}) implies that indeed \eqref{funct:res2} is essentially surjective.

Below we show the vanishing (\ref{Hom:vanish}).
The stack $\X^{\lambda \geq 0}(d)$
is the stack of filtrations 
\begin{align}\label{filt:R}
    0=R_0 \subset R_1 \subset R_2 \subset \cdots \subset R_k=R
    \end{align}
of $Q$-representations such that $R_i/R_{i-1}$
has dimension vector $d_i$. 
By Lemma~\ref{lem:prop}, we have that
$\langle \ell, d_i\rangle=0$ for $1\leq i\leq k$. It follows that 
$R$ is $\ell$-semistable if and only if 
every $R_i/R_{i-1}$ is $\ell$-semistable. 
Therefore there are Cartesian squares: 
\begin{equation}\label{C:diagram}
    \begin{tikzcd}
        \times_{i=1}^k \X(d_i)^{\ell\text{-ss}}
\arrow[d, hook, "j"]  & \mathcal{N} \arrow[l, "q^{\ell}_{\lambda}"'] \arrow[r, "p^{\ell}_{\lambda}"] \arrow[d, hook, "j"]\arrow[dr, phantom, "\square"] \arrow[dl, phantom, "\square"]
& 
\X(d)^{\ell\text{-ss}} \arrow[d, hook, "j"] \\
\X(d)^{\lambda} & \arrow[l, "q_{\lambda}"'] \X(d)^{\lambda \geq 0} \arrow[r, "p_{\lambda}"] & \X(d), 
    \end{tikzcd}
\end{equation}
where each vertical arrow is an open immersion. 
The stack $\mathcal{N}$ is the moduli stack 
of filtrations (\ref{filt:R}) such that each 
$R_i/R_{i-1}$ is $\ell$-semistable with 
dimension vector $d_i$. 
Using proper base change and adjunction, the vanishing (\ref{Hom:vanish}) is 
equivalent to the vanishing of 
\begin{align}\label{vanish:l}
    \Hom(E, p_{\lambda\ast}^{\ell}q_{\lambda}^{\ell\ast}
    j^{\ast}P)
    =\Hom(p_{\lambda}^{\ell\ast}E, q_{\lambda}^{\ell\ast}j^{\ast}P)
\end{align}
for any $E \in \mathbb{M}^{\ell}(d; \delta_d)$. 
Let 
$\pi$ be the following composition: 
\begin{align*}\pi \colon \mathcal{N} \to \X(d)^{\ell\text{-ss}} \to 
X(d)^{\ell\text{-ss}}.
\end{align*}
By the local-to-global Hom spectral sequence, it is enough to show
the vanishing of
\begin{align}\label{vanish:p}
\pi_{\ast}\mathcal{H}om(p_{\lambda}^{\ell\ast}E, q_{\lambda}^{\ell\ast}j^{\ast}P)=0. 
\end{align}
Further, it suffices to show the vanishing (\ref{vanish:p})
formally locally 
at each point $p \in X(d)^{\ell\text{-ss}}$. 
We abuse notation and also denote by $p$ the unique closed point 
in the fiber of $\X(d)^{\ell\text{-ss}} \to X(d)^{\ell\text{-ss}}$
and we may assume that $\lambda$ is a
cocharacter of $G_p:=\mathrm{Aut}(p)$. 
By Lemma~\ref{lem:form:A} below, 
the top diagram of (\ref{C:diagram})
is, formally locally over $p\in X(d)^{\ell\text{-ss}}$, 
of the form 
\begin{align}\label{form:A}
 A^{\lambda}/G_p^{\lambda} \leftarrow A^{\lambda \geq 0}/G_p^{\lambda \geq 0} \to A/G_p,
 \end{align}
 where $A$ is a smooth affine scheme (of finite type over a complete local ring) with a $G_p$-action and good moduli space map $\pi_A\colon A/G_p\to A\ssslash G_p$. 
Then the vanishing (\ref{vanish:p}) at $p$ holds 
since the $\lambda$-weights of $p_{\lambda}^{\ell\ast}E$
are strictly larger than those of $q_{\lambda}^{\ell\ast}j^{\ast}P$
by the definition of $\mathbb{M}^{\ell}(d; \delta_d)$
and the inequality (\ref{chi:ineq}), see~\cite[Corollary~3.17, Amplification~3.18]{halp}, \cite[Proposition 4.2]{P}. 
\end{proof}
We have used the following lemma: 
\begin{lemma}\label{lem:form:A}
Let $X(d)_p^{\ell\text{-ss}}=\Spec \widehat{\mathcal{O}}_{X(d)^{\ell \text{-ss}}, p}$. 
The top diagram of (\ref{C:diagram})
pulled back via $X(d)_p^{\ell\text{-ss}} \to X(d)^{\ell \text{-ss}}$
is of the form (\ref{form:A}). 
\end{lemma}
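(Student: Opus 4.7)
The plan is to invoke the local structure theorem for stacks with good moduli spaces (Alper--Hall--Rydh's Luna-type slice theorem) in order to present $\X(d)^{\ell\text{-ss}}$ formally locally at $p$ as a quotient stack $A/G_p$, and then to check that the top row of (\ref{C:diagram}) pulls back to the attractor/fixed-locus diagram of this presentation. The content splits into three steps.

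First I would apply the local structure theorem to the good moduli space morphism $\X(d)^{\ell\text{-ss}} \to X(d)^{\ell\text{-ss}}$. Since $p$ is a closed point and the unique closed point in the fiber over $p$ has stabilizer $G_p$, one obtains an étale neighborhood $[A'/G_p] \to \X(d)^{\ell\text{-ss}}$, with origin mapping to this closed point and inducing an étale map on good moduli spaces. Base changing along $X(d)^{\ell\text{-ss}}_p = \Spec \widehat{\mathcal{O}}_{X(d)^{\ell\text{-ss}}, p} \to X(d)^{\ell\text{-ss}}$, one gets an identification
\[
\X(d)^{\ell\text{-ss}} \times_{X(d)^{\ell\text{-ss}}} X(d)^{\ell\text{-ss}}_p \;\cong\; A/G_p,
\]
where $A$ is a smooth affine scheme over the complete local ring $\widehat{\mathcal{O}}_{X(d)^{\ell\text{-ss}}, p}$ carrying a $G_p$-action, and the cocharacter $\lambda$ is a cocharacter of $G_p$ (as $\lambda$ fixes the closed point over $p$ by assumption).

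Next I would identify the pullbacks of $\X(d)^{\lambda}$ and $\X(d)^{\lambda \geq 0}$. Formation of the $\lambda$-fixed and $\lambda$-attracting stacks is compatible with smooth (and in particular étale) base change on the target: if $\X$ is a quotient stack with an action of a cocharacter $\lambda$ of its automorphism group at a closed point, then $\X^{\lambda}$ and $\X^{\lambda \geq 0}$ commute with formal completion along the good moduli space. Applied to the presentation above, this gives
\[
\X(d)^{\lambda} \times_{X(d)^{\ell\text{-ss}}} X(d)^{\ell\text{-ss}}_p \;\cong\; A^{\lambda}/G_p^{\lambda}, \qquad \X(d)^{\lambda \geq 0} \times_{X(d)^{\ell\text{-ss}}} X(d)^{\ell\text{-ss}}_p \;\cong\; A^{\lambda \geq 0}/G_p^{\lambda \geq 0}.
\]

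Finally, I need to see that the pullback of $\mathcal{N}$ coincides with $A^{\lambda \geq 0}/G_p^{\lambda \geq 0}$, and similarly for the factor $\times_{i=1}^{k} \X(d_i)^{\ell\text{-ss}}$. The stack $\mathcal{N}$ classifies $\lambda$-filtrations all of whose subquotients are $\ell$-semistable. Since $\ell$-semistability is an open condition on $X(d)$, every closed point of $A/G_p$ mapping into $X(d)^{\ell\text{-ss}}_p$ is $\ell$-semistable; combined with the fact (used in the proof of Proposition~\ref{prop:esssurj}, and based on Lemma~\ref{lem:prop}, which ensures $\langle d_i, \ell\rangle = 0$) that a representation is $\ell$-semistable if and only if all subquotients of the associated $\lambda$-filtration are, the semistability conditions defining $\mathcal{N}$ become automatic after base change. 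Hence the pullback of $\mathcal{N}$ equals the full $A^{\lambda\geq 0}/G_p^{\lambda \geq 0}$, and the pullback of $\times_i \X(d_i)^{\ell\text{-ss}}$ equals $A^{\lambda}/G_p^{\lambda}$, giving the diagram (\ref{form:A}).

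The main obstacle is the first step: invoking the correct form of the local structure theorem so that the base change along the formal neighborhood of $p$ in the good moduli space produces an exact quotient presentation $A/G_p$. Steps two and three are then essentially formal, relying on the standard compatibility of fixed/attracting loci with base change and on the already-established characterization of semistability of filtrations.
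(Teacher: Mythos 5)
Your overall strategy matches the paper's: apply a Luna-type slice theorem to present $\X(d)^{\ell\text{-ss}}$ formally locally as $A/G_p$, and then argue that the attractor/fixed-locus diagram commutes with this base change. The difference, which amounts to a genuine gap, is in how the crucial compatibility step is justified.

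Your second step asserts, without proof, that ``formation of the $\lambda$-fixed and $\lambda$-attracting stacks is compatible with smooth (and in particular étale) base change on the target'' and hence ``commutes with formal completion along the good moduli space.'' This is exactly the point that needs an argument, and it is not immediate from the definitions: $\X(d)^{\lambda}$, $\X(d)^{\lambda \geq 0}$, and $\mathcal{N}$ are defined via a specific linear presentation of $\X(d)$ as a quotient of a $G(d)$-representation, whereas after base change along $\Spec \widehat{\mathcal{O}}_{X(d)^{\ell\text{-ss}}, p} \to X(d)^{\ell\text{-ss}}$ one only has a quotient stack $A/G_p$ of an affine scheme, not a representation, and it is not a priori clear that ``$A^{\lambda\geq 0}/G_p^{\lambda\geq 0}$'' in this new presentation identifies with the pullback of $\mathcal{N}$. (You also invoke ``smooth/étale base change'' while the completion map is flat but not étale, which compounds the imprecision.) The paper's proof supplies precisely this missing justification: it recognizes the top row of (\ref{C:diagram}) as consisting of components of the intrinsic diagram
\[
\mathrm{Map}(B\mathbb{C}^{\ast}, \X(d)^{\ell\text{-ss}}) \leftarrow \mathrm{Map}(\Theta, \X(d)^{\ell\text{-ss}}) \to \X(d)^{\ell\text{-ss}},
\]
with $\Theta = \mathbb{A}^1/\mathbb{C}^*$, and then appeals to the fact that these mapping stacks commute with pullbacks along good moduli space morphisms (\cite[Corollary~1.30.1]{halpinstab}), together with the structure theorem for their components over a quotient stack $A/G_p$ (\cite[Theorem~1.37]{halpinstab}). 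In short, the intrinsic characterization via mapping stacks is what makes ``commutes with base change along the good moduli space'' a theorem rather than an assertion; without invoking it, your step two is incomplete. The third step (identifying $\mathcal{N}$ with the full attracting locus using openness of semistability and Lemma~\ref{lem:prop}) is sound and agrees in spirit with the paper's handling of $\mathcal{N}$ as an open and closed substack of $\mathrm{Map}(\Theta, \X(d)^{\ell\text{-ss}})$.
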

\begin{proof}
Consider the stack $\Theta=\mathbb{A}^1/\mathbb{C}^{\ast}$ .
Since $\mathcal{N}$ is the moduli
stack of filtrations of $\ell$-semistable 
objects (\ref{filt:R}), the top diagram of (\ref{C:diagram}) is 
a component of the diagram 
\begin{align}\label{theta:map}
    \mathrm{Map}(B\mathbb{C}^{\ast}, \X(d)^{\ell\text{-ss}}) \leftarrow 
    \mathrm{Map}(\Theta, \X(d)^{\ell\text{-ss}}) \to
    \X(d)^{\ell\text{-ss}},
\end{align}
where the 
horizontal arrows are evaluation maps 
for mapping stacks from $B\mathbb{C}^{\ast}$
or $\Theta$, see~\cite{halpinstab}. 
Namely, $\mathcal{N}$ is an open and closed
substack of $\mathrm{Map}(\Theta, \X(d)^{\ell\text{-ss}})$, 
and the top diagram in (\ref{C:diagram}) is the restriction 
of (\ref{theta:map}) to $\mathcal{N}$. 
By Luna \'etale slice theorem, 
the pull-back of $\X(d)^{\ell\text{-ss}} \to X(d)^{\ell\text{-ss}}$
via $X(d)_p^{\ell \text{-ss}} \to X(d)^{\ell\text{-ss}}$
is of the form 
$A/G_p$, where $A$ is a smooth affine scheme 
of finite type over $X(d)_p^{\ell\text{-ss}}$
with an action of $G_p$ and good moduli space map $\pi_A\colon A/G_p\to A\ssslash G_p$
Since the mapping stacks from $B\mathbb{C}^{\ast}$
or 
$\Theta$ commute with 
pull-backs of maps to good moduli spaces, see~\cite[Corollary~1.30.1]{halpinstab}, 
the pull-back of the top diagram in (\ref{C:diagram}) via 
$X(d)_p^{\ell \text{-ss}} \to X(d)^{\ell\text{-ss}}$
consists of compatible connected components of the stacks: 
\begin{align}\label{Map2}
\mathrm{Map}(B\mathbb{C}^{\ast}, A/G_p) \leftarrow 
\mathrm{Map}(\Theta, A/G_p) \to A/G_p. 
\end{align}
Such connected components are of the form (\ref{form:A}) for some cocharacter $\lambda$ of $G_p$, see~\cite[Theorem~1.37]{halpinstab}, and thus the conclusion follows.
\end{proof}

Recall the cocharacters $\{\lambda_1,\ldots,\lambda_m\}$ from \eqref{lambdacha} and the cocharacters $\{\mu_1,\ldots,\mu_N\}$ from \eqref{mucha}.
\begin{defn}\label{def:ul}
    For $\ell\in M(d)_{0,\mathbb{R}}^{W_d}$, let $U_\ell\subset M(d)^{W_d}_\mathbb{R}$ be the dense open subset of weights $\delta_d$ such that $2\langle \mu_i, \delta_d \rangle \notin \mathbb{Z}$. 
\end{defn}

 Propositions~\ref{lem:incW} and \ref{prop:esssurj} imply the following: 
\begin{thm}\label{thm:catwall}
 Suppose that the pair $(\ell, \delta_d)\in M(d)_{0,\mathbb{R}}^{W_d}\times M(d)^{W_d}_\mathbb{R}$ satisfies $\delta_d \in U_{\ell}$. 
 Then the functor (\ref{funct:res2}) is an equivalence: 
        \[\mathrm{res} \colon \mathbb{M}(d; \delta_d)
        \stackrel{\sim}{\to} \mathbb{M}^{\ell}(d; \delta_d).\]
   \end{thm}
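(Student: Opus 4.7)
The statement simply combines the two previous propositions, so the proof is essentially a one-liner. Here is how I would justify it and what the content of the cited results amounts to.

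The plan is to invoke Proposition \ref{lem:incW} for fully faithfulness and Proposition \ref{prop:esssurj} for essential surjectivity. The genericity hypothesis $\delta_d \in U_\ell$, i.e. $2\langle \mu_i, \delta_d\rangle \notin \mathbb{Z}$ for each Kempf--Ness cocharacter $\mu_i$ of the stratification (\ref{KN:X}), is exactly what is needed to apply Proposition \ref{lem:incW}: it ensures that the magic window inequality (\ref{wt:cond}) defining $\mathbb{M}(d;\delta_d)$ places every generator strictly inside the half-open Halpern-Leistner--Sam window
\[
[k_i, k_i + n_{\mu_i}), \qquad k_i := -n_{\mu_i}/2 + \langle \mu_i, \delta_d\rangle,
\]
so that $\mathbb{M}(d;\delta_d) \subset \mathbb{W}^\ell_{k_\bullet}$; the equivalence $\mathrm{res}\colon \mathbb{W}^\ell_{k_\bullet}\stackrel{\sim}{\to} D^b(\X(d)^{\ell\text{-ss}})$ of \cite{halp, MR3895631} then restricts to show that $\mathrm{res}$ is fully faithful on $\mathbb{M}(d;\delta_d)$.

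For essential surjectivity, Proposition \ref{prop:esssurj} applies with no genericity assumption at all. Its argument uses Lemma \ref{lem:modify}, which generates $D^b(\X(d)^{\ell\text{-ss}})_w$ from $\mathrm{res}(\mathbb{M}(d;\delta_d))$ together with restrictions of Hall products $m_\lambda(P)$ indexed by $\lambda \in \{\lambda_1,\ldots,\lambda_m\}$ with $\langle \lambda, \ell\rangle = 0$, where $P$ satisfies the strict weight inequality (\ref{chi:ineq}). Lemma \ref{lem:prop} then ensures that $\ell$-semistability is preserved under the relevant filtration, so base change in the Cartesian diagram (\ref{C:diagram}) applies, and a formally local analysis via Luna's slice theorem (Lemma \ref{lem:form:A}) reduces the required Hom-vanishing (\ref{Hom:vanish}) to the usual weight comparison of~\cite{halp,P}.

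Putting the two propositions together, $\mathrm{res}\colon \mathbb{M}(d;\delta_d)\to \mathbb{M}^\ell(d;\delta_d)$ is a fully faithful functor that generates the target under a set of objects orthogonal to its image, hence an equivalence. The only subtlety worth double-checking is that the genericity condition $\delta_d\in U_\ell$ controls \emph{all} Kempf--Ness strata (not just the codimension-one walls $\lambda_1,\ldots,\lambda_m$ of $\mathbf{W}(d)$), but this is exactly the input to Proposition \ref{lem:incW}; once that is granted, no further argument is needed and the theorem is immediate.
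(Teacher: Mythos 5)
Your proposal is correct and matches the paper exactly: the paper states Theorem \ref{thm:catwall} as an immediate consequence of Proposition \ref{lem:incW} (fully faithfulness, using $\delta_d \in U_\ell$ to place $\mathbb{M}(d;\delta_d)$ inside the window $\mathbb{W}^\ell_{k_\bullet}$) together with Proposition \ref{prop:esssurj} (essential surjectivity, with no genericity needed). Nothing further is required.
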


We mention two corollaries of Theorem \ref{thm:catwall}: 
\begin{cor}\label{cor:lgen}
   For weights $\ell, \ell' \in M(d)_{0, \mathbb{R}}^{W_d}$
        and $\delta_d \in U_{\ell} \cap U_{\ell'}$, there is an equivalence: 
        \begin{align*}
            \mathbb{M}^{\ell}(d; \delta_d) \simeq \mathbb{M}^{\ell'}(d; \delta_d). 
        \end{align*}
\end{cor}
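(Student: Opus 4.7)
The plan is to derive the equivalence directly by composing the two instances of Theorem~\ref{thm:catwall} provided by the hypothesis $\delta_d \in U_\ell \cap U_{\ell'}$. No new argument is needed; the work has already been done in the proofs of Propositions~\ref{lem:incW} and \ref{prop:esssurj}, which together yield the wall-crossing equivalence in Theorem~\ref{thm:catwall}.

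Concretely, first I would apply Theorem~\ref{thm:catwall} with stability parameter $\ell$: since $\delta_d \in U_\ell$ by assumption, the restriction functor gives an equivalence
\[
\mathrm{res}^\ell \colon \mathbb{M}(d; \delta_d) \stackrel{\sim}{\to} \mathbb{M}^\ell(d; \delta_d).
\]
Next I would apply Theorem~\ref{thm:catwall} with stability parameter $\ell'$: since $\delta_d \in U_{\ell'}$, we likewise obtain
\[
\mathrm{res}^{\ell'} \colon \mathbb{M}(d; \delta_d) \stackrel{\sim}{\to} \mathbb{M}^{\ell'}(d; \delta_d).
\]
The desired equivalence is then the composition $\mathrm{res}^{\ell'} \circ (\mathrm{res}^\ell)^{-1}$.

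There is essentially no obstacle: the condition $\delta_d \in U_\ell \cap U_{\ell'}$ ensures that the integrality condition $2\langle \mu_i, \delta_d \rangle \notin \mathbb{Z}$ (over the Kempf--Ness cocharacters of \eqref{mucha}) holds simultaneously for the Kempf--Ness stratifications associated to both $\ell$ and $\ell'$, which is precisely what is required for the magic window $\mathbb{M}(d;\delta_d)$ to serve as a common ``model'' mapping isomorphically onto both $\ell$- and $\ell'$-semistable versions. Conceptually, the intermediate category $\mathbb{M}(d; \delta_d)$ plays the role of a categorical crepant resolution of $X(d)$ that bypasses the wall between $\ell$ and $\ell'$; since both stability conditions see the same magic window (after restriction to their respective semistable loci), the resulting derived equivalence is a categorical manifestation of the fact that BPS invariants are preserved under wall-crossing, in line with the philosophy of Theorem~\ref{thm:intro5}.
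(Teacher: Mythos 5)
Your proof is correct and is exactly the argument the paper intends: Corollary~\ref{cor:lgen} is stated as an immediate consequence of Theorem~\ref{thm:catwall}, obtained by applying that theorem once for $\ell$ and once for $\ell'$ (both legitimate since $\delta_d \in U_\ell \cap U_{\ell'}$) and composing $\mathrm{res}^{\ell'} \circ (\mathrm{res}^{\ell})^{-1}$ through the common category $\mathbb{M}(d;\delta_d)$. No further comment is needed.
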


\begin{cor}\label{cor:lzero}
For any $\delta_d \in M(d)_{\mathbb{R}}^{W_d}$, we have
$\mathbb{M}(d; \delta_d)=\mathbb{M}^{\ell=0}(d; \delta_d)$.    
\end{cor}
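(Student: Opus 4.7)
The plan is to specialize Theorem~\ref{thm:catwall} to the trivial stability condition $\ell = 0$ and observe that both hypotheses become vacuous in that case.

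First, I would verify that $\X(d)^{\ell=0\text{-ss}} = \X(d)$. Indeed, by King's criterion recalled before Lemma~\ref{lem:prop}, a representation $R$ is $\ell$-semistable if and only if every subrepresentation $R' \subset R$ of dimension vector $d'$ satisfies $\langle d', \ell\rangle \geq 0$; when $\ell = 0$ this condition holds tautologically. Hence the Kempf–Ness stratification (\ref{KN:X}) for $\ell=0$ has no unstable strata, i.e.\ the set of one-parameter subgroups $\{\mu_1, \ldots, \mu_N\}$ from (\ref{mucha}) is empty. Consequently, the subset $U_{\ell=0} \subset M(d)_{\mathbb{R}}^{W_d}$ of Definition~\ref{def:ul} is cut out by an empty set of conditions and equals all of $M(d)_{\mathbb{R}}^{W_d}$.

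Next, I would apply Theorem~\ref{thm:catwall} to the pair $(\ell = 0, \delta_d)$ for an arbitrary $\delta_d \in M(d)_{\mathbb{R}}^{W_d}$. The assumption $\delta_d \in U_{\ell=0}$ is automatic by the preceding paragraph, so the restriction functor
\[
\mathrm{res} \colon \mathbb{M}(d; \delta_d) \longrightarrow \mathbb{M}^{\ell=0}(d; \delta_d)
\]
is an equivalence. Since $\X(d)^{\ell=0\text{-ss}} = \X(d)$, the functor $\mathrm{res}$ is literally the identity on $D^b(\X(d))$. Therefore $\mathbb{M}(d; \delta_d)$ and $\mathbb{M}^{\ell=0}(d; \delta_d)$ coincide as full subcategories of $D^b(\X(d))$.

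There is essentially no obstacle; the statement is a direct specialization of Theorem~\ref{thm:catwall}. The one minor point worth checking explicitly, should one wish to avoid invoking the full wall-crossing theorem, is that the two defining weight conditions on generators match: the condition (\ref{wt:cond}) of $\mathbb{M}(d; \delta_d)$ ranges over cocharacters $\lambda \colon \mathbb{C}^\ast \to T(d)$, while (\ref{cond:wtnuP}) for $\mathbb{M}^{\ell=0}(d; \delta_d)$ ranges over maps $\nu \colon B\mathbb{C}^\ast \to \X(d)$. Each such $\nu$ factors (up to conjugation) as a point $x \in R(d)$ together with a cocharacter $\lambda$ fixing $x$, with $n_\nu = n_\lambda$, as already observed in the proof of Lemma~\ref{lem:funct:res}; conversely every $\lambda$ arises from $x = 0$. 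This confirms the equality of the two subcategories directly, independently of the wall-crossing argument.
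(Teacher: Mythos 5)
Your main argument is correct and is essentially the paper's proof: the paper's entire proof reads ``For $\ell=0$, there are no Kempf-Ness loci, so the condition in Theorem~\ref{thm:catwall} is automatic,'' and you spell out exactly why.

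However, the optional ``alternative direct argument'' in your last paragraph is not actually independent of the wall-crossing machinery. You observe that the cocharacter conditions indexing \eqref{wt:cond} and \eqref{cond:wtnuP} coincide when $\ell=0$, and conclude that this ``confirms the equality of the two subcategories directly.'' But $\mathbb{M}(d;\delta_d)$ is \emph{defined} by generation from certain vector bundles (Definition~\ref{def:defmdw}), whereas $\mathbb{M}^{\ell=0}(d;\delta_d)$ is defined by an objectwise weight constraint. The containment $\mathbb{M}(d;\delta_d)\subset\mathbb{M}^{\ell=0}(d;\delta_d)$ is easy (weights of cones, shifts, sums, and summands stay within the window), but the reverse containment is exactly what Proposition~\ref{prop:esssurj} establishes, and its proof is genuinely nontrivial even for $\ell=0$: it relies on Lemma~\ref{lem:modify} to resolve an arbitrary object in $D^b(\X(d))_w$ against $\mathbb{M}(d;\delta_d)$ and Hall-product images, and then on a semiorthogonality computation to eliminate the latter. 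Matching the index sets of the two conditions does not by itself close that gap. So the alternative route is not a shortcut; the actual content of the corollary is precisely that an object satisfying the pointwise weight bound must already lie in the subcategory generated by the prescribed vector bundles.
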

\begin{proof}
    For $\ell=0$, there are no Kempf-Ness loci, so the condition in Theorem~\ref{thm:catwall} is automatic. 
\end{proof}

\begin{remark}\label{rmk:Ul}
The condition in Theorem~\ref{thm:catwall} is satisfied 
for $\delta_d=\varepsilon \cdot \ell$ for $0<\varepsilon \ll 1$
since $\langle \ell, \mu_i \rangle \in \mathbb{Z}\setminus \{0\}$. 
Similarly, in Corollary~\ref{cor:lgen}, 
the weight $\delta_d=\varepsilon \cdot \ell +\varepsilon' \cdot \ell'$
satisfies $\delta_d \in U_{\ell} \cap U_{\ell'}$
if $0<\varepsilon, \varepsilon \ll 1$ and $(\varepsilon, \varepsilon')$ are linearly 
independent over $\mathbb{Q}$. 
\end{remark}

\subsection{The categorical wall-crossing equivalence for symmetric quivers with potential}\label{subsec:catpot}
Let $(Q, W)$ be a symmetric quiver with potential. 
Similarly to (\ref{BPS:Sl}), we define a category
\begin{align}\label{defuqias}
\mathbb{S}^{\ell}(d; \delta_d) \subset \mathrm{MF}(\X(d)^{\ell\text{-ss}}, \Tr W).
\end{align}
First, for an object $A\in \mathrm{MF}(B\mathbb{C}^*, 0)$, write $A=\bigoplus_{w\in\mathbb{Z}} A_w$ with $A_w\in \mathrm{MF}(B\mathbb{C}^*, 0)_w$. Consider the set of weights: 
\[\mathrm{wt}(A):=\{w\mid A_w\neq 0\}\subset \mathbb{Z}.\]
Then \eqref{defuqias} is the subcategory of $\mathrm{MF}(\X(d)^{\ell\text{-ss}}, \Tr W)$ containing objects $P$ such that, for any 
map $\nu \colon B\mathbb{C}^{\ast} \to \X(d)^{\ell\text{-ss}}$
with $\nu^{\ast} \Tr W=0$, 
the set $\mathrm{wt}\left(\nu^{\ast}P\right)$ satisfies the condition (\ref{cond:wtnuP}). 
Note that 
if $\nu^{\ast} \Tr W \neq 0$, then 
$\mathrm{MF}(B\mathbb{C}^{\ast}, \nu^{\ast} \Tr W)=0$
so that the condition of weights in $\nu^{\ast}P$ is 
vacuous in this case. 

In the graded case (of a the tripled quiver), 
the subcategory 
\begin{align*}
   \mathbb{S}^{\mathrm{gr}, \ell}(d; \delta_d) \subset \mathrm{MF}^{\mathrm{gr}}(\X(d)^{\ell\text{-ss}}, \Tr W) 
\end{align*}
is defined to be the pull-back 
of $\mathbb{S}^{\ell}(d; \delta_d)$ by the forget-the-grading functor 
\begin{align}\label{def:forg}
    \mathrm{forg} \colon \mathrm{MF}^{\rm{gr}}(\X(d)^{\ell \text{-ss}}, \Tr W)
    \to \mathrm{MF}(\X(d)^{\ell \text{-ss}}, \Tr W).
\end{align}
Let $\bullet \in \{\emptyset, \mathrm{gr}\}$. 
The following results are proved as
Theorem~\ref{thm:catwall}, Corollary~\ref{cor:lgen}, and Corollary~\ref{cor:lzero}. 

\begin{thm}\label{thm:catwall2}
 Suppose that the pair $(\ell, \delta_d)\in M(d)_{0,\mathbb{R}}^{W_d}\times M(d)^{W_d}_\mathbb{R}$ satisfies $\delta_d \in U_{\ell}$. 
           Then the restriction functor induces an equivalence: 
        \[\mathrm{res} \colon \mathbb{S}^{\bullet}(d; \delta_d)
        \stackrel{\sim}{\to} \mathbb{S}^{\bullet, \ell}(d; \delta_d).\]
   \end{thm}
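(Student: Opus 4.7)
The plan is to mirror the structure of the proof of Theorem~\ref{thm:catwall}, establishing fully-faithfulness and essential surjectivity separately in the ungraded case $\bullet=\emptyset$, and then reducing the graded case to the ungraded one via the forget-the-grading functor $\mathrm{forg}$ of (\ref{def:forg}). Throughout, the guiding principle is that matrix factorizations in $\mathrm{MF}(\X(d)^{\ell\text{-ss}}, \Tr W)$ are built as totalizations of tuples whose factors are vector bundles on $\X(d)^{\ell\text{-ss}}$, so the weight-theoretic arguments of Section~\ref{subsection:wc} (before the potential was turned on) can be applied factor by factor.

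For fully-faithfulness, I would upgrade Proposition~\ref{lem:incW} to matrix factorizations. With $k_i:=-n_{\mu_i}/2+\langle \mu_i,\delta_d\rangle$, the condition $\delta_d\in U_\ell$ ensures that the weights at the Kempf-Ness centers avoid the integers, so the magic-window inclusion $\mathbb{M}(d;\delta_d)\subset \mathbb{W}^\ell_{k_\bullet}$ recalled in the proof of Proposition~\ref{lem:incW} holds. The window theorem for matrix factorizations (proved in essentially the same way as the coherent window theorem of \cite{halp,MR3895631}, using that the Kempf-Ness strata stratification of $\X(d)$ is compatible with $\Tr W$ in the sense that each $\mathscr{S}_i$ is preserved) then yields an equivalence $\mathrm{res}\colon \mathrm{MF}(\mathbb{W}_{k_\bullet}^\ell,\Tr W)\xrightarrow{\sim}\mathrm{MF}(\X(d)^{\ell\text{-ss}},\Tr W)$. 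Restricting this equivalence to the subcategory with factors in $\mathbb{M}(d;\delta_d)$ gives the fully-faithful embedding $\mathbb{S}(d;\delta_d)\hookrightarrow\mathbb{S}^\ell(d;\delta_d)$.

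For essential surjectivity, I would adapt the argument of Proposition~\ref{prop:esssurj}. First, the matrix-factorization analogue of Lemma~\ref{lem:modify} shows that $\mathrm{MF}(\X(d)^{\ell\text{-ss}},\Tr W)$ is generated by $\mathrm{res}(\mathbb{S}(d;\delta_d))$ together with objects of the form $m_\lambda(Q)$, where $\lambda\in\{\lambda_1,\ldots,\lambda_m\}$ is antidominant with $\langle\lambda,\ell\rangle=0$ and the factors of $Q$ satisfy the strict weight inequality (\ref{chi:ineq}) -- this follows by applying Lemma~\ref{lem:modify} to the individual factors of a matrix factorization, combined with the compatibility of the Hall product $m_\lambda$ with taking matrix factorizations. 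It then suffices to prove the Hom-vanishing
\[\Hom_{\mathrm{MF}(\X(d)^{\ell\text{-ss}},\Tr W)}\left(\mathbb{S}^\ell(d;\delta_d),\mathrm{res}(m_\lambda(Q))\right)=0.\]
By proper base change and adjunction along the Cartesian diagram (\ref{C:diagram}) -- both of which are available for matrix factorizations since $p^\ell_\lambda$ is proper and $q^\ell_\lambda$ is quasi-smooth -- this reduces to a formal-local vanishing over each closed point $p\in X(d)^{\ell\text{-ss}}$. Lemma~\ref{lem:form:A} puts the relevant diagram in the standard form (\ref{form:A}), and the vanishing then follows from the strict inequality between $\lambda$-weights coming from (\ref{cond:wtnuP}) (for the first argument) and (\ref{chi:ineq}) (for the factors of $Q$), exactly as in the zero-potential case treated in \cite[Corollary~3.17, Amplification~3.18]{halp} and \cite[Proposition~4.2]{P}.

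The main obstacle I anticipate is verifying that the formal-local Hom-vanishing survives the presence of the matrix factorization differential. My expectation is that this is soft: the local Hom complex between two matrix factorizations decomposes according to the $\lambda$-weights of their factors, and a strict weight inequality at the level of factors propagates to the totalization because the differentials are of $\lambda$-weight zero along the fixed locus. The graded case $\bullet=\mathrm{gr}$ requires no new ideas: since $\mathbb{S}^{\mathrm{gr}}(d;\delta_d)$ and $\mathbb{S}^{\mathrm{gr},\ell}(d;\delta_d)$ are by definition the pull-backs of their ungraded counterparts along the conservative functor $\mathrm{forg}$, and since restriction commutes with $\mathrm{forg}$, the ungraded equivalence lifts to a graded one.
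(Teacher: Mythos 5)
Your proposal matches the paper's intended argument: the paper proves this theorem simply by asserting it is "proved as Theorem~\ref{thm:catwall}", i.e.\ by rerunning the window argument of Proposition~\ref{lem:incW} and the generation-plus-Hom-vanishing argument of Lemma~\ref{lem:modify} and Proposition~\ref{prop:esssurj} for matrix factorizations with factors in the magic category, exactly as you outline. One small correction: $\mathbb{S}^{\mathrm{gr}}(d;\delta_d)$ is defined directly by \eqref{defsdwgr} rather than as a pull-back under $\mathrm{forg}$ (only $\mathbb{S}^{\mathrm{gr},\ell}(d;\delta_d)$ is), and conservativity of $\mathrm{forg}$ alone does not give essential surjectivity in the graded case, so the cleanest treatment is to run the same graded window and weight arguments directly in $\mathrm{MF}^{\mathrm{gr}}$ instead of lifting the ungraded equivalence through $\mathrm{forg}$.
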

   
\begin{cor}\label{cor:lgen2}
    For weights $\ell, \ell' \in M(d)_{0, \mathbb{R}}^{W_d}$, 
        and $\delta_d \in U_{\ell} \cap U_{\ell'}$, there is an equivalence 
        \begin{align*}
            \mathbb{S}^{\bullet, \ell}(d; \delta_d) \simeq \mathbb{S}^{\bullet, \ell'}(d; \delta_d). 
        \end{align*}
\end{cor}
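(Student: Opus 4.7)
The plan is to deduce Corollary~\ref{cor:lgen2} as a formal consequence of Theorem~\ref{thm:catwall2}, in exact parallel with how Corollary~\ref{cor:lgen} follows from Theorem~\ref{thm:catwall} in the potential-zero case. Concretely, I would apply Theorem~\ref{thm:catwall2} twice, once to the pair $(\ell, \delta_d)$ and once to $(\ell', \delta_d)$, and then compose.

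First, since $\delta_d \in U_\ell$, Theorem~\ref{thm:catwall2} provides an equivalence
\[
\mathrm{res}_\ell \colon \mathbb{S}^{\bullet}(d; \delta_d) \stackrel{\sim}{\to} \mathbb{S}^{\bullet, \ell}(d; \delta_d)
\]
given by restriction to the $\ell$-semistable locus $\X(d)^{\ell\text{-ss}} \subset \X(d)$. Similarly, because $\delta_d \in U_{\ell'}$, the same theorem yields an equivalence
\[
\mathrm{res}_{\ell'} \colon \mathbb{S}^{\bullet}(d; \delta_d) \stackrel{\sim}{\to} \mathbb{S}^{\bullet, \ell'}(d; \delta_d).
\]
Composing these produces the desired equivalence
\[
\mathbb{S}^{\bullet, \ell}(d; \delta_d) \xrightarrow{\mathrm{res}_\ell^{-1}} \mathbb{S}^{\bullet}(d; \delta_d) \xrightarrow{\mathrm{res}_{\ell'}} \mathbb{S}^{\bullet, \ell'}(d; \delta_d).
\]

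There is no real obstacle specific to this corollary itself — all of the content is packed into Theorem~\ref{thm:catwall2}, whose proof follows the pattern laid out for Theorem~\ref{thm:catwall} (fully-faithfulness via a magic window inside the Kempf-Ness stratification, essential surjectivity via Lemma~\ref{lem:modify} and the Hom vanishing argument on the $\lambda$-fixed loci), with the matrix factorization version obtained by repeating the weight arguments on $\nu^{\ast}P$ subject to $\nu^{\ast}\mathrm{Tr}\,W = 0$ and, in the graded case, by pulling back along the forget-the-grading functor in \eqref{def:forg}. Once Theorem~\ref{thm:catwall2} is available, the wall-crossing statement is a one-line composition, and as noted in Remark~\ref{rmk:Ul} the intersection $U_\ell \cap U_{\ell'}$ is non-empty (e.g.\ one can take $\delta_d = \varepsilon\, \ell + \varepsilon'\, \ell'$ with $\varepsilon, \varepsilon'$ sufficiently small and $\mathbb{Q}$-linearly independent), so the hypothesis of the corollary can always be met.
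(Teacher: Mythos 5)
Your proposal is correct and matches the paper's argument: the paper deduces Corollary~\ref{cor:lgen2} exactly as Corollary~\ref{cor:lgen}, namely by applying the restriction equivalence of Theorem~\ref{thm:catwall2} to the pairs $(\ell,\delta_d)$ and $(\ell',\delta_d)$ and composing $\mathrm{res}_{\ell'}\circ \mathrm{res}_{\ell}^{-1}$. Nothing further is needed.
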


\begin{cor}\label{cor:lzero2}
For any $\delta_d \in M(d)_{\mathbb{R}}^{W_d}$, we have 
$\mathbb{S}^{\bullet}(d; \delta_d)=\mathbb{S}^{\bullet, \ell=0}(d; \delta_d)$.    
\end{cor}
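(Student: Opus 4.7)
The plan is to apply Theorem \ref{thm:catwall2} in the degenerate case $\ell = 0$, mirroring the one-line argument used for Corollary \ref{cor:lzero}. First I would observe that the zero stability condition makes every representation of $Q$ semistable in the sense of King, so $\X(d)^{\ell=0\text{-ss}} = \X(d)$ and the Kempf--Ness stratification \eqref{KN:X} reduces to the open semistable locus alone (no strata $\mathscr{S}_i$ appear). Consequently there are no associated one-parameter subgroups $\mu_1, \ldots, \mu_N$ to consider, the condition $2\langle \mu_i, \delta_d \rangle \notin \mathbb{Z}$ in Definition \ref{def:ul} is vacuous, and hence $U_{\ell=0} = M(d)_\mathbb{R}^{W_d}$.

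Next I would invoke Theorem \ref{thm:catwall2}: the hypothesis $\delta_d \in U_\ell$ now holds for every $\delta_d \in M(d)_\mathbb{R}^{W_d}$, so the restriction functor induces an equivalence
\[
\mathrm{res} \colon \mathbb{S}^\bullet(d;\delta_d) \xrightarrow{\sim} \mathbb{S}^{\bullet,\ell=0}(d;\delta_d).
\]
But since $\X(d)^{\ell=0\text{-ss}} = \X(d)$, this restriction functor is the identity on $\mathrm{MF}^\bullet(\X(d), \Tr W)$, so the equivalence is in fact an equality of subcategories, as claimed. For the graded case $\bullet = \mathrm{gr}$, the same conclusion follows formally from the ungraded one because, as noted in Subsection \ref{subsec:catpot}, both $\mathbb{S}^\mathrm{gr}(d;\delta_d)$ and $\mathbb{S}^{\mathrm{gr},\ell=0}(d;\delta_d)$ are defined as the pullbacks of their ungraded counterparts under the forget-the-grading functor \eqref{def:forg}.

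I do not anticipate any real obstacle: once the triviality of the Kempf--Ness stratification for $\ell = 0$ is noted, the corollary is an immediate consequence of Theorem \ref{thm:catwall2}. The only point worth emphasizing is that, in contrast to Corollary \ref{cor:lgen2} where one must genuinely avoid the bad hyperplanes $\{2\langle \mu_i, -\rangle \in \mathbb{Z}\}$, at $\ell = 0$ there is no bad locus to avoid since the indexing set of relevant Kempf--Ness cocharacters is empty.
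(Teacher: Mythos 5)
Your argument is correct and is essentially the paper's own: the paper deduces Corollary~\ref{cor:lzero2} exactly as Corollary~\ref{cor:lzero}, observing that for $\ell=0$ there are no Kempf--Ness strata, so the hypothesis $\delta_d\in U_{\ell=0}$ of Theorem~\ref{thm:catwall2} is automatic and the restriction functor is the identity on $\mathrm{MF}^{\bullet}(\X(d),\Tr W)$, giving the stated equality for both $\bullet=\emptyset$ and $\bullet=\mathrm{gr}$.
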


\subsection{The categorical wall-crossing for preprojective algebras}\label{subsec:catdoub}

In this subsection we will use the notations from Subsections~\ref{subsec22} and \ref{subsub:triple}. 
Consider a quiver $Q^{\circ}=(I, E^{\circ})$. Let $Q^{\circ, d}=(I, E^{\circ, d})$ be its 
doubled quiver.
For $\ell \in M(d)_{0, \mathbb{R}}^{W_d}$, the moment map $\mu\colon T^*R^\circ(d)\to \mathfrak{g}(d)$ induces a map $\mu^{\ell \text{-ss}}\colon \left(T^*R^\circ(d)\right)^{\ell \text{-ss}}\to \mathfrak{g}(d)$.
Let \[\mathscr{P}(d)^{\ell \text{-ss}}:=\big(\mu^{\ell \text{-ss}}\big)^{-1}(0)\big/G(d) \subset \mathscr{P}(d)\]
be the derived open substack of $\ell$-semistable representations of the preprojective algebra of $Q^\circ$.
Consider the restriction functor 
\begin{align}\label{rest:2}
    \mathrm{res} \colon D^b(\mathscr{P}(d)) \twoheadrightarrow D^b(\mathscr{P}(d)^{\ell\text{-ss}}). 
\end{align}
The closed immersion (\ref{def:Pj}) restricts to the 
closed immersion 
\begin{align}\label{def:Pj2}
j \colon \mathscr{P}(d)^{\ell\text{-ss}} \hookrightarrow 
\mathscr{Y}(d)^{\ell\text{-ss}}. 
\end{align}
For $\delta_d \in M(d)_{\mathbb{R}}^{W_d}$, 
define the subcategory
\begin{align}\label{def:int}
    \mathbb{T}^{\ell}(d; \delta_d) \subset D^b(\mathscr{P}(d)^{\ell\text{-ss}})
\end{align}
with objects $\mathcal{E}$ such that, 
for any map $\nu \colon B\mathbb{C}^{\ast} \to \mathscr{P}(d)^{\ell \text{-ss}}$,
we have 
\begin{align}\label{wt:cond:nu}
    \mathrm{wt}(\nu^{\ast}j^{\ast}j_{\ast}\mathcal{E})
    \subset \left[-\frac{1}{2}n_{\nu}, \frac{1}{2}n_{\nu} \right] +\mathrm{wt}(\nu^{\ast}\delta_d). 
\end{align}
Here, we let $n_{\nu}:=\mathrm{wt}(\det (\nu^{\ast}\mathbb{L}_{\X(d)}|_{\mathscr{P}(d)})^{\nu>0})$, 
where $\X(d)$ is the moduli stack of representations (\ref{stack:triple}) of the tripled quiver $Q$ of $Q^{\circ}$. 

\begin{remark}
The subcategory (\ref{def:int}) is the 
intrinsic window subcategory for the quasi-smooth stack
$\mathscr{P}(d)^{\ell \text{-ss}}$
defined in~\cite[Definition~5.2.13]{T}. 
\end{remark}

\begin{prop}\label{prop:koszul:P}
The Koszul equivalence (\ref{Kosz}) descends to an equivalence: 
\begin{align}\label{equiv:lss1}
    \Theta \colon D^b(\mathscr{P}(d)^{\ell \text{-ss}}) \stackrel{\sim}{\to}
    \mathrm{MF}^{\rm{gr}}(\X(d)^{\ell \text{-ss}}, \Tr W),
\end{align}
which restricts to an equivalence: 
\begin{align}\label{equiv:lss2}
    \Theta \colon \mathbb{T}^{\ell}(d; \delta_d) \stackrel{\sim}{\to} 
    \mathbb{S}^{\mathrm{gr}, \ell}(d; \delta_d). 
\end{align}
    \end{prop}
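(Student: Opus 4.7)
Both equivalences follow from the Koszul equivalence (Theorem~\ref{thm:Koszul}) applied to the $\ell$-semistable open substacks, combined with a comparison of the intrinsic weight window conditions defining the two subcategories.

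For (\ref{equiv:lss1}), I would apply Theorem~\ref{thm:Koszul} to the open substack $\mathscr{Y}(d)^{\ell\text{-ss}} \subset \mathscr{Y}(d)$ with the restricted section $\mu^{\ell\text{-ss}}$. Its derived zero locus is $\mathscr{P}(d)^{\ell\text{-ss}}$ by definition, and the total space $\mathrm{Tot}_{\mathscr{Y}(d)^{\ell\text{-ss}}}(\mathfrak{g}(d))$ is the open substack $\eta^{-1}(\mathscr{Y}(d)^{\ell\text{-ss}}) \subset \X(d)$. Theorem~\ref{thm:Koszul} thus produces an equivalence $D^b(\mathscr{P}(d)^{\ell\text{-ss}}) \simeq \mathrm{MF}^{\mathrm{gr}}(\eta^{-1}(\mathscr{Y}(d)^{\ell\text{-ss}}), \mathrm{Tr}\,W)$. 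I would then use the support invariance (\ref{rest:MFU}) to replace $\eta^{-1}(\mathscr{Y}(d)^{\ell\text{-ss}})$ by $\X(d)^{\ell\text{-ss}}$: since the partial derivatives of $W$ in (\ref{poten:W}) with respect to the loop variables $\omega_a$ are precisely the components of the moment map $\mu$, critical points of $\mathrm{Tr}\,W$ project under $\eta$ into $\mathscr{P}(d)$, so both open substacks of $\X(d)$ contain a common neighborhood of $\mathrm{Crit}(\mathrm{Tr}\,W) \cap \X(d)^{\ell\text{-ss}}$.

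For (\ref{equiv:lss2}), I would match the defining weight conditions. A map $\nu \colon B\mathbb{C}^{\ast} \to \mathscr{P}(d)^{\ell\text{-ss}}$, composed with $j$ and the zero section of $\eta$, lifts to $\nu' \colon B\mathbb{C}^{\ast} \to \X(d)^{\ell\text{-ss}}$ with $\nu'^{\ast}\mathrm{Tr}\,W = 0$, since $\nu'$ sends the loop coordinates to zero and $W$ is linear in them. The pullback identity $\nu'^{\ast}\mathbb{L}_{\X(d)} \cong \nu^{\ast}(\mathbb{L}_{\X(d)}|_{\mathscr{P}(d)})$ then gives $n_{\nu'} = n_{\nu}$. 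Using the explicit form (\ref{equiv:theta}) of $\Theta$, a direct computation with the Koszul factorization $\mathcal{K}$ identifies the $\mathbb{C}^{\ast}$-weight multisets of $\nu'^{\ast}\Theta(\mathcal{E})$ and $\nu^{\ast}j^{\ast}j_{\ast}\mathcal{E}$, both being computed from $\nu^{\ast}\mathcal{E}$ via the Koszul resolution of $\mathcal{O}_{\mathscr{P}}$ inside $\mathscr{Y}$. Together with the Weyl-invariance of $\delta_d$, which gives $\mathrm{wt}(\nu^{\ast}\delta_d) = \mathrm{wt}(\nu'^{\ast}\delta_d)$, the window conditions (\ref{wt:cond:nu}) and (\ref{cond:wtnuP}) correspond.

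The main obstacle is that condition (\ref{cond:wtnuP}) formally ranges over \emph{all} maps $\nu' \colon B\mathbb{C}^{\ast} \to \X(d)^{\ell\text{-ss}}$ with $\nu'^{\ast}\mathrm{Tr}\,W = 0$, not only those lifted from $\mathscr{P}(d)^{\ell\text{-ss}}$ through the zero section, so one must show that the larger family imposes no extra constraints. I would argue this using that matrix factorizations are supported on $\mathrm{Crit}(\mathrm{Tr}\,W)$: any $\nu'$ contributing nontrivially to the weight data of $\Theta(\mathcal{E})$ must land in a formal neighborhood of the critical locus, which by the analysis used for (\ref{equiv:lss1}) sits inside the image of the zero section over $\mathscr{P}(d)^{\ell\text{-ss}}$. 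This reduces the condition to the lifted family of $\nu'$, making (\ref{cond:wtnuP}) equivalent to (\ref{wt:cond:nu}) and completing the equivalence (\ref{equiv:lss2}).
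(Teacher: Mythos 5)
Your overall route is the same as the paper's (Koszul equivalence over the semistable locus, then critical-locus support to change open substacks, then a reduction of the window condition to maps factoring through $\mathscr{P}(d)^{\ell\text{-ss}}$), but both of the genuinely non-trivial steps are asserted with incorrect justifications. First, for (\ref{equiv:lss1}) the key fact is the inclusion $\mathrm{Crit}(\Tr W)\cap \X(d)^{\ell\text{-ss}}\subset \eta^{-1}(\mathscr{Y}(d)^{\ell\text{-ss}})$, and your argument only shows that critical points satisfy the moment map equation, i.e.\ project into $\mathscr{P}(d)\subset\mathscr{Y}(d)$; it does not show the projection is $\ell$-semistable. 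Note that semistability of a tripled-quiver representation $(x,g)$ is a priori \emph{weaker} than semistability of the underlying doubled-quiver representation $x$ (there are fewer subrepresentations to test), so the needed inclusion is not formal. What makes it work is the remaining critical-locus equations: on $\mathrm{Crit}(\Tr W)$ the extra loop maps are endomorphisms of the preprojective module $x$, and one must argue that destabilizing subobjects can be taken invariant under such an endomorphism; this is exactly the cited input \cite[Lemma~4.3.22]{halpK32} in the paper, and it is missing from your proposal.

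Second, for (\ref{equiv:lss2}) your reduction of condition (\ref{cond:wtnuP}) to maps lifted through the zero section relies on the claim that $\mathrm{Crit}(\Tr W)$ over the semistable locus ``sits inside the image of the zero section over $\mathscr{P}(d)^{\ell\text{-ss}}$.'' That is false: critical points are pairs $(x,g)$ with $\mu(x)=0$ and $g$ a (typically nonzero, e.g.\ scalar) endomorphism commuting with $x$, so the critical locus maps to $\mathscr{P}(d)^{\ell\text{-ss}}$ under $\eta$ but is not contained in the zero section (already for the Jordan quiver with $d=1$ one has $\Tr W=0$ and $\mathrm{Crit}=\X(d)$). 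Hence restricting attention to lifted $\nu'$ genuinely loses maps, and your argument does not show the larger family imposes no extra constraints. The paper closes this gap differently: since $P=\Theta(\mathcal{E})$ is equivariant for the fiberwise weight-two $\mathbb{C}^{\ast}$-action on $\eta\colon\X(d)\to\mathscr{Y}(d)$, upper semicontinuity gives $\mathrm{wt}(\nu^{\ast}\mathrm{forg}(P))\subset \mathrm{wt}(\nu'^{\ast}\mathrm{forg}(P))$ for $\nu'=0\circ\eta\circ\nu$, and together with $n_{\nu}=n_{\nu'}$ this shows that checking the window condition on the lifted maps suffices. Your matching of the two window conditions for lifted maps (via the explicit form (\ref{equiv:theta}) and the Koszul resolution, with $n_{\nu'}=n_{\nu}$) is fine, but without the semicontinuity step and the correct semistability input above, the proof as written does not go through.
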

\begin{proof}
    Let $\eta \colon \X(d) \to \mathscr{Y}(d)$ be the projection. 
    Then we have 
    \begin{align}\label{inclu:ss}
    \mathrm{Crit}(\Tr W) \cap \X(d)^{\ell \text{-ss}} \subset 
    \eta^{-1}(\mathscr{Y}(d)^{\ell \text{-ss}})
    \subset \X(d)^{\ell \text{-ss}},     
    \end{align}
    where the first inclusion is proved in~\cite[Lemma~4.3.22]{halpK32} and the 
    second inclusion is immediate from the definition of $\ell$-stability. 
    We obtain equivalences 
    \begin{align*}
        D^b(\mathscr{P}(d)^{\ell \text{-ss}}) \stackrel{\sim}{\to} 
        \mathrm{MF}^{\rm{gr}}(\eta^{-1}(\mathscr{Y}(d)^{\ell \text{-ss}}), \Tr W) 
        \stackrel{\sim}{\leftarrow} 
        \mathrm{MF}^{\rm{gr}}(\X(d)^{\ell \text{-ss}}, \Tr W), 
    \end{align*}
    where the first equivalence is the Koszul equivalence in Theorem~\ref{thm:Koszul} and 
    the second equivalence follows from (\ref{inclu:ss}) together with the fact that 
    matrix factorizations are supported on critical locus, see (\ref{rest:MFU}). 
    Therefore we obtain the equivalence (\ref{equiv:lss1}). 

    For an object $\mathcal{E} \in D^b(\mathscr{P}(d)^{\ell \text{-ss}})$, 
    the object $P=\Theta(\mathcal{E})$
    is in 
$\mathbb{S}^{\rm{gr}, \ell}(d; \delta_d)$ if and only 
if, for any map $\nu \colon B\mathbb{C}^{\ast} \to \X(d)^{\ell \text{-ss}}$
with $\nu^{\ast}\Tr W=0$, the set $\mathrm{wt}\left(\nu^{\ast}\mathrm{forg}(P)\right)$ satisfies the weight condition (\ref{cond:wtnuP}).
As $P$ is supported on $\mathrm{Crit}(\Tr W)$, 
we may assume that 
the image of $\nu$ is contained in $\mathrm{Crit}(\Tr W) \cap \X(d)^{\ell \text{-ss}}$. 
By the $\mathbb{C}^{\ast}$-equivariance of $P$ 
for the fiberwise weight $2$-action on $\eta\colon \X(d)\to\Y(d)$ and upper semicontinuity,  
we have that 
\[\mathrm{wt}\left(\nu^{\ast}\mathrm{forg}(P)\right)\subset \mathrm{wt}\left(\nu'^{\ast}\mathrm{forg}(P)\right),\]
 where $\nu'$ is the composition 
\begin{align*}
\nu' \colon 
B\mathbb{C}^{\ast} \stackrel{\nu}{\to} \X(d) \stackrel{\eta}{\to} \mathscr{Y}(d)
\stackrel{0}{\hookrightarrow} \X(d).
\end{align*}
The image of $\nu'$ lies in $\mathscr{P}(d)^{\ell \text{-ss}} \hookrightarrow \Y(d)^{\ell \text{-ss}}\stackrel{0}{\hookrightarrow} \X(d)^{\ell \text{-ss}}$. 
Therefore we may assume that the image of $\nu$ is contained in 
$\mathscr{P}(d)^{\ell \text{-ss}}$. 
Since   
    the object $P$ is represented by 
    \begin{align*}
    P=\Theta(\mathcal{E})=
    (\mathcal{E} \otimes_{\mathcal{O}_{\mathscr{P}(d)}}\mathcal{O}_{\X(d)})|_{\X(d)^{\ell \text{-ss}}}
    =(j_{\ast}\mathcal{E} \otimes_{\mathcal{O}_{\mathscr{Y}(d)}} \mathcal{O}_{\X(d)})|_{\X(d)^{\ell \text{-ss}}},
    \end{align*}
    it follows that 
    $\mathrm{wt}\left(\nu^{\ast}\mathrm{forg}(P)\right)$ satisfies the condition (\ref{cond:wtnuP}) if and only $\mathrm{wt}\left(\nu^{\ast}(j_{\ast}\mathcal{E})\right)$
    satisfies the condition (\ref{wt:cond:nu}).
    Therefore $\Theta(\mathcal{E})$ is in $\mathbb{S}^{\rm{gr}, \ell}(d; \delta_d)$
    if and only if $\mathcal{E}$ is in $\mathbb{T}^{\ell}(d; \delta_d)$. 
\end{proof}

By combining Proposition~\ref{prop:koszul:P} with Theorem~\ref{thm:catwall2}, Corollary~\ref{cor:lgen2}
and Corollary~\ref{cor:lzero2}, we obtain the following: 

\begin{thm}\label{thm:catwall3}
 Suppose that the pair $(\ell, \delta_d)\in M(d)_{0,\mathbb{R}}^{W_d}\times M(d)^{W_d}_\mathbb{R}$ satisfies $\delta_d \in U_{\ell}$. 
           Then the restriction functor (\ref{rest:2}) induces an equivalence: 
        \[\mathrm{res} \colon \mathbb{T}(d; \delta_d)
        \stackrel{\sim}{\to} \mathbb{T}^{\ell}(d; \delta_d).\]
   \end{thm}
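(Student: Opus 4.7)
The strategy is to reduce the statement to its graded matrix factorization analogue (Theorem~\ref{thm:catwall2} with $\bullet=\mathrm{gr}$) via the Koszul equivalence of Proposition~\ref{prop:koszul:P}, by showing that the two restriction functors (for $\mathbb{T}$ and for $\mathbb{S}^{\mathrm{gr}}$) intertwine through Koszul duality.

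The first step is to assemble the commutative square
\begin{equation*}
\begin{tikzcd}
D^b(\mathscr{P}(d)) \arrow[r, "\Theta", "\sim"'] \arrow[d, "\mathrm{res}"'] & \mathrm{MF}^{\mathrm{gr}}(\X(d), \mathrm{Tr}\,W) \arrow[d, "\mathrm{res}"] \\
D^b(\mathscr{P}(d)^{\ell\text{-ss}}) \arrow[r, "\Theta", "\sim"'] & \mathrm{MF}^{\mathrm{gr}}(\X(d)^{\ell\text{-ss}}, \mathrm{Tr}\,W),
\end{tikzcd}
\end{equation*}
in which the top row is the Koszul equivalence (\ref{Kosz}) and the bottom row is the equivalence (\ref{equiv:lss1}) from Proposition~\ref{prop:koszul:P}. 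Commutativity follows from the explicit form (\ref{equiv:theta}) of $\Theta$, namely tensoring with the Koszul factorization $\mathcal{K}$: this construction is manifestly natural under restriction to open substacks, and the downward arrow on the right is literal pullback since every matrix factorization is supported on $\mathrm{Crit}(\Tr W) \cap \X(d)^{\ell\text{-ss}} \subset \X(d)^{\ell\text{-ss}}$ by (\ref{rest:MFU}).

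Second, I would check that the square restricts to quasi-BPS subcategories on both vertical arrows. By (\ref{Koszul:qbps}) the top row takes $\mathbb{T}(d;\delta_d)$ isomorphically to $\mathbb{S}^{\mathrm{gr}}(d;\delta_d)$, and by (\ref{equiv:lss2}) the bottom row takes $\mathbb{T}^{\ell}(d;\delta_d)$ isomorphically to $\mathbb{S}^{\mathrm{gr},\ell}(d;\delta_d)$. The left vertical restriction lands in $\mathbb{T}^{\ell}(d;\delta_d)$ because every $\nu\colon B\mathbb{C}^{\ast}\to \mathscr{P}(d)^{\ell\text{-ss}}$ composes to a map $\nu\colon B\mathbb{C}^{\ast}\to\mathscr{P}(d)$, so the weight condition (\ref{wt:cond:nu}) on the restricted object is a special case of the defining weight condition for $\mathbb{T}(d;\delta_d)$. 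Correspondingly, the right vertical restriction sends $\mathbb{S}^{\mathrm{gr}}(d;\delta_d)$ into $\mathbb{S}^{\mathrm{gr},\ell}(d;\delta_d)$ for the analogous reason.

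Third, under the hypothesis $\delta_d\in U_{\ell}$, Theorem~\ref{thm:catwall2} applied with $\bullet=\mathrm{gr}$ yields that the right vertical map
\[
\mathrm{res}\colon \mathbb{S}^{\mathrm{gr}}(d;\delta_d)\xrightarrow{\sim} \mathbb{S}^{\mathrm{gr},\ell}(d;\delta_d)
\]
is an equivalence. Chasing around the commutative square then forces the left vertical map $\mathrm{res}\colon \mathbb{T}(d;\delta_d)\to\mathbb{T}^{\ell}(d;\delta_d)$ to be an equivalence as well. The main potential obstacle is the compatibility check in step one, i.e.\ verifying that $\Theta$ commutes with restriction to the semistable open substack at the level of the defining tensor construction; but the inclusions (\ref{inclu:ss}) and the support statement (\ref{rest:MFU}) together guarantee that pullback along $\X(d)^{\ell\text{-ss}}\hookrightarrow\X(d)$ corresponds precisely to pullback along $\mathscr{P}(d)^{\ell\text{-ss}}\hookrightarrow \mathscr{P}(d)$ under $\Theta$, making the reduction to Theorem~\ref{thm:catwall2} essentially formal.
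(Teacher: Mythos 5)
Your proposal is correct and follows essentially the same route as the paper, which obtains Theorem~\ref{thm:catwall3} precisely by combining the Koszul compatibility of Proposition~\ref{prop:koszul:P} (equivalences (\ref{equiv:lss1}), (\ref{equiv:lss2}), together with (\ref{Koszul:qbps}) on the unstable side) with the graded case of Theorem~\ref{thm:catwall2}. The commutativity of restriction with $\Theta$, which you verify via the explicit tensor description (\ref{equiv:theta}) and the support statement (\ref{rest:MFU}), is exactly the formal compatibility the paper relies on implicitly.
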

   
\begin{cor}\label{cor:lgen3}
   For $\ell, \ell' \in M(d)_{0, \mathbb{R}}^{W_d}$ 
        and $\delta_d \in U_{\ell} \cap U_{\ell'}$, there is an equivalence 
        \begin{align*}
            \mathbb{T}^{\ell}(d; \delta_d) \simeq \mathbb{T}^{\ell'}(d; \delta_d). 
        \end{align*}
\end{cor}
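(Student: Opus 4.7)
The plan is to deduce Corollary~\ref{cor:lgen3} directly from Theorem~\ref{thm:catwall3}, in complete analogy with how Corollary~\ref{cor:lgen} follows from Theorem~\ref{thm:catwall} and Corollary~\ref{cor:lgen2} follows from Theorem~\ref{thm:catwall2}. Concretely, the hypothesis $\delta_d \in U_\ell \cap U_{\ell'}$ is precisely what is needed to simultaneously apply Theorem~\ref{thm:catwall3} for both stability conditions.

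First, since $\delta_d \in U_\ell$, Theorem~\ref{thm:catwall3} gives that the restriction functor along the open immersion $\mathscr{P}(d)^{\ell\text{-ss}} \hookrightarrow \mathscr{P}(d)$ induces an equivalence
\[
\mathrm{res}_{\ell} \colon \mathbb{T}(d; \delta_d) \stackrel{\sim}{\to} \mathbb{T}^{\ell}(d; \delta_d).
\]
Analogously, since $\delta_d \in U_{\ell'}$, the restriction functor along $\mathscr{P}(d)^{\ell'\text{-ss}} \hookrightarrow \mathscr{P}(d)$ induces an equivalence
\[
\mathrm{res}_{\ell'} \colon \mathbb{T}(d; \delta_d) \stackrel{\sim}{\to} \mathbb{T}^{\ell'}(d; \delta_d).
\]
The composition $\mathrm{res}_{\ell'} \circ \mathrm{res}_{\ell}^{-1}$ then yields the desired equivalence $\mathbb{T}^{\ell}(d; \delta_d) \simeq \mathbb{T}^{\ell'}(d; \delta_d)$.

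There is essentially no obstacle here, as all the serious work has already been carried out in the proofs of Proposition~\ref{prop:koszul:P} and Theorem~\ref{thm:catwall3}. The only thing worth emphasizing is that both equivalences have the \emph{same} intermediate category $\mathbb{T}(d; \delta_d) \subset D^b(\mathscr{P}(d))$ (the global quasi-BPS category on the full stack of representations of the preprojective algebra), so that the composition makes sense without any further compatibility check. In particular, one sees again the role of $\mathbb{T}(d; \delta_d)$ as a ``magic window'' that is independent of stability and realizes each semistable quasi-BPS category via restriction, which is the categorical reflection of the BPS invariants being preserved under wall-crossing.
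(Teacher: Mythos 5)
Your proposal is correct and is essentially the paper's argument: the paper obtains Corollary~\ref{cor:lgen3} by combining Proposition~\ref{prop:koszul:P} with the wall-crossing results for $\mathbb{S}^{\mathrm{gr},\ell}$, which amounts to the same composition of equivalences through the global category $\mathbb{T}(d;\delta_d)$ that you write down via two applications of Theorem~\ref{thm:catwall3}. Your remark that both restriction equivalences share the same source category, so no further compatibility is needed, is exactly the (only) point to check.
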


\begin{cor}\label{cor:lzero3}
For any $\delta_d \in M(d)_{\mathbb{R}}^{W_d}$, we have 
$\mathbb{T}(d; \delta_d)=\mathbb{T}^{\ell=0}(d; \delta_d)$.    
\end{cor}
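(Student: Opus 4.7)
The plan is to deduce this corollary directly from the analogous statement for ungraded quasi-BPS categories of the tripled quiver (Corollary~\ref{cor:lzero2}) by transport along the Koszul equivalence of Proposition~\ref{prop:koszul:P}, exactly in parallel to how Theorem~\ref{thm:catwall3} was obtained from Theorem~\ref{thm:catwall2}. The key observation is that for $\ell=0$ the open substack $\mathscr{P}(d)^{\ell\text{-ss}}$ equals $\mathscr{P}(d)$ itself, so the restriction functor \eqref{rest:2} is the identity and both sides of the claimed equality sit inside the same ambient category $D^b(\mathscr{P}(d))$.

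First, I would note that for $\ell=0$ the Kempf-Ness stratification \eqref{KN:X} is trivial (no strata other than the semistable locus), so $U_{\ell=0}=M(d)^{W_d}_\mathbb{R}$ and the hypothesis $\delta_d \in U_\ell$ of Theorem~\ref{thm:catwall3} is vacuously satisfied for every $\delta_d$. Applying Theorem~\ref{thm:catwall3} to $\ell=0$ already yields an equivalence $\mathrm{res}\colon \mathbb{T}(d; \delta_d)\xrightarrow{\sim}\mathbb{T}^{\ell=0}(d; \delta_d)$, and since $\mathrm{res}$ is the identity on $D^b(\mathscr{P}(d))$ this equivalence is the desired equality of subcategories.

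Alternatively, and more self-containedly, I would combine three inputs directly: the restriction of the Koszul equivalence established in Proposition~\ref{prop:koszul:P} at $\ell=0$ identifies $\mathbb{T}^{\ell=0}(d;\delta_d)$ with $\mathbb{S}^{\mathrm{gr},\ell=0}(d;\delta_d)$ inside $\mathrm{MF}^{\mathrm{gr}}(\X(d),\mathrm{Tr}\,W)$; Corollary~\ref{cor:lzero2} identifies $\mathbb{S}^{\mathrm{gr},\ell=0}(d;\delta_d)$ with $\mathbb{S}^{\mathrm{gr}}(d;\delta_d)$; finally the unrestricted Koszul equivalence \eqref{Koszul:qbps} identifies $\mathbb{S}^{\mathrm{gr}}(d;\delta_d)$ with $\mathbb{T}(d;\delta_d)$. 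Chaining these three identifications and noting that each of them is compatible with the inclusion into $D^b(\mathscr{P}(d))\simeq \mathrm{MF}^{\mathrm{gr}}(\X(d),\mathrm{Tr}\,W)$ gives $\mathbb{T}(d;\delta_d)=\mathbb{T}^{\ell=0}(d;\delta_d)$.

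There is no real obstacle here; the argument is bookkeeping. The only point that warrants a moment's care is checking that all three equivalences above act as the identity on objects of $D^b(\mathscr{P}(d))$ rather than inducing a genuine re-identification, which is immediate because the underlying stacks and functors involved ($\mathscr{P}(d)^{\ell=0\text{-ss}}=\mathscr{P}(d)$, $\X(d)^{\ell=0\text{-ss}}=\X(d)$) are untouched at $\ell=0$.
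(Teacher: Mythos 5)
Your proposal is correct and follows the paper's own route: the paper obtains Corollary~\ref{cor:lzero3} precisely by combining Proposition~\ref{prop:koszul:P} (the Koszul equivalence for semistable loci, which at $\ell=0$ is the unrestricted equivalence \eqref{Koszul:qbps}) with Corollary~\ref{cor:lzero2}, which is your second argument, while your first argument (specializing Theorem~\ref{thm:catwall3} to $\ell=0$, where the Kempf--Ness stratification is trivial so $U_{\ell=0}=M(d)^{W_d}_{\mathbb{R}}$ and $\mathrm{res}$ is the identity) is exactly how the paper deduces the analogous Corollaries~\ref{cor:lzero} and \ref{cor:lzero2}. No gaps.
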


\subsection{Quasi-BPS categories under Kn\"orrer periodicity}

In this subsection, we apply Corollaries \ref{cor:lzero} and \ref{cor:lzero2} to obtain an equivalence of quasi-BPS categories under Kn\"orrer periodicity, which is a particular case of the Koszul equivalence. Other than the use of Corollaries \ref{cor:lzero} and \ref{cor:lzero2}, the current subsection is independent of the other results and constructions discussed in Section \ref{subsection:wc}. 
We will use the results of this subsection in Section \ref{sec:SOD} and in \cite{PTtop}.

We will use the notations from Subsection \ref{subsec22two}.
For a symmetric quiver $Q$ and 
$d \in \mathbb{N}^I$, 
let $U$ be a $G(d)$-representation. 
Consider the closed immersion 
\begin{align*}
    j \colon \X(d):=R(d)/G(d) \hookrightarrow 
    \mathscr{Y}=(R(d) \oplus U)/G(d)
\end{align*}
into $(R(d)\oplus \{0\})/G(d)$. 
We consider the quotient stack $\X^\gimel$ and regular function $f^\gimel$: 
\begin{align*}
    \X^\gimel=(R(d) \oplus U \oplus U^{\vee})/G(d)
    \stackrel{f}{\to} \mathbb{C}, \ 
    f(x, u, u')=\langle u, u'\rangle. 
\end{align*}
We consider the following Cartesian diagram 
\begin{align*}
    \xymatrix{
    \X^\gimel \times_{\mathscr{Y}}\X(d)
    \inclusion^-{s} \ar[d]_-{v} & \X^\gimel \ar[d]^-{\eta} \\
    \X(d) \inclusion^-{j} & \mathscr{Y}, 
    }
\end{align*}
where $\eta$ is the natural projection. 
Let $\mathbb{C}^{\ast}$ act with weight $0$ on $R(d)\oplus U$ and with weight $2$ on $U^\vee$. 
In this case, the Koszul equivalence in Theorem~\ref{thm:Koszul}
is given by, see~\cite[Remark~2.3.5]{T}:
\begin{align}\label{equiv:thetaX}
    \Theta=s_{\ast}v^{\ast}
    \colon D^b(\X(d)) \stackrel{\sim}{\to} 
    \mathrm{MF}^{\rm{gr}}(\X^\gimel, f). 
\end{align}
Such an equivalence is also called Kn\"orrer periodicity \cite{OrLG, Hirano}.  
For a weight $\delta^\gimel_d \in M(d)_{\mathbb{R}}^{W_d}$, define the subcategory 
\begin{align}\label{qbps:X'}
    \mathbb{S}^{\rm{gr}}(d; \delta^\gimel_d) \subset 
    \mathrm{MF}^{\rm{gr}}(\X^\gimel, f)
\end{align}
in a way similar to (\ref{defsdwgr}). 
By Lemma~\ref{lemma:alt},
it consists of matrix factorizations 
whose factors are direct sums of vector bundles
$\mathcal{O}_{\X^\gimel}\otimes \Gamma$, where $\Gamma$
is a $G(d)$-representations
such that, for any weight $\chi'$ of $\Gamma$ and any cocharacter $\lambda$ of $T(d)$, we have
\begin{align}\label{cond:n'}
\langle \lambda, \chi'-\delta^\gimel_d \rangle 
\in \left[-\frac{1}{2}n^\gimel_{\lambda}, \frac{1}{2}n^\gimel_{\lambda}   \right]. 
\end{align} 
Here, we define $n^\gimel_{\lambda}$ by: 
\begin{align}\label{id:n'}
    n^\gimel_{\lambda}=\langle \lambda, 
    \mathbb{L}_{\X^\gimel}^{\lambda>0} \rangle=
    n_{\lambda}+\langle \lambda, U^{\lambda>0} \rangle+
    \langle \lambda, (U^{\vee})^{\lambda>0} \rangle,
\end{align}
where recall the definition of $n_{\lambda}$ from (\ref{nlambdadef}). The following is the main result we prove in this subsection:
\begin{prop}\label{prop:period}
Let $\delta^\gimel_d=\delta_d-\frac{1}{2}\det U\in M(d)^{W_d}_\mathbb{R}$. 
The equivalence (\ref{equiv:thetaX}) 
restricts to the equivalence: 
\begin{align}\label{Thetaprop320}
    \Theta \colon \mathbb{M}(d; \delta_d)
    \stackrel{\sim}{\to} \mathbb{S}^{\rm{gr}}(d; \delta^\gimel_d).
\end{align}
\end{prop}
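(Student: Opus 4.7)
\emph{Proof plan.} The strategy is to reformulate both sides using intrinsic weight conditions on pullbacks to $B\mathbb{C}^{\ast}$, and then to translate one into the other via the explicit Koszul form of $\Theta$.

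By Corollaries~\ref{cor:lzero} and~\ref{cor:lzero2}, we identify $\mathbb{M}(d; \delta_d) = \mathbb{M}^{\ell=0}(d; \delta_d)$ and $\mathbb{S}^{\mathrm{gr}}(d; \delta^\gimel_d) = \mathbb{S}^{\mathrm{gr}, \ell=0}(d; \delta^\gimel_d)$, so membership on each side is characterized by the stacky weight bounds~(\ref{cond:wtnuP}). Since any object of $\mathrm{MF}^{\mathrm{gr}}(\X^\gimel, f)$ is supported on $\mathrm{Crit}(f) = \X(d) \subset \X^\gimel$, it suffices to test the condition for $\Theta(P)$ on maps $\nu\colon B\mathbb{C}^{\ast} \to \X^\gimel$ that factor as $\iota \circ \nu'$ with $\nu'\colon B\mathbb{C}^{\ast} \to \X(d)$; any such $\nu$ automatically satisfies $\nu^{\ast}f = 0$. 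Using the Koszul description~(\ref{equiv:theta}), whose differential is built from contraction with $u$ and wedging with $u'$ and therefore vanishes identically on $\X(d) = \{u = 0, u' = 0\}$, the restriction is
\[
    \nu^{\ast}\mathrm{forg}(\Theta(P)) \cong \Lambda^{\bullet} U^{\vee} \otimes (\nu')^{\ast} P
\]
as a $\mathbb{Z}/2$-graded $\mathbb{C}^{\ast}$-representation with zero differential.

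Let $b_j := \langle \nu', a_j \rangle$ denote the $\nu'$-weights of the $T(d)$-weights $a_j$ of $U$. Then the multiset of weights of $\nu^{\ast}\mathrm{forg}(\Theta(P))$ equals $\{\chi - \sum_{j \in S} b_j : \chi \in \mathrm{wt}((\nu')^{\ast} P),\ S \subseteq \{1, \ldots, \dim U\}\}$. From~(\ref{id:n'}) together with the splitting $\mathbb{L}_{\X^\gimel}|_{\X(d)} \cong \mathbb{L}_{\X(d)} \oplus U^{\vee} \oplus U$, one obtains $n^\gimel_\nu = n_{\nu'} + \sum_j |b_j|$; and $\mathrm{wt}(\nu^{\ast}\delta^\gimel_d) = \mathrm{wt}((\nu')^{\ast}\delta_d) - \tfrac{1}{2}\sum_j b_j$ by the choice of $\delta^\gimel_d$. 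Writing $y := \chi - \mathrm{wt}((\nu')^{\ast}\delta_d)$ and $z_S := \tfrac{1}{2}\sum_j b_j - \sum_{j \in S} b_j$, the membership $\Theta(P) \in \mathbb{S}^{\mathrm{gr}}(d; \delta^\gimel_d)$ translates into the inequalities $|y + z_S| \leq \tfrac{1}{2}n^\gimel_\nu$ for every admissible $\chi$ and $S$.

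For the forward inclusion, if $P \in \mathbb{M}(d; \delta_d)$ then $|y| \leq \tfrac{1}{2}n_{\nu'}$, while $|z_S| = \tfrac{1}{2}\lvert\sum_{j \notin S} b_j - \sum_{j \in S} b_j\rvert \leq \tfrac{1}{2}\sum_j |b_j|$ by the triangle inequality, giving $|y + z_S| \leq \tfrac{1}{2}n^\gimel_\nu$ as required. Conversely, specializing to $S = \{j : b_j > 0\}$ and $S = \{j : b_j < 0\}$ yields $z_S = -\tfrac{1}{2}\sum_j |b_j|$ and $z_S = +\tfrac{1}{2}\sum_j |b_j|$ respectively, and intersecting the two corresponding intervals for $y$ collapses precisely to $|y| \leq \tfrac{1}{2}n_{\nu'}$, recovering $P \in \mathbb{M}(d; \delta_d)$. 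The main subtlety is the explicit Koszul identification of $\nu^{\ast}\mathrm{forg}(\Theta(P))$; once that step is carried out, the argument reduces to elementary weight bookkeeping via the triangle inequality.
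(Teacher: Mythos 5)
Your overall strategy is close to the paper's, and the computational core is the same: the identification of $\nu^{\ast}\mathrm{forg}(\Theta(P))$ with $\Lambda^{\bullet}U^{\vee}\otimes(\nu')^{\ast}P$, the identity $n^\gimel_{\nu}=n_{\nu'}+\sum_j|b_j|$, and the interval bookkeeping are all correct, and your reduction to test maps factoring through $\mathrm{Crit}(f)=\X(d)$ (vacuity off the support) is legitimate and mirrors the argument in Proposition~\ref{prop:koszul:P}. Your converse direction (from $\mathbb{S}^{\rm{gr}}(d;\delta^\gimel_d)$ back to $\mathbb{M}(d;\delta_d)$) is essentially the paper's: deduce the pointwise weight bounds for $\mathcal{E}$ and then invoke Corollary~\ref{cor:lzero}, which is proved in the paper for the quiver stack $\X(d)$.

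The gap is in your forward inclusion. You deduce $\Theta(P)\in\mathbb{S}^{\rm{gr}}(d;\delta^\gimel_d)$ from the fact that $\Theta(P)$ satisfies the pointwise weight condition (\ref{cond:wtnuP}) on $\X^\gimel$, which requires the hard direction of an identification $\mathbb{S}^{\rm{gr}}(d;\delta^\gimel_d)=\mathbb{S}^{\mathrm{gr},\ell=0}(d;\delta^\gimel_d)$ for the pair $(\X^\gimel,f)$, i.e.\ that an object satisfying the pointwise bounds is actually generated by the prescribed vector bundles. Corollary~\ref{cor:lzero2} does not cover this as cited: it is stated for a symmetric quiver with potential, and its graded form only for tripled quivers with their canonical grading, whereas in Proposition~\ref{prop:period} the representation $U$ is an arbitrary $G(d)$-representation and the grading puts weight $2$ on $U^{\vee}$. (When $U$ comes from quiver edges, as in the paper's applications, the ungraded statement applies and the graded extension is plausible by the same window arguments, but it is not established in the paper.) The paper sidesteps this entirely: for the forward inclusion it works with generators, using Lemma~\ref{lem:genJ} and the Koszul resolution (\ref{Kos:resol}) to check that the weights of $j_{\ast}(\Gamma\otimes\mathcal{O}_{\X(d)})$ satisfy (\ref{cond:n'}) — i.e.\ the implication (\ref{cond:n''})$\Rightarrow$(\ref{cond:n'}), which is exactly your triangle-inequality step applied to generators — so no wall-crossing input is needed on the $\X^\gimel$ side, and Corollary~\ref{cor:lzero} is used only where it is proved. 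If you rephrase your forward direction as a statement about the generators of $\mathbb{M}(d;\delta_d)$ (your computation already contains the needed inequality), the proof is complete without the extra input.
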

\begin{proof}
We first note that, by Lemma~\ref{lem:genJ}, 
an object $\mathcal{E} \in D^b(\X(d))$
satisfies $\Theta(\mathcal{E}) \in \mathbf{S}^{\rm{gr}}(d; \delta^\gimel_d)$ 
if and only if 
$j_{\ast}\mathcal{E}$ is generated by vector bundles 
$\mathcal{O}_{\mathscr{Y}} \otimes \Gamma'$,
where any weight $\chi'$ of $\Gamma'$ satisfies 
(\ref{cond:n'}). 

    By Lemma~\ref{lemma:alt},
    the category $\mathbb{M}(d; \delta_d)$
    is generated by vector bundles $\mathcal{O}_{\X(d)}\otimes\Gamma$
    such that any weight $\chi$ of $\Gamma$
    satisfies (\ref{cond:n}) for any $\lambda$. 
   Consider the Koszul resolution 
    \begin{align}\label{Kos:resol}
        j_{\ast}(\Gamma \otimes \mathcal{O}_{\X(d)}) 
        =\Gamma \otimes \mathrm{Sym}_{\mathscr{Y}}
        (\mathcal{U}^{\vee}[1]),
    \end{align}
    where $\mathcal{U} \to \mathscr{Y}$ is the vector 
    bundle associated with the $G(d)$-representation $U$. 
    Therefore, the category 
$j_{\ast}\mathbb{M}(d; \delta)$
is generated by vector bundles $\mathcal{O}_{\mathscr{Y}} \otimes \Gamma'$
    such that any weight $\chi'$ of $\Gamma'$
    satisfies 
    \begin{align}\label{cond:n''}
        \langle \lambda, \chi'-\delta_d \rangle 
        \in \left[-\frac{n_{\lambda}}{2}+
        \langle \lambda, (U^{\vee})^{\lambda<0}), 
        \frac{n_{\lambda}}{2}+
        \langle \lambda, (U^{\vee})^{\lambda>0})
        \right]
    \end{align}
    for any $\lambda$. 
    By (\ref{id:n'}), we have 
    \begin{align}\label{n:equality}
        \frac{n^\gimel_{\lambda}}{2}
        =\frac{n_{\lambda}}{2}+\langle \lambda, (U^{\vee})^{\lambda>0}\rangle 
        +\frac{1}{2}\langle \lambda, U \rangle 
                =\frac{n_{\lambda}}{2}-
        \langle \lambda, 
        (U^{\vee})^{\lambda<0}\rangle 
        -\frac{1}{2}\langle \lambda, U \rangle. 
    \end{align}
    Therefore (\ref{cond:n''}) implies (\ref{cond:n'})
    for $\delta^\gimel_d=\delta_d-\frac{1}{2} \det U$, 
    hence the functor (\ref{equiv:thetaX})
    sends 
$\mathbb{M}(d; \delta)$ to $\mathbb{S}^{\rm{gr}}(d; \delta^\gimel_d)$, which shows the fully-faithfullness of \eqref{Thetaprop320}.

       To show essential surjectivity of \eqref{Thetaprop320}, let $\mathcal{E} \in D^b(\X(d))$ be
    such that $j_{\ast}\mathcal{E}$ is generated by the vector bundles
$\mathcal{O}_{\mathscr{Y}} \otimes \Gamma'$,
where any weight $\chi'$ of $\Gamma'$ satisfies 
(\ref{cond:n'}). 
We will show that $\mathcal{E}\in \mathbb{M}^{\ell=0}(d; \delta_d)$, and thus that $\mathcal{E}\in \mathbb{M}(d; \delta_d)$ by Corollary \ref{cor:lzero}.

Let $\nu \colon B\mathbb{C}^{\ast} \to \X(d)$
be a map, which corresponds to a point 
$x \in R(d)$ and a cocharacter $\lambda \colon 
\mathbb{C}^{\ast} \to T(d)$ which fixes $x$. 
By the condition (\ref{cond:n'}) for weights of $\Gamma'$, we have 
\begin{align*}
    \mathrm{wt}^{\mathrm{max}}(\nu^{\ast}j^{\ast}j_{\ast}\mathcal{E}) \leq 
    \frac{1}{2} n_{\lambda}'
    +\langle \lambda, \delta_d' \rangle,
\end{align*}
see \eqref{wtmax} for the definition of $\mathrm{wt}^{\mathrm{max}}$.
On the other hand, by the Koszul resolution (\ref{Kos:resol}),
we have 
\begin{align*}
   \mathrm{wt}^{\mathrm{max}}(\nu^{\ast}j^{\ast}j_{\ast}\mathcal{E})
   = \mathrm{wt}^{\mathrm{max}}(\nu^{\ast}\mathcal{E})
   -\langle\lambda, (U^{\vee})^{\lambda>0} \rangle. 
\end{align*}
Therefore we have 
\begin{align*}
   \mathrm{wt}^{\mathrm{max}}(\nu^{\ast}\mathcal{E}) 
   \leq \frac{n_{\lambda}'}{2}+\langle \lambda, \delta_d'
   \rangle -\langle \lambda, (U^{\vee})^{\lambda>0}
   \rangle =\frac{n_{\lambda}}{2}+\langle \lambda, \delta_d\rangle,
\end{align*}
where the last equality follows from (\ref{n:equality}).
The lower bound 
\[\mathrm{wt}^{\mathrm{min}}(\nu^{\ast}\mathcal{E})
\geq -\frac{n_{\lambda}}{2}+\langle \lambda, \delta_d\rangle\] is proved similarly. 
We then have that: 
\[\mathrm{wt}\left(\nu^{\ast}\mathcal{E}\right)\subset\left[-\frac{n_{\lambda}}{2}+\langle \lambda, \delta_d\rangle, \frac{n_{\lambda}}{2}+\langle \lambda, \delta_d\rangle\right].\]
Thus $\mathcal{E} \in \mathbb{M}(d; \delta_d)^{\ell=0}$, and then $\mathcal{E} \in \mathbb{M}(d; \delta_d)$
by Corollary~\ref{cor:lzero}. 
\end{proof}

Let $W$ be a potential of $Q$. By abuse of notation, 
we denote by $\Tr W \colon \X^\gimel \to \mathbb{C}$ 
the pull-back of $\Tr W \colon \X(d) \to \mathbb{C}$ 
by the natural projection $\X^\gimel \to \X(d)$. 
There is an equivalence similar to (\ref{equiv:thetaX}), also called 
Kn\"orrer periodicity, see~\cite[Theorem~4.2]{Hirano}, \cite{OrLG}:
\begin{align}\label{theta:period}
    \Theta=s_{\ast}v^{\ast} \colon \mathrm{MF}(\X(d), \Tr W) \stackrel{\sim}{\to} 
    \mathrm{MF}(\X^\gimel, \Tr W+f). 
\end{align}
The subcategory 
\begin{align*}
    \mathbb{S}(d; \delta_d) \subset \mathrm{MF}(\X^\gimel, \Tr W+f)
\end{align*}
is defined similarly to (\ref{qbps:X'}). 
The following proposition is proved as
Proposition~\ref{prop:period}, using Corollary~\ref{cor:lzero2}
instead of Corollary~\ref{cor:lzero}. 
\begin{prop}\label{prop:period2}
    Let $\delta^\gimel_d=\delta_d-\frac{1}{2}\det U\in M(d)^{W_d}_\mathbb{R}$. 
    The equivalence (\ref{theta:period})
    restricts to the equivalence: 
    \begin{align*}
        \Theta \colon \mathbb{S}(d; \delta_d) \stackrel{\sim}{\to}
        \mathbb{S}(d; \delta^\gimel_d). 
    \end{align*}
\end{prop}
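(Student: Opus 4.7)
The plan is to imitate the proof of Proposition~\ref{prop:period}, substituting Corollary~\ref{cor:lzero2} for Corollary~\ref{cor:lzero} at the essential-surjectivity step. All of the polytope/weight bookkeeping is identical; the only new feature is that we are working with matrix factorizations of the potential $\Tr W + f$ rather than with bounded complexes of coherent sheaves.

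For fully faithfulness, I would show that the Kn\"orrer functor $\Theta = s_{\ast}v^{\ast}$ sends a generator of $\mathbb{S}(d;\delta_d)$ — namely, a matrix factorization on $\X(d)$ whose factors are direct sums of vector bundles $\Gamma \otimes \mathcal{O}_{\X(d)}$ with $\Gamma$ satisfying the magic window condition (\ref{cond:n}) — into $\mathbb{S}(d;\delta^\gimel_d)$. Up to isomorphism, $\Theta$ replaces such a factor by one whose underlying graded piece is $\Gamma \otimes \Lambda^{\bullet} \mathcal{U}^{\vee} \otimes \mathcal{O}_{\X^\gimel}$ via the Koszul resolution (\ref{Kos:resol}). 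The weight analysis culminating in (\ref{n:equality}) converts the bound (\ref{cond:n}) for the weights of $\Gamma$ into the bound (\ref{cond:n'}) for the weights of the new factors relative to $\delta^\gimel_d = \delta_d - \frac{1}{2}\det U$.

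For essential surjectivity, given $F \in \mathbb{S}(d; \delta^\gimel_d)$, set $\mathcal{E}:= \Theta^{-1}(F) \in \mathrm{MF}(\X(d), \Tr W)$. By Corollary~\ref{cor:lzero2}, it is enough to show $\mathcal{E} \in \mathbb{S}^{\ell=0}(d; \delta_d)$, i.e.\ to verify the weight bound (\ref{cond:wtnuP}) at each $\nu \colon B\mathbb{C}^{\ast} \to \X(d)$ with $\nu^{\ast}\Tr W = 0$. Given such a $\nu$, I compose with the zero section $\X(d) \hookrightarrow \X^\gimel$ (in the $U \oplus U^{\vee}$ direction) to obtain $\widetilde{\nu}\colon B\mathbb{C}^{\ast} \to \X^\gimel$ with $\widetilde{\nu}^{\ast}(\Tr W + f) = 0$. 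Since $\Theta$ is explicitly $s_{\ast}v^{\ast}$, the set $\mathrm{wt}(\widetilde{\nu}^{\ast}F)$ is related to $\mathrm{wt}(\nu^{\ast}\mathcal{E})$ by a shift of $\langle \lambda, (U^{\vee})^{\lambda>0}\rangle$ on the upper end (and $\langle \lambda, (U^{\vee})^{\lambda<0}\rangle$ on the lower end), coming from the Koszul description of $s_{\ast}v^{\ast}$. The defining bound (\ref{cond:wtnuP}) on $F$, together with the identity (\ref{n:equality}), then delivers exactly the required bound for $\nu^{\ast}\mathcal{E}$ in terms of $\delta_d$ and $n_{\lambda}$.

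The main obstacle lies in Step~2: translating the weight comparison from a derived-category argument into a matrix-factorization one. Unlike in Proposition~\ref{prop:period}, one cannot directly invoke (\ref{Kos:resol}) in $D^b(\X^\gimel)$ to read off the weights of $F$, because $F$ is a matrix factorization of a non-zero potential. My approach would be to argue by $\mathbb{C}^{\ast}$-equivariance and upper semicontinuity of the weight function, exactly in the spirit of Proposition~\ref{prop:koszul:P}: the fiberwise $\mathbb{C}^{\ast}$-action on $\X^\gimel \to \X(d)$ in the $U \oplus U^{\vee}$ direction lets one reduce the weight computation for $\widetilde{\nu}^{\ast}F$ to the case where $\widetilde{\nu}$ factors through the zero section, at which point the Koszul resolution reappears and the combinatorial weight bookkeeping already carried out in the proof of Proposition~\ref{prop:period} applies verbatim.
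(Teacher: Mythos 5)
Your proposal coincides with the paper's own proof, which is exactly the one-line reduction you describe: Proposition \ref{prop:period2} is ``proved as Proposition \ref{prop:period}, using Corollary \ref{cor:lzero2} instead of Corollary \ref{cor:lzero}''. Your elaboration of the matrix-factorization bookkeeping (generators with factors $\Gamma\otimes\wedge^{\bullet}\mathcal{U}^{\vee}\otimes\mathcal{O}_{\X^\gimel}$, restriction along the zero section, and the identity (\ref{n:equality})) is consistent with that argument, and the appeal to semicontinuity in your last paragraph is not even needed since your $\widetilde{\nu}$ factors through the zero section by construction.
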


\section{The semiorthogonal decompositions of DT categories}
\label{sec:SOD}
In this section, we construct semiorthogonal decompositions for the moduli of (framed or unframed)
representations of certain symmetric quivers (see Subsection \ref{symmquivers}) in terms of quasi-BPS categories, see Theorem \ref{thmsodC}, \ref{sodfullstackB}, \ref{sodfullstackBW} and Corollary \ref{thmsodD}. The results generalize the decomposition of DT categories of points on $\mathbb{C}^3$ from \cite[Theorem 1.1]{PTzero} and the decomposition of the Hall algebra of $\mathbb{C}^3$ (equivalently, of the Porta--Sala Hall algebra of $\mathbb{C}^2$) from \cite[Theorem 1.1]{P}, \cite[Theorem 1.1]{P2}.

\subsection{Semiorthogonal decompositions}
The following 
is the main result in this section, 
which provides a semi-orthogonal decomposition of $D^b\left(\X^f(d)^{\text{ss}}\right)$ in products of quasi-BPS categories of $Q$. Recall
the definition of a good weight from Definition \ref{def:generic2}, the category $\mathbb{M}(d;\delta)$ from Definition \eqref{def:defmdw}, and the Weyl-invariant real weights $\tau_d, \sigma_d$ from Subsection \ref{subsec:theweightlattice}. Recall also the convention about the product of categories of matrix factorizations from Subsection \ref{subsection:matrixfactorizations}.


\begin{thm}\label{thmsodC}
Let $Q$ be a symmetric quiver such that the number of loops at each vertex $i\in I$ has the same parity. Let $d\in \mathbb{N}^I$,
   let $\delta_d\in M(d)^{W_d}_\mathbb{R}$, and let $\mu\in \mathbb{R}$ such that $\delta_d+\mu\sigma_d$ is a good weight. 
   For a partition $(d_i)_{i=1}^k$ of $d$, let $\lambda$ be an associated antidominant cocharacter and define the 
       weights $\delta_{d_i}\in M(d_i)_\mathbb{R}^{W_{d_i}}$, 
       $\theta_i \in \frac{1}{2} M(d_i)^{W_{d_i}}$ for $1\leq i\leq k$ by: 
\begin{equation}\label{def:deltai}
        \sum_{i=1}^k \delta_{d_i}=\delta_d, \
        \sum_{i=1}^k \theta_i=-\frac{1}{2}R(d)^{\lambda>0}+\frac{1}{2}\mathfrak{g}(d)^{\lambda>0}. 
    \end{equation}
   There is a semiorthogonal decomposition
    \begin{equation}\label{sodA}
        D^b\left(\X^f(d)^{\text{ss}}\right)=\left\langle \bigotimes_{i=1}^k \mathbb{M}(d_i; \theta_i+\delta_{d_i}+v_i\tau_{d_i}): 
        \mu\leq \frac{v_1}{\dd_1}<\cdots<\frac{v_k}{\dd_k}<1+\mu
        \right\rangle,
    \end{equation}
    where the right hand side in \eqref{sodA} is after all partitions $(d_i)_{i=1}^k$ of $d$ and
    real numbers 
    $(v_i)_{i=1}^k\in\mathbb{R}^k$ such that the sum of coefficients of $\theta_{i}+\delta_{d_i}+v_i\tau_{d_i}$ is an integer for all $1\leq i\leq k$.
    The order on the summands is as in Subsection \ref{comppartitions}. 

    The functor from a summand on the right hand side to $D^b\left(\X^f(d)^{\text{ss}}\right)$ is the composition of the Hall product with the pullback along the forget-the-framing map $\X^f(d)^{\mathrm{ss}}\to \X(d)$. 
Further, the decomposition \eqref{sodA} is $X(d)$-linear for the map $\pi_{f,d}\colon \X^f(d)^{\mathrm{ss}}\to\X(d)\xrightarrow{\pi_{X,d}} X(d)$. 
\end{thm}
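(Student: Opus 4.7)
The plan is to realize $D^b(\X^f(d)^{\text{ss}})$ as a window subcategory of $D^b(\X^f(d))$ via Halpern-Leistner's theorem for the GIT stability $\sigma_d$, and then decompose this window by slope-ordered partitions using categorical Hall products. This parallels the strategy used in \cite{PTzero} for the three-loop quiver, but with extra bookkeeping to handle a general symmetric quiver.

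First I would analyze the Kempf-Ness stratification of $\X^f(d)$ for the linearization $\sigma_d$, whose semistable locus is a smooth variety. The strata are indexed by Harder-Narasimhan-type filtrations of the framed representation; combinatorially these correspond to partitions $(d_i)_{i=1}^k$ of $d$ together with slope data $v_i/\dd_i$ lying outside $[\mu, 1+\mu)$ for the partitions that do not contain the framed piece at a slope in that interval. For the antidominant cocharacter $\lambda$ associated to such a partition, compute the window width $n_\lambda$ using (\ref{nlambdadef}). The parity hypothesis on the number of loops at each vertex is what guarantees that the half-shifts $\theta_i$ in (\ref{def:deltai}) lie in the integral weight lattice $M(d_i)^{W_{d_i}}$ rather than in a half-integral translate of it; this integrality is essential so that the magic categories $\mathbb{M}(d_i; \theta_i+\delta_{d_i}+v_i\tau_{d_i})$ sit correctly inside the window.

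Second, I would apply Halpern-Leistner's window theorem \cite{halp} to identify $D^b(\X^f(d)^{\text{ss}})$ with a window subcategory of $D^b(\X^f(d))$ generated by equivariant vector bundles $\mathcal{O}_{\X^f(d)} \otimes \Gamma_{G(d)}(\chi)$ whose weights $\chi$ satisfy the Kempf-Ness inequalities at every unstable stratum. Pulling back along the affine-bundle projection $\X^f(d) \to \X(d)$ and applying a Kn\"orrer-periodicity argument analogous to Proposition~\ref{prop:period}, this window corresponds to a translate of the polytope subcategory $\mathbb{D}(d; \delta)$ from (\ref{def:dd}) for a shift built from $\delta_d$, $\mu\sigma_d$, and $\frac{1}{2}\det V(d)$. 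The use of the larger polytope $\mathbf{V}(d)$ rather than $\mathbf{W}(d)$ reflects the additional framing weights in $R^f(d)\setminus R(d)$.

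Third, I would decompose this window by slopes. For a partition $(d_i)_{i=1}^k$ of $d$ with antidominant cocharacter $\lambda$, the Hall product $m_\lambda = p_{\lambda*}q_\lambda^*$ from (\ref{def:Hallprod}) applied to $\boxtimes_{i=1}^k \mathbb{M}(d_i;\theta_i+\delta_{d_i}+v_i\tau_{d_i})$ produces objects landing in the window precisely when $\mu \leq v_1/\dd_1 < \cdots < v_k/\dd_k < 1+\mu$; the twist $\theta_i$ absorbs exactly the discrepancy between $m_\lambda$ and the naive inclusion. Semiorthogonality between two such summands with distinct partitions or slope sequences is a standard weight computation using adjunction through $q_\lambda$ and $p_\lambda$, compare \cite[Proposition~4.2]{P}; the slope strict inequality places the relevant source and target weights in disjoint slope regions of $\mathbf{W}(d)$, forcing the required Hom-vanishing. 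Generation then follows by an inductive polytope decomposition: every dominant $\chi$ in the window polytope decomposes uniquely as $\sum_{i}(\theta_i+\delta_{d_i}+v_i\tau_{d_i}+\chi_i)$ with $\chi_i \in \mathbf{W}(d_i)$ and slopes strictly increasing in $[\mu, 1+\mu)$, proved by induction on $\dd$ after peeling off the minimal-slope summand. The hardest step will be this uniqueness: the goodness condition on $\delta_d+\mu\sigma_d$ is invoked precisely to rule out dominant weights lying simultaneously on two adjacent slope faces, so that every weight is captured by exactly one partition and one slope tuple, giving both semiorthogonality and generation without overlap. The $X(d)$-linearity of the decomposition is then automatic, since both $m_\lambda$ and the pullback $\pi_{f,d}^*$ are morphisms of $X(d)$-linear categories by functoriality of the good moduli space.
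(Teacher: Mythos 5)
Your outline reproduces the window-plus-Hall-product strategy that the paper uses, but only in the setting where it actually works, namely for \emph{very symmetric} quivers (all pairs of vertices joined by the same number $A$ of arrows). The step you describe as ``every dominant $\chi$ in the window polytope decomposes as $\sum_i(\theta_i+\delta_{d_i}+v_i\tau_{d_i}+\chi_i)$ with $\chi_i\in\mathbf{W}(d_i)$ and increasing slopes'' hinges on the fact that, for the cocharacter $\lambda$ of a partition $(d_i)_{i=1}^k$, the weight $R(d)^{\lambda>0}$ (and hence the terms $N_\ell$ peeled off in the decomposition of Proposition~\ref{prop:decompchi}) is a linear combination of the $\tau_{d_i}$. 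This is Proposition~\ref{prop:tau}, it is exactly where the very symmetric hypothesis enters, and the paper explicitly remarks that it fails for a general symmetric quiver. Under only the parity assumption of Theorem~\ref{thmsodC} there is no way to extract well-defined slope parameters $v_i$ from a dominant weight in the window, so the inductive ``peel off the minimal-slope summand'' argument you propose breaks down at its key combinatorial step; one only gets the less explicit indexing set of \cite{P}.

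Relatedly, you misidentify what the parity hypothesis is for. It is not needed to make the $\theta_i$ integral (the theorem allows $\theta_i\in\frac{1}{2}M(d_i)^{W_{d_i}}$ and real $v_i$, with only a sum-of-coefficients integrality condition); integrality of $\theta_i$ only holds under the stronger Assumption~\ref{assum11} of Corollary~\ref{thmsodD}. The parity of the loop numbers is what allows one to choose $A\equiv\varepsilon\ (\mathrm{mod}\ 2)$ and add $c_a=(A-e_{aa})/2$ pairs of loops (and pairs of arrows between distinct vertices) with a nondegenerate quadratic potential, producing a very symmetric quiver $Q^\gimel$ with potential $W^\gimel$. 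The paper then proves the statement for $Q^\gimel$ by your type of argument (Theorems~\ref{thmsodA}, \ref{thmsodB}) and transfers it to $Q$ via Kn\"orrer periodicity: the Koszul equivalence $D^b(\X^f(d)^{\mathrm{ss}})\simeq\mathrm{MF}^{\mathrm{gr}}(\X^{\gimel f}(d)^{\mathrm{ss}},\Tr W^\gimel)$, its compatibility with Hall products (Proposition~\ref{prop:compati:hall}), and the weight shift $\delta\mapsto\delta-\tfrac{1}{2}\det U$ matching quasi-BPS categories (Proposition~\ref{prop:period}, Step~\ref{step3} of Subsection~\ref{subsec:proof}). This reduction is the essential missing idea in your proposal; without it (or a genuinely new combinatorial argument replacing Proposition~\ref{prop:tau}) the proof does not go through beyond the very symmetric case. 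A smaller point: the goodness of $\delta_d+\mu\sigma_d$ is used to make the window restriction $j^*\colon\mathbb{E}(d;\delta_d+\mu\sigma_d)\to D^b(\X^f(d)^{\mathrm{ss}})$ an equivalence (no weights on the boundary of the Kempf--Ness windows), not primarily to force uniqueness of the slope decomposition as you assert.
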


The same argument also applies to obtain the following similar semiorthogonal decomposition using Theorem \ref{sodfullstackA}
for unframed moduli stacks.
Note that this decomposition is different from the one discussed in \cite[Theorem 1.1]{P}, which we could not use to obtain a decomposition of $D^b\left(\X^f(d)^{\mathrm{ss}}\right)$ as in Theorem \ref{thmsodC}.

\begin{thm}\label{sodfullstackB}
Let $Q$ be a symmetric quiver such that the number of loops at each vertex $i\in I$ has the same parity.
Let $d\in\mathbb{N}^I$ and let $\delta_d\in M(d)^{W_d}_\mathbb{R}$. 
   For a partition $(d_i)_{i=1}^k$ of $d$, define the 
       weights $\delta_{d_i}\in M(d_i)_\mathbb{R}^{W_{d_i}}$, 
       $\theta_i \in \frac{1}{2} M(d_i)^{W_{d_i}}$ for $1\leq i\leq k$  as in \eqref{def:deltai}. 
   There is a semiorthogonal decomposition
    \begin{equation}\label{sod:sodfullstackB}
D^b\left(\X(d)\right)=\left\langle \bigotimes_{i=1}^k \mathbb{M}(d_i; \theta_i+\delta_{d_i}+v_i\tau_{d_i}) : 
\frac{v_1}{\dd_1}<\cdots<\frac{v_k}{\dd_k}
\right\rangle,
   \end{equation}
   where the right hand side in \eqref{sod:sodfullstackB} is after all partitions $(d_i)_{i=1}^k$ of $d$ and 
    real numbers $(v_i)_{i=1}^k\in\mathbb{R}^k$ such that the sum of coefficients of $\theta_{i}+\delta_{d_i}+v_i\tau_{d_i}$ is an integer for all $1\leq i\leq k$.
    The order on the summands is as in Subsection \ref{comppartitions}. 

    The functor from a summand on the right hand side to $D^b\left(\X^f(d)^{\text{ss}}\right)$ is given by the Hall product. 
The decomposition \eqref{sod:sodfullstackB} is $X(d)$-linear.
\end{thm}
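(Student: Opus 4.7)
The plan is to follow the same strategy as for Theorem~\ref{thmsodC}, now applied to the unframed stack $\X(d)$ in place of the semistable framed stack $\X^f(d)^{\mathrm{ss}}$. The main structural difference is that, without a semistability cut-off, the slope sequences $v_1/\dd_1 < \cdots < v_k/\dd_k$ are only required to be strictly increasing rather than to lie in a fixed interval of length one.

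I would proceed in two stages. First, I apply the coarser semiorthogonal decomposition of Theorem~\ref{sodfullstackA} to $D^b(\X(d))$; this produces summands indexed by partitions $(d_i)_{i=1}^k$ of $d$ together with suitable weight data. The parity assumption on the number of loops at each vertex enters here to guarantee that the half-integral shifts $\theta_i \in \tfrac{1}{2}M(d_i)^{W_{d_i}}$, when combined with $\delta_{d_i} + v_i\tau_{d_i}$, pair integrally with the diagonal cocharacter $1_{d_i}$. By Remark~\ref{rmk:Mweight} this integrality is the non-vanishing condition for the magic categories $\mathbb{M}(d_i; \theta_i + \delta_{d_i} + v_i\tau_{d_i})$, which is exactly why the indexing in \eqref{sod:sodfullstackB} is restricted to those $(v_i)$ for which the total weight of $\theta_i + \delta_{d_i} + v_i\tau_{d_i}$ is an integer.

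Second, I refine the decomposition by induction along the tree $\mathcal{T}$ of partitions from Definition~\ref{tree}. Given an antidominant cocharacter $\lambda$ with associated partition $(d_1,\ldots,d_k)$, the Hall product $m_\lambda = p_{\lambda*}\, q_\lambda^{\ast}$ embeds an appropriate box product of subcategories into $D^b(\X(d))$, and the shifts $\theta_i$ arise naturally from the relative dualizing complex along the proper morphism $p_\lambda$, introducing the determinantal twists by $\det R(d)^{\lambda>0}$ and $\det \mathfrak{g}(d)^{\lambda>0}$. Associativity of $m_\lambda$ along edges of $\mathcal{T}$ then allows one to iterate these embeddings until one reaches the finest level, indexed by the ordered partitions appearing in \eqref{sod:sodfullstackB}.

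The main obstacle will be verifying semiorthogonality between distinct summands across incompatible slope orderings. Using adjunction and base change along Cartesian squares analogous to \eqref{C:diagram}, this reduces to a strict weight inequality: one must show that when the slope sequence of one partition fails to be compatibly ordered with that of another, the maximum $\lambda$-weight on one side strictly exceeds the minimum $\lambda$-weight on the other. This follows by combining the weight characterization in Lemma~\ref{lemma:alt} with the slope inequality, by the same mechanism used in the proofs of Lemma~\ref{lem:modify} and Proposition~\ref{prop:esssurj}. Finally, the asserted $X(d)$-linearity is immediate since both the Hall product functors and the categories $\mathbb{M}(d_i; \cdot)$ are linear over the corresponding good moduli spaces, compatibly with the natural map $\prod_i X(d_i) \to X(d)$.
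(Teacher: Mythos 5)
Your proposal misses the key reduction step that the paper uses. Theorem~\ref{sodfullstackA}, which you invoke at the outset, is stated (and holds) only for \emph{very symmetric} quivers, not for the broader class of symmetric quivers with the parity condition on loops that Theorem~\ref{sodfullstackB} addresses. The reason is Proposition~\ref{prop:tau}: the direct combinatorial decomposition of weights along the tree of partitions relies on $R(d)^{\lambda>0}$ being expressible as a linear combination of the $\tau_{d_i}$, and the paper explicitly remarks that this fails for a general symmetric quiver. Your second stage, refining along the tree $\mathcal{T}$ and extracting $(d_i,v_i)$ data with the slope ordering, therefore does not go through directly for the quivers in the statement.

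The paper's actual proof passes through Kn\"orrer periodicity, which is absent from your proposal. One constructs an auxiliary very symmetric quiver $Q^\gimel$ with potential $W^\gimel$ by adding pairs of loops and pairs of opposite edges (Subsection~\ref{symmquivers}); the parity assumption is precisely what makes the number $c_a=(A-e_{aa})/2$ of added loop pairs an integer, and hence what makes $Q^\gimel$ definable. The Kn\"orrer periodicity equivalence $\Theta_d\colon D^b(\X(d))\xrightarrow{\sim}\mathrm{MF}^{\mathrm{gr}}(\X^\gimel(d),\mathrm{Tr}\,W^\gimel)$ then carries the semiorthogonal decomposition from Theorem~\ref{sodfullstackA} (for $Q^\gimel$, via matrix factorizations) over to $D^b(\X(d))$, and the bulk of the work (Step 3 in Subsection~\ref{subsec:proof}, using Propositions~\ref{prop:compati:hall} and~\ref{prop:period}) is to match up the shift parameters $\theta_i$, $\delta_{d_i}$ on both sides of the equivalence. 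Relatedly, the role you assign to the parity hypothesis --- forcing $\theta_i$ to pair integrally with $1_{d_i}$ --- is not its function here: the integrality constraint on $(v_i)$ is imposed by hand in the indexing set \eqref{sod:sodfullstackB}, and only under the stronger Assumption~\ref{assum11} does one get $\theta_i\in M(d_i)^{W_{d_i}}$ as in Corollary~\ref{thmsodD}. Without the Kn\"orrer periodicity bridge, your argument has a genuine gap.
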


The plan of proof for both of Theorem \ref{thmsodC} and \ref{sodfullstackB}
is as follows: we first prove them in a particular case, that of very symmetric quivers; then we use Knörrer periodicity to obtain the more general statements stated above.

Using \cite[Proposition 2.3]{PTzero}, \cite[Proposition 2.1]{P0}, one obtains versions for quivers with potentials of Theorems \ref{thmsodC} and \ref{sodfullstackB}, see Subsection \ref{subsection:moresod}.
Note that both theorems apply to tripled quivers, and thus to tripled quivers with potential. 

\subsection{Very symmetric quivers}

We introduce a class of quivers for which we can prove Theorems \ref{thmsodC} and \ref{sodfullstackB} using the arguments employed for the quiver with one vertex and three loops in \cite{PTzero, P2}.

\begin{defn}
    A quiver $Q=(I,E)$ is a \textit{very symmetric quiver} if there exists an integer $A \in \mathbb{Z}_{\geq 1}$ 
such that, for any vertices $a, b\in I$, the number of edges from $a$ to $b$ is $A$. 
\end{defn}

\begin{remark}
We will use that the categories of matrix factorizations for a symmetric quiver as in Theorem \ref{thmsodC} (i.e. a symmetric quiver with the number of loops at each vertex with the same parity) are equivalent, under Knörrer periodicity, to the categories of matrix factorizations for a very symmetric quiver. Thus we will study in detail the categories of matrix factorizations for very symmetric quivers, for which one can use the same combinatorial techniques as in \cite{P2, PTzero}, see Remark \ref{remark46}.
\end{remark}

The first step in proving Theorem \ref{thmsodC} is:

\begin{thm}\label{thmsodA}
Let $Q$ be a very symmetric quiver. Then
Theorem \ref{thmsodC} holds for $Q$.
\end{thm}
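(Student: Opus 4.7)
The plan is to mimic, in the setting of a very symmetric quiver, the argument of \cite[Theorem~1.1]{PTzero} for the one-vertex three-loop quiver (which is the case $|I|=1$, $A=3$). The key reason this works is that, for a very symmetric quiver, the multiset $\mathscr{A}$ of $T(d)$-weights of $R(d)$ consists of $A$ copies of $\beta^a_i-\beta^b_j$ for all $a,b\in I$ and all indices $i,j$, so the polytope $\mathbf{W}(d)$ and the widths $n_\lambda$ have the same clean form as in the three-loop case. In particular, the magic window argument of Halpern-Leistner--Sam \cite{hls} applies to each Kempf-Ness stratum exactly as in \emph{loc.~cit.}, and all the weight comparisons needed for semiorthogonality reduce to the same combinatorics used in \cite{PTzero, P2}.

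Concretely, I would proceed by induction on $\underline{d}$. First, consider the Kempf-Ness stratification of $R^f(d)$ with respect to the framing stability $\sigma_d$: its strata are indexed by pairs consisting of an antidominant cocharacter $\lambda$ (equivalently, a partition $(d_i)_{i=1}^k$ of $d$) together with a normalized slope sequence, and the associated one-parameter subgroups are precisely those whose slope pattern corresponds to $\mu\leq v_1/\underline{d_1}<\cdots<v_k/\underline{d_k}<1+\mu$. Next, pick a window subcategory of $D^b(\X^f(d))$ whose weight bounds on each stratum are centered at $\frac{1}{2}$ of the relative cotangent weights, shifted by $\delta_d+\mu\sigma_d$; the ``good'' condition on this weight (Definition \ref{def:generic2}) ensures that the window has no strata on its boundary, so by Halpern-Leistner's theorem \cite{halp} the restriction functor to $D^b(\X^f(d)^{\mathrm{ss}})$ is an equivalence. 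On each non-semistable stratum, Halpern-Leistner's Hall-product description identifies the contribution with the image under $m_\lambda\circ(\text{forget})^{\ast}$ of a magic subcategory on the Levi stack $\times_i \X(d_i)$; the shift by $\theta_i$ in \eqref{def:deltai} is exactly half the $\lambda$-positive part of $R(d)^\vee$ minus $\mathfrak{g}(d)^\vee$, i.e. the half-Euler-class correction that matches the window on the stratum with the magic category on the Levi.

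The $X(d)$-linearity is automatic from the construction (Hall product and pullback are both $X(d)$-linear). The main obstacle I anticipate is purely bookkeeping: verifying simultaneously (a) that the slope inequalities $v_1/\underline{d_1}<\cdots<v_k/\underline{d_k}$ produce semiorthogonality in the correct direction (this reduces to a weight count on $\lambda^{\perp}$ via the very symmetric structure of $\mathbf{W}(d)$), (b) that the strict inequality on the right and the weak one on the left (vs.\ the reverse) match the convention for Kempf-Ness ordering, and (c) that the integrality condition on the sum of coefficients of $\theta_i+\delta_{d_i}+v_i\tau_{d_i}$ exactly picks out the non-zero summands (cf.\ Remark \ref{rmk:Mweight}). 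All three of these were handled for the three-loop case in \cite[Section~6]{PTzero} and \cite[Proposition~4.1]{P2}, and the arguments transport verbatim once one replaces ``three copies of $\mathfrak{g}(d)$'' by ``$A$ copies of $\mathrm{Hom}(V^a,V^b)$ for each pair $(a,b)$''. The unframed version (Theorem \ref{sodfullstackB} restricted to very symmetric $Q$) will be proved in parallel as an intermediate step or as a direct corollary by taking $\mu\to-\infty$ (i.e., no framing cut-off).
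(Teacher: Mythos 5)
Your proposal correctly isolates the two genuine ingredients: the magic-window equivalence $\mathbb{E}(d;\delta_d+\mu\sigma_d)\simeq D^b(\X^f(d)^{\mathrm{ss}})$ via the good-weight condition (which guarantees no stratum weights sit on the window boundary), and the combinatorics of \cite{PTzero,P2} that works because $R(d)$ for a very symmetric quiver has the same clean weight structure as the three-loop case (in particular, Proposition~\ref{prop:tau} holds). But there is a structural confusion in the middle step. You write that the Kempf-Ness strata of $R^f(d)$ for the framing stability are ``indexed by a partition $(d_i)_{i=1}^k$ together with a normalized slope sequence'' obeying $\mu\leq v_1/\underline{d}_1<\cdots<v_k/\underline{d}_k<1+\mu$, and that ``on each non-semistable stratum, Halpern-Leistner's Hall-product description identifies the contribution'' with a magic subcategory on the Levi, shifted by $\theta_i$. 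Neither of these is the right mechanism. The Kempf-Ness strata for $\sigma_d$-stability on $R^f(d)$ are indexed by a much coarser set of one-parameter subgroups (roughly, by the maximal destabilizing sub-dimension-vector of the framed representation), not by the slope sequences of the theorem; and in Halpern-Leistner's theorem the strata contributions are the \emph{complement} of the window inside $D^b(\X^f(d))$, not a decomposition of $D^b(\X^f(d)^{\mathrm{ss}})$ itself.

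What actually produces the summands in \eqref{sodA} is a separate combinatorial statement that does not involve the Kempf-Ness stratification at all. The paper proves the theorem in two moves: first, Theorem~\ref{thmsodB} gives a semiorthogonal decomposition of the big window $\mathbb{D}(d;\delta_d+\mu\sigma_d)\subset D^b(\X(d))$ — a subcategory of the \emph{unframed} stack defined by the polytope $\mathbf{V}(d)$ — into Hall products of magic categories, using the decomposition of dominant weights from Proposition~\ref{prop:decompchi}, the slope bound from Proposition~\ref{prop55} (weights in $\mathbf{V}(d)$ yield slopes in $[\mu,1+\mu]$), and the semiorthogonality/generation results of \cite{P}. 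Second, one transports this decomposition to $\mathbb{E}(d;\delta_d+\mu\sigma_d)\subset D^b(\X^f(d))$ via the forget-the-framing pullback $\pi^\ast$, and then uses the window equivalence $j^\ast\colon\mathbb{E}\to D^b(\X^f(d)^{\mathrm{ss}})$. Your proposal never articulates the step of decomposing $\mathbb{D}$ on the unframed stack, and instead replaces it with a (false) claim about Kempf-Ness strata. Since you cite the right references for the combinatorics, you likely have the correct argument in mind; but as written, the step that actually produces the Hall-product summands is misattributed and would need to be replaced by the $\mathbb{D}$-decomposition. Also, the unframed Theorem~\ref{sodfullstackA} is not obtained by taking $\mu\to-\infty$; it uses the decomposition of the full weight lattice $M(d)$ (as in \cite[Theorem~1.1]{P}) rather than of the polytope $\mathbf{V}(d)$.
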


Until Subsection \ref{symmquivers}, we assume the quiver $Q$ is very symmetric.

The proof of Theorem \ref{thmsodA} follows closely the proof of \cite[Theorem 3.2]{PTzero}. 
As in loc. cit., the claim follows from the following semiorthogonal decomposition for subcategories of $D^b(\X(d))$ using ``window categories". Recall the definition of the categories $\mathbb{D}(d; \delta)$ from \eqref{def:dd}.

\begin{thm}\label{thmsodB}
    Let $\delta_d\in M(d)^{W_d}_\mathbb{R}$ and let $\mu\in \mathbb{R}$ such that $\delta_d+\mu\sigma_d$ is a good weight.
    Then there is a semiorthogonal decomposition
    \begin{equation}\label{SODD}
    \mathbb{D}(d; \delta_d+\mu\sigma_d)
    =\left\langle \bigotimes_{i=1}^k \mathbb{M}(d_i; \theta_i+\delta_{d_i}+v_i\tau_{d_i}) \right\rangle,
    \end{equation}
    where the right hand side is as in Theorem \ref{thmsodC}.
\end{thm}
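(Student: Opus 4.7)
The plan is to adapt the proof strategy of \cite[Theorem 3.2]{PTzero}, where the analogous SOD was established for the three-loop quiver ($|I|=1$ and $A=3$), to a general very symmetric quiver. For such a quiver, $R(d)$ decomposes into $A$ copies of the adjoint-type representation $\bigoplus_{a,b\in I}\Hom(V^a,V^b)$, so the polytope $\mathbf{W}(d)$ and its Minkowski decompositions along antidominant cocharacters mirror the three-loop situation. The proof has the three usual steps: (i) each summand lands in $\mathbb{D}(d;\delta_d+\mu\sigma_d)$, (ii) semiorthogonality, and (iii) generation.

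For step (i), given a partition $(d_i)_{i=1}^k$ of $d$ with associated antidominant cocharacter $\lambda$, the Hall product $m_\lambda=p_{\lambda\ast}q_\lambda^\ast$ from \eqref{def:Hallprod} provides a functor from $\bigotimes_i\mathbb{M}(d_i;\theta_i+\delta_{d_i}+v_i\tau_{d_i})$ into $D^b(\X(d))$. A polytope computation using Borel--Weil--Bott to evaluate $p_{\lambda\ast}$ on pushed-up vector bundles, combined with the standard identity $\rho-\sum_i\rho_i=\tfrac12\mathfrak{g}(d)^{\lambda>0}$ and the definition of $\theta_i$ in \eqref{def:deltai}, shows that this image lies inside $\mathbb{D}(d;\delta_d+\mu\sigma_d)$ precisely when the slope inequalities $v_1/\underline{d}_1<\cdots<v_k/\underline{d}_k$ (within the framing bounds) hold: the $v_i\tau_{d_i}$-shifts assemble into a weight in $\mathbf{V}(d)-\mu\sigma_d$ rather than only in $\mathbf{W}(d)$, exactly reproducing the extra Minkowski summand $\frac12\mathrm{sum}_{\beta\in\mathscr{B}}[0,\beta]$ by which $\mathbf{V}(d)$ exceeds $\mathbf{W}(d)$.

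For step (ii), I would mimic the Hom-vanishing argument of \cite[Section~3]{PTzero} and \cite[Section~4]{P}: for two summands indexed by distinct partitions with compatibly ordered slopes, base change and adjunction reduce the Hom to a computation on a common fixed-point stack $\X(d)^{\nu}$ for a separating cocharacter $\nu$. The slope ordering produces a strict inequality between the highest $\nu$-weight on one side and the lowest $\nu$-weight on the other, so the Hom vanishes by the usual window/weight argument, with the goodness hypothesis on $\delta_d+\mu\sigma_d$ ruling out borderline configurations. For step (iii), I would induct on a complexity measure analogous to the pair $(r_\chi,p_\chi)$ from the proof of Lemma~\ref{lem:modify}, applied to a generator $\mathcal{O}_{\X(d)}\otimes\Gamma_{G(d)}(\chi)$ of $\mathbb{D}(d;\delta_d+\mu\sigma_d)$: either $\chi$ already satisfies the magic-window condition for $\mathbb{M}(d;\delta_d+v\tau_d)$ for some admissible integer $v$, in which case one lies in the trivial-partition summand, or there exists an antidominant $\lambda$ parallel to a face of $\mathbf{W}(d)$ such that $m_\lambda$ applied to a suitably twisted representation on $\X(d)^\lambda$ produces a complex containing $\mathcal{O}_{\X(d)}\otimes\Gamma_{G(d)}(\chi)$ and otherwise only vector bundles of strictly smaller complexity, closing the induction.

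The principal obstacle is the bookkeeping for the twists $\theta_i$ and the precise matching between the magic-window conditions on the factors $\mathbb{M}(d_i;\cdot)$ and the $\mathbf{V}(d)$-condition on the product. In the three-loop setting of \cite{PTzero} this is transparent because $R(d)=\mathfrak{g}(d)^{\oplus 3}$, whereas for a general very symmetric quiver the identity $\sum_i\theta_i=-\tfrac12 R(d)^{\lambda>0}+\tfrac12\mathfrak{g}(d)^{\lambda>0}$ must be propagated consistently through the iterated partitions indexed by the tree $\mathcal{T}$ of Definition~\ref{tree}. This is a finite combinatorial verification, but it is the most delicate part of the argument and the main obstacle to a routine adaptation.
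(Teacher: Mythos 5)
Your proposal follows the same three-step template (containment via Corollary~\ref{cor56} and the Hall-product resolution, semiorthogonality, and generation by induction on the complexity invariant $(r_\chi,p_\chi)$) that the paper invokes by deferring to the proof of \cite[Proposition~3.9]{PTzero} and to \cite{P}, and you correctly identify both the role of Proposition~\ref{prop:tau} (the very-symmetric hypothesis making $R(d)^{\lambda>0}$ a combination of the $\tau_{d_i}$) and the $\theta_i$-bookkeeping from \eqref{defthetai} and \eqref{decompchi2} as the crux. This is essentially the paper's argument, just written out rather than cited.
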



\begin{proof}[Proof of Theorem \ref{thmsodA} assuming Theorem~\ref{thmsodB}]
 We briefly explain why Theorem \ref{thmsodB} implies Theorem \ref{thmsodA}, for full details see \cite[Proof of Theorem 3.2]{PTzero}.
    Consider the 
    morphisms 
    \begin{equation}\label{def:pullback}
    \X^f(d)^{\text{ss}} \stackrel{j}{\hookrightarrow} \X^f(d)\stackrel{\pi}{\twoheadrightarrow} \X(d).\end{equation}
    where $j$ is an open immersion and $\pi$ is the natural projection. 
    Let 
    \begin{align*}
     \mathbb{E}(d; \delta_d+\mu\sigma_d) \subset D^b(\X^f(d))
     \end{align*}
     be the subcategory 
     generated by the complexes $\pi^*(D)$ for $D\in \mathbb{D}(d; \delta_d+\mu\sigma_d)$. 
    If $\delta_d+\mu\sigma_d$ is a good weight, then there is an equivalence of categories   
    \[j^*\colon \mathbb{E}(d; \delta_d+\mu\sigma_d)\xrightarrow{\sim} D^b\left(\X^f(d)^{\text{ss}}\right),\] 
    see \cite[Proof of Proposition 3.16]{PTzero}.
    The equivalence follows from the theory of ``window categories" of Halpern-Leistner \cite{halp}, Ballard--Favero--Katzarkov \cite{MR3895631}, and the description of ``window categories" via explicit generators (due to Halpern-Leistner--Sam \cite{hls}) for the self-dual representation of $G(d)$:
    \begin{align*}
    R^f(d) \oplus V(d)^{\vee}=R(d)\oplus V(d)\oplus V(d)^{\vee}.\end{align*}  
    Further,
    there is a semiorthogonal decomposition 
    \begin{align*}
        D^b(\X^f(d))= \langle \pi^{\ast} D^b(\X(d))_w : w \in \mathbb{Z}   \rangle 
    \end{align*}
    and equivalences 
$\pi^{\ast} \colon D^b(\X(d))_w \stackrel{\sim}{\to} \pi^{\ast}D^b(\X(d))_w$ for each $w\in\mathbb{Z}$. 
Therefore, by Theorem~\ref{thmsodB}, there is a 
    semiorthogonal decomposition 
    \begin{align*}
    \mathbb{E}(d; \delta_d+\mu\sigma_d)=
        \left\langle \bigotimes_{i=1}^k \mathbb{M}(d_i; \theta_i+\delta_{d_i}+v_i\tau_{d_i})\right\rangle,
    \end{align*}
    where the right hand side is as in 
    Theorem~\ref{thmsodA}. 
    Therefore the theorem holds.
\end{proof}

\subsection{Decompositions of weights}\label{subsub421}

The proof of Theorem \ref{thmsodB} follows closely the proof of \cite[Proposition 3.12]{PTzero}. 

\begin{remark}\label{remark46}
 The assumption that the quiver $Q$ is very symmetric is used only to obtain a (convenient) decomposition \eqref{decompchi2} of the weight $\chi$, which allows us to perform a similar combinatorial analysis as in \cite{P2, PTzero}. Note that the decomposition \eqref{decompchi2} is a corollary of Proposition \ref{prop:tau}, where it is essential that $Q$ is very symmetric. 
 \end{remark}

The proof of Theorem \ref{thmsodB} uses the decomposition of categorical Hall algebras for quivers with potential in quasi-BPS categories from \cite{P}. The summands in this semiorthogonal decomposition are indexed by decompositions of weights of $T(d)$, which we now briefly review.

Before stating it, we introduce some notations. For $d_a$ a summand of a partition of $d$, denote by $M(d_a)\subset M(d)$ the subspace as in the decomposition from Subsection~\ref{id}.
Assume that $\ell$ is a partition of a dimension $d_a\in \mathbb{N}^I$, alternatively, $\ell$ is an edge of the tree $\mathcal{T}$ introduced in Subsection \ref{tree}.
Let $\lambda_{\ell}$ be the corresponding 
antidominant cocharacter of $T(d_a)$. 
Recall the set \[\mathscr{A}=\{(\beta^a_i-\beta^b_j)^{\times A}\mid a,b\in I, 1\leq i\leq i\leq d^a, 1\leq j\leq d^b\}\] from \eqref{def:poly}.
Let \[\mathscr{A}_{\ell}\subset M(d_a)\cap \mathscr{A}\] be the multiset of weights in $M(d_a)\cap \mathscr{A}$
such that $\langle \lambda_{\ell}, \beta\rangle>0$.
Define
$N_{\ell}$ by 
 \begin{align*}
 N_\ell:=\sum_{\beta\in \mathscr{A}_\ell}\beta.
 \end{align*}

\begin{prop}\label{prop:decompchi}
Let $\chi$ be a dominant weight in $M(d)_\mathbb{R}$,
let $\delta_d\in M(d)^{W_d}_\mathbb{R}$, and let $w=\langle 1_{d}, 
\chi-\delta_d\rangle\in\mathbb{R}$.
There exists:
\begin{enumerate}
    \item a path of partitions $T$, see Subsection \ref{tree}, with decomposition 
$(d_i)_{i=1}^k$ at the end vertex,
\item coefficients $r_\ell$ for $\ell\in T$ such that $r_{\ell}>1/2$ 
if $\ell$ corresponds to a partition with length $>1$, 
and $r_{\ell}=0$ otherwise; further, if $\ell, \ell'\in T$ are vertices 
corresponding to partitions with length $>1$, and 
with a path from $\ell$ to $\ell'$, then $r_{\ell}> r_{\ell'}> \frac{1}{2}$, and
\item dominant weights $\psi_i\in\mathbf{W}(d_i)$ for $1\leq i\leq k$
such that:
\end{enumerate}
\begin{equation}\label{decompchi}
    \chi+\rho-\delta_d=-\sum_{\ell\in T}r_\ell N_\ell+\sum_{i=1}^k\psi_i+w\tau_d.
\end{equation}
\end{prop}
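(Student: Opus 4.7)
The proof will proceed by induction on $\dd$, mirroring the arguments of \cite[Proposition~3.12]{PTzero} and \cite[Proposition~4.1]{P}, and constructing the path $T$ by iteratively ``peeling off'' facets of scaled copies of the zonotope $\mathbf{W}(d) = \tfrac{1}{2}\mathrm{sum}_{\beta \in \mathscr{A}}[0,\beta]$. First I would normalize by setting $\chi'' := \chi + \rho - \delta_d - w\tau_d$. Since $\rho$ lies in the root lattice, $\tau_d$ and $\delta_d$ are Weyl-invariant, and $\langle 1_d, \tau_d\rangle = 1$, the weight $\chi''$ lies in $M(d)_{0,\mathbb{R}}$ and remains dominant. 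The statement is then equivalent to writing $\chi'' = -\sum_{\ell \in T} r_\ell N_\ell + \sum_{i=1}^k \psi_i$ with $\psi_i \in \mathbf{W}(d_i)$ dominant.

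\textbf{Peel step.} Define $r_\chi := \inf\{s \geq 0 : \chi'' \in s\mathbf{W}(d)\}$. If $r_\chi \leq 1$, take $T$ to be the singleton path at $(d)$ with no edges and $\psi_1 := \chi''$, concluding the base case. Otherwise $\chi''$ lies on the boundary of $r_\chi \mathbf{W}(d)$. Pick a facet $H_{\ell_1}$ of $\mathbf{W}(d)$ containing $\chi''/r_\chi$, and let $\lambda_{\ell_1}$ be the associated antidominant cocharacter in $ST(d)$, corresponding to a two-part refinement $(d_1',d_2')$ of $d$. A direct zonotope computation gives
\begin{equation*}
\min_{\mu \in \mathbf{W}(d)} \langle \lambda_{\ell_1}, \mu \rangle = -\tfrac{1}{2}\langle \lambda_{\ell_1}, N_{\ell_1} \rangle,
\end{equation*}
so $\langle \lambda_{\ell_1}, \chi'' \rangle = -\tfrac{r_\chi}{2}\langle \lambda_{\ell_1}, N_{\ell_1} \rangle$. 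Setting $r_{\ell_1} := r_\chi/2 > 1/2$ ensures $\langle \lambda_{\ell_1}, \chi'' + r_{\ell_1} N_{\ell_1}\rangle = 0$.

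\textbf{Key decomposition and recursion.} The very symmetric hypothesis enters through Proposition~\ref{prop:tau}: it implies that $N_{\ell_1}$ is a $\mathbb{Q}$-linear combination of $\sigma_{d_1'}$ and $\sigma_{d_2'}$, and a dimension count then gives $\lambda_{\ell_1}^\perp \cap M(d)_{0,\mathbb{R}} = M(d_1')_{0,\mathbb{R}} \oplus M(d_2')_{0,\mathbb{R}}$. Hence $\chi'' + r_{\ell_1} N_{\ell_1} = \chi_1'' + \chi_2''$ with $\chi_i'' \in M(d_i')_{0,\mathbb{R}}$; moreover each $\chi_i''$ can be written as $\chi_i + \rho_{d_i'} - \delta_{d_i'} - w_i \tau_{d_i'}$ for a suitable dominant $\chi_i \in M(d_i')$, Weyl-invariant $\delta_{d_i'}$, and $w_i \in \mathbb{R}$ with $\delta_{d_1'} + \delta_{d_2'} = \delta_d$ and $w_1 + w_2 = w$. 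I would then apply the inductive hypothesis to each $\chi_i''$ (valid since $\underline{d_i'} < \dd$), obtaining decompositions along sub-paths $T_i$. To assemble a single path $T \subset \mathcal{T}$, refine greedily: at each node of the path, further refine the part whose associated weight currently has the largest radius, which naturally interleaves $T_1$ and $T_2$ into one linear path.

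\textbf{Main obstacle.} The crucial technical point is the strict inequality $r_\ell > r_{\ell'} > 1/2$ along $T$, which is equivalent to $r(\chi_i'') < r_\chi$ inside $\mathbf{W}(d_i')$. This reflects geometrically the fact that after subtracting the $\lambda_{\ell_1}$-directional contribution from the boundary point $\chi''$ of $r_\chi \mathbf{W}(d)$, the components $\chi_i''$ land strictly inside the scaled subzonotopes $r_\chi \mathbf{W}(d_i')$; this exploits that the facet $H_{\ell_1}$ of the Minkowski zonotope decomposes as a product indexed by the two blocks, so the decomposition of $\chi''/r_\chi$ as a convex combination of generators $\beta/2$ never uses all the generators of $\mathbf{W}(d_i')$. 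Dominance of $\psi_i$ propagates through the induction since $\chi + \rho$ is strictly dominant and the subsequent Weyl-invariant shifts and $N_\ell$-shifts preserve the interior of the dominant cone on each block. Termination is immediate from $\underline{d_i'} < \dd$, and unfolding the recursion yields the identity (\ref{decompchi}).
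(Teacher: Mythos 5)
Your overall strategy — normalize to $\chi''=\chi+\rho-\delta_d-w\tau_d\in M(d)_{0,\mathbb{R}}$, take the smallest dilate of $\mathbf{W}(d)$ containing it, peel off the face it lies on, and recurse — is the same as the paper's sketch (which defers the details to \cite{P0} and \cite{P2}). But there is a genuine gap precisely at the point you call the ``main obstacle''. You pick \emph{an arbitrary facet} $H_{\ell_1}$ containing $\chi''/r_\chi$ (hence always a two-part refinement) and assert that the resulting components $\chi''_i$ have strictly smaller radius. This fails in general: if $\chi''/r_\chi$ lies on a face of codimension at least two (e.g.\ for the one-vertex, one-loop quiver with $d=3$, when $\chi''/r_\chi$ is a vertex of the hexagon $\mathbf{W}(3)$), then after subtracting $r_{\ell_1}N_{\ell_1}$ for your chosen facet one component still sits on the boundary of the scaled sub-zonotope, its $r$-invariant equals $r_\chi$, and the next coefficient equals $r_{\ell_1}$ — violating the strict decrease demanded in condition (2). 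Your justification (``never uses all the generators of $\mathbf{W}(d_i')$'') does not address this. The correct choice, as in the paper's sketch, is the \emph{unique face of $r_\chi\mathbf{W}(d)$ containing $\chi''$ in its relative interior}; this face may correspond to a partition with more than two parts, it is the translate by $-r_{\ell_1}N_{\ell_1}$ of the Minkowski sum of the sub-zonotopes $r_\chi\mathbf{W}(e_i)$ (because the $\lambda$-fixed weights of $\mathscr{A}$ are exactly the weights of $\bigoplus_i R(e_i)$), and relative interiority is what forces each component into the relative interior of $r_\chi\mathbf{W}(e_i)$, hence gives the strict drop of the $r$-invariant. Even then, some care is needed when interleaving the branches into a single path of partitions, a point your greedy interleaving leaves unexamined.

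A secondary issue: you route the key decomposition through the very symmetric hypothesis and Proposition \ref{prop:tau}. The identity $\lambda_{\ell_1}^\perp\cap M(d)_{0,\mathbb{R}}=M(d_1')_{0,\mathbb{R}}\oplus M(d_2')_{0,\mathbb{R}}$ for a two-block cocharacter is an elementary pairing computation and needs neither, and Remark \ref{remark46} states explicitly that very-symmetry is \emph{not} used in Proposition \ref{prop:decompchi} — it only enters later, in the rewriting \eqref{decompchi2}. What the peeling step actually requires is the product structure of the chosen face recalled above, which holds for any symmetric quiver; so as written your argument both over-assumes and, at the decisive monotonicity step, under-proves.
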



\begin{proof}
The above is proved in \cite[Subsection 3.1.2]{P0}, see also
\cite[Subsection 3.2.8]{P2}.

We briefly explain the process of obtain the decomposition \eqref{decompchi}.
Choose $r$ such that $\chi+\rho-\delta_d-w\tau_d$ is on the boundary of $2r\mathbf{W}(d)$ (that is, let $r$ be the $r$-invariant of $\chi+\rho-w\tau_d$). The first partition $\ell_1$ in $T$ corresponds to the face of $2r\mathbf{W}(d)$ which contains $\chi+\rho-\delta_d-w\tau_d$ in its interior. Assume $\ell_1$ corresponds to a partition $(e_i)_{i=1}^s$.
Then there exist weights $\chi'_i\in M(e_i)_0$ for $1\leq i\leq s$ such that \[\chi+\rho-\delta_d-w\tau_d+r_{\ell_1}N_{\ell_1}=\sum_{i=1}^s \chi'_i.\] By the choice of $r$ and $\ell_1$, the weights $\chi'_i$ are inside the polytopes $2r\mathbf{W}(e_i)$. One repeats the process above to decompose further the weights $\chi'_i$ until the decomposition \eqref{decompchi} is obtained.
\end{proof}

Let $w\in\mathbb{Z}$ and assume that $\langle 1_d, \delta_d\rangle=w$.
Let $M(d)^+_w$ be the subset of $M(d)$ of integral dominant weights $\chi$ with $\langle 1_d, \chi\rangle=w$. 
We denote by $L^d_{\delta_d}$ the set of all paths of partitions $T$ with coefficients $r_\ell$ for $\ell\in T$ satisfying (2) from the statement of Proposition \ref{prop:decompchi} for a dominant integral weight $\chi\in M(d)^+_w$. By Proposition \ref{prop:decompchi}, there is a map:
\[\Upsilon\colon M(d)^+_w\to L^d_{\delta_d}, \,\, \Upsilon(\chi)=(T, r_\ell).\]

The set $L^d_{\delta_d}$ was used in \cite{P} to index summands in semiorthogonal decompositions of $D^b(\X(d))_w$.
We will show in the next subsection that $L^d_{\delta_d}$ has an explicit description.

\subsection{Partitions associated to dominant weights}

We continue with the notation from Subsection \ref{subsub421}.
Using the following proposition, the weight $-\sum_{\ell\in T}r_\ell N_\ell$ from \eqref{decompchi} is a linear combinations of $\tau_{d_i}$ for $1\leq i\leq k$.

\begin{prop}\label{prop:tau}
    Let $\lambda$ be an antidominant cocharacter associated to the partition $(d_i)_{i=1}^k$ of $d$. Then $R(d)^{\lambda>0}$ is a linear combination of the weights $\tau_{d_i}$ (alternatively, of $\sigma_{d_i}$) for $1
    \leq i\leq k$.
\end{prop}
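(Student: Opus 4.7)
The plan is to compute $R(d)^{\lambda>0}$ as an explicit sum of $T(d)$-weights, then regroup using the very symmetric hypothesis. Refine the basis $\{\beta^a_i\}$ into blocks compatible with the partition $(d_i)_{i=1}^k$: write $\beta^{a,p}_i$ for $a\in I$, $1\leq p\leq k$, $1\leq i\leq d_p^a$, so that $\lambda$ acts on $\beta^{a,p}_i$ with weight $\lambda_p$ where, by antidominance, $\lambda_1>\cdots>\lambda_k$. Then $\langle\lambda,\beta^{a,p}_i-\beta^{b,q}_j\rangle>0$ iff $p<q$. Since $Q$ is very symmetric with constant edge multiplicity $A$, the $T(d)$-weights of $R(d)$ are the multiset $\{(\beta^a_i-\beta^b_j)^{\times A}\}_{a,b,i,j}$, and therefore
\begin{align*}
R(d)^{\lambda>0}
=A\sum_{p<q}\sum_{a,b\in I}\sum_{\substack{1\leq i\leq d_p^a\\ 1\leq j\leq d_q^b}}\bigl(\beta^{a,p}_i-\beta^{b,q}_j\bigr).
\end{align*}

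Now collapse the sums: $\sum_{a\in I}\sum_{i=1}^{d_p^a}\beta^{a,p}_i=\sigma_{d_p}$ by definition, and $\sum_{b\in I}d_q^b=\underline{d_q}$. Hence
\begin{align*}
R(d)^{\lambda>0}
=A\sum_{p<q}\bigl(\underline{d_q}\,\sigma_{d_p}-\underline{d_p}\,\sigma_{d_q}\bigr)
=A\sum_{p<q}\underline{d_p}\,\underline{d_q}\,(\tau_{d_p}-\tau_{d_q}),
\end{align*}
using $\sigma_{d_p}=\underline{d_p}\,\tau_{d_p}$. This exhibits $R(d)^{\lambda>0}$ as an explicit $\mathbb{Q}$-linear combination of $\tau_{d_1},\ldots,\tau_{d_k}$ (equivalently, of the $\sigma_{d_p}$), proving the proposition.

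The argument is a direct bookkeeping exercise; there is no real obstacle. The only subtle point is that the very symmetric hypothesis is essential at the step where we pull $A$ out of the sum over $a,b\in I$: for a general symmetric quiver the edge multiplicities $A_{ab}$ depend on the pair $(a,b)$, the vertex sums do not reassemble into $\sigma_{d_p}$, and the resulting expression is no longer Weyl-invariant on each block. This is precisely why the statement is restricted to very symmetric quivers, and why Knörrer periodicity will be needed in the proofs of Theorems~\ref{thmsodC} and \ref{sodfullstackB} to reduce the general symmetric case to the very symmetric one.
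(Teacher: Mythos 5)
Your proof is correct and takes essentially the same approach as the paper's: the paper states "this follows from a direct computation" and exhibits the $k=2$ case $R(d)^{\lambda>0}=A(\underline{d}_2\sigma_{d_1}-\underline{d}_1\sigma_{d_2})$, while you carry out the same enumeration-and-regrouping argument for general $k$, arriving at the compatible formula $A\sum_{p<q}\underline{d}_p\underline{d}_q(\tau_{d_p}-\tau_{d_q})$. Your closing observation about the role of the very symmetric hypothesis matches the paper's remark immediately following the proposition.
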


\begin{proof}
    This follows from a direct computation, for example when $k=2$, one computes directly that 
    \[R(d)^{\lambda>0}=A
    \sum_{a, b \in I}\sum_{d_1^a<i \leq d^a, 1 \leq j\leq d_1^b}(\beta^b_j-\beta^a_i)=A\left(\dd_2\sigma_{d_1}-\dd_1\sigma_{d_2}\right).\]
\end{proof}
\begin{remark}
The conclusion of Proposition~\ref{prop:tau} is not true for a general symmetric quiver. 
\end{remark}

Consider the decomposition \eqref{decompchi} and let $\lambda$ be the antidominant cocharacter corresponding to $(d_i)_{i=1}^k$. 
We define $v_i \in \mathbb{R}$
for $1\leq i \leq k$ by 
\begin{align}\label{decompchi24}
    \sum_{i=1}^k v_i \tau_{d_i}=-\sum_{\ell \in T}\left(r_{\ell}-\frac{1}{2}\right)N_{\ell}+w\tau_d=-\sum_{\ell\in T} r_{\ell} N_{\ell}
    +\frac{1}{2}R(d)^{\lambda>0}+w\tau_d.
\end{align}
Here the right hand side is a linear combination 
of $\tau_{d_i}$ by Proposition~\ref{prop:tau}, so $v_i \in \mathbb{R}$ is well-defined. 
We
define the 
weights $\theta_i \in M(d_i)^{W_{d_i}}_{\mathbb{R}}$ by 
\begin{align}\label{defthetai}
    \sum_{i=1}^k \theta_i=-\frac{1}{2}R(d)^{\lambda>0}+\frac{1}{2}\mathfrak{g}(d)^{\lambda>0}.
\end{align}
Then we rewrite \eqref{decompchi} as
\begin{align}\label{decompchi2}
    \chi
   =\sum_{i=1}^k \theta_i+\sum_{i=1}^k v_i\tau_{d_i}+\sum_{i=1}^k(\psi_i-\rho_i+\delta_{d_i}),
\end{align}
where $\rho_i$ is half the sum of positive roots of $\mathfrak{g}(d_i)$. The next proposition follows as in \cite[Proposition 3.5]{PTzero}:
\begin{prop}\label{ineqslopes}
    Let $\chi$ be a dominant weight in $M(d)$ and consider the weights $(v_i)_{i=1}^k$ from \eqref{decompchi24}. Then 
    \begin{equation}\label{ineqslopes2}
    \frac{v_1}{\dd_1}<\ldots<\frac{v_k}{\dd_k}.
    \end{equation}
\end{prop}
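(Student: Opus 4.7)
The plan is to adapt the argument of \cite[Proposition~3.5]{PTzero} from the three-loop quiver to an arbitrary very symmetric quiver. The key input that makes this adaptation possible is Proposition~\ref{prop:tau}: by that proposition, each weight $N_\ell$ appearing in \eqref{decompchi24} is a linear combination of the $\sigma_{d_i}$'s with exactly the same combinatorial structure as in the three-loop case, up to the overall multiplicative factor $A$. I would proceed by induction on the number of internal edges of the path of partitions $T$.

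For the base case of a tree $T$ with one internal edge $\ell$ refining $d$ into $(d_1, d_2)$, Proposition~\ref{prop:tau} gives $N_\ell = A(\underline{d}_2 \sigma_{d_1} - \underline{d}_1 \sigma_{d_2})$. Matching the coefficients of $\sigma_{d_1}$ and $\sigma_{d_2}$ on the two sides of \eqref{decompchi24}, a direct computation yields $v_2/\underline{d}_2 - v_1/\underline{d}_1 = A \underline{d}\,(r_\ell - 1/2) > 0$, which settles the two-part case. For the inductive step, consider the final edge $\ell_*$ of $T$, which refines a single part $e$ of the penultimate partition into consecutive parts $(d_m, \ldots, d_{m+s})$. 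Because $N_{\ell_*}$ is supported in $M(e)$, the slopes $v_j/\underline{d}_j$ for $j \notin \{m, \ldots, m+s\}$ coincide with the slopes $v'_j/\underline{d}_j$ for the penultimate partition, which are strictly increasing by induction. Within the block $\{m, \ldots, m+s\}$, the identity \eqref{decompchi24} reduces to the base identity with $w\tau_d$ replaced by $v'_e \tau_e$, so an iteration of the base computation gives $v_m/\underline{d}_m < \cdots < v_{m+s}/\underline{d}_{m+s}$, together with the weighted-mean identity $\sum_{j=m}^{m+s} v_j = v'_e$ (obtained by summing the coefficients, using that each $N_\ell$ has total coefficient zero).

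The remaining step, and the only delicate one, is to verify that the new slopes $v_m/\underline{d}_m$ and $v_{m+s}/\underline{d}_{m+s}$ fit strictly between the neighbouring old slopes $v'_{m-1}/\underline{d}_{m-1}$ and $v'_{m+s+1}/\underline{d}_{m+s+1}$, which is the main obstacle I anticipate. This is precisely where the monotonicity condition (2) of Proposition~\ref{prop:decompchi} enters: the strict inequality $r_\ell > r_{\ell_*} > 1/2$ for every ancestor $\ell$ of $\ell_*$ in $T$ quantitatively bounds how much the refined slopes can spread relative to the slope $v'_e/\underline{e}$ at the preceding level. An explicit estimate of this spread, carried out inductively in \cite[Proposition~3.5]{PTzero} for the three-loop quiver, shows that this bound is exactly what is needed to preserve the strict chain of inequalities across the boundary of the refined block. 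Since the only quiver-dependent input in the computation is the factor $A$ in $N_\ell$, the bookkeeping argument from loc.~cit.\ transfers without change, completing the induction.
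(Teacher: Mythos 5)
Your proposal is correct and takes essentially the same route as the paper, which simply invokes the argument of \cite[Proposition~3.5]{PTzero} and adapts it using Proposition~\ref{prop:tau} as the only quiver-dependent input; your base-case computation $\frac{v_2}{\underline{d}_2}-\frac{v_1}{\underline{d}_1}=A\,\underline{d}\,(r_\ell-\tfrac{1}{2})>0$ and the reduction of each refinement step to the same bookkeeping (with the extra factor $A$) match the intended argument. The only caveat, which you already flag, is that the induction hypothesis must be carried in quantitative form (a lower bound on consecutive slope gaps in terms of the coefficients $r_\ell$, guaranteed by condition (2) of Proposition~\ref{prop:decompchi} via the telescoping estimate), rather than just the strict chain of inequalities.
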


Let $T^d_w$ be the set of tuples $A=(d_i, v_i)_{i=1}^k$ of $(d,w)$ such that $(d_i)_{i=1}^k$ is a partition of $d$ and the real numbers $(v_i)_{i=1}^k\in\mathbb{R}^k$ are such that:
\begin{itemize}
    \item $\sum_{i=1}^k v_i=w$,
    \item the inequality \eqref{ineqslopes2} holds,
    \item for each $1\leq i\leq k$, the sum of coefficients of $\theta_i+v_i\tau_{d_i}+\delta_{d_i}$ is an integer.
\end{itemize}
By Proposition \ref{ineqslopes}, there is a map 
\begin{equation}\label{varphi}
\varphi\colon L^d_{\delta_d}\to T^d_w.
\end{equation}

\begin{prop}\label{prop410}
    The map \eqref{varphi} is a bijection.
\end{prop}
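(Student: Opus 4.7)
The plan is to exhibit an explicit inverse $\psi \colon T^d_w \to L^d_{\delta_d}$ of $\varphi$. Given $(d_i, v_i)_{i=1}^k \in T^d_w$ with strictly increasing slopes $s_i := v_i/\dd_i$, I would construct a path of partitions $T$ from $(d)$ to $(d_1, \ldots, d_k)$ together with coefficients $r_\ell$ at its vertices, mirroring the algorithm in the proof of Proposition \ref{prop:decompchi} but running it backwards from the slope data.

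The construction would proceed inductively on the number $k$ of parts, splitting off parts consecutively starting from $(d)$. By Proposition \ref{prop:tau}, every relevant $R(d_a)^{\lambda>0}$ is a linear combination of the $\tau$-weights for the sub-parts, so the relation \eqref{decompchi24} becomes a triangular linear system, invertible for $(r_\ell)$ in terms of $(v_i)$. The strictly decreasing condition on the $r_\ell$ required by Proposition \ref{prop:decompchi}(2) should follow from the strict slope inequality defining $T^d_w$ (the output of Proposition \ref{ineqslopes}), while the integrality condition in $T^d_w$ ensures that the weight $\chi$ reconstructed via \eqref{decompchi2} (for appropriate $\psi_i$ in the relevant sub-polytopes) is an integral dominant weight in $M(d)^+_w$, thereby placing $(T, r_\ell)$ in the image of $\Upsilon$.

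For injectivity of $\varphi$, the end partition $(d_i)_{i=1}^k$ of $T$ is read off directly from $\varphi(T, r_\ell)$, and the path $T$ together with the coefficients $r_\ell$ is then uniquely recovered from $(v_i)$ via the same triangular system, using the linear independence of the $\tau_{d_a}$ inside $M(d)_\mathbb{R}^{W_d}$ and the descendant-decreasing condition on $r_\ell$ which rules out any redundant refinements.

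The main obstacle will be checking the combinatorial compatibility between the two sides: the strict slope inequality $v_1/\dd_1 < \cdots < v_k/\dd_k$ defining $T^d_w$ should correspond exactly to the strict condition $r_\ell > r_{\ell'} > 1/2$ along descendant vertices of $T$, both encoding the same Harder--Narasimhan--type filtration data in different combinatorial languages. Translating between them rigorously should proceed by induction on the length $\dd$ of $d$, applying Proposition \ref{ineqslopes} at each refinement step to propagate the strict inequalities from the coarser to the finer partitions, and verifying at the end that no two distinct refinements of $\mathcal{T}$ can give rise to the same slope pattern $(v_i/\dd_i)$.
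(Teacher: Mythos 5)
Your strategy---construct an explicit inverse by choosing $\psi_i \in \mathbf{W}(d_i)$ so that the weight $\chi$ defined in \eqref{decompchi2} is an integral dominant weight---is the same idea the paper uses, which simply cites the analogous Proposition~3.8 of \cite{PTzero}. The gap is in the claim that \eqref{decompchi24} is a triangular linear system invertible for $(r_\ell)$ in terms of $(v_i)$. For a \emph{fixed} path $T$ this is plausibly true via Proposition~\ref{prop:tau}, but the shape of $T$---how many refinement steps there are, and which parts are grouped together at each step---is not determined a priori. Different paths from $(d)$ to the same end partition $(d_1,\ldots,d_k)$ yield different linear systems with different numbers of unknowns, and only one of them produces a strictly decreasing sequence $(r_\ell)$ as required by Proposition~\ref{prop:decompchi}(2). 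In particular, ``splitting off parts consecutively starting from $(d)$'' selects one specific maximal path, which is usually wrong: already for $k=3$ there are three candidate paths covering three mutually exclusive regimes of slope data, and the ``consecutive'' path covers only one of them, while for the others it would produce $r_1 < r_2$, violating the required monotonicity.

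Determining the correct $T$ from $(v_i)$ is a nonlinear max/Harder--Narasimhan-type recursion (made explicit after the proposition, in Subsection~\ref{comppartitions}, where $r'_{1,A}$ and $\pi_{1,A}$ are computed as a maximum over interior splitting points), not a linear solve. You correctly flag the compatibility between the two combinatorial languages as the main obstacle, but the resolution you sketch---induction on $\dd$ plus an appeal to Proposition~\ref{ineqslopes}---runs the implication in the wrong direction: that proposition derives the slope inequalities from the $r_\ell$-conditions, not conversely, and in any case does not determine the tree structure. The cleaner order of operations, which is what the paper's sketch has in mind, is to build $\chi$ from $(d_i, v_i, \psi_i)$ via \eqref{decompchi2} first and then feed it to the algorithm of Proposition~\ref{prop:decompchi}; that algorithm selects the path shape automatically, and the remaining task is to check $\varphi(\Upsilon(\chi)) = (d_i, v_i)$ using the uniqueness of the decomposition.
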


The above follows as \cite[Proposition 3.8]{PTzero}. The proof goes by constructing an inverse $\varphi'$. To construct $\varphi'$, one chooses $\psi_i\in \textbf{W}(d_i)$ such that $\chi$ defined in \eqref{decompchi2} is an integral weight.

Thus the summands appearing in the semiorthogonal decomposition of $D^b(\X(d))_w$ from \cite[Theorem 1.1]{P} are labeled by elements of $T^d_w$.

\subsection{Partitions for framed quivers}

The following proposition is the analogue of \cite[Proposition 3.7]{PTzero}.

\begin{prop}\label{prop55}
Let $\delta_d\in M(d)^{W_d}_\mathbb{R}$ and let $\chi \in M(d)$ be a dominant weight. 
Consider the decomposition (\ref{decompchi2})
with associated partition $(d_i)_{i=1}^k$ and weights $(v_i)_{i=1}^k\in \mathbb{R}^k$. 
Let $\mu\in \mathbb{R}$ and assume that \begin{align}\label{chi:Vd}
\chi+\rho-\delta_d-\mu\sigma_d\in\mathbf{V}(d).\end{align}
Then 
    \begin{equation}\label{ineqslopesmu}
    \mu\leq \frac{v_1}{\dd_1}<\ldots<\frac{v_k}{\dd_k}\leq 1+\mu.
     \end{equation}
\end{prop}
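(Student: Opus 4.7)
The plan is as follows. The strict middle inequalities in \eqref{ineqslopesmu} already follow from Proposition~\ref{ineqslopes}, so only the outer bounds $v_1/\dd_1 \geq \mu$ and $v_k/\dd_k \leq 1+\mu$ require attention. I will deduce these by pairing \eqref{chi:Vd} with the diagonal cocharacters $1_{d_1}$ and $1_{d_k}$ of $T(d_1), T(d_k) \subset T(d)$.

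The first step is to produce a closed form for each $v_i$. Starting from \eqref{decompchi2}, substituting \eqref{defthetai}, and using the identity $\rho - \sum_i \rho_i = -\tfrac{1}{2}\mathfrak{g}(d)^{\lambda>0}$ (immediate from the description of positive roots of $\mathfrak{g}(d)$ relative to the Levi subalgebra $\prod_i \mathfrak{g}(d_i)$), the $\mathfrak{g}(d)^{\lambda>0}$ contributions cancel, and one arrives at
\[\chi+\rho-\delta_d \;=\; -\tfrac{1}{2}R(d)^{\lambda>0} \;+\; \sum_{i=1}^k v_i\tau_{d_i} \;+\; \sum_{i=1}^k \psi_i,\]
where $\psi_i \in \mathbf{W}(d_i) \subset M(d_i)_{0,\mathbb{R}}$. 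Pairing with $1_{d_i}$, and using $\langle 1_{d_i}, \tau_{d_j}\rangle = \delta_{ij}$ together with $\langle 1_{d_i}, \psi_j\rangle = 0$ for all $j$ (since $\psi_i$ has vanishing sum of coefficients and $\psi_j$ for $j\neq i$ has no $M(d_i)$-component), yields
\[v_i \;=\; \langle 1_{d_i}, \chi+\rho-\delta_d\rangle \;+\; \tfrac{1}{2}\langle 1_{d_i}, R(d)^{\lambda>0}\rangle.\]

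Next I would carry out two direct weight counts, both relying on the hypothesis that $Q$ is very symmetric with $A$ edges between any pair of vertices. First, a special case of Proposition~\ref{prop:tau} yields $\langle 1_{d_i}, R(d)^{\lambda>0}\rangle = A\dd_i\bigl(\sum_{m>i}\dd_m - \sum_{m<i}\dd_m\bigr)$. Second, using \eqref{polyt:W} and the decomposition $\mathbf{V}(d) = \mathbf{W}(d) + \tfrac{1}{2}\mathrm{sum}_{\beta\in\mathscr{B}}[0,\beta]$, the $T(d)$-weight count gives
\[\langle 1_{d_i}, \mathbf{V}(d)\rangle \;=\; \bigl[-\tfrac{A\dd_i(\dd-\dd_i)}{2}, \; \tfrac{A\dd_i(\dd-\dd_i)}{2}+\tfrac{\dd_i}{2}\bigr],\]
so \eqref{chi:Vd} translates directly to $\langle 1_{d_i}, \chi+\rho-\delta_d\rangle - \mu\dd_i \in \langle 1_{d_i}, \mathbf{V}(d)\rangle$.

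The proof closes by specialization. For $i = 1$, the correction $+A\dd_1(\dd-\dd_1)/2$ to $v_1$ exactly cancels the lower endpoint of $\langle 1_{d_1}, \mathbf{V}(d)\rangle$, giving $v_1 \geq \mu\dd_1$. For $i = k$, the correction $-A\dd_k(\dd-\dd_k)/2$ to $v_k$ cancels the $A$-dependent part of the upper endpoint, giving $v_k \leq \mu\dd_k + \dd_k/2 \leq (1+\mu)\dd_k$. Dividing by $\dd_1$ and $\dd_k$ respectively yields the outer bounds. There is no serious obstacle; the main care is in the bookkeeping for the two decompositions \eqref{decompchi} and \eqref{decompchi2} and in verifying that the combinatorial counts for $\langle 1_{d_i}, R(d)^{\lambda>0}\rangle$ and for $\langle 1_{d_i}, \mathbf{V}(d)\rangle$ line up cleanly, which is where the very-symmetric hypothesis is used.
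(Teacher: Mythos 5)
Your proof is correct and follows essentially the same route as the paper: rewrite \eqref{decompchi2} as $\chi+\rho-\delta_d-\mu\sigma_d=-\tfrac{1}{2}R(d)^{\lambda>0}+\sum_i(v_i-\mu\dd_i)\tau_{d_i}+\sum_i\psi_i$ and pair it against block-diagonal cocharacters (the paper uses $\alpha_k=1_{d_k}$ for the upper bound and remarks that the lower bound is analogous), bounding the pairing by the extremal values of $\mathbf{V}(d)$. Your explicit endpoint computation with the literal $\tfrac{1}{2}$-scaled $\mathscr{B}$-part even gives the slightly sharper bound $v_k/\dd_k\le\mu+\tfrac{1}{2}$, which the paper's coarser estimate $\langle\alpha_k,\cdot\rangle\le\langle\alpha_k,-\tfrac{1}{2}R(d)^{\lambda>0}\rangle+\dd_k$ relaxes to the stated $1+\mu$.
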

\begin{proof}
    Using the decomposition \eqref{decompchi2}, we have that
    \begin{align}\label{decompC}
    \chi+\rho-\delta_d-\mu\sigma_d 
    =-\frac{1}{2}R(d)^{\lambda>0}+\sum_{i=1}^k (v_i-\mu\dd_i)\tau_{d_i}+\sum_{i=1}^k\psi_i. 
   \end{align}
   Let $\alpha_k$ be the (dominant) cocharacter of $T(d)$ which acts with weight $1$ on $\beta^a_i$ for $a\in I$ and $d^a-d^a_k<i\leq d^a$ and with weight $0$ on $\beta^a_i$ for $a\in I$ and $d^a-d^a_k\geq i$. 
 By (\ref{decompC}), we have 
that \begin{align}\label{id:lambdak}
\langle \alpha_k, \chi+\rho-\delta_d-\mu\sigma_d\rangle=\left\langle\alpha_k, -\frac{1}{2}R(d)^{\lambda>0}\right\rangle+v_k-\mu\dd_k.
    \end{align}
    On the other hand, we have 
\begin{align*}
    \mathbf{V}(d)=\frac{1}{2} \mathrm{sum}_{\beta \in \mathscr{C}}[0, \beta],
\end{align*}
       where recall that \[\mathscr{C}=\{(\beta^a_i-\beta^b_j)^{\times A}, \beta^a_i\mid a,b\in I, 1\leq i\leq d^a, 1\leq j\leq d^b\}.\]
   Then $-R(d)^{\lambda>0}/2+\dd_k \tau_{d_k}$ has maximum $\alpha_k$-weight among weights in 
    $\mathbf{V}(d)$. 
    Therefore 
    from (\ref{chi:Vd}) we obtain that 
    \begin{align}\label{chi:Vd2}\langle \alpha_k, \chi+\rho-\delta_d-\mu\sigma_d\rangle\leq \left\langle\alpha_k, -\frac{1}{2}R(d)^{\lambda>0}\right\rangle+\dd_k.\end{align}
    By comparing (\ref{id:lambdak}) and (\ref{chi:Vd2}), we
    conclude that  
    $v_k-\mu\dd_k\leq \dd_k$, so  
    $v_k/\dd_k \leq 1+\mu$. 
    A similar argument also shows the lower bound.
\end{proof}



\begin{cor}\label{cor56}
In the setting of Proposition~\ref{prop55}, 
consider the decomposition (\ref{decompC}). 
  Recall the sets $\mathscr{A}$ and $\mathscr{B}$ from (\ref{def:poly}) and define $\mathscr{A}_{\lambda}:=\{\beta \in \mathscr{A} \mid \langle \lambda, \beta \rangle >0\}$.
    Then there are $\psi_i\in \mathbf{W}(d_i)$ for $1\leq i\leq k$ such that: 
    \begin{align*}
        -\frac{1}{2}R(d)^{\lambda>0}&\in \frac{1}{2}\mathrm{sum}_{\beta\in\mathscr{A}_\lambda}[0, -\beta],\
        \sum_{i=1}^k (v_i-\mu\dd_i)\tau_{d_i}\in \mathrm{sum}_{\beta\in\mathscr{B}}[0,\beta].
    \end{align*}
\end{cor}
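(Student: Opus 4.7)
The plan is to read off the two claimed inclusions directly from the decomposition (\ref{decompC}), using the definition of $R(d)^{\lambda>0}$ and the slope bounds supplied by Proposition~\ref{prop55}. The weights $\psi_i\in\mathbf{W}(d_i)$ are already the ones produced by Proposition~\ref{prop:decompchi} applied to the dominant weight $\chi$ and the Weyl-invariant real weight $\delta_d$; they appear unchanged in (\ref{decompC}), so no separate existence argument is needed for them.

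For the first inclusion, I observe that by construction
\[R(d)^{\lambda>0}=\sum_{\beta\in\mathscr{A}_\lambda}\beta\]
as an element of $M(d)$, since the $\lambda$-positive weights of $R(d)$ are (with multiplicity) precisely the elements of $\mathscr{A}_\lambda$. Hence
\[-\tfrac{1}{2}R(d)^{\lambda>0}=\sum_{\beta\in\mathscr{A}_\lambda}\tfrac{-\beta}{2}\]
is the vertex of the Minkowski sum $\tfrac{1}{2}\mathrm{sum}_{\beta\in\mathscr{A}_\lambda}[0,-\beta]$ obtained by choosing the endpoint $-\beta/2$ on each segment, so the first inclusion is automatic.

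For the second inclusion, the crucial input is the slope chain $\mu\le v_1/\dd_1<\cdots<v_k/\dd_k\le 1+\mu$ proved in Proposition~\ref{prop55}, which rewrites as
\[c_i:=\frac{v_i-\mu\dd_i}{\dd_i}\in[0,1]\]
for every $i$. Under the identification of Subsection~\ref{id}, the set $\mathscr{B}$ partitions as $\mathscr{B}=\bigsqcup_{i=1}^k\mathscr{B}_i$ with $\sigma_{d_i}=\sum_{\beta\in\mathscr{B}_i}\beta$ and $\tau_{d_i}=\sigma_{d_i}/\dd_i$. Consequently
\[\sum_{i=1}^k(v_i-\mu\dd_i)\tau_{d_i}=\sum_{i=1}^k c_i\sum_{\beta\in\mathscr{B}_i}\beta=\sum_{\beta\in\mathscr{B}}c_{i(\beta)}\beta,\]
where $i(\beta)$ denotes the block containing $\beta$. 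Each coefficient $c_{i(\beta)}$ lies in $[0,1]$, so the expression lies in $\mathrm{sum}_{\beta\in\mathscr{B}}[0,\beta]$, as required.

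There is no substantive obstacle: the corollary merely repackages the decomposition (\ref{decompC}) by bounding the $R(d)^{\lambda>0}$ term tautologically and the $\tau_{d_i}$ terms via the slope chain of Proposition~\ref{prop55}. The only point needing mild care is matching the block-indexing of $\sigma_{d_i}$ with the partition $\mathscr{B}=\bigsqcup_i\mathscr{B}_i$, which is direct from the identifications of Subsection~\ref{id}.
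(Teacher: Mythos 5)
Your proof is correct and matches the paper's (very terse) argument: the paper likewise treats the inclusion of $-\tfrac{1}{2}R(d)^{\lambda>0}$ as immediate and deduces the inclusion of $\sum_i(v_i-\mu\dd_i)\tau_{d_i}$ from the slope bounds of Proposition~\ref{prop55} (citing the analogous statement in \cite{PTzero}), which is exactly your observation that each coefficient $v_i/\dd_i-\mu$ lies in $[0,1]$. You have simply written out the details that the paper leaves to the reader and to the reference.
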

 
\begin{proof}
    The inclusion 
    \begin{align*}\sum_{i=1}^k (v_i-\mu\dd_i)\tau_{d_i}\in \mathrm{sum}_{\beta\in\mathscr{B}}[0,\beta]
    \end{align*}
    follows from Proposition \ref{prop55}, see \cite[Proposition 3.8]{PTzero}. The rest of the decomposition is immediate.
\end{proof}

\subsection{Comparison of partitions}\label{comppartitions}

In this subsection, we explain the order used in the semiorthogonal decomposition from Theorem \ref{thmsodA}, see also the discussion in Subsection \ref{notation}.

Fix $d\in\mathbb{N}^I$ and a weight $\delta^\circ\in M(d)^{W_d}_{0,\mathbb{R}}$. Define $L^d_{\delta^\circ, w}:=L^{d}_{\delta^\circ+w\tau_d}$ and  $L^d_{\delta^\circ}:=\bigcup_{w\in\mathbb{Z}}L^d_{\delta^\circ, w}$.
We define a set 
\[O\subset L^d_{\delta^\circ}\times L^d_{\delta^\circ}\]
which is used to compare summands of semiorthogonal decompositions, see Subsection \ref{notation}.
 
For $w>w'$, let $O_{w, w'}:=L^d_{\delta^\circ, w}\times L^d_{\delta^\circ, w'}$. For $w<w'$, let $O_{w, w'}$ be the empty set.

We now define $O_{w,w}\subset L^d_{\delta^\circ, w}\times L^d_{\delta^\circ, w}$.
The general procedure for defining such a set, equivalently for comparing two partitions for an arbitrary symmetric quiver is described in \cite[Subsection 3.3.4]{P}. 
Consider the path of partitions $T_A$ with coefficients $r_{\ell, A}$ as in \eqref{decompchi} corresponding to $A\in L^d_{\delta^\circ, w}$. 
Order the coefficients $r_{\ell, A}$ in decreasing order $r'_{1,A}>r'_{2,A}>\cdots>r'_{f(A),A}.$ Each $r'_{i, A}$ for $1\leq i\leq f(A)$ corresponds to a partition $\pi_{i,A}$.  
Similarly, consider the path of partitions $T_B$ with coefficients $r_{\ell, B}$ corresponding to $B\in L^d_{\delta^\circ, w}$. Define similarly $r'_{1,B}>\cdots>r'_{f(B),B}$ and $\pi_{i,B}$ for $1\leq i\leq f(B)$. 

Define the set $R\subset L^d_{\delta^\circ, w}\times L^d_{\delta^\circ, w}$ which contains pairs $(A,B)$ such that

\begin{itemize}
    \item there exists $n\geq 1$ such that $r'_{n, A}>r'_{n, B}$ and $r'_{i, A}=r'_{i, B}$ for $i<n$, or
    \item there exists $n\geq 1$ such that $r'_{i, A}=r'_{i, B}$ for $i\leq n$, $\pi_{i, A}=\pi_{i, B}$ for $i<n$, and $\pi_{n, B}\geq \pi_{n, A}$, see Subsection \ref{compa}, or
    \item are of the form $(A, A)$.
\end{itemize}
We then let $O_{w,w}:=L^d_{\delta^\circ, w}\times L^d_{\delta^\circ, w}\setminus R$ and 
\begin{equation}\label{def:setO}
O:=\bigcup_{w,w'\in\mathbb{Z}}O_{w, w'}.\end{equation}

We will only use that such an order exists in the current paper. 

In order to make the above process more accessible, we explain how to compute $r'_{1,A}$ and $\pi_{1,A}$.
From Proposition \ref{prop410}, there is an isomorphism of sets $L^d_{\delta^\circ, w}\cong T^d_w$.

For a dominant weight $\theta\in M(d)^+_{\mathbb{R}}$ with $\langle 1_d, \theta\rangle=w$, its $r$-invariant is the smallest $r$ such that $
\theta-w\tau_d\in 2r\textbf{W}$.
Equivalently, the $r$-invariant of a dominant weight $\theta\in M(d)^+_\mathbb{R}$ is the maximum after all dominant cocharacters $\mu$ of $ST(d)$:
\begin{equation}\label{def:rtheta}
r(\theta)=\text{max}_\mu \frac{\langle \mu, \theta\rangle}{\Big\langle \mu, R(d)^{\mu>0}\Big\rangle},
\end{equation}
see \cite[Subsection 3.1.1]{P}. 
Assume that $A=(d_i, v_i)_{i=1}^k\in T^d_w$; we also denote by $A$ the corresponding element of $T^d_{w}$.
Then one can show that 
\[r'_{1,A}=r(\chi_A+\rho),\]
where $\chi_A:=\sum_{i=1}^k v_i\tau_{d_i}+\sum_{i=1}^k\theta_i\in M(d)_\mathbb{R}$, see \eqref{defthetai} for the definition of $\theta_i$. We let $\lambda$ be the antidominant cocharacter corresponding to the partition $(d_i)_{i=1}^k$.
By \eqref{decompchi24} and Proposition \ref{prop:tau},
write \[\chi_A+\frac{1}{2}\mathfrak{g}(d)^{\lambda<0}=\sum_{i=1}^k w_i\tau_{d_i}.\]
There is a transformation:
\[(d_i, v_i)_{i=1}^k\mapsto (d_i, w_i)_{i=1}^k.\]
We will compute $r'_{1,A}$ in terms of $(w_i)_{i=1}^k$.
 Let $\mu$ be a dominant cocharacter attaining the maximum above and
 assume the associated partition of $\mu$ is $(e_i)_{i=1}^s$. Then the maximum in \eqref{def:rtheta} is also attained for the cocharacter $\mu$ with associated partition $\left(\sum_{i\leq b} e_i, \sum_{i>b} e_i\right)$, see \cite[Proposition 3.2]{P}. 
We have that $(d_i)_{i=1}^k\geq (e_i)_{i=1}^s$, see Subsection \ref{compa} for the notation, and Proposition \ref{prop:decompchi}.

Let $\mu_a$ be a cocharacter which acts with weight $\sum_{i\leq a} d_i$ on $\beta_j$ for $j>a$ and weight $-\sum_{i>a} d_i$ on $\beta_j$ for $j\leq a$. 
We have that
\[r(\chi_A+\rho)=\text{max}_a \frac{\langle \mu_a, \chi_A+\rho\rangle}{\Big\langle \mu_a, R(d)^{\mu_a>0}\Big\rangle},\] where the maximum is taken after all $1\leq a<k$.
We compute
\[\Big\langle \mu_a, R(d)^{\mu_a>0}\Big\rangle=
A\underline{d}\left(\sum_{i>a} \underline{d}_i\right)\left(\sum_{i\leq a}\underline{d}_i\right).
\]
Then
\[r(\chi_A+\rho)=r'_{1,A}=\frac{1}{A}\text{max}_a\left(\frac{\sum_{i>a} w_i}{\sum_{i>a} \dd_i}-\frac{\sum_{i\leq a} w_i}{\sum_{i\leq a} \dd_i}\right),\]
where the maximum is taken after $1\leq a\leq k$.
The partition $\pi_{1,A}$ can be reconstructed from all $1\leq a<k$ for which $\mu_a$ attains the maximum of \eqref{def:rtheta}.
Assume the set of all such $1\leq a < k$ is $1\leq a_2<\cdots<a_s< k$. Then $\pi_{1,A}=(e_i)_{i=1}^s$ is the partition of $d$ with terms:
\[(d_1+\ldots+d_{a_2},\ldots, d_{a_s+1}+\ldots+d_k).\]

\subsection{Semiorthogonal decompositions for very symmetric quivers}

We now prove Theorem \ref{thmsodB}, and thus Theorem \ref{thmsodA}.
 
  \begin{proof}[Proof of Theorem \ref{thmsodB}]
     The same argument used to prove \cite[Proposition 3.9]{PTzero} applies here. The argument in loc. cit. was organized in three steps:
     \begin{enumerate}
         \item for $(d_i)_{i=1}^k$ a partition of $d$ and $(v_i)_{i=1}^k\in
         \mathbb{R}^k$ as in the statement of Theorem \ref{thmsodB},
         the categorical Hall product \[\boxtimes_{i=1}^k \mathbb{M}(d_i; \theta_i+\delta_{d_i}+v_i\tau_{d_i})\to D^b(\X(d))\] has image in $\mathbb{D}(d; \delta_d+\mu\sigma_d)$,

         \item the categories on the left hand side of \eqref{SODD} are semiorthogonal for the ordering introduced in Subsection \ref{comppartitions}, see \cite[Subsection 3.3]{P}, \cite[Subsection 3.4]{PTzero},

         \item the categories on the left hand side of \eqref{SODD} generate $\mathbb{D}(d; \delta_d+\mu\sigma_d)$.
     \end{enumerate}
     The proofs of (2) and (3) are exactly as in loc. cit. and follow from the semiorthogonal decomposition of the categorical Hall algebra for a quiver from \cite{P}. 

     We explain the shifts used in defining the categories on the right hand side of \eqref{SODD}. Consider the weights $\chi_i\in M(d_i)$ such that $\sum_{i=1}^k\chi_i=\chi$.
     The decomposition \eqref{decompchi2} can be rewritten as
     \[\sum_{i=1}^k \left(\chi_i+\rho_i-\delta_{d_i}\right)=\sum_{i=1}^k \left(\theta_i+v_i\tau_{d_i}+\psi_i\right),\] and so
     \[\chi_i+\rho_i-\left(\theta_i+\delta_{d_i}+v_i\tau_{d_i}\right)=\psi_i\in \mathbf{W}(d_i).\]
     The proof of (1) follows from Corollary \eqref{cor56} and an explicit resolution by vector bundles of the Hall product of generators of categories $\mathbb{M}(d_i; \theta_i+\delta_{d_i}+v_i\tau_{d_i})$ for $1\leq i\leq k$, see Step 1 in the proof of \cite[Proposition 3.9]{PTzero} and \cite[Proposition 2.1]{PTzero}. 
\end{proof}

 In Theorem \ref{thmsodB}, we obtained a semiorthogonal decomposition after decomposing the polytope $\mathbf{V}(d)$ in translates of direct sums of the polytopes $\mathbf{W}(e)$ for dimension vectors $e$ which are parts of partitions of $d$. 
 We can also decompose the full weight lattice $M(d)$ to prove Theorem \ref{sodfullstackA}, which follows \cite[Theorem 1.1]{P} for the quiver $Q$ and Proposition \ref{ineqslopes}, see also the argument in \cite[Corollary 3.3]{P2}.

\begin{thm}\label{sodfullstackA}
 Let $Q$ be a very symmetric quiver.
     Then Theorem \ref{sodfullstackB} holds for $Q$.
 \end{thm}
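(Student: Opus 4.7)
The plan is to mirror the three-step argument used in the proof of Theorem \ref{thmsodB}, but applied to the full category $D^b(\X(d))$ instead of to the magic window $\mathbb{D}(d; \delta_d+\mu\sigma_d)$. Because we are no longer confined to a window, the indexing is simpler: all partitions $(d_i)_{i=1}^k$ of $d$ and all tuples $(v_i)_{i=1}^k \in \mathbb{R}^k$ satisfying the strict slope inequality $v_1/\dd_1 < \cdots < v_k/\dd_k$ together with the integrality condition on the sum of coefficients of $\theta_i+\delta_{d_i}+v_i\tau_{d_i}$ should appear, with no further constraint involving any $\mu$.

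First, I would invoke \cite[Theorem 1.1]{P} applied to the very symmetric quiver $Q$, which for each $w \in \mathbb{Z}$ produces a semiorthogonal decomposition of the weight-$w$ summand $D^b(\X(d))_w$ from \eqref{orthogonaldecomposition}. The pieces are indexed by elements of $L^d_{\delta_d, w}$, i.e., paths of partitions equipped with real coefficients $(r_\ell)$ as in Proposition \ref{prop:decompchi}, and each piece is an explicit Hall product attached to the partition at the end vertex of the path. I would then reindex via the bijection $\varphi \colon L^d_{\delta_d, w} \stackrel{\sim}{\to} T^d_w$ of Proposition \ref{prop410}, whose existence relies on Proposition \ref{ineqslopes} to guarantee the slope inequalities and on Proposition \ref{prop:tau}, valid precisely because $Q$ is very symmetric, to write $-\sum_\ell r_\ell N_\ell$ as a linear combination of the $\tau_{d_i}$. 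Using the identity \eqref{decompchi2}, each summand gets rewritten as $\bigotimes_{i=1}^k \mathbb{M}(d_i; \theta_i + \delta_{d_i} + v_i \tau_{d_i})$, which is exactly the right-hand side of \eqref{sod:sodfullstackB}; summing over $w \in \mathbb{Z}$ via \eqref{orthogonaldecomposition} then completes the decomposition.

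The step I expect to need the most care is the verification that the rewritten summands really land in a product of quasi-BPS categories of the stated form: namely, that for each generator $\mathcal{O}_{\X(d_i)} \otimes \Gamma_{G(d_i)}(\chi_i)$ with $\chi_i + \rho_i - (\theta_i + \delta_{d_i} + v_i \tau_{d_i}) \in \mathbf{W}(d_i)$, the Hall product produces a generator of the expected piece of \cite[Theorem 1.1]{P}. This is a direct bookkeeping computation parallel to Step (1) in the proof of Theorem \ref{thmsodB}, and actually simpler here since no refinement to $\mathbf{V}(d)$ via Corollary \ref{cor56} is needed. The semiorthogonality and the precise ordering from Subsection \ref{comppartitions} are inherited directly from \cite[Subsection 3.3]{P} without modification, and $X(d)$-linearity is automatic since the maps $p_\lambda, q_\lambda$ defining the Hall product both live over $X(d)$. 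The overall strategy is the one already sketched by the authors via the reference to \cite[Corollary 3.3]{P2}.
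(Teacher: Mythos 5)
Your proposal matches the paper's own (very brief) argument: the paper likewise deduces Theorem \ref{sodfullstackA} from \cite[Theorem 1.1]{P} applied to $Q$, the reindexing of summands over the full weight lattice via Propositions \ref{prop:tau}, \ref{ineqslopes} and \ref{prop410} (i.e.\ the identity \eqref{decompchi2}), and the analogues of Steps (2) and (3) of Theorem \ref{thmsodB}, summing over the diagonal weights $w$ as in \eqref{orthogonaldecomposition}. Your write-up is essentially a fleshed-out version of the same route, with no substantive deviation.
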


Note that Theorem \ref{sodfullstackA} follows from the analogues (for the full weight lattice) of Steps (2) and (3) from the proof of Theorem \ref{thmsodB}.

\subsection{A class of symmetric quivers}\label{symmquivers}

In this section, we discuss some preliminaries before proving Theorems \ref{thmsodC} and \ref{sodfullstackB}. We stop assuming $Q$ is a very symmetric quiver.

Let $Q=(I, E)$ be a symmetric quiver such that the number of loops at each vertex $a\in I$ has the same parity $\varepsilon\in \mathbb{Z}/2\mathbb{Z}$. 
We construct a very symmetric quiver $Q^\gimel=(I, E^\gimel)$ with potential as follows. For $a,b\in I$, let $e_{ab}$ be the number of edges from $a$ to $b$ in $Q$.
Choose $A \in \mathbb{Z}_{\geq 1}$ such that 
\[A\geq \text{max}\{e_{ab}\mid a,b\in I\}\text{ and }A\equiv \varepsilon \,(\text{mod }2).\]
For $a\in I$, let $c_a\in\mathbb{N}$ be defined by 
\[c_{a}:=\frac{A-e_{aa}}{2}.\]
Add loops $\{\omega_k\mid 1\leq k\leq c_a\}$ and their opposites $\{\overline{\omega}_k\mid 1\leq k\leq c_a\}$ at $a$ and define the potential \[W_a:=\sum_{k=1}^{c_a}\omega_k \overline{\omega}_k.\] 
Fix a total ordering on $I$.
For two different vertices $a<b$ in $I$, let $c_{ab}:=A-e_{ab}$.
Add edges $\{e_k\mid 1\leq k\leq c_{ab}\}$ from $a$ to $b$ and their opposites $\{\overline{e}_k\mid 1\leq k\leq c_{ab}\}$ from $b$ to $a$. Let 
\[W_{ab}:=\sum_{k=1}^{c_{ab}}e_k\overline{e}_k.\]
Consider the potential
$W^\gimel$ of $Q^\gimel$
\[W^\gimel:=\sum_{a\in I}W_a+\sum_{a, b \in I, a<b}W_{ab}\] of the quiver $Q^\gimel$.
For $d\in\mathbb{N}^I$, let $U(d)$ be the affine space of linear maps corresponding to the edges \[\bigsqcup_{a\in I}\{\omega_k\mid 1\leq k\leq c_a\}\sqcup \bigsqcup_{a<b}\{e_k\mid 1\leq k\leq c_{ab}\}.\]
The stack of representations of dimension $d$ of $Q^\gimel$ is 
\[\X^\gimel(d):=R^\gimel(d)/G(d):=
\left(R(d)\oplus U(d)\oplus U(d)^{\vee}\right)/G(d).\]
Consider the action of $\mathbb{C}^*$ on 
\begin{align*}
  R^\gimel(d):=R(d)\oplus U(d)\oplus U(d)^{\vee}
  \end{align*}
  of weight $(0, 0, 2)$. 
  Let $s,v$ be the maps
\[\X(d)\xleftarrow{v}\left(R(d)\oplus U(d)^{\vee}\right)/G(d)\xrightarrow{s}\X'(d)\]
where $s$ is the inclusion and $v$ is the projection. 
The Koszul equivalence (\ref{equiv:thetaX}) gives an equivalence (Kn\"orrer periodicity):
\begin{equation}\label{Koszulqprime}
\Theta_d=s_{\ast}v^{\ast}\colon D^b(\X(d))\xrightarrow{\sim}\mathrm{MF}^{\mathrm{gr}}(\X^\gimel(d), \mathrm{Tr}\,W^\gimel).
\end{equation}
We may write $\Theta$ instead of $\Theta_d$ when the dimension vector $d$ is clear from the context.


Denote by $\X^f(d)^{\text{ss}}$ and $\X^{\gimel f}f(d)^{\text{ss}}$ the varieties of stable framed representations of $Q$ and $Q^\gimel$, respectively.



We will prove Theorem \ref{thmsodC} in the next section. The same argument also applies to obtain Theorem \ref{sodfullstackB} using Theorem \ref{sodfullstackA}.


\subsection{Proof of Theorem \ref{thmsodC}}\label{subsec:proof}
The order of summands in the semiorthogonal decomposition is induced from the order in Subsection \ref{comppartitions} for the quiver $Q^\gimel$. Note that $Q^\gimel$ depends on the choice of a certain integer $A$, but we do not discuss the dependence of the order on this choice and only claim that such an order exists.

For a partition $(d_i)_{i=1}^k$ of $d$ and $\lambda$ an associated antidominant cocharacter, 
 we define the weights $\theta^\gimel_i\in M(d_i)_{\mathbb{R}}$ such that
 \[\sum_{i=1}^k \theta^\gimel_i=-\frac{1}{2}R^\gimel(d)^{\lambda>0}+\frac{1}{2}\mathfrak{g}(d)^{\lambda>0}.\]
 For $\delta_d\in M(d)_\mathbb{R}^{W_d}$, denote by $\mathbb{M}^\gimel(d; \delta_d) \subset D^b(\X^\gimel(d))$ the magic categories \eqref{defmdw} for the quiver $Q^\gimel$. Consider the quasi-BPS categories: 
 \begin{align*}
\mathbb{S}^{\gimel\rm{gr}}(d; \delta_d) :=\mathrm{MF}^{\rm{gr}}(\mathbb{M}^\gimel(d; \delta), \Tr W')
\subset \mathrm{MF}^{\rm{gr}}(\X^\gimel(d), \Tr W^\gimel). 
 \end{align*}
 Define \[\delta^\circ_d:=-\frac{1}{2}\det U(d)=-\frac{1}{2} U(d),\] where above, as elsewhere, we abuse notation and denote by $U(d)$ the sum of weights of $U(d)$.
Further, define 
\[\delta^\gimel_d=\delta_d+\delta^\circ_d.\] We will use the notations $\delta^\gimel_{d_i}, \delta_{d_i}$ from \eqref{def:deltai}. Note that $\delta^\gimel_d+\mu\sigma_d$ is a good weight if and only if $\delta_d+\mu\sigma_d$ is a good weight because $\langle \lambda, \delta^\circ_d\rangle\in\frac{1}{2}\mathbb{Z}$ for all $\lambda$ as in Definition \ref{def:generic2}.

\begin{step}
    There is a semiorthogonal decomposition
    \[\mathrm{MF}^{\mathrm{gr}}\left(\X^{\gimel f}(d)^{\text{ss}}, \mathrm{Tr}\,W^\gimel\right)=\left\langle \bigotimes_{i=1}^k \mathbb{S}^{\gimel\mathrm{gr}}(d_i; \theta^\gimel_i+\delta^\gimel_{d_i}+v_i \tau_{d_i}) : \mu \leq \frac{v_1}{\dd_1}<\cdots<\frac{v_k}{\dd_k}<1+\mu \right \rangle,\] where the right hand side is as in Theorem \ref{thmsodA}.
\end{step}

 \begin{proof}
 The claim follows
 by applying matrix factorizations \cite[Proposition 2.5]{PTzero}, \cite[Proposition 2.1]{P0} for the potential $W^\gimel$ to the semiorthogonal decomposition of Theorem \ref{thmsodA} for the quiver $Q^\gimel$.
 \end{proof}

\begin{step}
    There is an equivalence: \[\Theta_d^f\colon D^b\left(\X^f(d)^{\text{ss}}\right)\xrightarrow{\sim}\mathrm{MF}^{\mathrm{gr}}\left(\X^{\gimel f}(d)^{\text{ss}}, \mathrm{Tr}\,W^\gimel\right).\]
\end{step}
     
    \begin{proof}
    Consider the natural projection map
    $\pi^\gimel\colon \X^{\gimel f}(d)\to \X^f(d).$ Then \[(\pi^{\gimel})^{-1}\left(\X^f(d)^{\text{ss}}\right)\subset \X^{f \gimel}(d)^{\text{ss}}\] is an inclusion of open sets and \begin{align}\label{Crit:open}(\pi^{\gimel})^{-1}\left(\X^f(d)^{\text{ss}}\right)\cap \mathrm{Crit}(\mathrm{Tr}\,W^\gimel)=\X^{\gimel f}(d)^{\text{ss}}\cap \mathrm{Crit}(\mathrm{Tr}\,W^\gimel).\end{align}
   We have equivalences
    \[\mathrm{MF}^{\mathrm{gr}}\left(\X^{\gimel f}(d)^{\text{ss}}, \mathrm{Tr}\,W^\gimel\right)\stackrel{\sim}{\to} \mathrm{MF}^{\mathrm{gr}}\left(\pi^{\gimel -1}\left(\X^f(d)^{\text{ss}}\right), \mathrm{Tr}\,W^\gimel\right)\stackrel{\sim}{\leftarrow}D^b\left(\X^f(d)^{\text{ss}}\right).\]
    Here the first equivalence follows from (\ref{Crit:open})
    and (\ref{rest:MFU}), and the second equivalence 
    is an instance of the Koszul equivalence from Theorem~\ref{thm:Koszul}. 
    \end{proof}

 The claim of Theorem \ref{thmsodC} then follows from:

 \begin{step}\label{step3}
 The equivalence (\ref{Koszulqprime})
 restricts to the equivalence 
 \begin{align*}
\Theta_d\colon \bigotimes_{i=1}^k\mathbb{M}(d_i; \theta_i+\delta_{d_i}+v_i\tau_{d_i})\stackrel{\sim}{\to} \bigotimes_{i=1}^k\mathbb{S}^{\gimel \mathrm{gr}}(d_i; \theta^\gimel_i+\delta^\gimel_{d_i}+v_i\tau_{d_i}),
\end{align*}
where the tensor products on the left and right hand sides are 
embedded by categorical Hall products into 
$D^b(\X(d))$ and $\mathrm{MF}^{\rm{gr}}(\X^\gimel(d), \Tr W^\gimel)$, respectively.  
 \end{step}
    \begin{proof}
By the compatibility of the Koszul equivalence with 
categorical Hall products in Proposition~\ref{prop:compati:hall} and 
with quasi-BPS categories in Proposition~\ref{prop:period}, 
it is enough to check that 
\[\sum_{i=1}^k \left(\theta_i+v_i\tau_{d_i}+\delta_{d_i}\right)-U(d)^{\lambda>0}=\sum_{i=1}^k\left(\theta^\gimel_i+\delta^\gimel_{d_i}+v_i\tau_{d_i}+\frac{1}{2}U(d_i)\right).\]
Recall that \begin{align*}
    &\sum_{i=1}^k U(d_i)=U(d)^\lambda,\
  \sum_{i=1}^k \delta^\circ_{d_i}=-\frac{1}{2}U(d),\\
   & \sum_{i=1}^k (\theta^\gimel_i-\theta_i)=-\frac{1}{2}\left(R^\gimel(d)^{\lambda>0}-R(d)^{\lambda>0}\right).
\end{align*}
It thus suffices to show that:
\[-\frac{1}{2}\left(R^\gimel(d)^{\lambda>0}-R(d)^{\lambda>0}\right)-\frac{1}{2}U(d)+\frac{1}{2}U(d)^\lambda+U(d)^{\lambda>0}=0,\]
which can be verified by a direct computation.
    \end{proof}

\subsection{More classes of quivers}
Note that Theorem \ref{thmsodC} applies for any tripled quiver.
The semiorthogonal decomposition in Theorem \ref{thmsodC} is particularly simple for quivers satisfying the following assumption:
\begin{assum}\label{assum11}
The quiver $Q=(I,E)$ is symmetric and:
\begin{itemize}
    \item for all $a, b \in I$ different, the number of edges from $a$ to $b$ is even, and
    \item for all $a\in I$, the number of loops at $a$ is odd.
\end{itemize}
\end{assum}

Examples of quivers satisfying Assumption \ref{assum11} are 
tripled quivers $Q$ of quivers $Q^\circ=(I, E^\circ)$ satisfying the following assumption, where recall $\alpha_{a, b}$ from (\ref{alphaab}):

\begin{assum}\label{assum1}
For all $a, b \in I$, 
we have $\alpha_{a, b} \in 2\mathbb{Z}$. 
\end{assum}

For example, Assumption \ref{assum1} is satisfied if $Q^{\circ}$ is symmetric. Further, the moduli stack of semistable sheaves on a K3 surface is locally described by the stack of representations of a preprojective algebra of a quiver satisfying Assumption \eqref{assum1}, see \cite{PTK3}.

We discuss the particular case of Theorem \ref{thmsodC} for quivers satisfying Assumption \ref{assum11}.

\begin{cor}\label{thmsodD}
Let $Q$ be a quiver satisfying Assumption \ref{assum11}.
    Let $\mu\in \mathbb{R}$ be such that $\mu\sigma_d$ is a good weight. Then there is a $X(d)$-linear semiorthogonal decomposition:
    \begin{equation}\label{cor29}
D^b\left(\X^f(d)^{\text{ss}}\right)=\left\langle \bigotimes_{i=1}^k \mathbb{M}(d_i)_{v_i} : 
\mu\leq \frac{v_1}{\dd_1}<\cdots<\frac{v_k}{\dd_k}<1+\mu
\right\rangle,
     \end{equation}
    where the right hand side is after all partitions $(d_i)_{i=1}^k$ of $d$ and integers $(v_i)_{i=1}^k\in\mathbb{Z}^k$ satisfying the above inequality.
\end{cor}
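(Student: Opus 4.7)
The plan is to derive Corollary~\ref{thmsodD} from Theorem~\ref{thmsodC} applied with $\delta_d = 0$. Theorem~\ref{thmsodC} gives a semiorthogonal decomposition of $D^b(\X^f(d)^{\mathrm{ss}})$ with summands $\bigotimes_{i=1}^k \mathbb{M}(d_i;\theta_i+v_i\tau_{d_i})$, where the Weyl-invariant weights $\theta_i \in \tfrac{1}{2} M(d_i)^{W_{d_i}}$ are determined (up to their distribution among the blocks) by $\sum_i \theta_i = -\tfrac{1}{2} R(d)^{\lambda>0} + \tfrac{1}{2}\mathfrak{g}(d)^{\lambda>0}$, and $(v_i) \in \mathbb{R}^k$ is constrained so that each $\theta_i + v_i \tau_{d_i}$ has integer sum of coefficients.

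The first step is a parity computation using Assumption~\ref{assum11}: writing $e_{aa} = 2f_a + 1$ for the odd number of loops at $a$ and $e_{ab} = 2g_{ab}$ for the even number of edges between distinct $a \neq b$, a direct block-by-block calculation shows that
\[
R(d)^{\lambda>0} - \mathfrak{g}(d)^{\lambda>0} \in 2\,\bigoplus_{i} M(d_i)^{W_{d_i}},
\]
so that the $\theta_i$ may be chosen in the integral lattice $M(d_i)^{W_{d_i}}$. Setting $c_i := \langle 1_{d_i}, \theta_i \rangle \in \mathbb{Z}$, the admissible real values of $v_i$ in Theorem~\ref{thmsodC} are exactly of the form $n_i - c_i$ with $n_i \in \mathbb{Z}$; moreover $\sum_i c_i = 0$ since both $R(d)^{\lambda>0}$ and $\mathfrak{g}(d)^{\lambda>0}$ have total weight zero. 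This matches the integer indexing in Corollary~\ref{thmsodD}.

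The second step is to identify each summand with $\mathbb{M}(d_i)_{n_i} = \mathbb{M}(d_i; n_i\tau_{d_i})$. Writing $\theta_i = c_i\tau_{d_i} + \eta_i$ with $\eta_i \in M(d_i)^{W_{d_i}}$ of zero total weight, we have
\[
\mathbb{M}(d_i;\theta_i + v_i\tau_{d_i}) \cong \mathbb{M}(d_i; n_i\tau_{d_i}) \otimes \mathcal{O}_{\X(d_i)}(\eta_i)
\]
as subcategories of $D^b(\X(d_i))$. Along the Hall-product embedding into $D^b(\X^f(d)^{\mathrm{ss}})$, the assembled twist $\mathcal{O}(\sum_i \eta_i)$ is absorbed by the natural shift intrinsic to the Hall product (cf.\ Proposition~\ref{prop:compati:hall}) together with the trivialization inherited from passing to the framed stable locus, which is the reason the decomposition is asserted only over $\X^f(d)^{\mathrm{ss}}$ and with $X(d)$-linearity. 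After the relabeling $v_i \mapsto n_i$, the slope inequality of Theorem~\ref{thmsodC} transports to $\mu \le n_1/\dd_1 < \cdots < n_k/\dd_k < 1+\mu$ up to the partition-dependent $\mu$-translation, which is permissible under the hypothesis that $\mu\sigma_d$ be good.

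The main obstacle is the line-bundle bookkeeping in the second step: one must check precisely that the combined twist $\mathcal{O}(\sum_i\eta_i)$ is absorbed by the canonical normalizations of the Hall product (the shift by $\det\mathfrak{g}(d)^{\lambda>0}$ and the Kn\"orrer shifts appearing in the proof of Theorem~\ref{thmsodC}) once one restricts to $\X^f(d)^{\mathrm{ss}}$, and that the resulting slope reparametrization is compatible with the claimed inequality. This is a technical but essentially combinatorial verification, using $\sum_i c_i = 0$ and the explicit form of $\theta_i$ derived in the first step.
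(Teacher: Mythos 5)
Your first step (the parity computation showing $\theta_i\in M(d_i)^{W_{d_i}}$) is the same as the paper's and is correct. The second step, however, contains a genuine error.

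You reparametrize $v_i \mapsto n_i := v_i+c_i$ with $c_i=\langle 1_{d_i},\theta_i\rangle$, and then try to identify the Corollary's integer index with $n_i$. This is both unnecessary and incorrect. Since $c_i\in\mathbb{Z}$, the constraint in Theorem~\ref{thmsodC} that $\langle 1_{d_i},\theta_i+v_i\tau_{d_i}\rangle\in\mathbb{Z}$ forces $v_i\in\mathbb{Z}$ directly, and the index appearing in Corollary~\ref{thmsodD} is $v_i$ itself, \emph{not} $n_i$. The slopes in Theorem~\ref{thmsodC} are $v_i/\underline{d}_i$, and if you replace $v_i$ by $n_i=v_i+c_i$, each slope is shifted by $c_i/\underline{d}_i$, which is \emph{not} constant in $i$. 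For example, for the one-vertex quiver with $e$ odd loops and $\lambda$ corresponding to $d=d_1+d_2$, one finds $\theta_1=-\tfrac{e-1}{2}d_2\sigma_{d_1}$, $\theta_2=\tfrac{e-1}{2}d_1\sigma_{d_2}$, so $c_1/\underline{d}_1=-\tfrac{e-1}{2}d_2$ while $c_2/\underline{d}_2=\tfrac{e-1}{2}d_1$. Your claim that this is ``a partition-dependent $\mu$-translation'' is wrong: a $\mu$-translation shifts all slopes equally, but $c_i/\underline{d}_i$ varies with $i$, so the chain of inequalities $\mu\le v_1/\underline{d}_1<\cdots<v_k/\underline{d}_k<1+\mu$ does not transport to an inequality of the same form in the $n_i$'s.

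Because of this miscalibration you are forced to split $\theta_i=c_i\tau_{d_i}+\eta_i$ and twist by $\mathcal{O}(\eta_i)$, but $\eta_i=\theta_i-c_i\tau_{d_i}$ need not be an integral weight in general (since $c_i\tau_{d_i}=c_i\sigma_{d_i}/\underline{d}_i$ need not lie in $M(d_i)$), so $\mathcal{O}(\eta_i)$ need not even be a line bundle. The appeal to ``absorption by the Hall-product normalization'' and to Proposition~\ref{prop:compati:hall} (which concerns the Koszul equivalence, not this situation) is not available, and the claim that the restriction to $\X^f(d)^{\mathrm{ss}}$ trivializes these twists has no basis.

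The correct argument is what the paper does, and is much shorter: once you know $\theta_i\in M(d_i)^{W_{d_i}}$, the defining condition $\chi+\rho-\delta\in\mathbf{W}(d_i)$ for $\mathbb{M}(d_i;\delta)$ is translation-equivariant, so tensoring by the $G(d_i)$-character $\mathcal{O}_{\X(d_i)}\otimes\theta_i$ gives an equivalence $\mathbb{M}(d_i)_{v_i}=\mathbb{M}(d_i;v_i\tau_{d_i})\xrightarrow{\sim}\mathbb{M}(d_i;\theta_i+v_i\tau_{d_i})$ for the \emph{same} $v_i$, with the slope inequality unchanged. No reparametrization, no $\eta_i$, and no Hall-product bookkeeping are needed.
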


\begin{proof}
We set $\delta_d=0$ in Theorem~\ref{thmsodC}. 
Fix $d\in\mathbb{N}^I$. For $a\in I$, let $V^a$ be a $\mathbb{C}$-vector space of dimension $d^a$.
For each $a, b\in I$, let $V^{ab}:=\text{Hom}\left(V^a, V^b\right)$, and $e^{ab}$ denotes the number of edges from $a$ to $b$.
Then 
\[\frac{1}{2}R(d)^{\lambda>0}-\frac{1}{2}\mathfrak{g}(d)^{\lambda>0}=\sum_{a\in I}\frac{e^{aa}-1}{2}\mathfrak{g}(d)^{\lambda>0}+\sum_{a\neq b\in I}\frac{e^{ab}}{2}(V^{ab})^{\lambda>0}\in M(d).\]
Thus $\theta_i\in M(d_i)^{W_{d_i}}$, so the weights $v_i$ are integers for $1\leq i\leq k$. 
Moreover there is an equivalence 
$\mathbb{M}(d_i)_{v_i}=\mathbb{M}(d_i; v_i \tau_{d_i})\stackrel{\sim}{\to} \mathbb{M}(d_i; \theta_i+v_i \tau_{d_i})$
by taking the tensor product with $\theta_i$. 
    The claim then follows from Theorem \ref{thmsodC}.  
\end{proof}

\subsection{More framed quivers}\label{framedquiversnew}
The semiorthogonal decompositions in Theorems \ref{thmsodA} and Corollary \ref{thmsodD} also hold for spaces of semistable representations of the quivers $Q^{\alpha f}$, where $Q=(I,E)$ is as in the statements of these theorems, $\alpha\in\mathbb{Z}_{\geq 1}$, and $Q^{\alpha f}$ has set of vertices $I\sqcup\{\infty\}$ and its set of edges is $E$ together with $\alpha$ edges from $\infty$ to any vertex of $I$. For future reference, we state the version of Corollary \ref{thmsodD} for the space of semistable representations $\X^{\alpha f}(d)^{\text{ss}}$ of the quiver $Q^{\alpha f}$:

\begin{cor}\label{thmsodE}
Let $Q$ be a quiver satisfying Assumption \ref{assum1}.
    Let $\mu\in \mathbb{R}$ such that $\mu\sigma_d$ is a good weight and let $\alpha\in\mathbb{N}$. Then there is a $X(d)$-linear semiorthogonal decomposition
    \[D^b\left(\X^{\alpha f}(d)^{\text{ss}}\right)=\left\langle \bigotimes_{i=1}^k \mathbb{M}(d_i)_{v_i} :
    \mu\leq \frac{v_1}{\dd_1}<\cdots<\frac{v_k}{\dd_k}<\alpha+\mu
    \right\rangle,\] where the rights hand side is after all partitions $(d_i)_{i=1}^k$ of $d$ and 
    integers 
    $(v_i)_{i=1}^k\in\mathbb{Z}^k$ satisfying the above inequality.
\end{cor}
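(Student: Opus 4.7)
The plan is to adapt the proof of Corollary~\ref{thmsodD} (equivalently, Theorem~\ref{thmsodC} with $\delta_d=0$) to the $\alpha$-framed setting. As in Subsection~\ref{subsec:proof}, I would first use Kn\"orrer periodicity to reduce to the case of a very symmetric quiver $Q^\gimel$: the $\alpha$-framed stack $\X^{\alpha f}(d)^{\text{ss}}$ has Koszul partner $\X^{\gimel,\alpha f}(d)^{\text{ss}}$ for the very symmetric auxiliary quiver $Q^\gimel$, and the equivalence from Theorem~\ref{thm:Koszul} together with the restriction equivalence (\ref{rest:MFU}) gives
\[
\Theta_d^{\alpha f}\colon D^b\!\left(\X^{\alpha f}(d)^{\text{ss}}\right)\xrightarrow{\ \sim\ }\mathrm{MF}^{\mathrm{gr}}\!\left(\X^{\gimel,\alpha f}(d)^{\text{ss}},\mathrm{Tr}\,W^\gimel\right),
\]
exactly as in Step~2 of Subsection~\ref{subsec:proof}. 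Thus it suffices to establish the decomposition for the very symmetric quiver $Q^\gimel$ with $\alpha$ framings and then translate back via the compatibility of $\Theta_d$ with categorical Hall products (Proposition~\ref{prop:compati:hall}) and with magic categories (Proposition~\ref{prop:period}), exactly as in Step~3 of Subsection~\ref{subsec:proof}. The weight-matching computation in that step is unchanged since the $\alpha$ framings contribute identically on both sides of Kn\"orrer periodicity.

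For the very symmetric quiver $Q^\gimel$, I would prove the direct analogue of Theorem~\ref{thmsodA}: the only modification is that the polytope $\mathbf{V}(d)$ controlling the ``window'' on the $\alpha$-framed stack is replaced by
\[
\mathbf{V}^{\alpha}(d):=\frac{1}{2}\mathrm{sum}_{\beta\in\mathscr{A}}[0,\beta]+\frac{\alpha}{2}\mathrm{sum}_{\beta\in\mathscr{B}}[0,\beta]\subset M(d)_{\mathbb{R}},
\]
reflecting that $R^{\alpha f}(d)=R(d)\oplus V(d)^{\oplus\alpha}$. The window-category argument of \cite[Proposition~3.16]{PTzero} adapts verbatim, since King-stability on $Q^{\alpha f}$ is controlled by the same character $\sigma_d$ and the self-duality of $R^{\alpha f}(d)\oplus (V(d)^\vee)^{\oplus\alpha}$ is preserved; this gives an equivalence between an $\alpha$-widened subcategory $\mathbb{D}^{\alpha}(d;\mu\sigma_d)\subset D^b(\X(d))$ generated by dominant weights $\chi$ with $\chi+\rho-\mu\sigma_d\in\mathbf{V}^\alpha(d)$ and $D^b(\X^{\alpha f}(d)^{\text{ss}})$.

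It then remains to prove the analogue of Theorem~\ref{thmsodB} for $\mathbf{V}^\alpha(d)$. Steps (2) and (3) of the proof of Theorem~\ref{thmsodB} (semiorthogonality and generation) require no change, as they depend only on the decomposition of the full weight lattice coming from Theorem~\ref{sodfullstackA}. The one point requiring real work is Step~(1), where one must verify that the Hall product of generators of $\mathbb{M}(d_i)_{v_i}$ with $\mu\leq v_1/\dd_1<\cdots<v_k/\dd_k<\alpha+\mu$ lands inside $\mathbb{D}^\alpha(d;\mu\sigma_d)$. This follows from an $\alpha$-version of Proposition~\ref{prop55} and Corollary~\ref{cor56}: the same cocharacter $\alpha_k$ computation gives $v_k-\mu\dd_k\leq \alpha\dd_k$ because the maximum $\alpha_k$-weight of $\mathbf{V}^\alpha(d)$ on the $\mathscr{B}$-summand is now $\alpha\dd_k$ (instead of $\dd_k$), and symmetrically for the lower bound.

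The main obstacle, although modest, is checking that the Minkowski decomposition used in Corollary~\ref{cor56}---writing $\chi+\rho-\mu\sigma_d$ as a sum with one summand in $\frac{1}{2}\mathrm{sum}_{\beta\in\mathscr{A}_\lambda}[0,-\beta]$ and the other in $\alpha\cdot\mathrm{sum}_{\beta\in\mathscr{B}}[0,\beta]$---goes through in the $\alpha$-framed polytope; this is a direct adaptation of the argument in \cite[Proposition~3.8]{PTzero}. Once this is in hand, reassembling the steps above yields the semiorthogonal decomposition, and the $X(d)$-linearity is automatic since the framed-to-unframed projection $\X^{\alpha f}(d)^{\text{ss}}\to\X(d)$ factors the good moduli map.
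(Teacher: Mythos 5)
Your proposal is correct and follows essentially the same route as the paper: the paper proves Corollary~\ref{thmsodE} only by remarking (Subsection~\ref{framedquiversnew}) that the arguments of Theorem~\ref{thmsodA}/Corollary~\ref{thmsodD} carry over to $Q^{\alpha f}$, and what you write out — replacing $\mathbf{V}(d)$ by the $\alpha$-scaled polytope for $R(d)\oplus V(d)^{\oplus\alpha}$, redoing the window argument and the $\alpha_k$-weight bound of Proposition~\ref{prop55}/Corollary~\ref{cor56} to get $\mu\leq v_i/\dd_i<\alpha+\mu$, and leaving the Kn\"orrer-periodicity and Hall-product compatibility steps of Subsection~\ref{subsec:proof} untouched — is exactly that intended adaptation. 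No gaps.
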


\subsection{Semiorthogonal decompositions for general potentials}\label{subsection:moresod}

Let $W$ be a potential of $Q$.
By \cite[Proposition 2.5]{PTzero}, there are analogous semiorthogonal decompositions to those in Theorems \ref{thmsodC} and \ref{sodfullstackB} (and also to those in Corollaries \ref{thmsodD} and \ref{thmsodE}) for categories of matrix factorizations. 
Recall the definition of (graded or not) quasi-BPS categories from \eqref{defsdw} and \eqref{defsdwgr}.
We first state the version for Corollary \ref{thmsodD}, which we use in \cite{PTtop}:

\begin{thm}\label{sodfullstackBW0}
Let $Q$ be a quiver satisfying Assumption \ref{assum1} and let $W$ be a potential of $Q$ (and possibly a grading as in Section \ref{polycat}). Let $\mu \in \mathbb{R}$ such that $\mu\sigma_d$ is a
good weight and let $\alpha \in \mathbb{Z}_{\geq 1}$. There is a semiorthogonal 
decomposition 
    \[
\mathrm{MF}^{\bullet}\left(\X^{\alpha f}(d)^{\text{ss}}, \mathrm{Tr}\,W\right)=\left\langle \bigotimes_{i=1}^k \mathbb{S}^{\bullet}(d_i)_{v_i}: 
\mu \leq \frac{v_1}{\dd_1}<\ldots<\frac{v_k}{\dd_k}<\mu+\alpha \right\rangle,\]
   where the right hand side is
   after all partitions $(d_i)_{i=1}^k$ of $d$ 
   and integers $(v_i)_{i=1}^k \in \mathbb{Z}^k$ satisfying the above inequality, 
   and where $\bullet\in\{\emptyset, \mathrm{gr}\}$.
\end{thm}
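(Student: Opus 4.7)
The plan is to deduce the theorem by applying matrix factorizations to Corollary~\ref{thmsodE}, which already provides the analogous semiorthogonal decomposition at the level of $D^b\bigl(\X^{\alpha f}(d)^{\mathrm{ss}}\bigr)$ with summands $\bigotimes_{i=1}^k \mathbb{M}(d_i)_{v_i}$ indexed by partitions $(d_i)_{i=1}^k$ of $d$ and integers $(v_i)$ satisfying exactly the inequalities appearing in our statement, embedded via categorical Hall products followed by pullback along the framing forgetful map $\X^{\alpha f}(d)^{\mathrm{ss}} \to \X(d)$.

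The main tool is the general lifting principle~\cite[Proposition~2.5]{PTzero} (and~\cite[Proposition~2.1]{P0} in the graded setting): given a smooth quotient stack $\X$ with a regular function $f$ and a semiorthogonal decomposition $D^b(\X)=\langle \mathbb{A}_i \rangle$ with each summand generated by explicit vector bundles, there is an induced semiorthogonal decomposition $\mathrm{MF}^{\bullet}(\X, f)=\langle \mathrm{MF}^{\bullet}(\mathbb{A}_i, f)\rangle$. I would apply this with $\X=\X^{\alpha f}(d)^{\mathrm{ss}}$ and $f=\mathrm{Tr}\,W$. By the definitions~\eqref{defsdw} and~\eqref{defsdwgr}, we have $\mathbb{S}^{\bullet}(d_i)_{v_i}=\mathrm{MF}^{\bullet}(\mathbb{M}(d_i)_{v_i}, \mathrm{Tr}\,W)$, and since the Hall product functors $p_{\lambda\ast}q_{\lambda}^{\ast}$ are built from a proper pushforward and a pullback along maps which respect the potential (the function $\mathrm{Tr}\,W$ factors through each of $\X(d)^{\lambda}$, $\X(d)^{\lambda\geq 0}$ and $\X(d)$, and is compatible with the weight-$2$ grading when $(Q,W)$ is a tripled quiver with potential), they extend verbatim to matrix factorizations. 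The summands in the lifted decomposition are therefore exactly $\bigotimes_{i=1}^k \mathbb{S}^{\bullet}(d_i)_{v_i}$, and the ordering convention of Subsection~\ref{comppartitions} is inherited directly.

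The main technical point, already subsumed in the cited propositions, is verifying that both generation and semiorthogonality pass from $D^b$ to $\mathrm{MF}^{\bullet}$. Generation uses the fact that matrix factorizations in $\mathrm{MF}^{\bullet}(\mathbb{M}, \mathrm{Tr}\,W)$ are totalizations of tuples (\ref{obj:AB}) with factors in $\mathbb{M}$ (see~\eqref{sub:MMF}), so the generation statement for $\mathbb{M}(d_i)_{v_i}$ by vector bundles $\mathcal{O}_{\X(d_i)}\otimes \Gamma_{G(d_i)}(\chi)$ transfers to a generation statement for matrix factorizations built from these. Semiorthogonality transfers via the local-to-global Hom spectral sequence together with the fact that matrix factorizations are supported on $\mathrm{Crit}(\mathrm{Tr}\,W)$, reducing vanishings of Homs between matrix factorizations to the vanishings at the $D^b$ level established in Corollary~\ref{thmsodE}. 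The $X(d)$-linearity likewise descends. The whole argument goes through identically for $\bullet \in \{\emptyset, \mathrm{gr}\}$, provided the requisite $\mathbb{C}^{\ast}$-action exists (which is the case in the graded setting of interest, namely tripled quivers).
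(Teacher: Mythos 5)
Your proposal matches the paper's own argument: the paper obtains Theorem \ref{sodfullstackBW0} precisely by applying the lifting result \cite[Proposition 2.5]{PTzero} (and \cite[Proposition 2.1]{P0} in the graded case) to the $D^b$-level decomposition of Corollary \ref{thmsodE}, with the Hall-product embeddings and ordering carried over unchanged. Your discussion of why generation, semiorthogonality, and $X(d)$-linearity pass to matrix factorizations is exactly the content subsumed in those cited propositions, so the proposal is correct and essentially identical in approach.
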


A version for Theorem~\ref{sodfullstackB} (for quivers satisfying Assumption \ref{assum1}) is the following: 
\begin{thm}\label{sodfullstackBW}
Let $Q$ be a quiver satisfying Assumption \ref{assum1} and let $W$ be a potential of $Q$ (and possibly consider a grading as in Section \ref{polycat}). 
There is a semiorthogonal decomposition
    \[
\mathrm{MF}^{\bullet}\left(\X(d), \mathrm{Tr}\,W\right)=\left\langle \bigotimes_{i=1}^k \mathbb{S}^{\bullet}(d_i; v_i\tau_{d_i}): 
\frac{v_1}{\dd_1}<\ldots<\frac{v_k}{\dd_k}\right\rangle,\]
   where the right hand side is
   after all partitions $(d_i)_{i=1}^k$ of $d$ 
   and integers $(v_i)_{i=1}^k \in \mathbb{Z}^k$ satisfying the above inequality, 
   and where $\bullet\in\{\emptyset, \mathrm{gr}\}$.
\end{thm}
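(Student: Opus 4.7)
The plan is to deduce Theorem \ref{sodfullstackBW} from Theorem \ref{sodfullstackB} in two stages: first lifting the semiorthogonal decomposition of $D^b(\X(d))$ to one of $\mathrm{MF}^\bullet(\X(d), \mathrm{Tr}\,W)$ via a standard matrix-factorization principle, then simplifying the combinatorial index using Assumption \ref{assum1}. This parallels the way Corollary \ref{thmsodD} is derived from Theorem \ref{thmsodA}, and the way Theorem \ref{sodfullstackBW0} is obtained from Theorem \ref{thmsodC}.

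Concretely, I would first apply Theorem \ref{sodfullstackB} to $Q$ with $\delta_d=0$. Since $Q$ satisfies Assumption \ref{assum1}, the loops at each vertex of $Q$ are odd in number and the edges between distinct vertices are even in number; in particular the loops at all vertices share a common parity, so the hypothesis of Theorem \ref{sodfullstackB} is verified. This yields an $X(d)$-linear semiorthogonal decomposition of $D^b(\X(d))$ whose summands are categorical Hall products $\bigotimes_{i=1}^k \mathbb{M}(d_i; \theta_i+v_i\tau_{d_i})$, where $\theta_i$ is the correction weight defined in (\ref{defthetai}), the slopes satisfy $v_1/\dd_1<\cdots<v_k/\dd_k$, and the sum of coefficients of each $\theta_i+v_i\tau_{d_i}$ is an integer. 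Next I would lift this decomposition to matrix factorizations via \cite[Proposition 2.5]{PTzero} and \cite[Proposition 2.1]{P0}; the $X(d)$-linearity of the $D^b$-SOD, together with the fact that $\mathrm{Tr}\,W$ is pulled back from $X(d)$ along $\pi_{X,d}$, allows these lifting results to apply in both the ungraded and graded settings. The outcome is an SOD of $\mathrm{MF}^\bullet(\X(d), \mathrm{Tr}\,W)$ whose summands are categorical Hall products of the quasi-BPS categories $\mathbb{S}^\bullet(d_i; \theta_i+v_i\tau_{d_i})$, indexed exactly as in the $D^b$-SOD.

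Finally, to absorb the $\theta_i$ into a relabeling of $v_i$, I would use the same direct computation as in the proof of Corollary \ref{thmsodD}: under Assumption \ref{assum1}, the weight $\theta_i$ lies in $M(d_i)^{W_{d_i}}$, i.e. it is an integral Weyl-invariant character. Tensoring the corresponding summand by the one-dimensional $G(d_i)$-representation $\det\theta_i$ gives an equivalence $\mathbb{S}^\bullet(d_i; \theta_i+v_i\tau_{d_i})\simeq \mathbb{S}^\bullet(d_i; v_i\tau_{d_i})$ compatible with the Hall product structure, and the integrality condition simplifies to $v_i\in\mathbb{Z}$. The slope ordering is preserved, yielding exactly the semiorthogonal decomposition claimed. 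The main (and fairly mild) obstacle is verifying that the matrix-factorization lift in the second stage is compatible with the categorical Hall product, so that each summand remains written as a Hall product of quasi-BPS categories rather than as a matrix factorization category attached to a single Hall product. This compatibility holds because the Hall correspondence $(q_\lambda, p_\lambda)$ from Subsection \ref{subsub225} is the same in the $D^b$ and $\mathrm{MF}^\bullet$ settings and the functors $m_\lambda=p_{\lambda*}q_\lambda^\ast$ commute with the pullback of the potential from $X(d)$.
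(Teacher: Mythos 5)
Your proposal matches the paper's own route: the paper obtains Theorem \ref{sodfullstackBW} exactly by applying the matrix-factorization lifting results \cite[Proposition 2.5]{PTzero}, \cite[Proposition 2.1]{P0} to the $X(d)$-linear decomposition of Theorem \ref{sodfullstackB} (with $\delta_d=0$, the potential being pulled back from $X(d)$), and then absorbing the integral Weyl-invariant weights $\theta_i$ by twisting, as in the proof of Corollary \ref{thmsodD}, so that the index reduces to integers $v_i$ with the slope ordering. Your argument is correct and essentially identical to the paper's.
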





In the case of doubled quiver, by combining 
Theorems~\ref{sodfullstackB} and~\ref{sodfullstackBW} with the Koszul equivalence
in Theorem~\ref{thm:Koszul}, and the compatibility 
of Koszul equivalence with categorical 
Hall product in Proposition~\ref{prop:compati:hall}, 
we obtain the following:
\begin{thm}\label{cor:double}
Let $Q^{\circ}$ be a quiver and let $(Q^{\circ, d}, \mathscr{I})$ be its doubled 
quiver relation. For a partition $(d_i)_{i=1}^k$ of $d$, let $\lambda$ be an associated antidominant cocharacter, and define $\theta_i \in \frac{1}{2}M(d_i)^{W_{d_i}}$
  by:
   \[
    \sum_{i=1}^k \theta_i=-\frac{1}{2}\overline{R}(d)^{\lambda>0}+\mathfrak{g}(d)^{\lambda>0}.
   \] 
There is a semiorthogonal decomposition: 
\begin{align}\label{sod:Pd}
 D^b(\mathscr{P}(d))=\left\langle 
 \bigotimes_{i=1}^k \mathbb{T}(d_i, \theta_i+
 v_i \tau_{d_i})\right\rangle
\end{align}
  where the right hand side is after all partitions $(d_i)_{i=1}^k$ of $d$ and tuplets $(v_i)_{i=1}^k\in\mathbb{R}^k$ such that the sum of coefficients of $\theta_i+v_i\tau_{d_i}$ is an integer for all $1\leq i\leq k$ and satisfying the inequality:
  \begin{equation}\label{ineuality}
  \frac{v_1}{\dd_1}<\ldots<\frac{v_k}{\dd_k}.
   \end{equation}
   Moreover, each summand is given by 
   the image of categorical Hall product (\ref{cathall:Pd}). 

   If furthermore $Q^{\circ}$ satisfies Assumption~\ref{assum1}, then $v_i \in \mathbb{Z}$
   and $\theta_i \in M(d_i)^{W_{d_i}}$, so the right hand side of \eqref{sod:Pd} is after all partitions $(d_i)_{i=1}^k$ of $d$ and all tuplets $(v_i)_{i=1}^k\in\mathbb{Z}^k$ satisfying \eqref{ineuality}.
  \end{thm}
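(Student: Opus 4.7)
The plan is to push the semiorthogonal decomposition of $\mathrm{MF}^{\mathrm{gr}}(\X(d),\mathrm{Tr}\,W)$ for the tripled quiver $(Q,W)$ of $Q^\circ$ across the Koszul equivalence \eqref{Kosz} to $D^b(\mathscr{P}(d))$. Since $Q$ has $2e^\circ_{a,a}+1$ loops at every vertex $a$, which is odd, the parity hypothesis of Theorem~\ref{sodfullstackB} holds automatically; the matrix factorization version of that theorem (obtained via \cite[Proposition~2.5]{PTzero} as in Subsection~\ref{subsection:moresod}), applied with the grading of Subsection~\ref{subsub:triple} and with $\delta_d=0$, yields
\begin{equation*}
\mathrm{MF}^{\mathrm{gr}}(\X(d),\mathrm{Tr}\,W) = \left\langle \bigotimes_{i=1}^k \mathbb{S}^{\mathrm{gr}}(d_i;\theta^\sharp_i + v_i\tau_{d_i})\right\rangle,
\end{equation*}
where $\sum_i \theta^\sharp_i = -\tfrac12 R(d)^{\lambda>0} + \tfrac12\mathfrak{g}(d)^{\lambda>0}$, with $v_1/\dd_1<\cdots<v_k/\dd_k$, the integrality condition on the sum of coefficients of $\theta^\sharp_i+v_i\tau_{d_i}$, and the ordering of Subsection~\ref{comppartitions}.

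Next, I would apply the Koszul equivalences $\Theta\colon D^b(\mathscr{P}(d))\xrightarrow{\sim}\mathrm{MF}^{\mathrm{gr}}(\X(d),\mathrm{Tr}\,W)$ and its analogues on the $\lambda$-fixed loci, together with the restriction \eqref{Koszul:qbps} giving $\mathbb{T}(d_i;\delta)\xrightarrow{\sim}\mathbb{S}^{\mathrm{gr}}(d_i;\delta)$. By Proposition~\ref{prop:compati:hall}, the MF Hall product $m_\lambda^{\mathrm{MF}}$ intertwines with the preprojective Hall product $m_\lambda^{\mathrm{prep}}$ via the twisted Koszul functor $\Theta'(-)=\Theta(-)\otimes\det(\mathfrak{g}(d)^{\lambda>0})^\vee[\mathrm{rank}\,\mathfrak{g}(d)^{\lambda>0}]$, so that for $\mathcal{F}_i\in\mathbb{T}(d_i;\theta_i+v_i\tau_{d_i})$ one has $\Theta(m_\lambda^{\mathrm{prep}}(\boxtimes_i\mathcal{F}_i))=m_\lambda^{\mathrm{MF}}(\boxtimes_i\Theta(\mathcal{F}_i)\otimes\det(\mathfrak{g}(d)^{\lambda>0})^\vee)$. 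The twist shifts the weight of $\Theta(\mathcal{F}_i)$ by $-\mathfrak{g}(d)^{\lambda>0}|_{d_i}$, the projection of $\mathfrak{g}(d)^{\lambda>0}$ onto the $M(d_i)$-component of $M(d)$, so matching with the summand of the MF decomposition forces $\theta_i=\theta^\sharp_i+\mathfrak{g}(d)^{\lambda>0}|_{d_i}$, and hence
\begin{equation*}
\sum_{i=1}^k \theta_i = -\tfrac12 R(d)^{\lambda>0} + \tfrac32\mathfrak{g}(d)^{\lambda>0} = -\tfrac12\overline{R}(d)^{\lambda>0} + \mathfrak{g}(d)^{\lambda>0},
\end{equation*}
using $R(d)=\overline{R}(d)\oplus\mathfrak{g}(d)$. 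Because $\mathfrak{g}(d)$ has weight zero for the diagonal center of each $G(d_i)$, the twist does not modify the $\tau_{d_i}$-coefficient, so the $v_i$, the chain of inequalities, the integrality condition and the ordering all transfer unchanged from the MF side.

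For the additional integrality under Assumption~\ref{assum1}, I would show that each individual $\theta_i$ lies in $M(d_i)^{W_{d_i}}$, not merely $\tfrac12 M(d_i)^{W_{d_i}}$, via an edge-by-edge computation of $-\tfrac12\overline{R}(d)^{\lambda>0}$: the contribution of edges between distinct vertices $a,b$ in $Q^{\circ,d}$ has total multiplicity $e^\circ_{a,b}+e^\circ_{b,a}=\alpha_{a,b}$, which is even by hypothesis; for $a=b$ the doubling inherent in $\overline{R}(d)=T^\ast R^\circ(d)$ already yields multiplicity $2e^\circ_{a,a}$. Combined with the integrality of $\mathfrak{g}(d)^{\lambda>0}$, this forces $\theta_i\in M(d_i)^{W_{d_i}}$ and hence $v_i\in\mathbb{Z}$. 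The main obstacle throughout is the careful tracking of the twist from Proposition~\ref{prop:compati:hall} at the level of each component $M(d_i)$, and the verification that all the decoration (orderings, inequalities, real-coefficient integrality) on the MF-side decomposition is preserved under this twist.
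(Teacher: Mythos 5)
Your proposal is correct and follows essentially the same route as the paper, whose entire proof is the one-line observation that Theorems~\ref{sodfullstackB}/\ref{sodfullstackBW} for the tripled quiver $Q$, combined with the Koszul equivalence and the twist from Proposition~\ref{prop:compati:hall}, give the decomposition. Your bookkeeping of the twist $\theta_i=\theta^\sharp_i+(\mathfrak{g}(d)^{\lambda>0})|_{d_i}$ and the resulting identity $\sum_i\theta_i=-\tfrac12\overline{R}(d)^{\lambda>0}+\mathfrak{g}(d)^{\lambda>0}$ are right, as is the integrality check under Assumption~\ref{assum1} mirroring Corollary~\ref{thmsodD}.

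One small inaccuracy: your justification that ``$\mathfrak{g}(d)$ has weight zero for the diagonal center of each $G(d_i)$'' is not correct. The $M(d_i)$-projection $(\mathfrak{g}(d)^{\lambda>0})|_{d_i}$ comes from off-diagonal blocks and in general pairs nontrivially with $1_{d_i}$, so the twist \emph{does} change the $1_{d_i}$-weight. The conclusion that $v_i=v^\sharp_i$ is nevertheless fine, but for a different reason: the statement of the theorem defines $\theta_i$ as the \emph{full} $M(d_i)$-component of $-\tfrac12\overline{R}(d)^{\lambda>0}+\mathfrak{g}(d)^{\lambda>0}$, which absorbs the entire shift $(\mathfrak{g}(d)^{\lambda>0})|_{d_i}$, including its $\tau_{d_i}$-part. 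Once $\theta_i$ is so defined, the relation $\delta_i-(\mathfrak{g}(d)^{\lambda>0})|_{d_i}=\theta^\sharp_i+v^\sharp_i\tau_{d_i}$ forces $v_i=v^\sharp_i$ automatically, with no claim about vanishing $1_{d_i}$-weights needed.
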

The following example will be used in~\cite{PTK3}: 
\begin{example}\label{exam:gloop}
Let $Q^{\circ}$ be the quiver with 
one vertex and $g\geq 1$ loops. 
Then $d \in \mathbb{N}$ and 
the semiorthogonal decomposition (\ref{sod:Pd}) 
is 
\begin{align*}
D^b(\mathscr{P}(d))=\left\langle 
 \bigotimes_{i=1}^k \mathbb{T}(d_i)_{w_i} :\frac{v_1}{d_1}<\ldots<\frac{v_k}{d_k} \right\rangle.
\end{align*}
Here, $w_i\in\mathbb{Z}$ for $1\leq i\leq k$ is given by 
\begin{align*}
w_i=v_i+(g-1)d_i\left(\sum_{i>j}d_j-\sum_{i<j}d_j  \right). 
\end{align*}    Note that $\mathbb{T}(d_i)_{w_i}\cong \mathbb{T}(d_i)_{v_i}$ for all $1\leq i\leq k$.
\end{example}

\subsection{Strong generation of quasi-BPS categories}
We use Theorem \ref{thmsodC} to prove the strong generation of the (graded or not) quasi-BPS 
categories  \[\mathbb{S}^\bullet(d; \delta_d)\text{ for }\bullet\in\{\emptyset, \mathrm{gr}\},\] where the grading is as in Subsection \ref{notation}. 
We first recall some terminology. 

Let $\mathcal{D}$ be a pre-triangulated dg-category. 
For a set of objects $\mathcal{S} \subset \mathcal{D}$, 
we denote by $\langle \mathcal{S} \rangle$
the smallest subcategory which contains $S$ 
and closed under shifts, the finite direct sums and 
direct summands. 
For subcategories $\mathcal{C}_1, \mathcal{C}_2 \subset 
\mathcal{D}$, we denote by 
$\mathcal{C}_1 \star \mathcal{C}_2 \subset \mathcal{D}$
the smallest subcategory which contains 
objects $E$ which fit into distinguished triangles
$A_1 \to E \to A_2\to A_1[1]$ for $A_i \in \mathcal{C}_i$
and closed under shifts, finite direct sums, and direct 
summands. 

We say that $\mathcal{D}$ is \textit{strongly generated by 
$C \in \mathcal{D}$} if
$\mathcal{D}=\langle C \rangle^{\star n}$
for some $n\geq 1$. 
A dg-category $\mathcal{D}$ is called \textit{regular} 
if it 
has a strong generator. 
It is called \textit{smooth} if the diagonal dg-module of $\mathcal{D}$ is perfect. It is proved in~\cite[Lemma~3.5, 3.6]{VL}
that if $\mathcal{D}$ is smooth, then $\mathcal{D}$ is regular. 

\begin{prop}\label{thm:stronggenerator}
Let $Q$ be a symmetric quiver such that the number of loops at each vertex $i\in I$ has the same parity, let $W$ be any potential of $Q$, let $d\in\mathbb{N}^I$, and let $\delta_d\in M(d)_\mathbb{R}^{W_d}$. 
    The category $\mathbb{S}^\bullet(d; \delta_d)$ has a strong generator, thus it is regular. 
\end{prop}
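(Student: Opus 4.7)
The plan is to realize $\mathbb{S}^\bullet(d; \delta_d)$ as an admissible subcategory of a smooth ambient dg-category, and then invoke the criterion of \cite[Lemmas~3.5 and 3.6]{VL} that smooth implies regular. One may first reduce to the case $\langle 1_d, \delta_d\rangle \in \mathbb{Z}$, since otherwise $\mathbb{M}(d; \delta_d)=0$ by Remark~\ref{rmk:Mweight} and the claim is trivial.

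I would then pick $\mu \in (-1, 0)$ such that $\delta_d + \mu\sigma_d$ is a good weight in the sense of Definition~\ref{def:generic2}; such a $\mu$ exists because the set of $\mu$ violating the goodness condition is a countable subset of $\mathbb{R}$. Applying the matrix factorization analog of Theorem~\ref{thmsodC}, obtained via \cite[Proposition~2.5]{PTzero} as discussed in Subsection~\ref{subsection:moresod}, the category $\mathrm{MF}^\bullet(\X^f(d)^{\text{ss}}, \Tr W)$ carries a semiorthogonal decomposition whose summands are categorical Hall products of quasi-BPS categories $\mathbb{S}^\bullet(d_i; \theta_i + \delta_{d_i} + v_i\tau_{d_i})$. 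For the trivial partition $k=1$, $(d_1)=(d)$, the associated cocharacter is trivial so $\theta_1 = 0$, and the choice $v_1 = 0$ fulfils both the inequality $\mu \leq 0 < 1+\mu$ and the integrality requirement on coefficients (using our assumption $\langle 1_d, \delta_d\rangle \in \mathbb{Z}$). The corresponding summand is precisely $\mathbb{S}^\bullet(d; \delta_d)$, which is thereby admissible in $\mathrm{MF}^\bullet(\X^f(d)^{\text{ss}}, \Tr W)$.

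Next, I would observe that $\X^f(d)^{\text{ss}}$ is in fact a smooth quasi-projective variety: the King stability condition on framed representations of $Q^f$ forces trivial stabilizers, so $G(d)$ acts freely on $R^f(d)^{\text{ss}}$. For a smooth quasi-projective scheme $X$ and a regular function $f\colon X \to \mathbb{C}$, the (graded or ungraded) matrix factorization category is a smooth dg-category, see e.g.\ \cite{Preygel}. Since smoothness is inherited by admissible subcategories in a semiorthogonal decomposition, $\mathbb{S}^\bullet(d; \delta_d)$ is smooth, and hence regular, i.e.\ admits a strong generator, by \cite[Lemmas~3.5 and 3.6]{VL}. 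The main technical input is therefore the smoothness of the ambient matrix factorization category on $\X^f(d)^{\text{ss}}$; descent of smoothness to the admissible subcategory is then formal.
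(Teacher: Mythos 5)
Your proposal is correct and takes essentially the same route as the paper: realize $\mathbb{S}^\bullet(d;\delta_d)$ as an admissible summand of $\mathrm{MF}^\bullet(\X^f(d)^{\mathrm{ss}},\Tr W)$ via the potential version of Theorem~\ref{thmsodC}, use smoothness of this ambient matrix factorization category, and conclude regularity via \cite{VL}. The only (harmless) difference is the final step: the paper simply applies the adjoint $\Phi$ of the inclusion to a strong generator of the ambient category to obtain a strong generator $\Phi(C)$ of $\mathbb{S}^\bullet(d;\delta_d)$, rather than invoking that smoothness descends to admissible subcategories.
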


\begin{proof}
    The category $\mathbb{S}^\bullet(d; \delta_d)$ is admissible in $\mathrm{MF}^\bullet\left(\X^f(d),\mathrm{Tr}\,W\right)$ by the variant for an arbitrary potential (see \cite[Proposition 2.5]{PTzero}) of Theorem \ref{thmsodC}. Let \[\Phi\colon \mathrm{MF}^\bullet\left(\X^f(d),\mathrm{Tr}\,W\right)\to \mathbb{S}^\bullet(d; \delta_d)\] be the adjoint of the inclusion. The category $\mathrm{MF}^\bullet\left(\X^f(d),\mathrm{Tr}\,W\right)$ is smooth, see~\cite[Lemma~2.11]{FavTy}. Therefore it is regular, so it has a strong generator $C$. 
    Then $\mathbb{S}^\bullet(d; \delta_d)$ has the strong generator $\Phi(C)$.
\end{proof}

\begin{remark}
    We do not discuss smoothness of the category $\mathbb{S}^\bullet(d; \delta_d)$ in this paper. Note that smoothness is equivalent to regularity if the category is proper \cite[Theorem 3.18]{Orsmooth}, but this is not the case for the categories $\mathbb{S}^\bullet(d; \delta_d)$.
However it should be possible to prove the smoothness of $\mathbb{S}^{\bullet}(d; \delta_d)$ using non-commutative matrix factorizations 
as in~\cite[Proposition~3.29]{PThiggs}. The details may appear elsewhere. 
\end{remark}

\section{Quasi-BPS categories for tripled quivers}\label{s4}

In this section, we prove a categorical analogue of Davison's support lemma \cite[Lemma 4.1]{Dav} for tripled quivers with potential of quivers $Q^\circ$ satisfying Assumption \ref{assum1}, see Theorem \ref{lem:support}. We then use Theorem \ref{lem:support} to construct reduced quasi-BPS categories $\mathbb{T}$ for preprojective algebras, which are proper over the good moduli space $P$ of representations of the preprojective algebra, and regular. When the reduced stack of representations of the preprojective algebra is classical, we show that the relative Serre functor of $\mathbb{T}$ over $P$ is trivial, and further that the category $\mathbb{T}$ is indecomposable.

Throughout this section, we consider tripled quivers with potential or preprojective algebra for quivers $Q^\circ$ satisfying Assumption \ref{assum1}.

\subsection{The categorical support lemma}
Let $Q^{\circ}=(I, E^{\circ})$ be a quiver satisfying Assumption \ref{assum1}, and consider 
its tripled quiver $Q=(I, E)$ with potential $W$, see Subsection~\ref{subsec22}. 
We will use the notations from Subsection \ref{subsec22}.
Recall that 
\begin{align*}
\X(d)=R(d)/G(d), \ 
	R(d)=\overline{R}(d) \oplus \mathfrak{g}(d),
\end{align*}
where $\overline{R}(d)$ is the representation space of the doubled 
quiver of $Q^{\circ}$. 
There is thus a projection map onto the second summand (which records the linear maps corresponding to the loops in the tripled quiver not in the doubled quiver):
\[\tau\colon \X(d)\to \mathfrak{g}(d)/G(d).\]
We consider the good moduli space morphism 
\begin{align}\label{map:eigen}
	\pi_{\mathfrak{g}}\colon \mathfrak{g}(d)/G(d) \to \mathfrak{g}(d)\ssslash G(d)=\prod_{a \in I}\mathrm{Sym}^{d^a}(\mathbb{C}). 
\end{align}
The above map sends $z \in \mathfrak{g}(d)=\bigoplus_{a\in I}\mathrm{End}(V^a)$ to its generalized eigenvalues.  
Let $\Delta$ be the diagonal 
\begin{align*}
	\Delta \colon \mathbb{C} \hookrightarrow \prod_{a\in I}\mathrm{Sym}^{d^a}(\mathbb{C}), \ x\mapsto 
	\prod_{a\in I} (\overbrace{x,\ldots, x}^{d^a})=(\overbrace{x,\ldots, x}^{\dd}). 
\end{align*}
Consider the composition 
\begin{align}\label{compose:pi}
	\pi \colon \mathrm{Crit}(\Tr W) \hookrightarrow \X(d) 
	\xrightarrow{\tau} \mathfrak{g}(d)/G(d)
	\xrightarrow{\pi_{\mathfrak{g}}} \mathfrak{g}(d)\ssslash G(d). 
\end{align}
The following is the main result of this section, and a generalization of \cite[Theorem~3.1]{PT1} which discusses the case of the tripled quiver with potential of the Jordan quiver.

\begin{thm}\label{lem:support}
	Let 
	$v  \in \mathbb{Z}$ such that $\gcd(v, \dd)=1$.
	Then any object in $\mathbb{S}(d)_v$ is supported on 
	$\pi^{-1}(\Delta)$. 
\end{thm}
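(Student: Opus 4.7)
The plan is to adapt the strategy of \cite[Theorem~3.1]{PT1}, which established the analogous statement for the tripled quiver with potential of the Jordan quiver, to the multi-vertex setting under Assumption~\ref{assum1}. It suffices to show that no closed point $p$ of $\mathrm{Crit}(\Tr W)$ with $\pi(p) \notin \Delta$ lies in the support of any object of $\mathbb{S}(d)_v$. Such a point corresponds to a polystable $(Q,W)$-representation whose loop component $z \in \mathfrak{g}(d)$ has at least two distinct generalized eigenvalues. Decomposing along generalized eigenspaces gives a non-trivial partition $d = d_1 + \cdots + d_k$ in $\mathbb{N}^I$ with $k \geq 2$, together with a direct-sum decomposition $R_p = R_1 \oplus \cdots \oplus R_k$ of the representation at $p$.

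Next, I would construct an antidominant cocharacter $\lambda \colon \mathbb{C}^{\ast} \to T(d)$ with strictly decreasing weights on the blocks $(V_j)_{j=1}^k$, where $V_j = \bigoplus_{a \in I} V_j^a$ is the generalized eigenspace of dimension vector $d_j$; then $\lambda$ fixes $p$ and realizes the attracting diagram $\X(d)^{\lambda} = \prod_{j=1}^k \X(d_j) \leftarrow \X(d)^{\lambda \geq 0} \to \X(d)$. By Luna's \'etale slice theorem (cf.\ Lemma~\ref{lem:form:A}), the formal neighborhood of $p$ in $\X(d)$ is of the form $A / G_p$ for a smooth affine $G_p$-scheme $A$ with $G_p^{\lambda} = \prod_{j} G(d_j)$, and the attracting diagram pulls back compatibly. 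Since Assumption~\ref{assum1} is preserved under the direct-sum decomposition of $R_p$, the local Ext-quiver at $p$ is again a tripled quiver with potential satisfying Assumption~\ref{assum1}, so Theorem~\ref{sodfullstackBW} applies to its stack of representations.

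For $P \in \mathbb{S}(d)_v$ supported at $p$, its restriction to the formal neighborhood is non-zero and lies in the local quasi-BPS category; applying the semiorthogonal decomposition of Theorem~\ref{sodfullstackBW} to this local model then expresses this restriction as a filtration whose associated graded pieces are categorical Hall products $m_{\lambda}(P_1 \boxtimes \cdots \boxtimes P_k)$ with $P_j \in \mathbb{S}(d_j)_{v_j}$, $\sum_j v_j = v$, and strictly increasing slopes $v_1/\dd_1 < \cdots < v_k/\dd_k$. On the other hand, the magic window condition defining $\mathbb{S}(d)_v$, restricted along $\lambda$ and matched with the weights appearing in the Hall products, forces each slope $v_j/\dd_j$ to equal the total slope $v/\dd$; this yields $\dd \cdot v_j = v \cdot \dd_j$ for each $j$, and $\gcd(v, \dd) = 1$ then gives $\dd \mid \dd_j$, hence $\dd_j = \dd$ and $k = 1$, contradicting $k \geq 2$.

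The hardest part will be the slope-matching step: rigorously showing that the magic window condition on $P$, restricted along $\lambda$ at the formal neighborhood of $p$, forces all local Hall-product slopes to equal $v/\dd$, rather than allowing arbitrary strictly increasing slopes. This is where the multi-vertex combinatorics become delicate, and where Assumption~\ref{assum1} is genuinely used (beyond merely ensuring that $\theta_i \in M(d_i)^{W_{d_i}}$, so that the semiorthogonal decomposition is indexed by integers $v_i$): the argument requires a careful analysis of the polytopes $\mathbf{W}(d)$ of Subsection~\ref{polycat} and their product decompositions along the faces corresponding to $\lambda$, in order to match the magic windows for $(d, v)$ and for the tuple $(d_j, v\dd_j/\dd)_{j=1}^k$.
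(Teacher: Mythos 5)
Your setup (reduction to a point $p$ with split generalized eigenvalues, the restriction of an object of $\mathbb{S}(d)_v$ to a local model landing in a local quasi-BPS category, and the final numerics $v_j=v\dd_j/\dd\in\mathbb{Z}$ contradicting $\gcd(v,\dd)=1$) matches the paper, but the central mechanism you propose does not work, and it is exactly the part you defer as "the hardest step". The restriction $\iota_p^{\ast}P$ lies in the local quasi-BPS category $\mathbb{S}_p(d)_v$, which is itself a \emph{single} summand of the semiorthogonal decomposition of Theorem~\ref{sodfullstackBW} for the local (Ext-)quiver; so applying that decomposition does not produce a filtration of $\iota_p^{\ast}P$ with graded pieces $m_\lambda(P_1\boxtimes\cdots\boxtimes P_k)$ of strictly increasing slopes, and there is consequently no "slope-matching" statement to prove. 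Moreover, the identification of the local model with a tripled quiver with potential is not immediate: the restricted function $\Tr W|_{\X_p(d)}$ contains cross-terms between the eigenvalue blocks, and one needs the explicit $G_p$-equivariant change of variables of Lemma~\ref{sublem:1.5} (using that the eigenvalues of $p'$ and $p''$ are distinct) to write it as $\Tr W'\boxplus\Tr W''\boxplus f$ with $f$ a nondegenerate quadratic form on $U\oplus U^{\vee}$.

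The paper's actual argument then proceeds quite differently from your plan: after the splitting of Lemma~\ref{sublem:1.5}, Kn\"orrer periodicity in the form of Proposition~\ref{prop:period2} (which rests on the wall-crossing result Corollary~\ref{cor:lzero2}) gives a fully faithful functor with dense image $\mathbb{S}_{p'}(d';\delta'+v'\tau_{d'})\boxtimes\mathbb{S}_{p''}(d'';\delta''+v''\tau_{d''})\to\mathbb{S}_p(d)_v$, where $\delta'+\delta''=\tfrac{1}{2}U$ and $v\tau_d=v'\tau_{d'}+v''\tau_{d''}$; the slopes $v'/\dd'=v''/\dd''=v/\dd$ are therefore equal \emph{by construction}, with no polytope analysis needed. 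The genuinely delicate point is integrality, not slope comparison: nonvanishing of $\mathbb{S}_p(d)_v$ forces both factor categories to be nonzero, and Remark~\ref{rmk:Mweight} then forces $\langle 1_{d'},\delta'+v'\tau_{d'}\rangle\in\mathbb{Z}$; Assumption~\ref{assum1} is used precisely to guarantee that the Kn\"orrer twist $\delta'=\tfrac{1}{2}U|_{d'}$ is an integral Weyl-invariant weight, so that $v',v''\in\mathbb{Z}$, which contradicts $\gcd(v,\dd)=1$. Your proposal leaves this core step unproved, and the SOD-based route you sketch for it would not supply it; to complete the proof you need the local Thom--Sebastiani splitting plus Kn\"orrer periodicity with its half-determinant twist, rather than a window/slope analysis of Hall-product summands.
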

Before the proof of Theorem~\ref{lem:support}, we 
introduce some notations related to formal completions of fibers over $\mathfrak{g}(d)\sslash G(d)$.
For $p \in \mathfrak{g}(d)\ssslash G(d)$, 
we denote by 
$\X_p(d)$ the pull-back of 
the morphism 
\[\pi_{\mathfrak{g}} \circ \tau \colon 
\X(d) \to \mathfrak{g}(d)\ssslash G(d)\]
by $\Spec \widehat{\mathcal{O}}_{\mathfrak{g}(d)\ssslash G(d), p} \to 
\mathfrak{g}(d)\ssslash G(d)$. 
We write
an element $p \in \prod_{a\in I}\mathrm{Sym}^{d^a}(\mathbb{C})$ as
\begin{align}\label{psum}
 p=\left(\sum_{j=1}^{l^a} d^{a, (j)} x^{a, (j)}\right)_{a\in I}
\end{align}
where $x^{a, (j)} \in \mathbb{C}$
with $x^{a, (j)} \neq x^{a, (j')}$ for $1\leq j\neq j'\leq l^a$ and 
$d^{a, (j)} \in \mathbb{Z}_{\geq 1}$ for $1\leq j\leq l^a$ are such that $\sum_{j=1}^{l^a}d^{a,(j)}=d$.
There is an isomorphism 
\begin{align}\label{isom:Xp}
	\X_p(d)\cong\left(\overline{R}(d) \times \prod_{a, j} \widehat{\mathfrak{g}}^{a, (j)}\right)/G_p
\end{align}
where 
$V^a=\bigoplus_{j} V^{a, (j)}$ is the decomposition into generalized eigenspaces corresponding to $p$, $G_p:=\prod_{a, j} GL(V^{a, (j)})$, and
$\mathfrak{g}^{a, (j)}:=\mathrm{End}(V^{a, (j)})$. 
\begin{remark}\label{rmk:Qp}
For a point $p$ as in (\ref{psum}), 
let $J$ be the set of pairs $(a, j)$ such that $a \in I$ and $1\leq j \leq l^a$. 
The support of $p$ 
is defined to be 
\[\mathrm{supp}(p):=\{x^{a,(j)} \mid (a, j) \in J \} \subset \mathbb{C}.\]
Let $Q_p^{\circ}$ be a quiver with vertex set $J$ 
and the number of arrows from $(a, j)$ to $(b, j')$ is the
number of arrows from $a$ to $b$ in $Q^{\circ}$. 
Since we have  
\begin{align}\label{RJ}
    &\overline{R}(d) \oplus \bigoplus_{(a, j)\in J}\mathfrak{g}^{a, (j)} \\
   \notag &=\bigoplus_{(a\to b) \in E^{\circ}, j, j'}
\Hom(V^{a, (j)}, V^{b, (j')}) \oplus 
\Hom(V^{b, (j')}, V^{a, (j)}) \oplus 
\bigoplus_{(a, j)\in J}\mathrm{End}(V^{a, (j)}),     
    \end{align}
the space (\ref{RJ}) is the representation space of 
the tripled quiver 
$Q_p$ of $Q_p^{\circ}$
with dimension vector $d=\left(d^{a, (j)}\right)_{(a, j)\in J}$.
Note that $Q_p^{\circ}$ satisfies Assumption~\ref{assum1} 
since $Q^{\circ}$ satisfies Assumption~\ref{assum1}. 
There is a correspondence from dimension vectors of $Q_p$ to dimension vectors of $Q$:
\begin{align*}
	\left(d^{a, (j)}\right)_{(a, j)\in J} \mapsto \left(d^{a}=\sum_{j}d^{a, (j)}  \right)_{a\in I}.
	\end{align*}
\end{remark}

\begin{proof}[Proof of Theorem \ref{lem:support}]
	The proof is similar to the proof of  
	~\cite[Theorem~3.1]{PT1}, but simpler by the use of Proposition~\ref{prop:period2}. 
	Consider an object $\mathcal{F} \in \mathbb{S}(d)_v$. 
	Let $p\in \prod_{a\in I}\mathrm{Sym}^{d^a}(\mathbb{C})$ be a point which decomposes as $p=p'+p''$ such that 
	the supports of $p'$ and $p''$ are disjoint. Write $p$ as in \eqref{psum}.
 Assume the support of $\mathcal{F}$
	intersects $\pi^{-1}(p)$. We will reach a contradiction with the assumption $\gcd(v, \dd)=1$. 
	Define 
	\begin{align}\label{subcat:Sp}
		\mathbb{S}_p(d)_v \subset \mathrm{MF}\left(\X_p(d), \Tr W\right)
	\end{align}
	to be the full subcategory generated by matrix factorizations 
	whose factors are direct summands of the vector bundles 
 $\mathcal{O}_{\X_p(d)} \otimes \Gamma_{G_p}(\chi)$, 
	where $\chi$ is a $G_p$-dominant $T(d)$-weight
	satisfying 
	\begin{align}\label{wei:chip}
		\chi+\rho_p -v\tau_d\in \mathbf{W}_p(d):=\frac{1}{2}\mathrm{sum}[0, \beta]. 
	\end{align}
	Here, $\rho_p$ is half the sum of positive roots of $G_p$
	and the Minkowski sum for $\mathbf{W}_p(d)$ 
 is after all weights $\beta$ of the $T(d)$-representation \eqref{RJ}. 
 Define $n_{\lambda, p}$ by 
     \begin{align}\label{identity:n}
         n_{\lambda, p}:=\left\langle \lambda, \det\left(\mathbb{L}_{\X_p(d)}^{\lambda>0}|_{0}\right)\right\rangle=\left\langle \lambda, \det\left(\mathbb{L}_{\X(d)}^{\lambda>0}|_{0}\right)\right\rangle. 
     \end{align}
 As in Lemma~\ref{lemma:alt}, 
 the subcategory (\ref{subcat:Sp})
 is generated by matrix factorizations
 whose factors are
 of the form $\Gamma \otimes \mathcal{O}_{\X_p(d)}$, where $\Gamma$ is a $G_p$-representation
 such that any $T(d)$-weight $\chi$ of $\Gamma$
 satisfies 
 \begin{align*}
\langle \lambda, \chi-v\tau_d \rangle
\in \left[-\frac{1}{2} n_{\lambda, p}, \frac{1}{2} n_{\lambda, p}   \right]. 
     \end{align*}
     In particular by the identity (\ref{identity:n}), the restriction along 
     with the natural morphism $\iota_p \colon \X_p(d) \to \X(d)$
     restricts to the functor 
     \begin{align*}
     \iota_p^{\ast} \colon 
         \mathbb{S}(d)_v \to \mathbb{S}_p(d)_v. 
     \end{align*}
     Therefore, by the assumption that the support of 
	$\mathcal{F}$ intersect $\pi^{-1}(p)$, we have $0\neq \iota_p^{\ast}\mathcal{F} \in \mathbb{S}_p(d)_v$, and
	in particular $\mathbb{S}_p(d)_v \neq 0$. We show that, in this case, $v$ is not coprime with $\dd$.
	
	The decomposition $p=p'+p''$ corresponds to 
	decompositions $V^{a}=V'^{a} \oplus V''^{a}$
	\begin{align*}
		V'^{a}=\bigoplus_{x^{a, (j)} \in \mathrm{supp}(p')} V^{a, (j)}, \ 
		V''^{a}=\bigoplus_{x^{a, (j)} \in \mathrm{supp}(p'')} V^{a, (j)} 
	\end{align*} 
 for all $a\in I$.
	Let $d'^{a}=\dim V'^{a}$, $d''^{a}=\dim V''^{a}$, 
	$d'=(d'^{a})_{a\in I}$ and $d''=(d''^{a})_{a\in I}$, 
	so $d=d'+d''$. 
	By Lemma~\ref{sublem:1.5}, 
 after possibly replacing the isomorphism (\ref{isom:Xp}),
	the regular function $\Tr W$ restricted to $\X_p(d)$ is written as 
	\begin{align}\label{W:Xpd}
		\Tr W|_{\X_p(d)}= \Tr W'\boxplus 
		\Tr W'\boxplus f.
	\end{align}
	Here, $\Tr W'$ and $\Tr W''$ are the regular functions given by 
	$\Tr W$ on $\X(d')$ and $\X(d'')$, respectively,
	restricted to $\X_{p'}(d')$ and $\X_{p''}(d'')$,
	and 
	$f$ is a non-degenerate $G_p$-invariant 
	quadratic form on $U\oplus U^{\vee}$
	given by $f(u, v)=\langle u, v \rangle$, where $U$ is the $G_p$-representation 
	\begin{align*}
		U:=\bigoplus_{(a\to b) \in E^{\circ}}\Hom(V'^{a}, V''^{b}) \oplus
		\bigoplus_{(b\to a) \in E^{\circ}}\Hom(V'^{a}, V''^{b}). 
	\end{align*}
	Note that we have the decomposition as $G_p$-representations
	\begin{align}\label{decom:R}
		\overline{R}(d)=\overline{R}(d') \oplus \overline{R}(d'') \oplus U \oplus U^{\vee}. 
	\end{align}
	
	We have the following diagram 
	\begin{align*}
		\xymatrix{
			\mathcal{U} \ar@<-0.3ex>@{^{(}->}[r]^-{i} \ar[d]_-{q} 
			& \mathcal{U} \oplus \mathcal{U}^{\vee} & \ar[l]_-{j} \X_p(d) \\
			\X_{p'}(d') \times \X_{p''}(d''), & &	
		}
	\end{align*}
	where $\mathcal{U}$ is the vector bundle on $\X_{p'}(d') \times \X_{p''}(d'')$
	determined by the $G_p$-representation $U$, $i$ is the closed immersion $x \mapsto (x, 0)$, 
	and $j$ is the natural morphism 
	induced by the formal completion which induces the isomorphism on critical loci of $\Tr W$. 
	Consider the functor
	\begin{align}\label{Kn:eq}
		\Psi := j^{\ast}i_{\ast}q^{\ast} \colon 
		\mathrm{MF}(\X_{p'}(d'), \Tr W') &\boxtimes 
		\mathrm{MF}(\X_{p''}(d''), \Tr W'') \\
		\notag &\stackrel{\sim}{\to}\mathrm{MF}(\mathcal{U} \oplus \mathcal{U}^{\vee}, \Tr W) 
		\stackrel{j^{\ast}}{\hookrightarrow}
		\mathrm{MF}(\X_p(d), \Tr W), 
	\end{align}
	where the first arrow is an equivalence 
	by Kn\"orrer periodicity (\ref{theta:period}) 
	and the second arrow is fully-faithful with dense image, see~\cite[Lemma~6.4]{T3}. 
 Let $v', v''\in\mathbb{Q}$ be such that
	\begin{equation}\label{vdvprime}
	v\tau_d=v'\tau_{d'}+v''\tau_{d''}, \end{equation} and let $\delta'\in M(d')_\mathbb{R}$ and $\delta''\in M(d'')_\mathbb{R}$ be
	such that 
	\begin{align}\label{delta:U}
		\delta' +\delta''=\frac{1}{2}U. 
		\end{align} 
  The quiver $Q^\circ$ satisfies Assumption \ref{assum1}, and thus $\delta' \in M(d')^{W_{d'}}$ and $\delta'' \in M(d'')^{W_{d''}}$.  
	By Proposition~\ref{prop:period2}, 
 the functor \eqref{Kn:eq}
	restricts to the fully-faithful functor
 with dense image:
	\begin{align}\label{rest:M}
		\mathbb{S}_{p'}(d'; \delta'+v'\tau_{d'})\boxtimes \mathbb{S}_{p''}(d''; \delta''+v''\tau_{d''})
		\to \mathbb{S}_p(d)_v.
	\end{align}
	Then we have 
	\begin{align*}
	    &\mathbb{S}_{p'}(d')_{v'}\simeq \mathbb{S}_{p'}(d'; \delta'+v' \tau_{d'}) \neq 0, \\
     &\mathbb{S}_{p''}(d'')_{v''}\simeq 
 \mathbb{S}_{p''}(d''; \delta'' +v'' \tau_{d''}) \neq 0.
 \end{align*}
	In particular, we have  
	$v' \in \mathbb{Z}$, $v'' \in \mathbb{Z}$ by Remark~\ref{rmk:Mweight}.
	By \eqref{vdvprime}, we further have that  
	\begin{align*}
		\frac{v'}{\dd'}
		=\frac{v''}{\dd''}=\frac{v}{\dd},
		\end{align*} which contradicts the assumption that $\gcd(v, \dd)=1$. 
	\end{proof}

We have postponed the proof of the following:

\begin{lemma}\label{sublem:1.5}
	By replacing the isomorphism (\ref{isom:Xp}) if necessary, 
	the identity (\ref{W:Xpd}) holds. 
\end{lemma}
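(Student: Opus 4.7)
The plan is to expand $\Tr W$ on $\X_p(d)$ using the $G_p$-equivariant decomposition (\ref{decom:R}) of $\overline{R}(d)$, separate out the ``pure'' $d'$- and $d''$-contributions from the ``mixed'' terms involving $U \oplus U^\vee$, and then show that the mixed terms can be brought into the standard pairing form $f(u,v) = \langle u, v\rangle$ by an equivariant formal change of coordinates.

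More precisely, I would first observe that because $\X_p(d)$ is the formal completion over $p \in \mathfrak{g}(d)\ssslash G(d)$, on this formal neighborhood the loop $\omega_a$ factors through $\prod_j \widehat{\mathfrak{g}}^{a,(j)}$, i.e.\ it is block-diagonal with respect to $V^a = \bigoplus_j V^{a,(j)}$ and can be written as $\omega_a^{(j)} = x^{a,(j)} \cdot \mathrm{id} + n^{a,(j)}$ with $n^{a,(j)}$ a formal coordinate near $0$. Then for each edge $e\colon a\to b$ in $E^\circ$, I would expand the contribution
\begin{align*}
\Tr\bigl(\bar{e}(\omega_b e - e \omega_a)\bigr)
\end{align*}
into blocks indexed by the decompositions $V^a = V'^a \oplus V''^a$. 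The diagonal blocks (both indices in the support of $p'$ or both in the support of $p''$) assemble into $\Tr W'$ and $\Tr W''$ respectively, while the off-diagonal blocks (one index in each support) contribute the ``mixed'' part, which is bilinear in $(e^{j,j'}, \bar{e}^{j',j}) \in U \oplus U^\vee$ with leading coefficient $(x^{b,(j')} - x^{a,(j)})$ plus higher-order terms in the $n^{a,(j)}$.

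Since $\mathrm{supp}(p')$ and $\mathrm{supp}(p'')$ are disjoint by hypothesis, each leading coefficient $(x^{b,(j')} - x^{a,(j)})$ is a nonzero constant, so the leading quadratic form on $U \oplus U^\vee$ is the standard non-degenerate pairing (up to nonzero rescaling on each summand). The remainder consists of terms that are at least linear in the nilpotent coordinates $n^{a,(j)}$, multiplied by a monomial of degree $2$ in $U \oplus U^\vee$. I would then invoke an equivariant formal Morse/splitting-lemma argument: treating $\overline{R}(d') \oplus \overline{R}(d'') \oplus \prod_{a,j}\widehat{\mathfrak{g}}^{a,(j)}$ as parameter directions and the $(u,v) \in U \oplus U^\vee$ coordinates as the ``Morse'' directions, I can iteratively modify $(u,v)$ by $G_p$-equivariant formal power series in the parameters to absorb the non-leading quadratic perturbations into the standard pairing. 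Equivariance is automatic because the correction at each stage is canonically determined by inverting the non-degenerate leading pairing on $U \oplus U^\vee$, which is $G_p$-equivariant.

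The main obstacle I anticipate is keeping track of $G_p$-equivariance and of the mixed terms that are themselves bilinear between $U \oplus U^\vee$ and $\overline{R}(d') \oplus \overline{R}(d'')$ rather than purely quadratic in $U \oplus U^\vee$. Such terms \emph{do} appear when one expands $\Tr(\bar{e}(\omega_b e - e\omega_a))$, and they have the schematic shape $e^{j,j'} \cdot (\text{element of }\overline{R}(d'')) \cdot \bar{e}^{j',j}$. I would handle them by the same parametric Morse argument: they are still linear in $(u,v)$ times linear in $(u,v)$ times an element of the parameter, so the iterative completion-of-the-square goes through, converging formally because each iteration increases the order in the combined maximal ideal of the parameter directions. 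Once this formal $G_p$-equivariant change of coordinates is performed, the potential on $\X_p(d)$ has exactly the form $\Tr W' \boxplus \Tr W'' \boxplus f$, which is the desired identity (\ref{W:Xpd}).
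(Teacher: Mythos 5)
Your proposal is correct and in substance follows the same route as the paper's proof: expand $\Tr W$ on the formal model (\ref{isom:Xp}) according to the $G_p$-equivariant decomposition (\ref{decom:R}), observe that the mixed part is a pairing between $U$ and $U^{\vee}$ whose leading coefficients are eigenvalue differences $x^{b,(j')}-x^{a,(j)}$, nonzero because $\mathrm{supp}(p')\cap \mathrm{supp}(p'')=\emptyset$, and then remove the mixed part by a $G_p$-equivariant formal change of coordinates. Two remarks on where your description can be tightened. First, the correction terms you anticipate do not have the shape ``$U$-block $\cdot$ (element of $\overline{R}(d'')$) $\cdot$ $U^{\vee}$-block'': the potential is cubic with exactly one factor coming from the loops $\omega$, and on $\X_p(d)$ that coordinate is block-diagonal, so every mixed monomial has bidegree $(1,1)$ in $(U,U^{\vee})$ with coefficient an affine-linear expression in the formal coordinates $\gamma\in\prod_{a,j}\widehat{\mathfrak{g}}^{a,(j)}$ and the constants $x^{a,(j)}$; in particular no monomial is linear in $U\oplus U^{\vee}$, and no factor from $\overline{R}(d')\oplus\overline{R}(d'')$ ever multiplies a mixed term. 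Second, because of this the iterative parametric Morse/completion-of-squares you propose collapses to a single step, which is exactly what the paper does: the mixed part is $\langle x_{U^{\vee}}, L\, x_U\rangle$ where $L=L_0+N$, with $L_0$ the constant operator given by right/left multiplication by $u'$, $u''$ (invertible precisely by disjointness of supports) and $N$ having coefficients in the maximal ideal of the formal parameters, so $L$ is invertible over the completion and one $G_p$-equivariant linear substitution in the $U\oplus U^{\vee}$ directions brings the function into the form $\Tr W'\boxplus \Tr W''\boxplus f$ of (\ref{W:Xpd}). With that adjustment your argument is complete and coincides with the paper's.
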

\begin{proof}
	(cf.~\cite[Lemma~3.3]{PT1})
	For $p \in \prod_{a \in I}\mathrm{Sym}^{d^a}(\mathbb{C})$ as in (\ref{psum}), let 
	$u \in \mathfrak{g}(d)/G(d)$ be the unique closed point 
	over $p$. Note that $u$ is represented by a diagonal matrix 
	with eigenvalues $x^{a, (j)}$. 
	In particular, we can assume that 
	\begin{align*}
		u \in \bigoplus_{(a, j)\in J}\mathfrak{g}^{a, (j)}
		\subset \mathfrak{g}(d). 
		\end{align*}
	We construct 
	a map 
	\begin{align*}
		\nu \colon \left(\overline{R}(d) \oplus \bigoplus_{(a, j)\in J}\mathfrak{g}^{a, (j)}\right)/G_p \to \X(d)
		\end{align*}
	given by $(\alpha, \beta=\beta^{a, (j)}) \mapsto (\alpha, \beta+u)$. 
		The morphism $\nu$ is \'{e}tale at $\nu(0)$. 
		Indeed, the tangent complex of $\X(d)$
	at $\nu(0)$ is 
	\begin{align*}
		\mathbb{T}_{\X(d)}|_{\nu(0)}=
		\big(\mathfrak{g}(d) \to \overline{R}(d) \oplus \mathfrak{g}(d)
		   \big), \ 
		\gamma \mapsto (0, [\gamma, u]).
	\end{align*}
	The kernel of the above map is 
	$\bigoplus_{(a, j)\in J}\mathfrak{g}^{a, (j)}$ and the cokernel 
	is $\overline{R}(d) \oplus \bigoplus_{(a, j)\in J}\mathfrak{g}^{a, (j)}$, 
	so the morphism $\nu$ induces a quasi-isomorphism on tangent 
	complexes at $\nu(0)$. 
	
	For $x \in \overline{R}(d)$, 
	 a vertex $a \in I$ and and edge $e=(a \to b)$ in $E^{\circ}$, write its corresponding maps as
	$x(e) \colon V^a \to V^b$ 
	and $x(\overline{e}) \colon V^b \to V^a$. For $\theta \in \mathfrak{g}(d)$, write its corresponding map as $\theta(a) \colon V^a \to V^a$. 
	The function $\Tr W$ is given by 
	\begin{align*}
		\Tr W(x, \theta)=\Tr \left(\sum_{e \in E^{\circ}}[x(e), x(\overline{e})]
		   \right)\left(\sum_{a\in I} \theta(a)\right). 
		\end{align*}
	We set $\mathfrak{g}'$ and $\mathfrak{g}''$ to be 
\begin{align*}\mathfrak{g}'=\bigoplus_{x^{a, (j)} \in \mathrm{supp}(p')}
\mathfrak{g}^{a, (j)}, \ 
\mathfrak{g}''=\bigoplus_{x^{a, (j)} \in \mathrm{supp}(p'')}
\mathfrak{g}^{a, (j)},
	\end{align*}
and write 
an element $\gamma \in \bigoplus_{(a, j)\in J}\mathfrak{g}^{a, (j)}$ as 
$\gamma=\gamma'+\gamma''$ for $\gamma' \in \mathfrak{g}'$, $\gamma'' \in \mathfrak{g}''$. 
Note that there are isomorphisms 
\begin{align*}
	\X_{p'}(d')\cong\left(\overline{R}(d') \times \widehat{\mathfrak{g}}'\right)/G_{p'}, \ 	\X_{p''}(d'')\cong\left(\overline{R}(d'') \times \widehat{\mathfrak{g}}''\right)/G_{p''}.
	\end{align*}
For $x \in \overline{R}(d)$, we write 
\begin{align*}
	x=x'+x''+x_U+x_{U^{\vee}}
	\end{align*}	
for $x' \in \overline{R}(d')$, $x'' \in \overline{R}(d'')$, 
$x_U \in U$ and $x_{U^{\vee}} \in U^{\vee}$. 
	Then $\nu^{\ast} \Tr W$ is calculated as 
  \begin{align*}
		\nu^{\ast}\Tr W(x, \gamma)&=
		\Tr \left(\sum_{e \in E^{\circ}}[x'(e), x'(\overline{e})]
		\right)\left(\sum_{a\in I} \gamma'(a)\right) \\
		&+\Tr \left(\sum_{e \in E^{\circ}}[x''(e), x''(\overline{e})]
		\right)\left(\sum_{a\in I} \gamma''(a)\right) \\
		&+\Tr \left(\sum_{e\in E^{\circ}} x_{U^{\vee}}(e)\left(x_{U}(\overline{e})\gamma'+x_U(\overline{e})u'-\gamma''x_U(\overline{e})-u''x_{U}(\overline{e})\right)
		\right) \\
		&+\Tr \left(\sum_{e\in E^{\circ}} x_{U}(e)\left(x_{U^{\vee}}(\overline{e})\gamma''+x_{U^{\vee}}(\overline{e})u''-
		\gamma'x_{U^{\vee}}(\overline{e})-u'x_{U^\vee}(\overline{e})\right)
		\right).
		\end{align*}
	We take the following $G_p$-equivariant variable change
 \begin{align*}
	&x_U(\overline{e}) \mapsto x_{U}(\overline{e})\gamma'+x_U(\overline{e})u'-\gamma''x_U(\overline{e})-u''x_{U}(\overline{e}), \\
	&x_{U^{\vee}}(\overline{e}) \mapsto 	
	x_{U^{\vee}}(\overline{e})\gamma''+x_{U^{\vee}}(\overline{e})u''-
	\gamma'x_{U^{\vee}}(\overline{e})-u'x_{U^\vee}(\overline{e}).
	\end{align*}
Since $u'$ and $u''$ are diagonal matrices with different eigenvalues, 
the above variable change determines an automorphism of $\X_p(d)$. 
Therefore 
	we obtain the desired identity (\ref{W:Xpd}). 
\end{proof}

\subsection{Reduced stacks}\label{subsec52}
We use the notations introduced in Subsections~\ref{subsec22}, \ref{subsub:triple}. 
Let 
$\mathfrak{g}(d)_0 \subset \mathfrak{g}(d)$ 
be the traceless subalgebra, i.e. the kernel of the map 
\begin{align*}
\mathfrak{g}(d)=\bigoplus_{a\in I} \Hom(V^a, V^a) \to \mathbb{C}, \ 
(g^a)_{a \in I} \mapsto \sum_{a\in I} \Tr(g^a). 
\end{align*}
The moment map $\mu$ in (\ref{def:moment}) 
factors though the map 
\begin{align*}
\mu_0 \colon \overline{R}(d) \to \mathfrak{g}(d)_0.
\end{align*}
We define the following reduced derived stack:
\begin{align}\label{def:Pd}
    \mathscr{P}(d)^{\rm{red}}:=\mu_0^{-1}(d)/G(d). 
\end{align}
Note that we have 
the commutative diagram 
\begin{align*}
    \xymatrix{
    \mathscr{P}(d)^{\rm{red, cl}} \ar@{=}[r] \ar[d]_-{\pi_{P}} & \mathscr{P}(d)^{\rm{cl}} \inclusion \ar[d]_-{\pi_{P}}
    & \mathscr{P}(d)^{\rm{red}} \inclusion 
    & \mathscr{P}(d) \inclusion & \mathscr{Y}(d) \ar[d]^-{\pi_Y} \\
    P(d) \ar@{=}[r] & P(d)  \ar@<-0.3ex>@{^{(}->}[rrr] & & & Y(d), 
    }
\end{align*}
where the horizontal arrows are closed immersions and $\pi_{P}=\pi_{P,d}$, $\pi_{Y}=\pi_{Y,d}$ are the good 
moduli space morphisms.

Further, consider the stack  
\begin{align*}
    \X_0(d):=\left(\overline{R}(d) \oplus 
    \mathfrak{g}(d)_0\right)/G(d)
\end{align*}
and the regular function: 
\begin{align*}
    \Tr W_0 =\Tr W|_{\X_0(d)}\colon \X_0(d)
    \to \mathbb{C}. 
\end{align*}   

Let $(Q,W)$ be the tripled quiver with potential associated to $Q^\circ$, see Subsection \ref{subsec22}.
Denote by $(\partial W)$ the two-sided ideal of $\mathbb{C}[Q]$ generated by $\partial W/\partial e$ for $e\in E$.
Consider the Jacobi algebra $J(Q, \partial W):=\mathbb{C}[Q]/(\partial W)$. Let $w_a\in J(Q, \partial W)$ be the image of the element corresponding to the loop $\omega_a$.
The critical locus 
\begin{align*}
\mathrm{Crit}(\Tr W_0) \subset \X_0(d)    
\end{align*}
is the moduli stack of $(Q, W)$-representations
such that the action of $\theta$ has trace zero, where 
$\theta$ is the element
\begin{align}\label{def:theta}
    \theta:=\sum_{a\in I} w_a \in J(Q, \partial W).
\end{align}
The element $\theta$ is a central element in $J(Q, \partial W)$
from the definition of the potential $W$, see (\ref{poten:W}). 
We have the following diagram 
\begin{align}\label{dia:XY}
    \xymatrix{
    \mathfrak{g}(d)_0/G(d) \ar[r]^-{\pi_{\mathfrak{g}}}   & \mathfrak{g}(d)_0\ssslash G(d)  \ar@{=}[r]   
    & 
    \left(\prod_{a \in I} \mathrm{Sym}^{d^a}(\mathbb{C})\right)_0 \\
\X_0(d) \ar@<0.5ex>[d]^-{\eta} \ar[r]^-{\pi_X} \ar[u]^-{p}  & X(d) \ar[r]^-{\Tr W_0} 
\ar@<0.5ex>[d]^-{\overline{\eta}}\ar[u]^-{\overline{p}} & 
\mathbb{C} \\
\mathscr{Y}(d) \ar[r]^-{\pi_Y}\ar@<0.5ex>[u]^-0 & Y(d)\ar@<0.5ex>[u]^-{\overline{0}} . 
    }
\end{align}
Here, recall the good moduli space map 
$\pi_X=\pi_{X,d}$, 
$p$ and $\eta$ are projections, the morphism $0 \colon \mathscr{Y}(d) \to \X_0(d)$
is the zero-section 
of $\eta \colon \X_0(d) \to \mathscr{Y}(d)$,
the middle vertical arrows are the induced maps on good moduli spaces, 
and $\left(\prod_{a\in I}\mathrm{Sym}^{d^a}(\mathbb{C})\right)_0$
is the fiber at $0\in \mathbb{C}$ of the addition map 
\begin{align*}
    \prod_{a\in I} \mathrm{Sym}^{d^a}(\mathbb{C}) \to \mathbb{C}.
\end{align*}
Let $\overline{\mathrm{Crit}}(\Tr W_0)\hookrightarrow X(d)$ be the good moduli space of $\mathrm{Crit}(\Tr W_0)$.
The above diagram restricts to the diagram
\begin{align}\label{dia:crit}
    \xymatrix{
\mathrm{Crit}(\Tr W_0) \ar@<0.5ex>[d]^-{\eta_C}\ar[r]^-{\pi_C} & \overline{\mathrm{Crit}}(\Tr W_0) \ar[r]^-{\overline{p}_C} \ar@<0.5ex>[d]^-{\overline{\eta}_C}&  \left(\prod_{a\in I}\mathrm{Sym}^{d^a}(\mathbb{C})\right)_0  \\
\mathscr{P}(d)^{\rm{cl}} \ar[r]^-{\pi_P}\ar@<0.5ex>[u]^-{0_C} & P(d)\ar@<0.5ex>[u]^-{\overline{0}_C}, 
    }
\end{align}
where the arrows $\pi_C$ and $\pi_P$ 
are good moduli space morphisms. 
We abuse notation and denote by \[0:=(\overbrace{0,\ldots, 0}^{\dd})\in \left(\prod_{a\in I} \mathrm{Sym}^{d^a}(\mathbb{C})\right)_0.\]
Define 
\begin{equation}\label{def:Nnil}
\mathscr{N}_{\rm{nil}}:=\pi_C^{-1}\overline{p}_C^{-1}(0)
        \subset \mathrm{Crit}(\Tr W_0).
        \end{equation}
The substack $\mathscr{N}_{\rm{nil}} \subset \mathrm{Crit}(\Tr W_0)$ 
correspond to $(Q, W)$-representations such that the action of $\theta$ is nilpotent. 
Alternatively, it can be described as follows: 
\begin{lemma}\label{lem:nil}
We have $\overline{p}_C^{-1}(0)=\mathrm{Im}(\overline{0}_C)$ in the diagram (\ref{dia:crit}). 
Hence $\mathscr{N}_{\rm{nil}}=\pi_C^{-1}(\mathrm{Im}(\overline{0}_C))$. 
\end{lemma}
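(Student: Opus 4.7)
The plan is to prove the equality $\overline{p}_C^{-1}(0) = \mathrm{Im}(\overline{0}_C)$ topologically by checking it on closed points of $\overline{\mathrm{Crit}}(\Tr W_0)$, which suffices since both sides are closed subsets and $\overline{\mathrm{Crit}}(\Tr W_0)$ is of finite type over $\mathbb{C}$. The inclusion $\mathrm{Im}(\overline{0}_C) \subset \overline{p}_C^{-1}(0)$ is immediate from the construction of diagram (\ref{dia:crit}): the zero section $\overline{0}_C$ descends from the closed immersion $0 \colon \mathscr{Y}(d) \hookrightarrow \X_0(d)$, whose image consists of representations on which the $\mathfrak{g}(d)_0$-component, i.e.\ the element $\theta = \sum_{a \in I} w_a$, acts as zero, so all generalized eigenvalues of $\theta$ vanish.

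For the reverse inclusion, I would argue as follows. Since $\pi_C \colon \mathrm{Crit}(\Tr W_0) \to \overline{\mathrm{Crit}}(\Tr W_0)$ is a good moduli space morphism, every closed point $\xi \in \overline{p}_C^{-1}(0)$ is represented by a polystable representation, equivalently a semisimple $J(Q, \partial W)$-module $R$, since the stability condition is trivial. Decompose $R = \bigoplus_i V_i \otimes S_i$ into isotypic components with $S_i$ pairwise non-isomorphic simple $J(Q, \partial W)$-modules. The hypothesis $\overline{p}_C(\xi) = 0$ translates into the condition that all generalized eigenvalues of the action of $\theta$ on the underlying vector space of $R$ vanish, i.e., $\theta$ acts nilpotently on $R$.

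The key step is then the centrality of $\theta \in J(Q, \partial W)$ noted right after (\ref{def:theta}), which follows directly from the form of the potential $W$ in (\ref{poten:W}). By Schur's lemma, $\theta$ acts on each simple module $S_i$ by a scalar $\lambda_i \in \mathbb{C}$; combined with nilpotence of $\theta$ on $R$, this forces $\lambda_i = 0$ for every $i$. Hence $\theta$ acts as zero on $R$, so all loop maps $\omega_a$ vanish on $R$, and $R$ is a representation of the preprojective algebra $\Pi_{Q^{\circ}}$. In other words, $R$ lies in the image of the closed immersion $\mathscr{P}(d)^{\mathrm{cl}} \hookrightarrow \mathrm{Crit}(\Tr W_0)$, and passing to good moduli spaces yields $\xi \in \mathrm{Im}(\overline{0}_C)$. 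The second statement $\mathscr{N}_{\mathrm{nil}} = \pi_C^{-1}(\mathrm{Im}(\overline{0}_C))$ is then obtained by applying $\pi_C^{-1}$ to both sides of the first.

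The core content is the centrality-plus-Schur argument, which promotes ``$\theta$ nilpotent'' to ``$\theta = 0$'' at semisimple points; this is the mechanism by which the potentially larger locus $\overline{p}_C^{-1}(0)$ (allowing nilpotent $\theta$) collapses onto $\mathrm{Im}(\overline{0}_C)$ (demanding $\theta = 0$) once one passes to the good moduli space and replaces each representation by its polystable $S$-equivalent. No deeper obstacle is expected beyond being careful that this identification indeed takes place at the level of the good moduli space rather than the stack itself.
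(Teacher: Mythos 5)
Your proof is correct, and its core mechanism — centrality of $\theta$ in $J(Q,\partial W)$ plus Schur's lemma forcing the scalar to vanish when $\theta$ is nilpotent — is exactly the mechanism the paper uses. The organization is slightly different and, in fact, a bit more economical: you start from a closed point $\xi \in \overline{p}_C^{-1}(0)$, take its polystable representative $R$, show $\theta$ acts as $0$ on $R$, conclude $R \in \mathscr{P}(d)^{\mathrm{cl}}$, and then push through the commuting square $\pi_C \circ 0_C = \overline{0}_C \circ \pi_P$ to land in $\mathrm{Im}(\overline{0}_C)$. The paper instead proceeds forward from a point $r \in P(d)$, and to describe the fiber $\overline{\eta}_C^{-1}(r)$ it first establishes an auxiliary fact you never need: that $\eta_\ast$ sends simple $J(Q,\partial W)$-modules to simple $\mathbb{C}[Q^{\circ,d}]/(\mathscr{I})$-modules, hence maps polystable to polystable. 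Your route avoids this step entirely because you only invoke the good moduli space structure of $\mathscr{P}(d)^{\mathrm{cl}}$ through the already-given map $\pi_P$, not through any claim about semisimplicity of $\eta_\ast R$. Both arguments are valid; the paper's version gives a slightly finer description of the map $\overline{\eta}_C$ (useful elsewhere), while yours is the shortest path to the stated equality. One small thing worth making explicit in your write-up: the deduction from "$\theta=0$ as an operator on $\bigoplus_a V^a$" to "$\omega_a=0$ for each $a$" uses that the $\omega_a$ live in pairwise disjoint blocks $\mathrm{End}(V^a)$ of $\mathfrak{g}(d)$, so the total sum vanishing forces each summand to vanish — immediate, but worth the half-sentence.
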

    \begin{proof}
    The inclusion $\mathrm{Im}(\overline{0}_C) \subset \overline{p}_C^{-1}(0)$ is obvious. 
    Below we show that $\overline{p}_C^{-1}(0) \subset \mathrm{Im}(\overline{0}_C)$. 
    
    Let $Q^{\circ, d}$ be the doubled quiver of $Q^{\circ}=(I, E^{\circ})$ and let 
    $(\mathscr{I}) \subset \mathbb{C}[Q^{\circ, d}]$
be the two-sided ideal generated by the relation $\mathscr{I}:=\sum_{e\in E^{\circ}}[e, \overline{e}]$. 
Since $\sum_{e\in E^{\circ}}[e, \overline{e}] \in (\partial W)$, 
we have the functor 
\begin{align*}
    \eta_{\ast} \colon J(Q, \partial W)\text{-mod}
    \to \mathbb{C}[Q^{\circ, d}]/(\mathscr{I})\text{-mod}
\end{align*}
which forgets the action of $\theta$, where $\theta \in J(Q, \partial W)$ is defined in (\ref{def:theta}). 

For a simple $J(Q, \partial W)$-module $R$, 
we show that $\eta_{\ast}R$ is a simple module
over $\mathbb{C}[Q^{\circ, d}]/(\mathscr{I})$. We first note that 
the action of 
$\theta$ on $R$ has equal generalized eigenvalues. Indeed, 
$\theta \in J(Q, \partial W)$ is central, so the
generalized eigenspaces for different eigenvalues 
would give a direct sum decomposition of $R$, which 
contradicts that $R$ is simple. Moreover, 
if $R' \subset R$ is an eigenspace for $\theta$, 
then $R'$ is preserved by the $J(Q, \partial W)$-action, so 
$R'=R$ as $R$ is simple. It follows that 
the action of $\theta$ on $R$ is multiplication by $\lambda$ for 
some $\lambda \in \mathbb{C}$. It follows that any 
submodule of $\eta_{\ast}R$ is preserved by the 
action of $\theta$, so $\eta_{\ast}R$ is also simple. 

Let $\bigoplus_{i=1}^k R_i^{\oplus n_i}$ be a semisimple $J(Q, \partial W)$-module. Then the morphism $\overline{\eta}_C$ in 
the diagram (\ref{dia:crit}) sends 
it to 
the semisimple $\mathbb{C}[Q^{\circ, d}]/(\mathscr{I})$-module 
$\bigoplus_{i=1}^k \eta_{\ast}R_i^{\oplus n_i}$, 
as $\eta_{\ast}R_i$ is simple. 
Thus, given a semisimple $\mathbb{C}[Q^{\circ, d}]/(\mathscr{I})$-module 
$T=\bigoplus_{i=1}^k T_i^{\oplus n_i}$
corresponding to a point $r\in P(d)$, 
the set of points 
of the fiber of $\overline{\eta}_C$ at 
$r$ consists of choices of 
$\lambda_{ij}\in\mathbb{C}$ for $1\leq i\leq k$, $1\leq j\leq n_i$, such that $\theta$ acts on the $j$th copy of
$T_i$ in $T$ by multiplication by $\lambda_{ij}$. 
If it lies in $\overline{p}_C^{-1}(0)$, we must have 
$\lambda_{ij}=0$ for all $1\leq i\leq k$, $1\leq j\leq n_i$. 
Therefore $\overline{p}_C^{-1}(0) \subset \mathrm{Im}(\overline{0}_C)$
holds. 
\end{proof}
        
\subsection{Quasi-BPS categories for reduced stacks}
We abuse notation and also denote by $j$ the closed immersion 
\begin{align*}j^r \colon \mathscr{P}(d)^{\rm{red}} \hookrightarrow 
\mathscr{Y}(d):=\overline{R}(d)/G(d).
\end{align*}
We define the subcategory 
 \begin{align}\label{def:quasiBPSred}
 \mathbb{T}(d)_v^{\rm{red}} \subset D^b(\mathscr{P}(d)^{\rm{red}})
 \end{align}
   to be consisting of objects $\mathcal{E}$ such that 
   $j^r_{\ast}\mathcal{E}$ is generated by 
   $\mathcal{O}_{\mathscr{Y}(d)} \otimes \Gamma_{G(d)}(\chi)$
   for $\chi$ a dominant weight satisfying (\ref{chiwd}) for $\delta_d=v\tau_d$, i.e. 
   \begin{align}\label{chi:Wd}
       \chi+\rho-v\tau_d \in \mathbf{W}(d),
   \end{align}
   where $\mathbf{W}(d)$ is the polytope defined by (\ref{polyt:W}) 
   for the tripled quiver $Q$ of $Q^{\circ}$. 

  The Koszul equivalence in Theorem~\ref{thm:Koszul} gives an 
  equivalence 
\begin{align}\label{Koszul}
\Theta_0 \colon 
    D^b(\mathscr{P}(d)^{\rm{red}}) \stackrel{\sim}{\to} \mathrm{MF}^{\rm{gr}}(\X_0(d), \Tr W_0). 
\end{align}
Define the reduced quasi-BPS category
\[\mathbb{S}^{\mathrm{gr}}(d)_v^{\mathrm{red}}\subset \mathrm{MF}^{\rm{gr}}(\X_0(d), \Tr W_0)\] as in \eqref{defsdwgr}, that is, 
\begin{equation}\label{def:quasiBPSredgr}
\mathbb{S}^{\mathrm{gr}}(d)_v^{\mathrm{red}}:= \mathrm{MF}^{\rm{gr}}\left(\mathbb{M}(d)_v^{\mathrm{red}}, \Tr W_0\right),
\end{equation}
where $\mathbb{M}(d)_v^{\mathrm{red}}$ is the full subcategory of $D^b(\X_0(d))$ generated by the vector bundles 
$\mathcal{O}_{\X_0(d)}\otimes \Gamma_{G(d)}(\chi)$ for $\chi$ a dominant weight satisfying
(\ref{chi:Wd}). 
The Koszul equivalence \eqref{Koszul}
restricts to the equivalence, see Lemma~\ref{lem:genJ}:
\begin{align*}
\Theta_0 \colon \mathbb{T}(d)_v^{\rm{red}} \stackrel{\sim}{\to} \mathbb{S}^{\rm{gr}}(d)_v^{\rm{red}}.
\end{align*}
We use Theorem~\ref{lem:support} to study the singular support of sheaves \cite{AG} in reduced quasi-BPS categories: 
\begin{cor}\label{cor:support}
Let $v\in\mathbb{Z}$ such that $\gcd(\dd, v)=1$. Then any object 
    $\mathcal{E} \in \mathbb{T}(d)_v^{\rm{red}}$ has 
    singular support 
    \begin{align}\label{ssupport}
        \mathrm{Supp}^{\rm{sg}}(\mathcal{E}) \subset
        \mathscr{N}_{\rm{nil}}. 
    \end{align}
\end{cor}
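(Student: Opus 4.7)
The plan is to use the Koszul equivalence to translate the singular support statement into a support statement for matrix factorizations on $\X_0(d)$, and then to deduce the latter from Theorem~\ref{lem:support} via pullback from $\X_0(d)$ to the full tripled quiver stack $\X(d)$.

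First, under the Koszul equivalence $\Theta_0 \colon D^b(\mathscr{P}(d)^{\rm{red}}) \xrightarrow{\sim} \mathrm{MF}^{\rm{gr}}(\X_0(d), \Tr W_0)$, the singular support $\mathrm{Supp}^{\rm{sg}}(\mathcal{E})$ is identified with the scheme-theoretic support of $\Theta_0(\mathcal{E})$, viewed as a coherent sheaf on $\X_0(d)$ (this support is automatically contained in $\mathrm{Crit}(\Tr W_0)$, since matrix factorizations are supported on the critical locus). It therefore suffices to show that $\mathrm{Supp}(\Theta_0(\mathcal{E})) \subset \mathscr{N}_{\rm{nil}}$.

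Second, the canonical $G(d)$-equivariant splitting $\mathfrak{g}(d) = \mathfrak{g}(d)_0 \oplus \mathbb{C} \cdot \mathrm{id}$ yields an isomorphism $\X(d) \cong \X_0(d) \times \mathbb{A}^1$; let $\mathrm{pr} \colon \X(d) \to \X_0(d)$ denote the projection. Since the trace of any commutator vanishes, $\Tr W = \mathrm{pr}^{\ast} \Tr W_0$ on $\X(d)$, so pullback induces a functor $\mathrm{pr}^{\ast} \colon \mathrm{MF}^{\rm{gr}}(\X_0(d), \Tr W_0) \to \mathrm{MF}^{\rm{gr}}(\X(d), \Tr W)$. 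Because the polytope $\mathbf{W}(d)$ is the same whether computed for $\X(d)$ or $\X_0(d)$ (the extra weight contributed by the trivial summand $\mathbb{C} \cdot \mathrm{id}$ is zero), $\mathrm{pr}^{\ast}$ sends the generating vector bundles of $\mathbb{M}(d)^{\rm{red}}_v$ to those of $\mathbb{M}(d)_v$, hence restricts to a functor $\mathrm{pr}^{\ast} \colon \mathbb{S}^{\rm{gr}}(d)_v^{\rm{red}} \to \mathbb{S}^{\rm{gr}}(d)_v$. After forgetting the grading, $\mathrm{forg}(\mathrm{pr}^{\ast} \Theta_0(\mathcal{E})) \in \mathbb{S}(d)_v$.

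Third, I apply Theorem~\ref{lem:support} to this object: its support in $\X(d)$ lies over $\Delta \subset \prod_{a \in I} \mathrm{Sym}^{d^a}(\mathbb{C})$ under the morphism $\pi$ of~\eqref{compose:pi}. Since $\mathrm{Supp}(\mathrm{pr}^{\ast} \Theta_0(\mathcal{E})) = \mathrm{Supp}(\Theta_0(\mathcal{E})) \times \mathbb{A}^1$, and a point $(y, x_0) \in \X_0(d) \times \mathbb{A}^1$ maps into $\Delta$ iff the generalized eigenvalues of the loop $\omega_0(y) + x_0 \cdot \mathrm{id}$ all coincide, requiring this for every $x_0 \in \mathbb{A}^1$ forces all generalized eigenvalues of $\omega_0(y)$ to vanish. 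By Lemma~\ref{lem:nil}, this says precisely that $y \in \mathscr{N}_{\rm{nil}}$, which finishes the argument.

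The main technical point is the identification of singular support with matrix-factorization support under Koszul equivalence, which is a standard fact in the derived algebraic geometry of quasi-smooth stacks; after that is in hand, the remaining content is a clean reduction to Theorem~\ref{lem:support} using the elementary observation that the diagonal $\Delta$ meets the traceless locus $\left(\prod_{a \in I}\mathrm{Sym}^{d^a}(\mathbb{C})\right)_0$ only at the origin, which is exactly what enforces nilpotence.
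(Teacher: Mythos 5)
Your proof is correct and follows essentially the same route as the paper: identify $\mathrm{Supp}^{\rm sg}(\mathcal{E})$ with the support of $\Theta_0(\mathcal{E})$ under the Koszul equivalence, transfer to the full stack $\X(d)$ so that Theorem~\ref{lem:support} applies (the paper leaves this transfer implicit, while you make it explicit via $\mathrm{pr}^{\ast}$ along $\X(d)\cong\X_0(d)\times\mathbb{A}^1$ and the equality of polytopes), and conclude from $\Delta\cap\bigl(\prod_{a\in I}\mathrm{Sym}^{d^a}(\mathbb{C})\bigr)_0=\{0\}$. One small imprecision: quantifying over all $x_0$ does no extra work, since coincidence of the generalized eigenvalues of $\omega_0(y)+x_0\cdot\mathrm{id}$ is independent of $x_0$; the vanishing (hence nilpotence) comes from the tracelessness of the loop on $\X_0(d)$, i.e.\ exactly the intersection fact you invoke in your final sentence, and Lemma~\ref{lem:nil} is not actually needed since $\mathscr{N}_{\rm nil}$ is defined as $\pi_C^{-1}\overline{p}_C^{-1}(0)$.
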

\begin{proof}
The singular support of $\mathcal{E}$ equals to the support of 
$\Theta_0(\mathcal{E})$, see~\cite[Proposition~2.3.9]{T}. 
Then the corollary follows from Theorem~\ref{lem:support}
together with 
\begin{align*}
    \Delta \cap \left(\prod_{a\in I} \mathrm{Sym}^{d^a}(\mathbb{C}) \right)_0=\{0\}. 
\end{align*}
\end{proof}
\begin{remark}\label{rmk:ssupport}
The closed substack $\mathrm{Im}(0_C) \subset \mathscr{N}_{\rm{nil}}$
is given by the equation $\theta=0$, and the condition 
$\mathrm{Supp}^{\rm{sg}}(\mathcal{E}) \subset \mathrm{Im}(0_C)$
is equivalent to $\mathcal{E}$ being perfect, see~\cite[Theorem~4.2.6]{AG}. 
The condition (\ref{ssupport}) is weaker than $\mathcal{E}$ being perfect, 
and indeed there may exist objects in $\mathbb{T}(d)_v^{\rm{red}}$
which are not perfect.
\end{remark}

\subsection{Relative properness of quasi-BPS categories}\label{subsec:relproper}
We continue to assume that the quiver $Q^\circ=(I, E^\circ)$ satisfies Assumption \ref{assum1}. 
We will make a further assumption on the quiver $Q^{\circ}$ that guarantees, for example, 
that the reduced stack $\mathscr{P}(d)^{\rm{red}}$ is classical. 
Recall $\alpha_{a, b}$ defined in 
(\ref{alphaab})
and let 
\[\alpha_{Q^{\circ}}:=\mathrm{min}\{\alpha_{a, b} \mid a, b \in I\}.\]
Recall the good moduli spaces $X(d)$, $Y(d)$, $P(d)$ from Subsections \ref{subsub221} and \ref{subsec22}. 

\begin{lemma}\label{lem:dimP}
(i) If $\alpha_{Q^{\circ}} \geq 1$, then $X(d)$ and
$Y(d)$ are Gorenstein with trivial canonical module. 

(ii) If $\alpha_{Q^{\circ}} \geq 2$, then 
$\mathscr{P}(d)^{\rm{red}}$ is a classical stack and 
$P(d)$ is a normal irreducible variety
with 
\begin{align}\label{dim:Pd}
    \dim P(d)=2+\sum_{a, b \in I}d^a d^b \alpha_{a, b}.
\end{align}
If furthermore $P(d)$ is Gorenstein, then its canonical module is trivial. 

(iii) If $\alpha_{Q^{\circ}} \geq 3$, or if $\alpha_{Q^{\circ}}=2$ and $\underline{d} \geq 3$, 
then $P(d)$ is Gorenstein. 
\end{lemma}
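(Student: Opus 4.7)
The plan is to address the three parts separately, each exploiting the self-duality structure of representations of symmetric quivers.

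For (i), the key observation is that both $\overline{R}(d)$ and $R(d)=\overline{R}(d)\oplus\mathfrak{g}(d)$ are self-dual as $G(d)$-representations. Self-duality of $\overline{R}(d)$ holds because the doubled quiver $Q^{\circ,d}$ is symmetric, so every summand $\mathrm{Hom}(V^a,V^b)$ comes paired with its dual $\mathrm{Hom}(V^b,V^a)$. Self-duality of $R(d)$ follows similarly from symmetry of the tripled quiver $Q$, together with self-duality of $\mathfrak{g}(d)$ via the trace form. Consequently, $\det R(d)$ and $\det\overline{R}(d)$ are trivial as characters of the connected group $G(d)$, so the canonical bundles $\omega_{R(d)}$ and $\omega_{\overline{R}(d)}$ are $G(d)$-equivariantly trivial and descend to trivial canonical modules on the GIT quotients $X(d)$ and $Y(d)$. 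Combined with Hochster-Roberts (giving Cohen-Macaulay invariants of polynomial rings under a reductive group action), we conclude that $X(d)$ and $Y(d)$ are Gorenstein with trivial canonical module.

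For (ii), the essential input is Crawley-Boevey's work on moment maps for quiver representations. The hypothesis $\alpha_{Q^{\circ}}\geq 2$ places $d$ in the regime where the moment map $\mu_0\colon\overline{R}(d)\to\mathfrak{g}(d)_0$ is flat. Flatness forces $\mu_0^{-1}(0)$ to have expected codimension $\dim\mathfrak{g}(d)_0$ in $\overline{R}(d)$, so the components of $\mu_0$ form a Koszul regular sequence; hence the derived zero-locus equals its classical truncation, making $\mathscr{P}(d)^{\mathrm{red}}$ a classical stack. Moreover Crawley-Boevey gives that $\mu_0^{-1}(0)$ is a reduced irreducible complete intersection, so $P(d)=\mu_0^{-1}(0)\ssslash G(d)$ is an irreducible normal variety of dimension
\begin{equation*}
\dim P(d)=\dim\overline{R}(d)-\dim\mathfrak{g}(d)_0-\dim G(d)+1=\dim\overline{R}(d)-2\dim G(d)+2,
\end{equation*}
where the extra $+1$ accounts for the generic $\mathbb{C}^{\ast}$-stabilizer, and a direct computation matches this with $2+\sum_{a,b}d^ad^b\alpha_{a,b}$. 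Assuming $P(d)$ is Gorenstein, triviality of the canonical module follows from adjunction: $\omega_{\mu_0^{-1}(0)}\cong\omega_{\overline{R}(d)}|_{\mu_0^{-1}(0)}\otimes\det\mathfrak{g}(d)_0$ is $G(d)$-equivariantly trivial (by self-duality of $\overline{R}(d)$ and of $\mathfrak{g}(d)_0$ via trace), and this descends to a trivialization of $\omega_{P(d)}$.

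For (iii), we strengthen (ii) as follows. The complete intersection $\mu_0^{-1}(0)$ is Gorenstein with $G(d)$-equivariantly trivial canonical bundle as above. Under the stronger hypothesis $\alpha_{Q^{\circ}}\geq 3$, or $\alpha_{Q^{\circ}}=2$ with $\dd\geq 3$, we verify that $\mu_0^{-1}(0)$ has rational singularities by combining Crawley-Boevey's analysis with a codimension bound on its singular locus. Boutot's theorem then gives that $P(d)$ has rational singularities, in particular is Cohen-Macaulay. The equivariantly trivial canonical bundle on $\mu_0^{-1}(0)$ descends to $\omega_{P(d)}\cong\mathcal{O}_{P(d)}$, so $P(d)$ is Gorenstein. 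The main obstacle will be the precise matching of the numerical hypotheses $\alpha_{Q^{\circ}}\geq 2,3$ with Crawley-Boevey's flatness and rational singularity conditions, and carefully tracking the $G(d)$-equivariant canonical bundle through the GIT quotient; the latter is standard but must be handled with some care in the non-regular setting.
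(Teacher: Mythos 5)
Your part (ii) is essentially the paper's argument (the paper quotes Kaledin--Lehn--Sorger, Proposition 3.6, which rests on Crawley--Boevey's moment-map results, and computes the dimension using the dense locus of representations with stabilizer $\mathbb{C}^{\ast}$), so that step is fine. The problems are in (i) and (iii), and in both cases they come from the same unjustified step: you assert that a $G(d)$-equivariantly trivial canonical bundle upstairs ``descends to a trivial canonical module'' on the GIT quotient, and that together with Hochster--Roberts (resp.\ Boutot) this gives Gorensteinness. This is not valid in general: Cohen--Macaulayness of invariant rings is automatic, but the canonical module of $A^{G}$ is \emph{not} simply the module of invariant top forms; identifying it requires Knop's theorem, whose hypotheses involve a codimension bound on the locus of points with non-generic stabilizer or non-closed orbit. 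This is exactly where the paper's proof of (i) does its only real work: it shows that the non-simple locus of $\mathscr{Y}(d)$ has codimension at least two, using $\alpha_{Q^{\circ}}\geq 1$, and then applies Knop's Korollar 2. Note that the hypothesis $\alpha_{Q^{\circ}}\geq 1$ never enters your argument for (i) at all, which is a sign that the deduction cannot be right as stated: self-duality of the representation by itself does not force the invariant ring to be Gorenstein.

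For (iii) your route is genuinely different from the paper's and has a second serious gap. The paper does not go through rational singularities of $\mu_0^{-1}(0)$ at all: it bounds the codimension of the singular locus of $P(d)$ by $4$ (this is precisely where $\alpha_{Q^{\circ}}\geq 3$, or $\alpha_{Q^{\circ}}=2$ with $\underline{d}\geq 3$, is used), and then invokes Flenner's extension theorem for $2$-forms together with Namikawa's criterion, so that $P(d)$ is a symplectic variety and hence has rational Gorenstein singularities. Your proposal instead needs (a) rational singularities of the complete intersection $\mu_0^{-1}(0)$, which you claim follows from ``Crawley--Boevey's analysis with a codimension bound on its singular locus''; a codimension bound on the singular locus of a normal complete intersection does not imply rational singularities, and in fact rationality of moment-map fibers for quivers is a difficult theorem (proved by jet-scheme methods, not by Crawley--Boevey) that you cannot simply assert; and (b) once again the descent of the trivial equivariant canonical bundle to $\omega_{P(d)}$, since Boutot only yields Cohen--Macaulayness and rationality of $P(d)$, not the freeness of its canonical module. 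If you want to salvage your approach you would have to supply a Knop-type identification of $\omega_{P(d)}$ together with a codimension-two bound on the non-stable locus inside $\mu_0^{-1}(0)$, at which point you are doing at least as much work as the paper's symplectic-singularity argument.
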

\begin{proof}
(i) We only prove the case of $Y(d)$, as the case of $X(d)$ is similar. 
Let $\mathscr{Y}(d)^{\rm{s}} \subset \mathscr{Y}(d)$ be the open substack 
corresponding to simple representations. By~\cite[Korollary~2]{Knop2}, 
it is enough to show that the codimension of $\mathscr{Y}(d) \setminus \mathscr{Y}(d)^{\rm{s}}$
is at least two. 
Let $\lambda$ be a cocharacter corresponding to $d=d_1+d_2$ such that $d_1$ and $d_2$ are non-zero.
A simple calculation shows that 
\begin{align*}
 \dim \mathscr{Y}(d) -\dim \mathscr{Y}(d)^{\lambda \geq 0}   
 =\sum_{a, b \in I}d_1^a d_2^b \alpha_{a, b} +\sum_{a \in I} d_1^a d_2^a \geq 2 \dd_1 \dd_2 \alpha_{Q^{\circ}}\geq 2. 
\end{align*}
Therefore $\mathscr{Y}(d) \setminus \mathscr{Y}(d)^{\rm{s}}$ is at least of codimension two in $\mathscr{Y}(d)$. 

(ii) If $\alpha_{Q^{\circ}} \geq 2$, then 
 $\mu_0^{-1}(0) \subset \overline{R}(d)$ is 
an irreducible variety of dimension $\dim \overline{R}(d)-\dim \mathfrak{g}(d)_0$
by ~\cite[Proposition~3.6]{KaLeSo}. 
In particular $\mathscr{P}(d)^{\rm{red}}$ is classical. 
Moreover, in the proof of~\cite[Proposition~3.6]{KaLeSo}, it is also proved 
that $\mu_0^{-1}(0)$ contains a dense open subset of points with trivial stabilizer 
groups in $G(d)/\mathbb{C}^{\ast}$. 
Therefore the dimension (\ref{dim:Pd})
is easily calculated from 
\begin{align*}
    \dim P(d)=1+\dim \mathscr{P}(d)^{\rm{red}}=2+\dim \overline{R}(d)-2\dim \mathfrak{g}(d). 
\end{align*}
The last statement holds since the canonical module of $\mu_0^{-1}(0)$ is a $G(d)$-equivariantly 
trivial line bundle. 

(iii) By~\cite{Flenner} and~\cite{Nam}, it is enough to show that the 
codimension of the singular locus of $P(d)$ is at least $4$, 
see~\cite[Proposition~1.1, Theorem~1.2]{Baoh}. 
The singular locus of $P(d)$ is contained in the union of images of 
\begin{align*}
\oplus \colon P(d_1) \times P(d_2) \to P(d)
\end{align*}
for $d=d_1+d_2$ with $d_i \neq 0$. The codimension of 
the image of the above map is at least 
\begin{align*}
\dim P(d)-\dim P(d_1)-\dim P(d_2) &=-2+\sum_{a, b \in I}(d_1^a d_2^b +d_1^b d_2^a)\alpha_{a, b} \\
&\geq 2 \underline{d}_1 \underline{d}_2 \alpha_{Q^{\circ}}-2 \geq 4.
\end{align*}
Here the identity follows from (\ref{dim:Pd}), the first inequality follows from the definition of $\alpha_{Q^{\circ}}$, and the last 
inequality follows from the assumption on $\alpha_{Q^{\circ}}$ and $d$. 
Therefore (iii) holds. 
\end{proof}
\begin{example}\label{exam:gloop2}
As in Example~\ref{exam:gloop}, let $Q^{\circ}$ be a quiver with 
one vertex and $g$-loops. 
Then $\alpha_{Q^{\circ}}=2g-2$. By Lemma~\ref{lem:dimP} the stack $\mathscr{P}(d)^{\rm{red}}$ is 
classical if $g\geq 2$, and $P(d)$ is Gorenstein 
if $g\geq 3$ or $g=2$, $d\geq 3$. 
When $g=d=2$, the singular locus of $P(d)$ is of codimension two, 
but nevertheless it is also Gorenstein because
it admits a symplectic resolution of
singularities~\cite{Ogra1}.     
\end{example}

Below we assume that $\mathscr{P}(d)^{\rm{red}}$ is classical, 
e.g. $\alpha_{Q^{\circ}} \geq 2$. 
Then we have the good 
moduli space morphism 
\begin{align*}
    \pi_P=\pi_{P,d} \colon \mathscr{P}(d)^{\rm{red}} \to P(d).
\end{align*}
In particular for $\mathcal{E}_1, \mathcal{E}_2 \in D^b(\mathscr{P}(d)^{\rm{red}})$, 
the Hom space $\Hom(\mathcal{E}_1, \mathcal{E}_2)$ is a module over $\mathcal{O}_{P(d)}$. 

In general for a dg-category $\mathcal{C}$ linear over a commutative algebra $R$, 
we say it is proper over $\Spec R$ if $\Hom^{\ast}(E, F)$ is a finitely generated $R$-module. 
The categorical support condition in Corollary~\ref{cor:support} implies the 
relative properness of quasi-BPS categories, which is a non-trivial 
statement as objects in $\mathbb{T}(d)_v^{\rm{red}}$ may not be 
perfect, see Remark~\ref{rmk:ssupport}: 
\begin{prop}\label{lem:bound}
Suppose that the stack $\mathscr{P}(d)^{\rm{red}}$ is classical, e.g. $\alpha_{Q^{\circ}} \geq 2$.  
For $v\in\mathbb{Z}$ such that $\gcd(\dd, v)=1$ and objects $\mathcal{E}_i \in \mathbb{T}(d)_v^{\rm{red}}$
for $i=1, 2$, the $\mathcal{O}_{P(d)}$-module 
\begin{align*}
    \bigoplus_{i\in \mathbb{Z}} \Hom^i(\mathcal{E}_1, \mathcal{E}_2)
\end{align*}
is finitely generated, i.e. the category $\mathbb{T}(d)_v^{\rm{red}}$ is proper over 
$P(d)$. In particular we have $\Hom^i(\mathcal{E}_1, \mathcal{E}_2)=0$ for 
$\lvert i \rvert \gg 0$. 
\end{prop}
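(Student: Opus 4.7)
The plan is to transfer the Hom computation to the matrix factorization side via the Koszul equivalence $\Theta_0$ of (\ref{Koszul}), and then exploit the support condition from Corollary~\ref{cor:support} to force coherence of the pushforward to $P(d)$. Setting $F_i := \Theta_0(\mathcal{E}_i) \in \mathbb{S}^{\mathrm{gr}}(d)^{\mathrm{red}}_v$, Lemma~\ref{lem:internal} gives a natural identification
\[
\eta_{\ast}\, \mathcal{H}om_{\X_0(d)}(F_1, F_2) \cong j^r_{\ast}\, R\mathcal{H}om_{\mathscr{P}(d)^{\mathrm{red}}}(\mathcal{E}_1, \mathcal{E}_2)
\]
in $D_{\mathrm{qc}}(\mathscr{Y}(d))$. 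Taking derived global sections, proving $\bigoplus_i \Hom^i(\mathcal{E}_1, \mathcal{E}_2)$ is finitely generated over $\mathcal{O}_{P(d)}$ reduces to the analogous statement on the MF side, where the singular support input is natural.

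By Corollary~\ref{cor:support}, both $F_i$ are set-theoretically supported on $\mathscr{N}_{\mathrm{nil}}$, and hence so is $\mathcal{H}om(F_1, F_2) \in \mathrm{MF}^{\mathrm{gr}}(\X_0(d), 0)$. By Lemma~\ref{lem:nil}, the image $\pi_X(\mathscr{N}_{\mathrm{nil}})$ under the good moduli space map $\pi_X \colon \X_0(d) \to X_0(d)$ is contained in $\mathrm{Im}(\overline{0}_C) \cong P(d) \hookrightarrow X_0(d)$. I would then push $\mathcal{H}om(F_1, F_2)$ forward along $\pi_X$: since each factor of $F_i$ is a finite direct sum of $\mathcal{O}_{\X_0(d)} \otimes \Gamma_{G(d)}(\chi)$ for $\chi$ in the magic window of (\ref{chi:Wd}), the pushforward $\pi_{X\ast}\mathcal{H}om(F_1, F_2)$ is, in each cohomological degree, a coherent $\mathcal{O}_{X_0(d)}$-module with set-theoretic support on $P(d)$, by Hilbert's invariant-theoretic finiteness for the reductive group $G(d)$. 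Such a module is automatically finitely generated over $\mathcal{O}_{P(d)}$, since $\mathcal{O}_{P(d)^{(n)}}$ is a finite $\mathcal{O}_{P(d)}$-module for every infinitesimal thickening $P(d)^{(n)}$.

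To upgrade this to uniform finite generation of the full graded $\Hom^{\ast}$, I would use the regularity of $\mathbb{S}^{\mathrm{gr}}(d)^{\mathrm{red}}_v$, provable analogously to Proposition~\ref{thm:stronggenerator} by adjunction from the ambient regular category $\mathrm{MF}^{\mathrm{gr}}(\X_0(d), \Tr W_0)$. Picking a strong generator $\mathcal{G}$ with $\mathbb{S}^{\mathrm{gr}}(d)^{\mathrm{red}}_v = \langle \mathcal{G}\rangle^{\star n}$ for some $n \geq 1$, a dévissage argument reduces the finite generation to the single case $F_1 = F_2 = \mathcal{G}$, which is handled by the previous step applied to finitely many explicit $G(d)$-factors. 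The vanishing $\Hom^i(\mathcal{E}_1, \mathcal{E}_2) = 0$ for $|i| \gg 0$ then follows from finite generation over the Noetherian ring $\mathcal{O}_{P(d)}$ together with the $\mathbb{Z}$-grading of the MF-level internal Hom.

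The hard part will be controlling the cohomological extent of $\mathcal{H}om_{\X_0(d)}(F_1, F_2)$: since $\mathscr{P}(d)^{\mathrm{red}}$ is classical but typically not smooth, the internal Hom in $D_{\mathrm{qc}}(\mathscr{P}(d)^{\mathrm{red}})$ is generally unbounded, and pushing it forward to $P(d)$ need not be coherent without additional input. The singular support restriction $\mathrm{Supp}^{\mathrm{sg}} \subset \mathscr{N}_{\mathrm{nil}}$ from Theorem~\ref{lem:support}, together with the coprimality hypothesis $\gcd(v, \underline{d}) = 1$, is exactly what collapses the unbounded contributions into the nilpotent cotangent directions over $P(d)$ and produces the desired coherence, playing the role that stability conditions play in more classical derived-category arguments.
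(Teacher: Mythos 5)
Your overall strategy is in the right direction (pass to the matrix factorization side, invoke Corollary~\ref{cor:support} for the support restriction, push forward along the good moduli space, use properness of the support over $Y(d)$ to conclude finite generation over $P(d)$), but there is a genuine gap at exactly the place you flag as ``the hard part,'' and your proposed fix does not close it.

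The gap: you work with the \emph{graded} matrix factorizations $F_i = \Theta_0(\mathcal{E}_i) \in \mathrm{MF}^{\mathrm{gr}}(\X_0(d), \Tr W_0)$, so that $\mathcal{H}om(F_1, F_2)$ lives in $\mathrm{MF}^{\mathrm{gr}}(\X_0(d), 0)$. Via Lemma~\ref{lem:internal} this equals $j^r_\ast \mathcal{H}om(\mathcal{E}_1, \mathcal{E}_2)$, which is genuinely unbounded in cohomological degree because $\mathscr{P}(d)^{\mathrm{red}}$ is singular. Your dévissage from a strong generator $\mathcal{G}$ does not repair this: strong generation $\mathbb{T} = \langle \mathcal{G}\rangle^{\star n}$ reduces the problem to showing $\bigoplus_i \Hom^i(\mathcal{G}, \mathcal{G})$ is finitely generated, but that is just the same question for a particular object, and the generator $\Phi(C)$ obtained by adjunction from the ambient category is not a vector bundle and carries no a priori bound on its self-Ext in the $\mathbb{Z}$-graded sense. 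Singular support confined to $\mathscr{N}_{\mathrm{nil}}$ does not by itself ``collapse the unbounded contributions''; you need an explicit mechanism that turns a $\mathbb{Z}$-indexed direct sum into something coherent.

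The paper's mechanism is the one missing ingredient: instead of $\Theta_0(\mathcal{E}_i)$, take $F_i = \mathrm{forg}\circ\Theta_0(\mathcal{E}_i)$ in the \emph{ungraded} category $\mathrm{MF}(\X_0(d), \Tr W_0)$. Then $\mathcal{H}om(F_1, F_2)$ lives in $\mathrm{MF}(\X_0(d), 0) = D^{\mathbb{Z}/2}(\X_0(d))$, the $\mathbb{Z}/2$-graded derived category of the \emph{smooth} stack $\X_0(d)$, where it is automatically a bounded (two-periodic) complex with coherent cohomology. Since $\pi_X$ is a good moduli space morphism, $\pi_{X\ast}$ lands in $D^{\mathbb{Z}/2}(X(d))$, and the support restriction (Corollary~\ref{cor:support} together with Lemma~\ref{lem:nil}) plus the identification $\overline{\eta}\colon \mathrm{Im}(\overline{0}) \cong Y(d)$ then give coherence over $Y(d)$, hence over $P(d)$. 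Finally, the folding identity
\[
\Hom^{i}_{\mathbb{Z}/2}(F_1, F_2) = \bigoplus_{k\in\mathbb{Z}} \Hom^{2k+i}(\mathcal{E}_1, \mathcal{E}_2), \qquad i \in \mathbb{Z}/2,
\]
converts finite generation of the left-hand side into finite generation of the full $\mathbb{Z}$-graded $\Hom^\ast(\mathcal{E}_1, \mathcal{E}_2)$, which is exactly the upgrade you were looking for. This $\mathbb{Z}/2$-folding step, not strong generation, is what makes the support condition effective.
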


\begin{proof}
Let 
$F_i$ be defined by 
\begin{align*}
F_i=\mathrm{forg}\circ \Theta_0(\mathcal{E}_i) \in 
\mathrm{MF}(\X_0(d), \Tr W_0)
\end{align*}
where $\Theta_0$ is the equivalence (\ref{Koszul}) and $\mathrm{forg}$
is the forget-the-grading functor. 
Consider its internal Hom:
\begin{align*}
    \mathcal{H}om(F_1, F_2) \in \mathrm{MF}(\X_0(d), 0)=D^{\mathbb{Z}/2}(\X_0(d)). 
\end{align*}
Here $D^{\mathbb{Z}/2}(-)$ is the $\mathbb{Z}/2$-graded derived category 
of coherent sheaves. 
The above object is supported over 
$\pi_C^{-1}(\mathrm{Im}(\overline{0}_C))$ 
by 
Corollary~\ref{cor:support} and Lemma~\ref{lem:nil}. 
As $\pi_X$ is a good moduli space morphism, 
$\pi_{X\ast}$ sends a coherent sheaf to a bounded complex of 
coherent sheaves. Therefore we have 
\begin{align}\label{gast}
    \pi_{X \ast}\mathcal{H}om(F_1, F_2)
    \in D^{\mathbb{Z}/2}(X(d))
\end{align}
and, further, that $\pi_{X\ast}\mathcal{H}om(F_1, F_2)$ is
supported over $\mathrm{Im}(\overline{0}_C) \subset \mathrm{Im}(\overline{0})$, 
see the diagrams (\ref{dia:XY}), (\ref{dia:crit}). 
Since $\overline{0}$ is a section of $\overline{p}$, 
the restriction of $\overline{\eta}$ to $\mathrm{Im}(\overline{0})$ 
is an isomorphism, 
$\overline{\eta} \colon \mathrm{Im}(\overline{0}) 
\stackrel{\cong}{\to} Y(d)$. 
In particular, (\ref{gast}) has a proper support 
over $Y(d)$. 
Therefore, 
we have that
\begin{align*}
    \overline{\eta}_{\ast}\pi_{X\ast}\mathcal{H}om(F_1, F_2)
    \in D^{\mathbb{Z}/2}(Y(d)), 
\end{align*}
so $\Hom^{\ast}(F_1, F_2)$ is finitely generated over $Y(d)$, hence 
over $P(d)$
as $P(d) \hookrightarrow Y(d)$ is a closed subscheme. 
Now, for $i \in \mathbb{Z}/2$, we have 
\begin{align*}
    \Hom^{i}(F_1, F_2)=\bigoplus_{k\in \mathbb{Z}}\Hom^{2k+i}(\mathcal{E}_1, \mathcal{E}_2),
\end{align*}
hence the finite generation over $P(d)$ of the left hand side implies 
the proposition. 
\end{proof}

\subsection{Relative Serre functor on quasi-BPS categories}
We keep the situation of Proposition~\ref{lem:bound}. 
The category $\mathbb{T}=\mathbb{T}(d)_v$
is a 
module over $\mathrm{Perf}(P(d))$.
We have the associated internal homomorphism,  see Subsection~\ref{subsec:Koszul}:
\begin{align*}
 \mathcal{H}om_{\mathbb{T}}(\mathcal{E}_1, \mathcal{E}_2)
\in D_{\rm{qc}}(\mathscr{P}(d)^{\rm{red}}), \ 
\mathcal{E}_i \in \mathbb{T}(d)_v^{\rm{red}}.
\end{align*}
Then Proposition~\ref{lem:bound} implies that 
\begin{align*}
    \pi_{\ast} \mathcal{H}om_{\mathbb{T}}(\mathcal{E}_1, \mathcal{E}_2) \in D^b(P(d)).
    \end{align*}
A functor 
$S_{\mathbb{T}/P} \colon \mathbb{T} \to \mathbb{T}$ is
called \textit{a relative Serre functor}
if it satisfies
the isomorphism 
\begin{align*}
    \Hom_{P(d)}(\pi_{\ast}\mathcal{H}om_{\mathbb{T}}(\mathcal{E}_1, \mathcal{E}_2), \mathcal{O}_{P(d)}) \cong 
    \Hom_{\mathbb{T}}(\mathcal{E}_2, S_{\mathbb{T}/P}(\mathcal{E}_1)). 
\end{align*} 
The following result shows that 
$\mathbb{T}$ is strongly crepant in the sense of~\cite[Section~2.2]{VdB22}. 
\begin{thm}\label{thm:Serretriv}
Suppose that $\alpha_{Q^{\circ}} \geq 2$ and 
$P(d)$ is Gorenstein, e.g. $\alpha_{Q^{\circ}} \geq 3$. Let $v\in\mathbb{Z}$ such that $\gcd(\dd, v)=1$. Then 
the relative Serre functor $S_{\mathbb{T}/P}$
exists and satisfies 
$S_{\mathbb{T}/P} \cong \id_{\mathbb{T}}$. 
\end{thm}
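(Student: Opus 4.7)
The plan is to establish $S_{\mathbb{T}/P(d)} \cong \id_{\mathbb{T}}$ via Grothendieck--Serre duality for the good moduli space morphism $\pi\colon \mathscr{P}(d)^{\rm{red}} \to P(d)$, showing both existence of the relative Serre functor and that the relative dualizing complex is trivial. By Proposition~\ref{lem:bound}, for any $\mathcal{E}_1, \mathcal{E}_2 \in \mathbb{T}$ the internal Hom $\mathcal{H}om_{\mathbb{T}}(\mathcal{E}_1, \mathcal{E}_2)$ has proper support over $P(d)$, so a version of Grothendieck duality applicable to objects with proper support yields
\begin{align*}
R\Hom_{P(d)}\!\left(\pi_{\ast}\mathcal{H}om_{\mathbb{T}}(\mathcal{E}_1, \mathcal{E}_2),\mathcal{O}_{P(d)}\right)
\cong \pi_{\ast}\mathcal{H}om_{\mathbb{T}}\!\left(\mathcal{E}_2, \mathcal{E}_1 \otimes \omega_{\pi}\right),
\end{align*}
where $\omega_{\pi} = \pi^{!}\mathcal{O}_{P(d)}$ is the relative dualizing complex. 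This identifies $S_{\mathbb{T}/P(d)}(-) = (-) \otimes \omega_{\pi}$, provided this functor preserves $\mathbb{T}$.

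The key input is then to show $\omega_{\pi} \cong \mathcal{O}_{\mathscr{P}(d)^{\rm{red}}}$. First, one computes the canonical bundle of $\mathscr{Y}(d)$: since the $G(d)$-representations $\overline{R}(d)$ and $\mathfrak{g}(d)$ are self-dual (indeed $\overline{R}(d) = T^{\ast}R^{\circ}(d)$ is symplectic and $\mathfrak{g}(d) = \bigoplus_a \mathrm{End}(V^a)$), the characters $\det \overline{R}(d)$ and $\det \mathfrak{g}(d)$ are trivial, so $\omega_{\mathscr{Y}(d)}$ is $G(d)$-equivariantly trivial. Under $\alpha_{Q^{\circ}} \geq 2$, Lemma~\ref{lem:dimP}(ii) says $\mathscr{P}(d)^{\rm{red}}$ is classical and is a local complete intersection in $\mathscr{Y}(d)$ cut out by the traceless moment map $\mu_0$ valued in $\mathfrak{g}(d)_0$. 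Adjunction gives $\omega_{\mathscr{P}(d)^{\rm{red}}} \cong \omega_{\mathscr{Y}(d)}\otimes \det(\mathfrak{g}(d)_0^{\vee})|_{\mathscr{P}(d)^{\rm{red}}}$; and since $\mathfrak{g}(d) = \mathfrak{g}(d)_0 \oplus \mathbb{C}$ as $G(d)$-representations, $\det \mathfrak{g}(d)_0 \cong \det \mathfrak{g}(d)$ is trivial. Hence $\omega_{\mathscr{P}(d)^{\rm{red}}} \cong \mathcal{O}$. Combined with $\omega_{P(d)} \cong \mathcal{O}_{P(d)}$ (which follows from Lemma~\ref{lem:dimP}(i) together with the hypothesis that $P(d)$ is Gorenstein), one concludes $\omega_{\pi} \cong \mathcal{O}_{\mathscr{P}(d)^{\rm{red}}}$, and hence $S_{\mathbb{T}/P(d)} \cong \id$.

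The main obstacle is handling shifts arising from the $B\mathbb{C}^{\ast}$-gerbe structure on $\pi$: because the diagonal $\mathbb{C}^{\ast}\subset G(d)$ acts trivially, $\pi$ has generic relative dimension $-1$, so naively the line bundle identity $\omega_{\pi} \cong \mathcal{O}$ might still leave a homological shift $[-1]$. The resolution is that $\mathbb{T}(d)_v^{\rm{red}}$ is concentrated in the single diagonal $\mathbb{C}^{\ast}$-weight $v$ (cf. Remark~\ref{rmk:Mweight}), so $\mathcal{H}om_{\mathbb{T}}(\mathcal{E}_1, \mathcal{E}_2)$ lives in diagonal weight $0$ and effectively descends along the rigidification $\mathscr{P}(d)^{\rm{red}} \to \mathscr{P}(d)^{\rm{red}}/B\mathbb{C}^{\ast}$, for which the comparison with $P(d)$ has relative dimension $0$ generically. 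One must spell this out carefully, for instance by working Koszul-dually with $\mathbb{S}^{\rm{gr}}(d)_v^{\rm{red}} \subset \mathrm{MF}^{\rm{gr}}(\X_0(d), \Tr W_0)$, where $\X_0(d)$ is smooth with equivariantly trivial canonical (by the same self-duality computation), and where the $\mathbb{Z}/2$-graded Serre duality for matrix factorizations over the good moduli space $X(d)$ gives the identity functor without spurious shifts once the fixed-weight slice is taken. This gerbe/shift bookkeeping is the only nontrivial step; everything else is a direct application of adjunction and Grothendieck duality.
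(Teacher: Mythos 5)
There is a genuine gap, and it sits exactly at the step you dismiss as ``a direct application of adjunction and Grothendieck duality.'' Your central formula
\begin{align*}
R\Hom_{P(d)}\!\left(\pi_{\ast}\mathcal{H}om_{\mathbb{T}}(\mathcal{E}_1, \mathcal{E}_2),\mathcal{O}_{P(d)}\right)
\cong \pi_{\ast}\mathcal{H}om_{\mathbb{T}}\!\left(\mathcal{E}_2, \mathcal{E}_1 \otimes \omega_{\pi}\right)
\end{align*}
is not an instance of any available duality theorem: $\pi_P\colon \mathscr{P}(d)^{\rm{red}}\to P(d)$ is a good moduli space morphism with positive-dimensional affine stabilizers, it is not proper, and the supports of objects of $\mathbb{T}$ are closed substacks which are never proper over the affine $P(d)$. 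Proposition~\ref{lem:bound} only gives \emph{categorical} properness (coherence of $\pi_{P\ast}\mathcal{H}om_{\mathbb{T}}$); it does not imply that $\pi_{P\ast}$ intertwines duality. For a quotient stack $V/G$ over its GIT quotient, pushforward of invariants does \emph{not} commute with dualization for arbitrary complexes of equivariant bundles — this failure is precisely the phenomenon that the magic-window/noncommutative-resolution formalism is designed to control. So even after you verify $\omega_{\mathscr{P}(d)^{\rm{red}}}\cong\mathcal{O}$ (that computation is fine, and the paper uses the analogous triviality of $\omega_{Y(d)}$, $\omega_{X(d)}$, $\omega_{P(d)}$ from Lemma~\ref{lem:dimP}), you have not produced a relative Serre functor, because the duality-commutes-with-pushforward statement is unproved — and it is the whole content of the theorem.

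A symptom of the gap is that the hypothesis $\gcd(v,\dd)=1$ never enters your duality step (you use it only through Proposition~\ref{lem:bound}). In the paper's proof it enters at exactly the missing point: after passing through the Koszul equivalence to $\mathcal{Q}=\mathcal{H}om(\Theta_0\mathcal{E}_1,\Theta_0\mathcal{E}_2)$ on $\X_0(d)$ and pushing to the singular affine quotient $X(d)$, one needs $\mathcal{H}om_{X(d)}(\pi_{X\ast}\mathcal{Q}^i,\mathcal{O}_{X(d)})\cong \pi_{X\ast}\big((\mathcal{Q}^i)^{\vee}\big)$, and this holds because the weights of the factors of $\mathcal{Q}$ lie in the \emph{interior} of the polytope $-2\rho+2\mathbf{W}$ (Lemma~\ref{lem:boundary}, which is where coprimality is used), so that $\pi_{X\ast}\mathcal{Q}^i$ is maximal Cohen--Macaulay by \v{S}penko--Van den Bergh. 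The paper additionally needs the categorical support theorem (Corollary~\ref{cor:support} and Lemma~\ref{lem:nil}) to guarantee that $\pi_{X\ast}\mathcal{Q}$ has proper support over $Y(d)$, which is what legitimizes the $\overline{\eta}^{!}$/adjunction step in place of properness of $\overline{\eta}$. Your gerbe/shift bookkeeping concern is real but minor; to repair the argument you would have to supply the MCM input (or an equivalent reason why duality commutes with the good moduli space pushforward on $\mathbb{T}$), at which point you are essentially reconstructing the paper's proof.
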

\begin{proof}
    We have the following commutative diagram 
    \begin{align}\label{dia:comM}
    \xymatrix{
        \mathscr{P}(d)^{\rm{red}} \ar[d]_-{\pi_P} \inclusion^{j^r} & \mathscr{Y}(d) \ar[d]_-{\pi_Y}  &
        \X_0(d) \ar[l]_{\eta} \ar[d]_-{\pi_X} \\
        P(d) \inclusion^-{\overline{j}} & Y(d) & X(d) \ar[l]_-{\overline{\eta}}. 
        }
    \end{align}
    Let $\Theta_0$
    be the Koszul duality equivalence in (\ref{Koszul}). 
    Then by Lemma~\ref{lem:internal}, we have 
 \begin{align*}
  j^r_{\ast}\mathcal{H}om_{\mathbb{T}}(\mathcal{E}_1, \mathcal{E}_2)=\eta_{\ast}\mathcal{Q},
  \end{align*}
    where $\mathcal{Q}$ is the internal homomorphism of matrix 
    factorizations 
    \begin{align*}
        \mathcal{Q}=\mathcal{H}om_{\mathrm{MF}}(\Theta_0(\mathcal{E}_1), \Theta_0(\mathcal{E}_2))
        \in \mathrm{MF}^{\rm{gr}}(\X_0(d), 0).
    \end{align*}
Since the left vertical arrows $\pi_P$, $\pi_Y$ have relative dimension $-1$ and the 
codimension of the closed substack 
$\mathscr{P}(d)^{\rm{red}}$ in $\mathscr{Y}(d)$ is $\dim \mathfrak{g}(d)_0$, 
the codimension of $P(d)$ in $Y(d)$ is 
$\dim \mathfrak{g}(d)_0$. 
By Lemma~\ref{lem:dimP}, we have the following descriptions of 
dualizing complexes
\begin{align}\label{dualizingcomplexes}
    \omega_{Y(d)}=\mathcal{O}_{Y(d)}[\dim Y(d)], \ 
    \omega_{P(d)}=\mathcal{O}_{P(d)}[\dim Y(d)-\dim \mathfrak{g}(d)_0]. 
\end{align}
As $\overline{j}^{!}\omega_{Y(d)}=\omega_{P(d)}$, we have 
\begin{align*}
\overline{j}^{!}\mathcal{O}_{Y(d)}=\mathcal{O}_{P(d)}[-\dim \mathfrak{g}(d)_0].
\end{align*}
   Then we have the isomorphisms 
   \begin{align*}
    &\Hom_{P(d)}(\pi_{P\ast}\mathcal{H}om_{\mathbb{T}}(\mathcal{E}_1, \mathcal{E}_2), \mathcal{O}_{P(d)}) \\
    &\cong 
    \Hom_{Y(d)}(\overline{j}_{\ast}\pi_{P\ast}\mathcal{H}om_{\mathbb{T}}(\mathcal{E}_1, \mathcal{E}_2), \mathcal{O}_{Y(d)}[\dim \mathfrak{g}(d)_0]) \\
    &\cong \Hom_{Y(d)}(\pi_{Y\ast}j^r_{\ast}\mathcal{H}om_{\mathbb{T}}(\mathcal{E}_1, \mathcal{E}_2), \mathcal{O}_{Y(d)}[\dim \mathfrak{g}(d)_0]) \\
    &\cong \Hom_{Y(d)}(\pi_{Y\ast}\eta_{\ast}\mathcal{Q}, 
    \mathcal{O}_{Y(d)}[\dim \mathfrak{g}(d)_0]) \\
    &\cong \Hom_{Y(d)}(\overline{\eta}_{\ast}\pi_{X\ast}\mathcal{Q}, \mathcal{O}_{Y(d)}[\dim \mathfrak{g}(d)_0]).
   \end{align*}
   By Lemma~\ref{lem:dimP} (i), we have 
   \begin{align*}
   \omega_{X(d)}=\mathcal{O}_{X(d)}[\dim X(d)](-2\dim \mathfrak{g}(d)_0)=
   \mathcal{O}_{X(d)}[\dim Y(d)-\dim \mathfrak{g}(d)_0].
   \end{align*}
     Here, we denote by $(1)$ the grade shift with respect to the 
$\mathbb{C}^{\ast}$-action on $X(d)$, induced by the  
   fiberwise weight two $\mathbb{C}^{\ast}$-action on 
   $\X_0(d) \to \mathscr{Y}(d)$, which is isomorphic to $[1]$ in 
   $\mathrm{MF}^{\rm{gr}}(X(d), 0)$. 
   As in the proof of Lemma~\ref{lem:bound}, the complex
   $\pi_{X\ast}\mathcal{Q}$ has proper support over $Y(d)$.
   Therefore, from  
   $\eta^{!}\omega_{Y(d)}=\omega_{X(d)}$ and \eqref{dualizingcomplexes}, 
   we have 
\begin{align*}
&\Hom_{Y(d)}(\overline{\eta}_{\ast}\pi_{X\ast}\mathcal{Q}, \mathcal{O}_{Y(d)}[\dim \mathfrak{g}(d)_0]) \\
&\cong 
    \Hom_{X(d)}(\pi_{X\ast}\mathcal{Q}, \overline{\eta}^{!}\mathcal{O}_{Y(d)}[\dim \mathfrak{g}(d)_0]) \\
    &\cong \Hom_{X(d)}(\pi_{X\ast}\mathcal{Q}, \mathcal{O}_{X(d)}). 
\end{align*}
Note that $\overline{\eta}$ is not proper, but the first isomorphism above holds  
because $\pi_{X\ast}\mathcal{Q}$ has proper support over $Y(d)$. 
   In the above, $\Hom_{X(d)}(-, -)$ denotes the space of homomorphisms 
   in the category $\mathrm{MF}^{\rm{gr}}(X(d), 0)$; note that $X(d)$ may not be smooth, but its definition and the definition of the functor $\overline{\eta}_*$ on the subcategory of matrix factorizations with proper support over $Y(d)$ are as in the smooth case \cite{MR3112502, EfPo}. 
   
   By the definition of $\mathbb{S}^{\mathrm{gr}}(d)_v$,
   the object $\mathcal{Q}$ is represented by 
   \[(\mathcal{Q}^0 \leftrightarrows \mathcal{Q}^1),\]
   where $\mathcal{Q}^0$ and $\mathcal{Q}^1$ are direct sums 
   of vector bundles of the form 
   $\Gamma_{G(d)}(\chi_1)^{\vee} \otimes \Gamma_{G(d)}(\chi_2) \otimes \mathcal{O}_{\X_0(d)}$
   such that the weights
   $\chi_i$ satisfy, for $i\in\{1, 2\}$: 
   \begin{align}\label{defW511}
       \chi_i +\rho-v\tau_d \in
       \mathbf{W}:=\frac{1}{2}\mathrm{sum}[0, \beta],
   \end{align}
   where the above Minkowski sum is over all weights $\beta$ in $\overline{R}(d) \oplus \mathfrak{g}(d)_0$. 
      By Lemma~\ref{lem:boundary} below, 
   the weight $\chi_2-\chi_1$ lies in the 
   interior of the polytope $-2\rho+2\mathbf{W}$.
   Therefore applying~\cite[Proposition~4.4.4]{SVdB}
   for the $PG(d):=G(d)/\mathbb{C}^{\ast}$-action on $\overline{R}(d)\oplus 
   \mathfrak{g}(d)_0$, the sheaf 
   $\pi_{X\ast}\mathcal{Q}^i$ is a maximal 
   Cohen-Macaulay sheaf on $X(d)$. 
 It follows that 
   \begin{align*}
  \mathcal{H}om_{X(d)}(\pi_{X\ast}\mathcal{Q}^i, \mathcal{O}_{X(d)})=\pi_{X\ast}(\mathcal{Q}^i)^{\vee}.
  \end{align*}
   The morphism $\pi_{X\ast}\mathcal{Q}^i \to 
   \pi_{X\ast}\mathcal{Q}^{i+1}$ is uniquely
   determined by its restriction to the smooth locus 
   of $X(d)$, and so
   \begin{align*}
    \mathcal{H}om_{X(d)}(\pi_{X\ast}\mathcal{Q}, \mathcal{O}_{X(d)}) \cong \pi_{X\ast}(\mathcal{Q}^{\vee}).
    \end{align*}
   Therefore we have the isomorphisms
   \begin{align*}
       \Hom_{X(d)}(\pi_{X\ast}\mathcal{Q}, \mathcal{O}_{X(d)})
       &\cong \Hom_{X(d)}(\mathcal{O}_{X(d)}, 
       \pi_{X\ast}(\mathcal{Q}^{\vee})) \\
       &\cong 
       \Hom_{\mathrm{MF}(\X(d), \mathrm{Tr}\,W)}(\Theta_0(\mathcal{E}_2), \Theta_0(\mathcal{E}_1)) \\
       &\cong 
       \Hom_{\mathscr{P}(d)}(\mathcal{E}_2, \mathcal{E}_1)
       \\
       &\cong 
       \Hom_{\mathbb{T}}(\mathcal{E}_2, \mathcal{E}_1), 
   \end{align*}
   and the conclusion follows.
\end{proof}

We are left with proving the following:

\begin{lemma}\label{lem:boundary}
In the setting of \eqref{defW511}, 
    let $\chi\in M(d)$ be a dominant weight such that 
    $\chi+\rho-v\tau_d \in \mathbf{W}$. 
    If $\gcd(\dd, v)=1$, then $\chi+\rho-v\tau_d$ is not 
    contained in the boundary of $\mathbf{W}$. 
\end{lemma}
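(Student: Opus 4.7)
The plan is to use the zonotope structure of $\mathbf{W}$ together with the integrality of $\chi$ and Assumption~\ref{assum1} to derive a divisibility constraint forcing $\gcd(v,\dd)>1$. As a first observation, the zero weights of $\mathfrak{g}(d)$ contribute trivially to the Minkowski sum, so $\mathbf{W}$ coincides with the polytope $\mathbf{W}(d)$ built from all weights of $R(d)=\overline{R}(d)\oplus\mathfrak{g}(d)$. Any boundary point of this zonotope lies on some codimension-one facet $F_\lambda$; by the Weyl invariance of $\mathbf{W}$ under $W_d=\prod_a S_{d^a}$ together with a dimension count on the span of the zero $\lambda$-weights, such facets are, up to reordering within each vertex, parametrized by nontrivial two-block partitions $d=d_1+d_2$ (with $d_1,d_2\in\mathbb{N}^I$ nonzero), realized by the cocharacter $\lambda$ of $T(d)$ with weight $1$ on the first block and weight $0$ on the second.

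Assuming for contradiction that $\chi+\rho-v\tau_d$ lies on such a facet, the extremality of $\lambda$ on $\mathbf{W}$ gives
\begin{equation*}
\langle \lambda,\chi\rangle + \langle \lambda,\rho\rangle - v\langle \lambda,\tau_d\rangle = \tfrac{1}{2}\langle \lambda, R(d)^{\lambda>0}\rangle.
\end{equation*}
The next step will be to evaluate each term explicitly. One has $\langle \lambda,\tau_d\rangle = \dd_1/\dd$ and $\langle \lambda,\rho\rangle = -\tfrac{1}{2}\sum_{a\in I} d_1^a d_2^a$. For the right-hand side, using that the doubled quiver satisfies $e^{\circ,d}_{a,b}=\alpha_{a,b}+2\delta_{ab}$, the positive $\lambda$-weights from $\overline{R}(d)$ contribute $\sum_{a,b\in I}\alpha_{a,b}d_1^b d_2^a + 2\sum_{a\in I} d_1^a d_2^a$, while those from $\mathfrak{g}(d)$ contribute $\sum_{a\in I} d_1^a d_2^a$. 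Substituting and rearranging will yield
\begin{equation*}
\langle \lambda,\chi\rangle = \frac{v\dd_1}{\dd} + \frac{1}{2}\sum_{a,b\in I}\alpha_{a,b}d_1^b d_2^a + 2\sum_{a\in I}d_1^a d_2^a.
\end{equation*}

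The final step is a divisibility argument. By Assumption~\ref{assum1}, each $\alpha_{a,b}$ is even, so the last two terms on the right are integers. The left-hand side is an integer since $\chi$ is an integral weight, whence $\dd\mid v\dd_1$. Combined with $\gcd(v,\dd)=1$, this forces $\dd\mid\dd_1$, so $\dd_1\in\{0,\dd\}$, contradicting the nontriviality of the partition $(d_1,d_2)$. The main obstacle will be the combinatorial bookkeeping in the weight calculations above, and in particular verifying the claim that codimension-one facets of $\mathbf{W}$ are parametrized (up to Weyl action) by two-block partitions; once the explicit formula for $\langle \lambda,\chi\rangle$ is in hand, the evenness of $\alpha_{a,b}$ furnished by Assumption~\ref{assum1} closes the proof cleanly.
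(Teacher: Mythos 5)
Your argument is correct in substance, and its engine is the same as the paper's: locate a cocharacter $\lambda$ supporting the face of $\mathbf{W}$ containing $\chi+\rho-v\tau_d$, pair the resulting identity against an integral (co)character, and use the evenness of $\alpha_{a,b}$ from Assumption~\ref{assum1} to force $v\,\dd_1/\dd\in\mathbb{Z}$, contradicting $\gcd(v,\dd)=1$. The implementation differs, though: the paper does not analyze facets directly but invokes the decomposition of Proposition~\ref{prop:decompchi} (with $\delta_d=0$), which for a boundary weight produces an antidominant $\lambda$ with associated partition $(d_i)_{i=1}^k$, $k\geq 2$, and weights $\psi_i\in\mathbf{W}(d_i)$; pairing with $1_{d_i}$ and using $\tfrac12\overline{R}(d)^{\lambda>0}\in M(d)$ then shows each $v_i=v\,\dd_i/\dd$ is an integer. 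Your route replaces this machinery by an explicit zonotope computation, which is more elementary and self-contained, but it shifts the burden onto the facet classification you flag yourself: the statement that every boundary point lies on a codimension-one facet whose normal is (up to Weyl action) a two-valued cocharacter is exactly what the citation of Proposition~\ref{prop:decompchi} covers in the paper, and it requires the tripled quiver to be connected for $\mathbf{W}$ to be full-dimensional in $M(d)_{0,\mathbb{R}}$ (in the degenerate disconnected case one should argue with a supporting, not facet-defining, cocharacter, e.g.\ one separating components, and the same computation goes through since the relevant $\alpha_{a,b}$ vanish). Two small points in your bookkeeping: the formula $\langle\lambda,\rho\rangle=-\tfrac12\sum_a d_1^ad_2^a$ presupposes the block-ordered representative of $\lambda$, and conjugating $\lambda$ into block form also moves $\chi$; this is harmless because only the classes modulo $\mathbb{Z}$ of $\langle\lambda,\rho\rangle$ and $\tfrac12\langle\lambda,R(d)^{\lambda>0}\rangle$ enter the divisibility step, and these are independent of the ordering within blocks. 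Also note the paper's contradiction is obtained from all $v_i$ simultaneously, while you only need one block, which is fine since your facet normals have exactly two level sets. Net effect: your proof buys independence from Proposition~\ref{prop:decompchi} at the price of proving the facet description; the paper's buys uniformity (it works verbatim for faces of any codimension) at the price of relying on the decomposition algorithm.
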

\begin{proof}
 Suppose that $\chi+\rho-v\tau_d$ is on the 
 boundary of $\mathbf{W}$. 
 Use the decomposition (\ref{decompchi})
for $\delta_d=0$ to obtain the decomposition
\begin{align}\label{id:chirho}
    \chi+\rho=-\frac{1}{2}R(d)^{\lambda>0}+\sum_{i=1}^k \psi_i +v\tau_d
\end{align}
for some antidominant cocharacter $\lambda$ such that,
if $d=d_1+\cdots+d_k$ for $k\geq 2$ is the decomposition corresponding to $\lambda$,
then $\langle 1_{d_i}, \psi_i\rangle=0$. 
We write 
\begin{align}\label{id:vi}
v\tau_d=\sum_{i=1}^k v_i \tau_{d_i}, \ 
    \frac{v}{\dd}=\frac{v_i}{\dd_i}
\end{align}
for $v_i \in \mathbb{Q}$. 
Then the identity (\ref{id:chirho}) is written as 
\begin{align}\label{id:chifrac}
    \chi=-\frac{1}{2}\overline{R}(d)^{\lambda>0}+\sum_{i=1}^k v_i\tau_{d_i}+\sum_{i=1}^k (\psi_i-\rho_i).
\end{align}
By Assumption~\ref{assum1}, we have 
$\frac{1}{2}\overline{R}(d)^{\lambda>0} \in M(d)^{W_d}$. 
Therefore the identity (\ref{id:chifrac}) implies that 
\begin{align*}
    v_i=\langle 1_{d_i}, \chi\rangle +\left\langle 1_{d_i}, \frac{1}{2}\overline{R}(d)^{\lambda>0} \right\rangle
    \in \mathbb{Z} 
\end{align*} for all $1\leq i\leq k$.
However, as $v/\dd=v_i/\dd_i$, we obtain a contradiction with the assumption that 
$\gcd(\dd, v)=1$. 
\end{proof}

\subsection{Indecomposability of quasi-BPS categories}

Let $Q^\circ=(I, E^\circ)$ be a quiver and let $d\in\mathbb{N}^I$. Recall the good moduli space map \[\pi_P\colon \mathscr{P}(d)^{\mathrm{red}}=\mu_0^{-1}(0)/G(d)\to P(d)\] from the reduced stack of dimension $d$ representations of the
preprojective algebra of $Q^\circ$.

\begin{prop}\label{mainprop:dec}
    Let $Q^\circ$ be a quiver and let $d\in\mathbb{N}^I$ such that $\mathscr{P}(d)^{\mathrm{red}}$ is a classical stack, e.g.
    $\alpha_{Q^{\circ}} \geq 2$. 
     Let $v\in\mathbb{Z}$. Then $\mathbb{T}(d)_v^{\mathrm{red}}$ does not have a non-trivial orthogonal decompositions. 
\end{prop}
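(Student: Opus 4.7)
The proof will proceed by contradiction. Suppose $\mathbb{T}:=\mathbb{T}(d)_v^{\mathrm{red}}=\mathbb{A}\oplus\mathbb{B}$ is a non-trivial orthogonal decomposition, yielding a central idempotent natural transformation $e$ of $\mathrm{id}_{\mathbb{T}}$. The key geometric input is the irreducibility of the classical reduced preprojective stack $\mathscr{P}(d)^{\mathrm{red}}$, which holds by Lemma~\ref{lem:dimP}(ii) under the classicality assumption; in particular $\mathcal{O}(P(d))=\mathcal{O}(\mu_0^{-1}(0))^{G(d)}$ is an integral domain and hence has no non-trivial idempotents.

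I would first analyze the idempotent on the generators of $\mathbb{T}$. By Definition~\ref{def:qbps:double}, $\mathbb{T}$ is generated by the vector bundles $V_\chi:=(j^r)^{\ast}(\mathcal{O}_{\mathscr{Y}(d)}\otimes \Gamma_{G(d)}(\chi))$ for dominant weights $\chi$ satisfying $\chi+\rho-v\tau_d\in\mathbf{W}(d)$. Since $\Gamma_{G(d)}(\chi)$ is an irreducible $G(d)$-representation, Schur's lemma identifies the natural inclusion
\[\mathcal{O}(P(d))\hookrightarrow Z(\mathrm{End}_{\mathbb{T}}(V_\chi))\subset \mathrm{End}_{\mathbb{T}}(V_\chi).\]
Functoriality of the idempotent transformation $e$ with respect to all morphisms in $\mathbb{T}$ implies that $e_{V_\chi}$ is central in $\mathrm{End}_{\mathbb{T}}(V_\chi)$; refining this via the $\mathcal{O}(P(d))$-linear structure, one sees that $e_{V_\chi}$ descends to an idempotent of $\mathcal{O}(P(d))$ itself, which must equal $0$ or $1$. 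Hence each generator $V_\chi$ lies entirely in $\mathbb{A}$ or entirely in $\mathbb{B}$.

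The second step is to show that the generators cannot be partitioned non-trivially into two orthogonal classes, i.e.\ that the ``Hom-graph'' on generators (with edges given by non-zero $\mathrm{Hom}^{\ast}$) is connected. For this, I would compute $\mathrm{Hom}_{\mathbb{T}}^{\ast}(V_\chi,V_{\chi'})$ as the $G(d)$-invariants in $\Gamma_{G(d)}(\chi)^{\vee}\otimes \Gamma_{G(d)}(\chi')\otimes \mathcal{O}(\mu_0^{-1}(0))$. Using the polynomial $G(d)$-module structure of $\mathcal{O}(\mu_0^{-1}(0))$, one verifies that for $\chi,\chi'$ sufficiently close inside $\mathbf{W}(d)$, these invariants are non-zero. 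Alternatively, one can restrict to the open locus $U\subset P(d)$ of $\ell$-stable representations for a generic $\ell$ and invoke the wall-crossing equivalence (Theorem~\ref{thm:catwall3}) together with the magic window theorem~\eqref{intro:magic} of Halpern-Leistner--Sam to identify $\mathbb{T}|_{\pi_P^{-1}(U)}$ with the derived category of a smooth connected variety (or its $\mathbb{C}^{\ast}$-gerbe), which is indecomposable; irreducibility of $P(d)$ then propagates indecomposability from the stable locus back to $\mathbb{T}$.

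The main obstacle, in both routes, is controlling behavior over the unstable strata of $P(d)$: in the first route, verifying non-vanishing of invariants uniformly across all generator pairs linking the full polytope, and in the second route, showing that an object of $\mathbb{T}$ with vanishing restriction to $\pi_P^{-1}(U)$ must itself vanish (a specialization/support argument using the concrete description of $\mathbb{T}$ via the polytope condition). Either way, the chain-connectedness of the generators via non-zero Homs, combined with the orthogonality forcing each generator into exactly one summand, yields the desired contradiction.
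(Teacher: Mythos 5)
There are genuine gaps. First, your generators are wrong: the objects $V_\chi=(j^r)^{\ast}(\mathcal{O}_{\mathscr{Y}(d)}\otimes\Gamma_{G(d)}(\chi))$ are in general \emph{not} objects of $\mathbb{T}(d)_v^{\mathrm{red}}$, because membership is tested on the pushforward $j^r_{\ast}$, and the Koszul resolution of $j^r_{\ast}(j^r)^{\ast}(\mathcal{O}\otimes\Gamma_{G(d)}(\chi))$ involves twists by the weights of $\mathfrak{g}(d)_0$ which push $\chi$ outside the polytope $\mathbf{W}(d)$; so $\mathbb{T}$ is not generated by these restrictions in the way you use. The paper instead works with $\Phi(\mathcal{O}_{\mathscr{P}(d)^{\mathrm{red}}}\otimes V)$, where $\Phi$ is the left adjoint to the inclusion $\mathbb{T}\hookrightarrow D^b(\mathscr{P}(d)^{\mathrm{red}})_v$ (available by admissibility), and these projected objects are not explicit vector bundles, so your Schur-lemma/idempotent analysis of $\mathrm{End}(V_\chi)$ does not transfer. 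Second, even for an honest equivariant bundle, the step ``a central idempotent of $\mathrm{End}_{\mathbb{T}}(V_\chi)$ descends to an idempotent of $\mathcal{O}(P(d))$'' is asserted, not proved: an inclusion $\mathcal{O}(P(d))\hookrightarrow Z(\mathrm{End})$ does not force idempotents of the (possibly large) endomorphism algebra to lie in $\mathcal{O}(P(d))$. Third, the crucial connectivity statement (non-vanishing of $\mathrm{Hom}^{\ast}$ between the relevant generators, uniformly) is exactly what you leave open; the paper resolves it differently: it shows $\pi_{P\ast}$ of any direct summand of $\Phi(\mathcal{O}\otimes V)$ has full support on $P(d)$ (using that $\pi_{P\ast}(\mathcal{O}_{\mathscr{P}(d)^{\mathrm{red}}}\otimes V'\otimes V^{\vee})$ is non-zero and torsion free, which is where irreducibility of $\mu_0^{-1}(0)$ enters), and that any two objects whose pushforwards have full support admit a non-zero $\mathrm{Hom}^i$ because $P(d)$ is affine. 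This forces all summands of all $\Phi(\mathcal{O}\otimes V)$ into one side of the orthogonal decomposition, and then any non-zero indecomposable object receives a non-zero map from some $\Phi(\mathcal{O}\otimes V)$ and hence lies on the same side.

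Your fallback route via the $\ell$-stable locus and the magic window theorem also does not work here: the equivalence \eqref{intro:magic} requires $d$ primitive, the wall-crossing results are stated for the non-reduced categories $\mathbb{T}(d;\delta_d)$, and the proposition is asserted for \emph{all} $v\in\mathbb{Z}$ (no coprimality), so support-lemma or window-type inputs are unavailable. Moreover, the claim that an object of $\mathbb{T}$ vanishing on $\pi_P^{-1}(U)$ must vanish is unjustified: quasi-BPS categories do contain objects supported on small (e.g. nilpotent) loci, so indecomposability does not propagate from the stable locus by restriction alone.
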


Recall the following standard lemma:

\begin{lemma}\label{lemmaortho}
    Let $\mathcal{D}$ be a triangulated category with Serre functor equal to a shift. Then a semi-orthogonal decomposition $\mathcal{D}=\langle \mathbb{A}, \mathbb{B}\rangle$ is an orthogonal decomposition $\mathcal{D}=\mathbb{A}\oplus \mathbb{B}$.
\end{lemma}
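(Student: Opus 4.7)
The plan is to use Serre duality to flip the vanishing of Hom-spaces guaranteed by the semiorthogonal decomposition into vanishing in the opposite direction as well, and then argue that the standard SOD triangle splits. Write $S = [n]$ for the Serre functor of $\mathcal{D}$.

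First, I would show that $\mathrm{Hom}_{\mathcal{D}}(\mathscr{A}, \mathscr{B}) = 0$ for every $\mathscr{A} \in \mathbb{A}$ and $\mathscr{B} \in \mathbb{B}$. By the defining property of the Serre functor,
$$\mathrm{Hom}_{\mathcal{D}}(\mathscr{A}, \mathscr{B}) \cong \mathrm{Hom}_{\mathcal{D}}(\mathscr{B}, S\mathscr{A})^{\vee} = \mathrm{Hom}_{\mathcal{D}}(\mathscr{B}, \mathscr{A}[n])^{\vee}.$$
Since $\mathbb{A}$ is a triangulated subcategory of $\mathcal{D}$, the object $\mathscr{A}[n]$ still lies in $\mathbb{A}$, so the right-hand side vanishes by the semiorthogonality condition $\mathrm{Hom}_{\mathcal{D}}(\mathbb{B}, \mathbb{A}) = 0$ built into the given SOD $\mathcal{D} = \langle \mathbb{A}, \mathbb{B}\rangle$.

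Next, I would show that every $X \in \mathcal{D}$ decomposes as a direct sum with one summand in $\mathbb{A}$ and the other in $\mathbb{B}$. The semiorthogonal decomposition provides a functorial distinguished triangle $\mathscr{B}_X \to X \to \mathscr{A}_X \xrightarrow{\delta} \mathscr{B}_X[1]$ with $\mathscr{A}_X \in \mathbb{A}$ and $\mathscr{B}_X \in \mathbb{B}$. Applying the vanishing from the first step with $\mathscr{B}_X[1] \in \mathbb{B}$ in place of $\mathscr{B}$ forces $\delta = 0$, so the triangle splits and $X \cong \mathscr{A}_X \oplus \mathscr{B}_X$. Combined with the bilateral Hom-vanishing from the first step, this yields the orthogonal decomposition $\mathcal{D} = \mathbb{A} \oplus \mathbb{B}$.

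The argument is essentially formal and I do not anticipate any substantive obstacle; the only point requiring slight care is to apply the Hom-vanishing to a shifted object of $\mathbb{B}$ in order to kill the connecting morphism $\delta$ of the SOD triangle.
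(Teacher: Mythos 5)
Your argument is correct, and since the paper states Lemma~\ref{lemmaortho} without proof (labeling it a ``standard lemma''), you are supplying the standard argument the authors omit. You correctly use Serre duality with $S=[n]$ to convert the one-sided vanishing $\Hom(\mathbb{B},\mathbb{A})=0$ of the SOD into the two-sided vanishing $\Hom(\mathbb{A},\mathbb{B})=0$ (using that $\mathbb{A}$ is closed under shifts), and then observe that the connecting morphism $\delta\colon \mathscr{A}_X \to \mathscr{B}_X[1]$ of the SOD triangle lies in $\Hom(\mathbb{A},\mathbb{B})=0$, so the triangle splits. This is exactly the intended justification.
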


We note the following corollary of Proposition \ref{mainprop:dec} and Lemma \ref{lemmaortho}:

\begin{cor}\label{mainprop:dec2} 
Let $Q^\circ$ be a quiver satisfying Assumption \ref{assum1} and let $d\in\mathbb{N}^I$ such that $\mathscr{P}(d)^{\mathrm{red}}$ is a classical stack.
Let $v\in\mathbb{Z}$ such that $\gcd(v, \dd)=1$.
Then the category $\mathbb{T}(d)_v^{\mathrm{red}}$ does not have any non-trivial semiorthogonal decompositions. 
\end{cor}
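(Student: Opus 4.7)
\begin{pf}
The plan is to combine triviality of the relative Serre functor (Theorem~\ref{thm:Serretriv}) with non-existence of orthogonal decompositions (Proposition~\ref{mainprop:dec}). The key step will be to show that, under the standing assumptions, any semiorthogonal decomposition of $\mathbb{T}:=\mathbb{T}(d)_v^{\mathrm{red}}$ is automatically orthogonal; Proposition~\ref{mainprop:dec} then forces it to be trivial.

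Concretely, suppose $\mathbb{T}=\langle\mathbb{A},\mathbb{B}\rangle$ is a semiorthogonal decomposition, so $\mathrm{RHom}_{\mathbb{T}}(B,A)=0$ for all $A\in\mathbb{A}$ and $B\in\mathbb{B}$. First, I invoke Proposition~\ref{lem:bound} to ensure that the internal Hom $\pi_{P\ast}\mathcal{H}om_{\mathbb{T}}(A,B)$ lies in $D^b(P(d))$, so that duality arguments on the base are available. Next I apply Theorem~\ref{thm:Serretriv}, whose Gorenstein hypothesis is the main restriction implicit in our setting; it provides the relative Serre functor $\mathbb{S}_{\mathbb{T}/P(d)}\cong\mathrm{id}_{\mathbb{T}}$, so the defining isomorphism specializes to
\begin{align*}
\mathrm{RHom}_{P(d)}\!\big(\pi_{P\ast}\mathcal{H}om_{\mathbb{T}}(A,B),\,\mathcal{O}_{P(d)}\big)\;\cong\;\mathrm{RHom}_{\mathbb{T}}(B,A)=0.
\end{align*}
By Lemma~\ref{lem:dimP} the variety $P(d)$ is Gorenstein with (shifted) trivial dualizing complex, so $\mathrm{RHom}(-,\mathcal{O}_{P(d)})$ is an anti-equivalence on $D^b_{\mathrm{coh}}(P(d))$; biduality then forces $\pi_{P\ast}\mathcal{H}om_{\mathbb{T}}(A,B)=0$. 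Passing to derived global sections over $P(d)$ yields $\mathrm{RHom}_{\mathbb{T}}(A,B)=0$, so $\langle\mathbb{A},\mathbb{B}\rangle$ is in fact an orthogonal decomposition $\mathbb{T}=\mathbb{A}\oplus\mathbb{B}$.

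Finally, Proposition~\ref{mainprop:dec} applies and forces $\mathbb{A}=0$ or $\mathbb{B}=0$, so $\mathbb{T}$ admits no non-trivial semiorthogonal decomposition. The main obstacle is really packaged into the two deep inputs already established, Theorem~\ref{thm:Serretriv} and Proposition~\ref{mainprop:dec}; given those, the deduction is a short relative Serre duality argument whose only technical point is to guarantee that the internal Hom lives in $D^b(P(d))$ so that Gorenstein biduality applies, which is exactly the content of Proposition~\ref{lem:bound}.
\end{pf}
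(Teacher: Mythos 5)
Your proof is correct and follows essentially the same route as the paper: use the triviality of the relative Serre functor (Theorem~\ref{thm:Serretriv}) to upgrade any semiorthogonal decomposition of $\mathbb{T}(d)_v^{\mathrm{red}}$ to an orthogonal one, then conclude with Proposition~\ref{mainprop:dec}. The only difference is that the paper cites the standard Lemma~\ref{lemmaortho} for the orthogonality step, whereas you prove it directly via relative Serre duality together with Gorenstein biduality and affineness of $P(d)$, which is if anything a more careful handling of the relative (rather than absolute) Serre functor.
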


\begin{proof}
    Assume there is a semiorthogonal decomposition $\mathbb{T}(d)_v^{\mathrm{red}}=\langle \mathbb{A}, \mathbb{B}\rangle$. By Theorem \ref{thm:Serretriv} and Lemma \ref{lemmaortho}, there is an orthogonal decomposition $\mathbb{T}(d)_v^{\mathrm{red}}=\mathbb{A}\oplus \mathbb{B}$. By Proposition \ref{mainprop:dec}, one of the categories $\mathbb{A}$ and $\mathbb{B}$ is zero. 
\end{proof}

Before we begin the proof of Proposition \ref{mainprop:dec}, we note a few preliminary results. We assume in the remaining of the subsection that $\mathscr{P}(d)^{\mathrm{red}}$ is a classical stack.
We say a representation $V$ of $G(d)$ has weight $v\in\mathbb{Z}$ if $1_d$ acts on $V$ with weight $v$.

\begin{prop}\label{prop416}
    Let $V$ be a representation of $G(d)$ of weight zero. Then the sheaf $\pi_*\left(\mathcal{O}_{\mathscr{P}(d)^{\mathrm{red}}}\otimes V\right)$ is non-zero and torsion free.
\end{prop}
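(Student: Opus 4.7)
The plan is to verify non-vanishing and torsion-freeness separately, both reducing to structural properties of $\mu_0^{-1}(0)$ recorded in Lemma~\ref{lem:dimP}(ii).

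For non-vanishing, I would pass to the open substack $\mathscr{U} \subset \mathscr{P}(d)^{\mathrm{red}}$ parameterising simple representations of $\Pi_{Q^\circ}$. Under the classicality hypothesis on $\mathscr{P}(d)^{\mathrm{red}}$, this open locus is non-empty and dense (see the proof of Lemma~\ref{lem:dimP}(ii), which invokes \cite{KaLeSo}), and over it the good moduli morphism $\pi_P$ becomes a $\mathbb{C}^{\ast}$-gerbe for the diagonal $\mathbb{C}^{\ast} \subset G(d)$. Since $V$ has weight zero for this diagonal, it descends through the gerbe, and the restriction of $\pi_{P\ast}(\mathcal{O}_{\mathscr{P}(d)^{\mathrm{red}}} \otimes V)$ to the image $\pi_P(\mathscr{U}) \subset P(d)$ is locally free of rank $\dim V$. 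This is non-zero whenever $V \neq 0$, which is the evident implicit hypothesis.

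For torsion-freeness, note that $P(d)$ is affine, so it suffices to check torsion-freeness at the level of global sections. Setting $R := \Gamma(\mu_0^{-1}(0), \mathcal{O}_{\mu_0^{-1}(0)})$, the defining property of the good moduli space gives
\begin{equation*}
\Gamma\!\left(P(d),\, \pi_{P\ast}(\mathcal{O}_{\mathscr{P}(d)^{\mathrm{red}}} \otimes V)\right) = (R \otimes_{\mathbb{C}} V)^{G(d)}.
\end{equation*}
By Lemma~\ref{lem:dimP}(ii), the scheme $\mu_0^{-1}(0)$ is irreducible and reduced, so $R$ is an integral domain and $R \otimes_{\mathbb{C}} V$ is a free, hence torsion-free, $R$-module. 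The subring $R^{G(d)} = \Gamma(P(d), \mathcal{O}_{P(d)})$ embeds in $R$, and any nonzero element of $R^{G(d)}$ remains nonzero in $R$ and therefore acts injectively on the free $R$-module $R \otimes V$. Thus $R \otimes V$ is torsion-free as a module over $R^{G(d)}$, and its $G(d)$-invariant $R^{G(d)}$-submodule inherits torsion-freeness.

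The only external input of substance is the irreducibility of $\mu_0^{-1}(0)$ from \cite{KaLeSo}, packaged in Lemma~\ref{lem:dimP}(ii); the remainder of the argument assembles standard facts about reductive-group invariants on affine domains and presents no real obstacle. The mild subtlety to keep track of is simply that the good-moduli pushforward computes $G(d)$-invariants and that $P(d)$ is affine (being the GIT quotient of an affine scheme by a reductive group), both of which are routine.
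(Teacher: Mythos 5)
Your proof is correct, and the torsion-freeness half is essentially the paper's argument verbatim: identify $\pi_{P\ast}\left(\mathcal{O}_{\mathscr{P}(d)^{\mathrm{red}}}\otimes V\right)$ with the $G(d)$-invariants of the free module $R\otimes V$ over the domain $R=\mathcal{O}_{\mu_0^{-1}(0)}$, observe that $\mathcal{O}_{P(d)}=R^{G(d)}$ embeds in $R$, so $R\otimes V$ is torsion-free over $R^{G(d)}$ and the invariant submodule inherits this. The paper's one-line proof is exactly this, phrased at the level of equivariant modules.

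Where you go beyond the paper is the non-vanishing. The paper merely asserts that $\mathcal{O}_{\mathscr{P}(d)^{\mathrm{red}}}\otimes V$ is non-zero (which is clear on the stack) and leaves implicit that the $G(d)$-invariants $(R\otimes V)^{G(d)}$ are non-zero; this is not automatic, since a torsion-free module can perfectly well be zero. You supply the missing step correctly: over the dense open locus of simple representations guaranteed by the proof of Lemma~\ref{lem:dimP}(ii), the good moduli morphism is a $\mathbb{C}^{\ast}$-gerbe, the weight-zero hypothesis lets $V$ descend, and the pushforward restricts to a locally free sheaf of rank $\dim V$ over a dense open of $P(d)$, hence is non-zero. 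This is a genuine (if small) completion of the paper's terse argument rather than a divergence from it.
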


\begin{proof}
    The module $\mathcal{O}_{\mathscr{P}(d)^{\mathrm{red}}}\otimes V$ is a non-zero torsion free $\mathcal{O}_{\mu_0^{-1}(0)}$-module of weight zero, thus it is also a torsion free $\mathcal{O}_{P(d)}=\mathcal{O}^{G(d)}_{\mu_0^{-1}(0)}\subset \mathcal{O}_{\mu_0^{-1}(0)}$-module.
\end{proof}

The category $\mathbb{T}(d)_v^{\mathrm{red}}$ is admissible in $D^b\left(\mathscr{P}(d)^{\mathrm{red}}\right)_v$ by an immediate modification of the argument for the admissibility of $\mathbb{T}(d)_v$ in $D^b\left(\mathscr{P}(d)\right)_v$, which follows from the Koszul equivalence \eqref{Kosz} and \cite[Theorem 1.1]{P}. Then there exists a left adjoint of the inclusion $\mathbb{T}(d)_v^{\mathrm{red}}
\hookrightarrow D^b\left(\mathscr{P}(d)^{\mathrm{red}}\right)_v$, which we denote by 
\[\Phi\colon D^b\left(\mathscr{P}(d)^{\mathrm{red}}\right)_v\to \mathbb{T}(d)_v^{\mathrm{red}}.\]

\begin{prop}\label{prop417}
    Let $V$ be a representation of $G(d)$ of weight $v$ and let $\mathscr{A}$ be a direct summand of $\Phi(\mathcal{O}_{\mathscr{P}(d)^{\mathrm{red}}}\otimes V)$. Then $\pi_*(\mathscr{A})$ has support $P(d)$.
\end{prop}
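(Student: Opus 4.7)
The plan is to combine the adjunction triangle for $\Phi$ with a maximal Cohen-Macaulay property of pushforwards from the window category, and thereby show that any non-zero direct summand $\mathscr{A}$ of $\Phi(E)$ has $\pi_{\ast}\mathscr{A}$ torsion-free on the irreducible variety $P(d)$, hence of full support.

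Write $E:=\mathcal{O}_{\mathscr{P}(d)^{\mathrm{red}}}\otimes V$. The left adjoint $\Phi$ yields a distinguished triangle
\[
K\longrightarrow E\longrightarrow \Phi(E)\longrightarrow K[1]
\]
in $D^b(\mathscr{P}(d)^{\mathrm{red}})_v$, with $K\in{}^{\perp}\mathbb{T}(d)_v^{\mathrm{red}}$. Since the good moduli map $\pi$ is affine with reductive structure group, $\pi_{\ast}$ is exact on the weight-$v$ component; we view this component as the category of $v$-twisted coherent sheaves on $P(d)$ via rigidification of the diagonal $\mathbb{C}^{\ast}$-gerbe, so that the triangle descends to a triangle of twisted complexes on $P(d)$.

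The key technical step is to show that $\pi_{\ast}\Phi(E)$ is a torsion-free twisted sheaf on $P(d)$ concentrated in degree zero. By Lemma~\ref{lem:dimP}, the classicality of $\mathscr{P}(d)^{\mathrm{red}}$ forces $\mu_0^{-1}(0)$ to be a Cohen-Macaulay complete intersection. The category $\mathbb{T}(d)_v^{\mathrm{red}}$ is generated by $\mathcal{O}_{\mathscr{P}(d)^{\mathrm{red}}}\otimes\Gamma_{G(d)}(\chi)$ for $\chi$ dominant with $\chi+\rho-v\tau_d\in\mathbf{W}(d)$ and $\langle 1_d,\chi\rangle=v$. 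For each such generator, \cite[Proposition~4.4.4]{SVdB} (the same mechanism used in the proof of Theorem~\ref{thm:Serretriv}, combined with Lemma~\ref{lem:boundary} and the coprimality $\gcd(v,\dd)=1$ to place the relevant weights into the strict interior of $-2\rho+2\mathbf{W}(d)$) gives that $\pi_{\ast}(\mathcal{O}_{\mathscr{P}(d)^{\mathrm{red}}}\otimes\Gamma_{G(d)}(\chi))$ is a maximal Cohen-Macaulay twisted module on $P(d)$. Combined with the extension of Proposition~\ref{prop416} to weight $v$ (which follows by the same integrality argument using that $\mathcal{O}_{\mu_0^{-1}(0)}$ is a domain, so that $\pi_{\ast}E$ is torsion-free), the triangle above then forces $\pi_{\ast}\Phi(E)$ to be torsion-free in degree zero.

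To conclude, a direct summand $\mathscr{A}$ of $\Phi(E)$ yields $\pi_{\ast}\mathscr{A}$ as a direct summand of $\pi_{\ast}\Phi(E)$, hence itself a torsion-free twisted sheaf on $P(d)$. It is non-zero whenever $\mathscr{A}\neq 0$, since $\pi_{\ast}$ is conservative on the weight-$v$ component (a non-zero $G(d)$-equivariant sheaf of weight $v$ has non-zero twisted pushforward after rigidifying the diagonal $\mathbb{C}^{\ast}$-gerbe). Since $P(d)$ is irreducible by Lemma~\ref{lem:dimP}(ii), the support of $\pi_{\ast}\mathscr{A}$ equals all of $P(d)$. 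The principal obstacle is the torsion-freeness assertion in the main step: one must control the contribution of $\pi_{\ast}K$ so as not to introduce torsion into $\pi_{\ast}\Phi(E)$, which is precisely where the maximal Cohen-Macaulay property from \cite[Proposition~4.4.4]{SVdB}, together with the strict-interior condition coming from $\gcd(v,\dd)=1$ via Lemma~\ref{lem:boundary}, becomes essential.
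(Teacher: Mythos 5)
Your approach is genuinely different from the paper's, and unfortunately it has two substantive gaps.

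The paper's proof never analyzes $\pi_{\ast}\Phi(E)$ directly. Instead, it picks a weight-$v$ representation $V'$ with $\mathrm{Hom}(\mathscr{A}, \mathcal{O}_{\mathscr{P}(d)^{\mathrm{red}}}\otimes V')\neq 0$, uses the adjunction $\mathrm{Hom}(\Phi(E), -)\cong \mathrm{Hom}(E,-)$ to identify the ambient Hom sheaf with $\pi_{\ast}(\mathcal{O}_{\mathscr{P}(d)^{\mathrm{red}}}\otimes V'\otimes V^\vee)$, which is torsion-free and nonzero by Proposition~\ref{prop416}. A nonzero summand of a torsion-free sheaf on the irreducible variety $P(d)$ has full support, and this forces $\pi_{\ast}\mathscr{A}$ to have full support. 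In particular, no coprimality hypothesis and no Cohen--Macaulay input enter the argument.

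Your central claim --- that $\pi_{\ast}\Phi(E)$ is a torsion-free twisted sheaf on $P(d)$ \emph{concentrated in degree zero} --- is unjustified and almost certainly false in this generality. The object $\Phi(E)$ is a genuine complex (a window projection of a vector bundle), and while $\pi_{\ast}$ is exact on quasicoherent sheaves, it is not a truncation functor: $\pi_{\ast}\Phi(E)$ will generically have cohomology in several degrees. The maximal Cohen--Macaulay statement from \cite[Proposition~4.4.4]{SVdB} concerns $\pi_{\ast}$ of a single vector bundle with appropriate weight constraints; it does not propagate through a complex, and you give no argument for how the ``contribution of $\pi_{\ast}K$'' is in fact controlled --- which you yourself flag as the principal obstacle without resolving it.

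The second gap is that you invoke $\gcd(v,\dd)=1$ (via Lemma~\ref{lem:boundary} and the strict-interior condition for the MCM input), but Proposition~\ref{prop417} does not assume coprimality, and it is applied in Proposition~\ref{mainprop:dec} which also makes no such assumption. So even if your torsion-freeness claim were repairable, the resulting statement would be strictly weaker than the one the paper proves and uses. The paper's route via the Hom sheaf and Proposition~\ref{prop416} is precisely what avoids both the MCM machinery and the coprimality hypothesis.
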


\begin{proof}
    Write $\Phi(\mathcal{O}_{\mathscr{P}(d)^{\mathrm{red}}}\otimes V)=\mathscr{A}\oplus \mathscr{B}$ for $\mathscr{A}, \mathscr{B}\in D^b\left(\mathscr{P}(d)^{\mathrm{red}}\right)_v$. There exists a representation $V'$ of $G(d)$ of weight $v$ such that $\mathrm{Hom}\left(\mathscr{A}, \mathcal{O}_{\mathscr{P}(d)^{\mathrm{red}}}\otimes V'\right)\neq 0$. Then $\mathrm{Hom}\left(\mathscr{A}, \mathcal{O}_{\mathscr{P}(d)^{\mathrm{red}}}\otimes V'\right)$ is a direct summand of 
    \begin{align*}
\mathrm{Hom}\left(\Phi(\mathcal{O}_{\mathscr{P}(d)^{\mathrm{red}}}\otimes V), \mathcal{O}_{\mathscr{P}(d)^{\mathrm{red}}}\otimes V'\right)&=\mathrm{Hom}\left(\mathcal{O}_{\mathscr{P}(d)^{\mathrm{red}}}\otimes V, \mathcal{O}_{\mathscr{P}(d)^{\mathrm{red}}}\otimes V'\right)\\&=\pi_*\left(\mathcal{O}_{\mathscr{P}(d)^{\mathrm{red}}}\otimes V'\otimes V^\vee\right),\end{align*}
which is non-zero and torsion free over $P(d)$ by Proposition \ref{prop416}, and thus the $P(d)$-sheaf $\mathrm{Hom}\left(\mathscr{A}, \mathcal{O}_{\mathscr{P}(d)^{\mathrm{red}}}\otimes V'\right)$ has support $P(d)$. Then also $\pi_*(\mathscr{A})$ has support $P(d)$.\end{proof}

\begin{prop}\label{prop418}
For $\mathscr{A}, \mathscr{A}' \in D^b\left(\mathscr{P}(d)^{\mathrm{red}}\right)_v$, suppose that 
$\pi_{\ast}\mathscr{A}$ and $\pi_{\ast}\mathscr{A}'$
have support $P(d)$. Then $\mathrm{Hom}^{i}(\mathscr{A}, \mathscr{A}')\neq 0$ for some $i\in \mathbb{Z}$. 
\end{prop}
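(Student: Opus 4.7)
The plan is to reduce to the generic point of $P(d)$ and conclude using linear algebra over the residue field. By Lemma~\ref{lem:dimP}, $P(d)$ is irreducible, and by (ii) of the same lemma the generic $G(d)$-stabilizer on $\mu_0^{-1}(0)$ is the diagonal $\mathbb{C}^*$. Let $\eta\in P(d)$ be the generic point, $K:=k(\eta)$, and let $\mathcal{U}\subset\mathscr{P}(d)^{\mathrm{red}}$ be the dense open substack of points with stabilizer $\mathbb{C}^*$; then $\pi|_{\mathcal{U}}$ is a $\mathbb{C}^*$-gerbe over a dense open $U\subset P(d)$ containing $\eta$. The gerbe is trivial since the weight-one character $\det V^a$ (any $a\in I$) yields a weight-one line bundle on $\mathscr{P}(d)^{\mathrm{red}}$, so the fiber of $\pi|_{\mathcal{U}}$ over $\eta$ is $(B\mathbb{C}^*)_K$, and its weight-$v$ derived category is $D^b(K)$, the bounded derived category of finite-dimensional $K$-vector spaces.

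Next, I will exploit the hypothesis. The statement ``$\pi_*\mathscr{A}$ has support $P(d)$'' means, as in the proof of Proposition~\ref{prop417}, that $\pi(\mathrm{Supp}(\mathscr{A}))=P(d)$. For each closed point $y\in U$, the unique closed $G(d)$-orbit of $\mu_0^{-1}(0)$ over $y$ is stable, and hence it lies in $\mathcal{U}$; since it is the unique closed point of $\pi^{-1}(y)$, it must belong to the (closed) substack $\mathrm{Supp}(\mathscr{A})$. Thus $\mathrm{Supp}(\mathscr{A})\cap\mathcal{U}$ surjects onto $U$, and in particular $\mathscr{A}|_{(B\mathbb{C}^*)_K}\in D^b(K)$ is non-zero; likewise for $\mathscr{A}'$. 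Every non-zero object of $D^b(K)$ is a direct sum of shifted copies of $K$, so for some $n\in\mathbb{Z}$ one has
\[\Ext^n_{(B\mathbb{C}^*)_K}\!\left(\mathscr{A}|_{(B\mathbb{C}^*)_K},\,\mathscr{A}'|_{(B\mathbb{C}^*)_K}\right)\neq 0.\]

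Finally, I will propagate this non-vanishing to a global Ext. The morphism $\pi$ is cohomologically affine (as a good moduli space morphism), so $\pi_*$ is exact on quasi-coherent sheaves; and $P(d)$ is affine, being a GIT quotient of $\mu_0^{-1}(0)$ by the reductive group $G(d)$, so $\Gamma(P(d),-)$ is exact on quasi-coherent sheaves. Combining these two exactness statements yields, for every $n\in\mathbb{Z}$,
\[\Ext^n(\mathscr{A},\mathscr{A}')\cong \Gamma\!\left(P(d),\,\pi_*\mathcal{E}xt^n(\mathscr{A},\mathscr{A}')\right).\]
Flat base change along the localization $\Spec K\to P(d)$ identifies the stalk at $\eta$ of the quasi-coherent sheaf $\pi_*\mathcal{E}xt^n(\mathscr{A},\mathscr{A}')$ with the generic $\Ext^n$ computed above; this is non-zero for the chosen $n$, so the sheaf is non-zero on the affine scheme $P(d)$ and therefore admits non-zero global sections, which is what we want.

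The main subtlety lies in the last step: because $\mathscr{P}(d)^{\mathrm{red}}$ is only quasi-smooth, $R\mathcal{H}om(\mathscr{A},\mathscr{A}')$ may be unbounded above, so the displayed identification of $\Ext^n$ with global sections of $\pi_*\mathcal{E}xt^n$, as well as the flat base change identification of its stalk at $\eta$, must be carried out at the level of the individual cohomology sheaves $\mathcal{E}xt^n$ rather than as statements about the full complex $R\mathcal{H}om$.
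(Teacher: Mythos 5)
Your proof is correct and follows the same basic strategy as the paper's: restrict to the generic point of $P(d)$, show the relevant (internal) Hom is non-zero there, and then propagate the non-vanishing back to a global $\Ext$ using the good moduli space structure and affineness of $P(d)$. The paper's own proof is very compressed (two sentences): it simply asserts that $\pi_{P\ast}\mathcal{H}om(\mathscr{A},\mathscr{A}')$ is non-zero because it is non-zero over a generic point, and then uses $R\Hom(\mathscr{A},\mathscr{A}')=R\Gamma(\pi_{P\ast}\mathcal{H}om(\mathscr{A},\mathscr{A}'))$ together with the fact that $R\Gamma$ is conservative on $D_{\rm qc}$ of an affine scheme.

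You add two things worth noting. First, you make explicit why the restriction is non-zero at the generic point: you use irreducibility of $P(d)$ and the fact that the generic stabilizer is the central $\mathbb{C}^*$ (both from Lemma~\ref{lem:dimP}~(ii) when $\alpha_{Q^\circ}\geq 2$), so that the generic fiber is $(B\mathbb{C}^*)_K$ and its weight-$v$ part is the derived category of a field. This is an implicit ingredient the paper does not spell out, and your spelling it out is a genuine improvement in clarity; note, however, that if the generic stabilizer were a larger reductive group the argument would break, so this step is really using the hypothesis more heavily than the terse statement of the proposition suggests. Second, you work with the individual cohomology sheaves $\mathcal{E}xt^n$ rather than the full complex $\pi_{P\ast}\mathcal{H}om$; the paper's route, working directly in $D_{\rm qc}(P(d))$ where $R\Gamma$ is conservative, sidesteps the unboundedness concern you raise, so your extra care is valid but not strictly necessary. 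One minor point: the remark that the gerbe over the open locus is trivial (because of the weight-one line bundle $\det V^a$) is not actually needed — the weight-$v$ derived category of any $\mathbb{C}^*$-gerbe over $\Spec K$ is the module category of a division $K$-algebra, for which non-zero objects still have non-zero Hom's.
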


\begin{proof}
    The object $\pi_{P\ast}\mathcal{H}om(\mathscr{A}, \mathscr{A}')\in D_{\rm{qc}}(P(d))$ is non-zero because 
   it is non-zero over a generic point. Then $R\Hom(\mathscr{A}, \mathscr{A}')=R\Gamma(\pi_{P\ast}\mathcal{H}om(\mathscr{A}, \mathscr{A}'))$
   is also non-zero as $P(d)$ is affine, and the conclusion follows.
\end{proof}

\begin{proof}[Proof of Proposition \ref{mainprop:dec}]
    Assume $\mathbb{T}(d)_v^{\mathrm{red}}$ has an orthogonal decomposition in categories $\mathbb{A}$ and $\mathbb{B}$.
    By Propositions \ref{prop417} and \ref{prop418},
    all summands of $\Phi(\mathcal{O}_{\mathscr{P}(d)^{\mathrm{red}}}\otimes V)$ are in the same category, say $\mathbb{A}$, for all representations $V$ of $G(d)$ of weight $v$. Then the complexes $\Phi(\mathcal{O}_{\mathscr{P}(d)^{\mathrm{red}}}\otimes V)$ are in $\mathbb{A}$ for all representations $V$ of $G(d)$ of weight $v$.

    Let $\mathscr{A}\in \mathbb{T}(d)_v^{\mathrm{red}}$ be non-zero and indecomposable. Then there exists $V$ such that $\mathrm{Hom}(\mathcal{O}_{\mathscr{P}(d)^{\mathrm{red}}}\otimes V, \mathscr{A})\neq 0$, and so $\mathrm{Hom}\left(\Phi(\mathcal{O}_{\mathscr{P}(d)^{\mathrm{red}}}\otimes V), \mathscr{A}\right)\neq 0$. The complex $\mathscr{A}$ is indecomposable, so $\mathscr{A}\in \mathbb{A}$, and thus $\mathbb{B}=0$.
\end{proof}

 \bibliographystyle{amsalpha}
\bibliography{math}

\providecommand{\bysame}{\leavevmode\hbox to3em{\hrulefill}\thinspace}
\providecommand{\MR}{\relax\ifhmode\unskip\space\fi MR }
\providecommand{\MRhref}[2]{%
  \href{http://www.ams.org/mathscinet-getitem?mr=#1}{#2}
}
\providecommand{\href}[2]{#2}
\begin{thebibliography}{{\v S}VdB17}

\bibitem[AG15]{AG}
D.~Arinkin and D.~Gaitsgory, \emph{Singular support of coherent sheaves and the geometric {L}anglands conjecture}, Selecta Math. (N.S.) \textbf{21} (2015), no.~1, 1--199.

\bibitem[Alp13]{MR3237451}
J.~Alper, \emph{Good moduli spaces for {A}rtin stacks}, Ann. Inst. Fourier (Grenoble) \textbf{63} (2013), no.~6, 2349--2402.

\bibitem[BFK14]{MR3270588}
M.~Ballard, D.~Favero, and L.~Katzarkov, \emph{A category of kernels for equivariant factorizations and its implications for {H}odge theory}, Publ. Math. Inst. Hautes \'{E}tudes Sci. \textbf{120} (2014), 1--111.

\bibitem[BFK19]{MR3895631}
\bysame, \emph{Variation of geometric invariant theory quotients and derived categories}, J. Reine Angew. Math. \textbf{746} (2019), 235--303.

\bibitem[BO]{B-O2}
A.~Bondal and D.~Orlov, \emph{Semiorthogonal decomposition for algebraic varieties}, arXiv:9506012.

\bibitem[Bri11]{BrH}
T.~Bridgeland, \emph{Hall algebras and curve-counting invariants}, J.~Amer.~Math.~Soc.~ \textbf{24} (2011), 969--998.

\bibitem[Dava]{Dav}
B.~Davison, \emph{The integrality conjecture and the cohomology of preprojective stacks}, arXiv:1602.02110, to appear in J. Reine Angew. Math.

\bibitem[Davb]{DavPurity}
B.~Davison, \emph{Purity and 2-{C}alabi-{Y}au categories}, arXiv:2106.07692.

\bibitem[DG13]{MR3037900}
V.~Drinfeld and D.~Gaitsgory, \emph{On some finiteness questions for algebraic stacks}, Geom. Funct. Anal. \textbf{23} (2013), no.~1, 149--294.

\bibitem[DHSM]{DHSM}
B.~Davison, L.~Hennecart, and S.~{S}chlegel {M}ejia, \emph{{BPS} {L}ie algebras for totally negative 2-{C}alabi-{Y}au categories and nonabelian {H}odge theory for stacks}, arXiv:2212.07668.

\bibitem[DM20]{DM}
B.~Davison and S.~Meinhardt, \emph{Cohomological {D}onaldson-{T}homas theory of a quiver with potential and quantum enveloping algebras}, Invent. Math. \textbf{221} (2020), no.~3, 777--871.

\bibitem[EP15]{EfPo}
A.~I. Efimov and L.~Positselski, \emph{Coherent analogues of matrix factorizations and relative singularity categories}, Algebra Number Theory \textbf{9} (2015), no.~5, 1159--1292.

\bibitem[FK18]{FavTy}
D.~Favero and T.~L. Kelly, \emph{Fractional {C}alabi-{Y}au categories from {L}andau-{G}inzburg models}, Algebr. Geom. \textbf{5} (2018), no.~5, 596--649.

\bibitem[Fle88]{Flenner}
H.~Flenner, \emph{Extendability of differential forms on nonisolated singularities}, Invent. Math. \textbf{94} (1988), no.~2, 317--326.

\bibitem[Fu06]{Baoh}
B.~Fu, \emph{A survey on symplectic singularities and symplectic resolutions}, Ann. Math. Blaise Pascal \textbf{13} (2006), no.~2, 209--236.

\bibitem[Hir17]{Hirano}
Y.~Hirano, \emph{Derived {K}n\"orrer periodicity and {O}rlov's theorem for gauged {L}andau-{G}inzburg models}, Compos. Math. \textbf{153} (2017), no.~5, 973--1007.

\bibitem[HLa]{halpK32}
D.~Halpern-Leistner, \emph{Derived {$\Theta$}-stratifications and the {$D$}-equivalence conjecture}, arXiv:2010.01127.

\bibitem[HLb]{halpinstab}
\bysame, \emph{On the structure of instability in moduli theory}, arXiv:1411.0627, version 4.

\bibitem[HL15]{halp}
\bysame, \emph{The derived category of a {GIT} quotient}, J. Amer. Math. Soc. \textbf{28} (2015), no.~3, 871--912.

\bibitem[HLS20]{hls}
D.~Halpern-Leistner and S.~V. Sam, \emph{Combinatorial constructions of derived equivalences}, J. Amer. Math. Soc. \textbf{33} (2020), no.~3, 735--773.

\bibitem[Isi13]{I}
M.~U. Isik, \emph{Equivalence of the derived category of a variety with a singularity category}, Int. Math. Res. Not. IMRN (2013), no.~12, 2787--2808.

\bibitem[Kaw02]{MR1949787}
Y.~Kawamata, \emph{{$D$}-equivalence and {$K$}-equivalence}, J. Differential Geom. \textbf{61} (2002), no.~1, 147--171.

\bibitem[Kin94]{Kin}
A.~King, \emph{Moduli of representations of finite-dimensional algebras}, Quart.~J.~Math.~Oxford Ser.(2) \textbf{45} (1994), 515--530.

\bibitem[KLS06]{KaLeSo}
D.~Kaledin, M.~Lehn, and Ch. Sorger, \emph{Singular symplectic moduli spaces}, Invent.~Math.~ \textbf{164} (2006), 591--614.

\bibitem[Kno89]{Knop2}
F.~Knop, \emph{Der kanonische {M}odul eines {I}nvariantenrings}, J. Algebra \textbf{127} (1989), no.~1, 40--54.

\bibitem[KS11]{MR2851153}
M.~Kontsevich and Y.~Soibelman, \emph{Cohomological {H}all algebra, exponential {H}odge structures and motivic {D}onaldson-{T}homas invariants}, Commun. Number Theory Phys. \textbf{5} (2011), no.~2, 231--352.

\bibitem[Kuz14]{KuzICM}
A.~Kuznetsov, \emph{Semiorthogonal decompositions in algebraic geometry}, Proceedings of the {I}nternational {C}ongress of {M}athematicians---{S}eoul 2014. {V}ol. {II}, Kyung Moon Sa, Seoul, 2014, pp.~635--660.

\bibitem[Lun10]{VL}
V.~Lunts, \emph{Categorical resolution of singularities}, J.~Algebra \textbf{323} (2010), 2977--3003.

\bibitem[MR19]{MeRe}
S.~Meinhardt and M.~Reineke, \emph{Donaldson-{T}homas invariants versus intersection cohomology of quiver moduli}, J. Reine Angew. Math. \textbf{754} (2019), 143--178.

\bibitem[Nam01]{Nam}
Y.~Namikawa, \emph{Extension of 2-forms and symplectic varieties}, J. Reine Angew. Math. \textbf{539} (2001), 123--147.

\bibitem[NSS]{NeSaSc}
A.~Negu\c{t}, F.~Sala, and O.~Schiffmann, \emph{Shuffle algebras for quivers as quantum groups}, arXiv:2111.00249.

\bibitem[O'G99]{Ogra1}
K.~O'Grady, \emph{Desingularized moduli spaces of sheaves on a {K}3}, J.~Reine Angew.~Math.~ \textbf{512} (1999), 49--117.

\bibitem[Orl06]{OrLG}
D.~Orlov, \emph{Triangulated categories of singularities, and equivalences between {L}andau-{G}inzburg models}, Mat. Sb. \textbf{197} (2006), no.~12, 117--132.

\bibitem[Orl16]{Orsmooth}
\bysame, \emph{Smooth and proper noncommutative schemes and gluing of {DG} categories}, Advances in Math \textbf{302} (2016), 59--105.

\bibitem[P{\u{a}}d]{P}
T.~P{\u{a}}durariu, \emph{Generators for {K}-theoretic {H}all algebras of quivers with potential}, arXiv:2108.07919, to appear in Sel. Math.

\bibitem[P{\u{a}}d22]{P0}
\bysame, \emph{Categorical and {K}-theoretic {H}all algebras for quivers with potential}, J.~Inst.~Math.~Jussieu (2022), 1--31.

\bibitem[P{\u{a}}d23]{P2}
\bysame, \emph{Generators for {H}all algebras of surfaces}, Math. Z. \textbf{303} (2023), no.~1, Paper No. 4, 24.

\bibitem[Pre]{Preygel}
A.~Preygel, \emph{{T}hom-{S}ebastiani and {D}uality for {M}atrix {F}actorizations}, arXiv:1101.5834.

\bibitem[PS23]{PoSa}
M.~Porta and F.~Sala, \emph{Two-dimensional categorified {H}all algebras}, J. Eur. Math. Soc. (JEMS) \textbf{25} (2023), no.~3, 1113--1205.

\bibitem[PTa]{PTzero}
T.~P\u{a}durariu and Y.~Toda, \emph{Categorical and {K}-theoretic {D}onaldson-{T}homas theory of $\mathbb{C}^3$ (part {I})}, arXiv:2207.01899, to appear in Duke Math. J.

\bibitem[PTb]{PT1}
\bysame, \emph{Categorical and {K}-theoretic {D}onaldson-{T}homas theory of $\mathbb{C}^3$ (part {II})}, arXiv:2209.05920, to appear in Forum Math. Sigma.

\bibitem[PTc]{PThiggs}
\bysame, \emph{Quasi-{BPS} categories for {H}iggs bundles}, arXiv:2408.02168.

\bibitem[PTd]{PTK3}
\bysame, \emph{Quasi-{BPS} categories for {K}3 surfaces}, arXiv:2309.08437.

\bibitem[PTe]{PTtop}
\bysame, \emph{Topological {K}-theory of quasi-{BPS} categories of symmetric quivers with potential}, arXiv:2309.08432.

\bibitem[PT14]{MR3221298}
R.~Pandharipande and R.~P. Thomas, \emph{13/2 ways of counting curves}, Moduli spaces, London Math. Soc. Lecture Note Ser., vol. 411, Cambridge Univ. Press, Cambridge, 2014, pp.~282--333. \MR{3221298}

\bibitem[PV11]{MR3112502}
A.~Polishchuk and A.~Vaintrob, \emph{Matrix factorizations and singularity categories for stacks}, Ann. Inst. Fourier (Grenoble) \textbf{61} (2011), no.~7, 2609--2642.

\bibitem[SV13]{SV}
O.~Schiffmann and E.~Vasserot, \emph{The elliptic {H}all algebra and the {$K$}-theory of the {H}ilbert scheme of {$\mathbb{A}^2$}}, Duke Math. J. \textbf{162} (2013), no.~2, 279--366.

\bibitem[{\v S}VdB17]{SVdB}
{\v S}.~{\v S}penko and M.~{V}an~den Bergh, \emph{Non-commutative resolutions of quotient singularities for reductive groups}, Invent. Math. \textbf{210} (2017), no.~1, 3--67.

\bibitem[Toda]{T}
Y.~Toda, \emph{Categorical {D}onaldson-{T}homas theory for local surfaces}, arXiv:1907.09076.

\bibitem[Todb]{T3}
\bysame, \emph{Categorical wall-crossing formula for {D}onaldson-{T}homas theory on the resolved conifold}, arXiv:2109.07064, to appear in Geometry and Topology.

\bibitem[Tod10]{T5}
\bysame, \emph{Curve counting theories via stable objects~{I}: {DT/PT} correspondence}, J.~Amer.~Math.~Soc.~ \textbf{23} (2010), 1119--1157.

\bibitem[Tod12]{MR2892766}
\bysame, \emph{Stability conditions and curve counting invariants on {C}alabi-{Y}au 3-folds}, Kyoto J. Math. \textbf{52} (2012), no.~1, 1--50.

\bibitem[Tod18]{Todstack}
\bysame, \emph{Moduli stacks of semistable sheaves and representations of {E}xt-quivers}, Geom. Topol. \textbf{22} (2018), no.~5, 3083--3144.

\bibitem[Tod23]{TodGV}
\bysame, \emph{Gopakumar-{V}afa invariants and wall-crossing}, Journal of Differential Geometry \textbf{123} (2023), no.~1, 141--193.

\bibitem[VdB]{VdB22}
M.~{V}an~den Bergh, \emph{Non-commutative crepant resolutions, an overview}, arXiv:2207.0970.

\bibitem[VV22]{VaVa}
M.~Varagnolo and E.~Vasserot, \emph{K-theoretic {H}all algebras, quantum groups and super quantum groups}, Selecta Math. (N.S.) \textbf{28} (2022), no.~7, 46 pages.

\bibitem[YZ16]{YangZhaopre}
Y.~Yang and G.~Zhao, \emph{The cohomological {H}all algebra of a preprojective algebra}, Proceedings of the London Mathematical Society \textbf{116} (2016), no.~5, 1029--1074.

\end{thebibliography}
\medskip

\textsc{\small Tudor P\u adurariu: Max Planck Institute for Mathematics,
Vivatsgasse 7 
Bonn 53111, Germany.}\\
\textit{\small E-mail address:} \texttt{\small tpadurariu@mpim-bonn.mpg.de}\\

\textsc{\small Yukinobu Toda: Kavli Institute for the Physics and Mathematics of the Universe (WPI), University of Tokyo, 5-1-5 Kashiwanoha, Kashiwa, 277-8583, Japan.}\\
\textit{\small E-mail address:} \texttt{\small yukinobu.toda@ipmu.jp}\\

 \end{document}